\newtheorem{satz}{Theorem}[section]
\newtheorem{thm}[satz]{Theorem}
\newtheorem{lemma}[satz]{Lemma}
\newtheorem{kor}[satz]{Corollary}
\newtheorem{prop}[satz]{Proposition}
\theoremstyle{definition}
\newtheorem{Def}[satz]{Definition}
\theoremstyle{remark}
\newtheorem{bem}[satz]{Remark}
\newtheorem{bsp}[satz]{Example}
\newcommand{\dq}{\textrm{\textit{\dj}} }                                           
\newcommand{\varf}[2]{#1_1, \ldots, #1_{#2}}                              
\newcommand{\vara}[2]{#1_1 + \ldots + #1_{#2}}                            
\newcommand{\<}[1]{\langle #1 \rangle}                                    
\newcommand{\op}{OP}                                             
\newcommand{\e}{\varepsilon}                                              
\DeclareMathOperator{\ad}{ad}                                             
\newcommand{\supp}{\textrm{supp }}                                        
\newcommand{\adc}[2]{ \ad(-ix)^{#1_1}\ad(D_x)^{#2_1}\ldots \ad(-ix)^{#1_l}\ad(D_x)^{#2_l} }  
\newcommand{\Sallg}[5]{S^{#1}_{#2,#3}(\R^{#4}\times\R^{#5})}              
\newcommand{\Sallgn}[6]{S^{#1}_{#2,#3}(\R^{#4}\times\R^{#5}; #6)}         
\newcommand{\Sn}[3]{\Sallg{#1}{#2}{#3}{n}{n}}                             
\newcommand{\Snn}[4]{S^{#1}_{#2,#3}(\R^{n}\times\R^{n}; #4)}              
\newcommand{\differencequotient}[1]{\p^{#1}_{x_j}}                           
\newcommand{\pa}[1]{\partial_{\xi}^{#1}}                                  
\newcommand{\p}{\partial}
\newcommand{\pax}[2]{\partial_{\xi}^{#1}\partial_{x}^{#2}}                
\newcommand{\skh}[3]{{\langle #1,#2 \rangle}_{#3}}                        
\newcommand{\R}{\mathbb{R}}                                               
\newcommand{\Rn}{\mathbb{R}^n}                                            
\newcommand{\RnRn}{\R^n \times \R^n}                                      
\newcommand{\RnRnx}[2]{\R^n_{#1} \times \R^n_{#2}}                         
\newcommand{\RnRnRn}{\RnRn \times \R^n}                                   
\newcommand{\RnRnRnRn}{\RnRn \times \RnRn}                                
\newcommand{\intr}{\int \limits_{\Rn} }                                   
\newcommand{\osint}{\textrm{Os\hspace*{0,1cm}-}\hspace{-0,15cm}\iint}     
\newcommand{\osiint}{\textrm{Os\hspace*{0,1cm}-}\hspace{-0,15cm}\iiiint}  
\newcommand{\s}{\mathcal{S}(\R^n)}                                        
\newcommand{\sd}{\mathcal{S'}(\R^n)}                                      
\newcommand{\N}{\mathbb{N}}                                               
\newcommand{\Z}{\mathbb{Z}}                                               
\newcommand{\Non}{\mathbb{N}_0^n}                                               
\newcommand{\C}{\mathbb{C}}                                               
\title{Spectral Invariance of Non-Smooth Pseudodifferential Operators}
\author{Helmut Abels and Christine Pfeuffer}
\begin{document}

\maketitle

\begin{abstract}
  In this paper we discuss some spectral invariance results for non-smooth pseudodifferential operators with coefficients in Hölder spaces. In analogy to the proof in the smooth case of Beals and Ueberberg, c.f.\;\cite{Ueberberg}, \cite{Beals}, we use the characterization of non-smooth pseudodifferential operators to get such a result. The main new difficulties are the limited mapping properties of pseudodifferential operators with non-smooth symbols and the fact, that in general the composition of two non-smooth pseudodifferential operators is not a pseudodifferential operator. 
  
  In order to improve these spectral invariance results for certain subsets of non-smooth pseudodifferential operators with coefficients in Hölder spaces, we improve the characterization of non-smooth pseudodifferential operators of A. and P., c.f. \cite{Paper1}.
\end{abstract}

\section{Introduction}

A lot of spectral invariance results of pseudodifferential operators are already known for pseudodifferential operators with smooth symbols e.g.\;of the Hörmander class $\Sn{m}{\rho}{\delta}$. The symbol-class $\Sn{m}{\rho}{\delta}$ with $m \in \R$ and $0 \leq \delta \leq \rho \leq 1$ consists of all smooth functions $p$ such that for all $k \in \N_0$
\begin{align*}
  |p|^{(m)}_k := \max_{|\alpha|,|\beta|\leq k} \sup_{x, \xi \in \R^n}|\pax{\alpha}{\beta} p(x,\xi)|\<{\xi}^{-(m-\rho|\alpha|+\delta|\beta|)} < \infty.
\end{align*}
We define the associated pseudodifferential operator via
\begin{align*}
		\op (p):= p(x,D_x) u(x) := \intr e^{ix \cdot \xi} p(x,\xi) \hat{u}(\xi) \dq \xi \qquad \text{for all } u \in \s , x \in \Rn,
\end{align*}
where $\s$ is
the Schwartz space,i.\,e.\,the space of all rapidly decreasing smooth functions. Moreover $\hat{u}$ and $\mathscr{F}[u]$ denote the Fourier transformation of $u$. 
Additionally $\op \Sn{m}{\rho}{\delta}$ is the set of all pseudodifferential operators with symbols in the symbol-class $\Sn{m}{\rho}{\delta}$.

A fundamental result in the theory of pseudodifferential operators allows one to directly conclude that the inverse of a pseudodifferential operator with a symbol in the Hörmander class $S^0_{\rho, \delta}(\RnRn)$, $0 \leq \delta \leq \rho \leq 1$, $\delta < 1$, which is invertible as an operator on $L^2(\Rn)$, is again a pseudodifferential operator in the same symbol-class. This important statement was derived by Beals \cite{Beals} and Ueberberg \cite{Ueberberg}. Their proof even showed that the same holds for all Bessel potential spaces $H^s_2(\Rn)$, $s \in \R$, see Definition \ref{BesselPotentialSpace} for the definition of these spaces, and that the spectrum is independent of the choice of the space.

Schrohe extended this result for weighted $L^p$-Sobolev spaces in \cite{Schrohe1990} and together with Leopold even for Besov spaces of variable order of differentiation $B_{p,q}^{s,a}(\Rn)$ in \cite{SchroheLeopold1992}. They verified that the spectrum of smooth pseudodifferential operators in certain symbol-classes is independent of the choice of the weighted $L^p$-Sobolev space and of the choice of the Besov space of variable order of differentiation respectively, cf.\;\cite{SchroheLeopold1992} and \cite{Schrohe1990}. 

There are several other results for spectral invariance of smooth pseudodifferential operators in the literature, cf.\;e.g.\;\cite{AlvarezHounie}, \cite{Grubb}, \cite{Kryakvin},\cite{LeopoldTriebel} and \cite{Schrohe1992}.

We show the spectral invariance for non-smooth pseudodifferential operators whose symbols are in the symbol-class $C^{\tau}_{\ast} S^0_{1,0}(\Rn \times \Rn)$ in this paper. Here
\begin{align*}
  C^{\tau}_{\ast}(\Rn):=\left\{ f \in \sd: \|f\|_{C^{\tau}_{\ast} }:= \sup_{j \in \N_0} 2^{js} \|\mathscr{F}^{-1}[\varphi_j \hat{f}] \|_{L^{\infty}} < \infty \right\},
\end{align*}
is the so-called Hölder-Zygmund space, where $\mathscr{F}^{-1}[u]$ denotes the inverse Fourier Transformation of $u \in \sd$, i.e., the dual space of $\s$. Note that $C^{\tau}_{\ast}(\Rn)$ is equal to the Hölder space $C^{\tau}(\Rn)$ if $\tau \notin \N$. The symbol-class $ C^{\tau}_{\ast} \Snn{m}{\rho}{0}{M}$ with $m \in \R$, $\tau>0$, $0 \leq \rho \leq 1$ and $M\in \N_0 \cup \{ \infty \}$ is the set of all functions $p: \RnRn \rightarrow \C$ such that we have for all $\alpha, \beta \in \N_0^n$ with $|\alpha| \leq M$ and $|\beta| \leq m$:
  \begin{itemize}

		\item[i)] $\p_x^{\beta} p(x, .) \in C^M(\Rn)$ for all $x \in \Rn$,

		\item[ii)] $\p_x^{\beta} \pa{\alpha} p \in C^{0}(\R^n_x \times \R^n_{\xi})$,

		\item[iii)] $\| \pa{\alpha} p(.,\xi)  \|_{C^{\tau}_{\ast}(\R^n)} \leq C_{\alpha}\<{\xi}^{\tilde{m}-\rho|\alpha|}$ for all $\xi \in \R^n$.
	\end{itemize}
For a given symbol $p$ of the previous symbol-class we define the associated pseudodifferential operator in the same way as in the smooth case and denote it by $p(x,D_x)$ or $\op(p)$. The set $\op C^{\tau}_{\ast} \Snn{m}{\rho}{0}{M}$ consists of all pseudodifferential operators with symbols in the symbol-class $C^{\tau}_{\ast} \Snn{m}{\rho}{0}{M}$.

In analogy to the proof of the spectral invariance results of Ueberberg in the smooth case, we use the characterization of non-smooth pseudodifferential operators via iterated commutators. The characterization set is defined as follows:  

\begin{Def}
	Let $m\in\R$, $M \in \N_0 \cup \{ \infty \}$ and $ 0\leq \rho \leq 1 $. Additionally let $\tilde{m} \in \N_0 \cup \{ \infty \}$ and $1 < q < \infty$. Then we define 
	$\mathcal{A}^{m,M}_{\rho,0}(\tilde{m},q)$
	as the set of all linear and bounded operators $P: H^m_q(\Rn) \rightarrow L^q(\Rn)$, such that for all $l\in\N$, $\alpha_1, \ldots, \alpha_l \in \Non$ and $ \beta_1, \ldots, \beta_l \in \N_0^n$ with $|\alpha_1| + |\beta_1|= \ldots = |\alpha_l| + |\beta_l|= 1$, $|\alpha| \leq M$ and $|\beta|\leq \tilde{m}$ the iterated commutator of $P$
	\begin{eqnarray*}
		\adc{\alpha}{\beta} P: H^{m-\rho|\alpha|}_q(\Rn) \rightarrow L^q(\Rn)
	\end{eqnarray*}
	is continuous. Here $\alpha:= \alpha_1 + \ldots + \alpha_l$ and $\beta:=\beta_1 + \ldots + \beta_l$. 
\end{Def}

The iterated commutators are defined in Definition \ref{Def:IteratedCommutators} below. In the case $M = \infty$ we write $\mathcal{A}^{m}_{\rho,0}(\tilde{m},q)$ instead of $\mathcal{A}^{m,\infty}_{\rho,0}(\tilde{m},q)$. In \cite{Paper1} we showed that every operator in such a characterization set is a non-smooth pseudodifferential operator if certain conditions hold:

\begin{thm}\label{thm:classificationA10}
  Let  $m\in \R$, $1<q<\infty$ and $\tilde{m} \in \N_0$ with $\tilde{m}>n/q$. Additionally let $M \in \N_0$ with $M >n$. We define $\tilde{M}:=M-(n+1)$. Assuming $P \in \mathcal{A}^{m, M}_{1,0}(\tilde{m},q)$ and $\tilde{M} \geq 1$, we obtain for all $ \tau \in \left(0,\tilde{m}-n/q \right]$ with $\tau \notin \N_0$: 
  $$P \in \op C^{\tau} S^m_{1,0}(\RnRn; \tilde{M}-1) \cap \mathscr{L}(H^m_q(\Rn),L^q(\Rn)).$$
\end{thm}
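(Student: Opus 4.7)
I would adapt the Beals--Ueberberg symbol reconstruction to the non-smooth setting, as in the authors' earlier work \cite{Paper1}. Given $P \in \mathcal{A}^{m,M}_{1,0}(\tilde{m},q)$, the plan has four blocks: (a) extract a candidate symbol $p(x,\xi)$ from $P$ via a localized oscillatory testing procedure; (b) verify the symbol estimates $\pa{\alpha} p = O(\<{\xi}^{m-|\alpha|})$ using the iterated-commutator hypothesis; (c) upgrade $L^q$-type bounds in $x$ to Hölder bounds via Sobolev embedding, thereby producing the $C^{\tau}$-regularity; and (d) show $P = \op(p)$ by testing against Schwartz functions.

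\textbf{Candidate symbol.} Fix $\psi \in \con$ with $\psi(0)=1$ and $\hat\psi$ compactly supported. Define
$$
p(x,\xi) \;:=\; e^{-ix\cdot\xi}\,\bigl(P\bigl[\psi(\,\cdot\, - x)\, e^{i\,\cdot\,\xi}\bigr]\bigr)(x).
$$
Since $\psi(\,\cdot\, - x)\, e^{i\,\cdot\,\xi}\in\s$ and $P\colon H^m_q(\Rn)\to L^q(\Rn)$ is bounded, the bracket lies in $L^q(\Rn)$; the pointwise evaluation at $x$ is made meaningful by first applying sufficiently many $\p_x$-commutators (below) and then using Sobolev embedding on a ball. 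This pointwise-evaluation step is where the loss of $n+1$ orders appears and explains the shift $\tilde M = M-(n+1)$.

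\textbf{Symbol estimates in $\xi$.} For $|\alpha|\le\tilde M-1$, the algebraic identity $\ad(-ix_j)\op(q) = \op(\pa{e_j} q)$ lets us rewrite $\pa{\alpha}p(x,\xi)$ as an expression in the iterated commutator $\adc{\alpha}{0}P$ acting on the same localized exponential. By hypothesis this commutator is bounded $H^{m-|\alpha|}_q(\Rn)\to L^q(\Rn)$, and the $H^{m-|\alpha|}_q$-norm of $\psi(\,\cdot\, - x)\, e^{i\,\cdot\,\xi}$ grows like $\<{\xi}^{m-|\alpha|}$ uniformly in $x$. Combined with the Sobolev evaluation this yields $|\pa{\alpha}p(x,\xi)|\le C_\alpha\<{\xi}^{m-|\alpha|}$ as required by condition (iii) of the symbol class with $\rho=1$.

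\textbf{Hölder regularity in $x$, identification, and main obstacle.} For $|\beta|\le\tilde m$, the identity $\ad(D_{x_j})\op(q)=\op(\p_{x_j}q)$ turns $x$-derivatives into the mixed commutator $\adc{\alpha}{\beta}P\colon H^{m-|\alpha|}_q\to L^q$, which by the same mechanism gives $H^{\tilde m}_q$-type control on $\pa{\alpha}p(\cdot,\xi)$ in $x$, uniform in $\xi$ up to the weight $\<{\xi}^{m-|\alpha|}$. The Sobolev embedding $H^{\tilde m}_q(\Rn)\hookrightarrow C^{\tau}_\ast(\Rn)$, valid precisely for $\tau\le\tilde m-n/q$, then upgrades this to the Hölder--Zygmund bound demanded by $C^{\tau}_\ast\Snn{m}{1}{0}{\tilde M-1}$; since $\tau\notin\N_0$ one has $C^{\tau}_\ast = C^{\tau}$. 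The identification $P=\op(p)$ would follow by testing both sides against $\s$ and using the reconstruction formula together with a density argument. The main difficulty I anticipate is the simultaneous bookkeeping required to produce the exact $\xi$-decay $\<{\xi}^{m-|\alpha|}$, the sharp Hölder exponent $\tau\le\tilde m-n/q$, and the full count of $\tilde M-1$ $\xi$-derivatives, while working only within the specific pairs of Sobolev spaces between which the commutators are known to be bounded and without invoking composition of non-smooth pseudodifferential operators (which is not available).
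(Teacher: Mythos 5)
Your general framework (iterated commutators encode the symbol estimates; the $\rho=1$ decay in $\xi$ comes from $\ad(-ix)^{\alpha}P$; the H\"older regularity in $x$ comes from $\ad(D_x)^{\beta}P$ together with the Sobolev embedding $H^{\tilde m}_q(\Rn)\hookrightarrow C^{\tau}(\Rn)$ for $\tau\le\tilde m-n/q$) is the right Beals--Ueberberg philosophy, and your observation that $Pu\in H^{\tilde m}_q(\Rn)\hookrightarrow C^0(\Rn)$ for $u\in\s$ (write $D_{x_j}Pu=\ad(D_{x_j})Pu+PD_{x_j}u$ and iterate) correctly legitimizes pointwise evaluation. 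The genuine gap is in the construction and identification of the symbol. Setting aside that $\psi\in\con$ with $\hat\psi$ compactly supported forces $\psi\equiv 0$, if $P=\op(a)$ for a reasonable symbol $a$ then your candidate satisfies
$$
p(x,\xi)=e^{-ix\cdot\xi}\bigl(P[\psi(\cdot-x)e^{i\cdot\xi}]\bigr)(x)=\int a(x,\xi+\eta)\,\hat\psi(\eta)\,\dq\eta ,
$$
i.e.\ it is the mollification of $a(x,\cdot)$ in the $\xi$-variable by $(2\pi)^{-n}\hat\psi$. This lies in the same symbol class but $\op(p)\ne P$ in general, so the final step ``$P=\op(p)$ by testing against $\s$'' does not close. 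The exact reconstruction formula would require testing on the pure exponential $e^{i\cdot\xi}$, which is not in $H^m_q(\Rn)$; this is precisely the obstruction caused by the limited mapping properties that the actual argument is built to circumvent, and your proposal does not supply a substitute (a deconvolution, or a partition of unity in $\xi$ with a careful resummation).

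For comparison, the proof in \cite{Paper1} that this paper relies on proceeds differently: after reducing to $m=0$ by the order-reducing operator $\Lambda^{-m}$, it approximates $P$ by operators $P_{\e}$ which carry non-smooth \emph{double} symbols, reduces these to single symbols by the oscillatory-integral symbol reduction of Theorem \ref{thm:SymbolReduktionNichtGlatt} (this is where the loss $M\mapsto M-(n+1)$ occurs, not in the pointwise evaluation), derives bounds uniform in $\e$ from the commutator hypotheses, and passes to the limit $\e\to 0$ via the compactness result Theorem \ref{thm:pointwiseConvergence} (which costs one further $\xi$-derivative, whence $\tilde M-1$); the case $\rho=1$ is then obtained by applying the $\rho=0$ case to $\ad(-ix)^{\alpha}P\in\mathcal{A}^{m-|\alpha|,M-|\alpha|}_{0,0}(\tilde m,q)$. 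Your proposal bypasses both of these ingredients, and without them (or an equivalent replacement) the identification of $P$ with a pseudodifferential operator is not established.
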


\begin{thm} \label{thm:classA00}
  Let $m\in \R$, $1<q<\infty$, $\tilde{m} \in \N_0$ with $\tilde{m}>n/q$. Additionally let $M \in \N_0 \cup \{ \infty \}$ with $M >n$. We define $\tilde{M}:=M-(n+1)$. Considering an operator $T \in \mathcal{A}^{m,M}_{0,0}(\tilde{m},q)$ and an $\tilde{M} \geq 1$ we have for $s \in (0, \tilde{m}-n/q]$ with $s\notin \N_0$:
  $$T \in \op C^s S^m_{0,0}(\RnRn; \tilde{M}-1) \cap \mathscr{L}(H^m_q(\Rn),L^q(\Rn)).$$
\end{thm}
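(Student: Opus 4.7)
The plan is to adapt the Beals--Ueberberg argument used for Theorem \ref{thm:classificationA10} to the $\rho=0$ regime. The strategy is the same: reconstruct a candidate symbol $p$ from the action of $T$ on shifted oscillatory test functions, show via the iterated--commutator hypothesis that $p \in C^s_\ast S^m_{0,0}(\RnRn; \tilde{M}-1)$, and finally identify $\op(p)$ with $T$ on $\s$. The key structural difference to Theorem \ref{thm:classificationA10} is that for $\rho=0$ the commutators $\adc{\alpha}{\beta} T$ map $H^m_q(\Rn) \to L^q(\Rn)$ with no gain of $\xi$-decay in $|\alpha|$, which is precisely the weaker $\xi$-bound demanded by $S^m_{0,0}$.

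First I would fix $\varphi \in \con$ with $\varphi(0)=1$ and define the candidate symbol by
\begin{align*}
  p(x,\xi) := e^{-ix\cdot\xi}\,(T u_{x,\xi})(x), \qquad u_{x,\xi}(y) := e^{iy\cdot\xi}\,\varphi(y-x),
\end{align*}
after making sure that the pointwise evaluation at $x$ is legitimate. Here the hypothesis $\tilde{m} > n/q$ together with the Sobolev embedding $H^{\tilde{m}}_q(\Rn) \hookrightarrow C^0(\Rn)$ enters: by applying $\tilde{m}$ derivatives in $x$ to $(Tu_{x,\xi})$ and invoking the iterated--commutator bounds, one passes from $L^q$-norms to pointwise values. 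The shift $\tilde{M} = M-(n+1)$ will surface here as well, because for the later identification $\op(p) = T$ one needs absolute integrability of $p(x,\xi)\hat{u}(\xi)$ in $\xi$, and in the $\rho=0$ setting this forces one to spend $n+1$ of the available commutators just to produce an integrable weight $\<{\xi}^{-(n+1)}$.

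The second step is to extract the symbol estimates. A direct computation expresses $\pa{\alpha}\p_x^{\beta} p(x,\xi)$ as $e^{-ix\cdot\xi}$ times the value at $x$ of $\adc{\alpha}{\beta} T$ applied to the same family $u_{x,\xi}$, up to lower-order terms in which derivatives fall on $\varphi$. Since each of $\ad(-ix)T$ and $\ad(D_x)T$ remains bounded $H^m_q \to L^q$ in the $\rho=0$ case, iteration with $|\alpha|\leq \tilde{M}-1$ and $|\beta|\leq \tilde{m}$ produces uniform $\xi$-bounds on $\pa{\alpha}\p_x^{\beta} p(x,\xi)$, which is exactly condition \emph{iii)} of the symbol class with $\rho|\alpha|=0$. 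The Hölder--Zygmund regularity of order $s$ in $x$ is then obtained through finite differences: using the commutators $\ad(D_x)^N T$ with $N = \lfloor s\rfloor + 1$ together with the characterization of $C^s_\ast$ via $N$-th order differences, one bounds $\|\Delta_h^N p(\cdot,\xi)\|_{L^\infty}$ by a constant times $|h|^s$, again routing $L^q$-estimates to pointwise ones through the Sobolev embedding.

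The main obstacle I anticipate lies in the identification $T = \op(p)$ on $\s$. Because the $\rho=0$ estimates yield no $\xi$-decay on $\pa{\alpha} p$, one cannot close the argument by a straightforward dominated-convergence or integration-by-parts reduction in $\xi$, as in the $\rho=1$ case of Theorem \ref{thm:classificationA10}. The natural remedy is a double approximation: a Friedrichs-type mollification in the $x$-variable replaces $T$ by operators with smooth symbols for which the identification is classical, and one then passes to a limit that preserves both the uniform $\xi$-bounds on the derivatives and the Hölder--Zygmund regularity in $x$. The additional loss of one derivative (yielding $\tilde{M}-1$ rather than $\tilde{M}$ in the conclusion) reflects a compactness or semi-norm step in this limit, and making the limit deliver the full range $s \in (0,\tilde{m}-n/q]$ with $s\notin\N_0$ is the delicate point of the proof.
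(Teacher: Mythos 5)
Your overall strategy is the right one: it is the Beals--Ueberberg scheme that the paper itself follows (reconstruct a symbol from the commutator hypotheses, convert $L^q$-bounds to pointwise and H\"older bounds via $H^{\tilde{m}}_q\hookrightarrow C^{s}$ with $s\leq\tilde{m}-n/q$, regularize $T$ and pass to the limit), and your attribution of the final loss from $\tilde{M}$ to $\tilde{M}-1$ to the compactness step in the limit is exactly right -- that is Theorem \ref{thm:pointwiseConvergence}, which returns the limit symbol with one fewer $\xi$-derivative. The general case $m\in\R$ is then reduced to $m=0$ by composing with the order-reducing operator $\Lambda^{-m}$, a step your sketch does not mention but which is routine.

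The genuine gap is the identification $T=\op(p)$, precisely the point you flag as the main obstacle, and your candidate symbol does not survive it. With a \emph{fixed} cutoff, $p(x,\xi):=e^{-ix\cdot\xi}(Tu_{x,\xi})(x)$ is not the symbol of $T$: if $T=\op(q)$ one computes $p(x,\xi)=\int q(x,\xi+\eta)\hat{\varphi}(\eta)\,\dq\eta$, a smearing of $q$ in $\xi$, and $\op(p)\neq T$ already for Fourier multipliers; no limit is taken to remove $\varphi$. The paper's route regularizes on \emph{both} sides, $T_{\e}=\varphi(\e x)\varphi(\e D_x)\,T\,\varphi(\e D_x)\varphi(\e x)$, which forces the natural symbol of $T_{\e}$ to be a \emph{double} symbol $p_{\e}(x,\xi,x')$; the commutator hypotheses plus Sobolev embedding give uniform bounds on $\{p_{\e}\}$ in the double-symbol class, and Theorem \ref{thm:SymbolReduktionNichtGlatt} then reduces these to single symbols $p_{\e,L}$ with $\op(p_{\e,L})=T_{\e}$ before the pointwise-convergence argument is applied. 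Your sketch omits this reduction entirely, yet it is both what makes the identification rigorous and what produces the shift $\tilde{M}=M-(n+1)$: the $n+1$ derivatives are consumed by the regularizer $A^{l_0}(D_{\eta},y)$, $l_0>n$, inside the oscillatory integral $a_L(x,\xi)=\osint e^{-iy\cdot\eta}a(x,\xi+\eta,x+y)\,dy\,\dq\eta$, which must generate the weight $\langle y\rangle^{-l_0}$. Your alternative explanation -- integrability of $p(x,\xi)\hat{u}(\xi)$ in $\xi$ -- does not hold up: after order reduction the symbol is bounded and $\hat{u}\in\s$, so that integral converges absolutely with no decay of $p$ at all. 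As it stands, the proposal neither identifies $\op(p)$ with $T$ nor accounts for the stated numerology.
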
 

The proofs of those two results are based on the existence of a pointwise convergent subsequence of a bounded set of non-smooth symbols and of the symbol reduction of non-smooth double symbols to a single symbol. We refer to Subsection \ref{Section:DoubleSymbols} for the definition of the non-smooth double symbols. For the convenience of the reader let us repeat these two results now:

\begin{thm}\label{kor:konvergenz}\label{thm:pointwiseConvergence}
  Let $m \in \N_0$, $M \in \N \cup \{ \infty \}$ and $0<s\leq 1$. 
  Furthermore, let $(p_{\e} )_{\e>0} \subseteq C^{m,s}S^0_{0,0}(\RnRn ;M)$ be a bounded sequence. 
  Then there is a subsequence $(p_{\e_l} )_{l \in \N} \subseteq (p_{\e} )_{\e>0}$ with $\e_l \rightarrow 0$ for $l \rightarrow \infty$ and a function $p: \Rn_x \times  \Rn_{\xi} \rightarrow \C$ such that for all $\alpha, \beta \in \N_0^n$ with $|\beta| \leq m$ and $|\alpha| \leq M-1$ we get
  \begin{itemize}
    \item[i)] $\p_x^{\beta}\pa{\alpha} p$ exists and $\p_x^{\beta} \pa{\alpha} p \in C^{0,s}(\RnRn)$,
    \item[ii)] $\p^{\beta}_x \pa{\alpha} p_{\e_l} \xrightarrow[]{l \rightarrow \infty} \p^{\beta}_x \pa{\alpha} p$ is uniformly convergent on each compact set $K \subseteq \RnRn$.
  \end{itemize}
  In particular $p \in C^{m,s}S^0_{0,0}( \RnRn; M-1)$.
\end{thm}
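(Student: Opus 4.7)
My approach is Arzelà--Ascoli combined with a diagonal extraction. The assumption that $(p_\e)_\e$ is bounded in $C^{m,s} S^0_{0,0}(\RnRn; M)$ gives uniform bounds on $\p_x^\beta \pa{\alpha} p_\e$ for all $|\alpha| \leq M$, $|\beta| \leq m$, together with a uniform $s$-Hölder estimate in $x$ at $|\beta|=m$. For each $(\alpha,\beta)$ with $|\alpha| \leq M-1$ and $|\beta| \leq m$ I can therefore spend one derivative to obtain equicontinuity: the additional $\xi$-derivative (still bounded, since $|\alpha|+1 \leq M$) yields equicontinuity in $\xi$, while the uniform Hölder estimate (if $|\beta| = m$) or an additional bounded $x$-derivative (if $|\beta|<m$) yields equicontinuity in $x$. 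Combined with the uniform boundedness, Arzelà--Ascoli applies on every compact $K \subseteq \RnRn$.

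\textbf{Extraction and identification of the limit.} Fixing an exhausting sequence $K_j \nearrow \RnRn$ and enumerating the finitely many admissible pairs $(\alpha,\beta)$, a successive Arzelà--Ascoli extraction followed by a diagonal argument produces $\e_l \to 0$ such that $\p_x^\beta \pa{\alpha} p_{\e_l}$ converges uniformly on every $K_j$ to some function $p^{(\alpha,\beta)}$ for each admissible $(\alpha,\beta)$. Set $p := p^{(0,0)}$. A routine induction on $|\alpha|+|\beta|$, using the fact that uniform local convergence of $f_l$ together with uniform local convergence of $\p_i f_l$ forces $\p_i (\lim f_l) = \lim \p_i f_l$, then identifies $p^{(\alpha,\beta)} = \p_x^\beta \pa{\alpha} p$, which already delivers both ii) and the existence part of i).

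\textbf{Symbol estimates and Hölder regularity.} The uniform bounds $\sup_\e \|\p_x^\beta \pa{\alpha} p_\e\|_{L^\infty}$ pass to the limit by pointwise convergence, yielding the same bound for $\p_x^\beta \pa{\alpha} p$ whenever $|\alpha| \leq M-1$, $|\beta| \leq m$. For $|\beta| = m$ and $|\alpha| \leq M-1$, passing to the limit in the uniform $s$-Hölder estimate in $x$ gives the analogous estimate for $p$; combined with Lipschitz continuity in $\xi$ (obtained from the extra bounded $\xi$-derivative of order $|\alpha|+1 \leq M$) one concludes $\p_x^\beta \pa{\alpha} p \in C^{0,s}(\RnRn)$. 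Altogether this places $p$ in $C^{m,s} S^0_{0,0}(\RnRn; M-1)$ as claimed.

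\textbf{Main obstacle.} The conceptual point is why exactly one $\xi$-derivative must be sacrificed: the top $\xi$-derivatives (order $M$) are only assumed bounded, not Hölder, so no equicontinuity is available at that level and they cannot be carried through Arzelà--Ascoli. In contrast, the $x$-direction does not suffer this loss because the Hölder regularity already present at $|\beta|=m$ supplies the equicontinuity needed. Beyond that, the remaining work---diagonal bookkeeping over countably many compacts and finitely many multi-indices, and the inductive identification of the derivatives---is routine.
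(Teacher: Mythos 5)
Your proposal is correct and follows the standard Arzel\`a--Ascoli plus diagonal-extraction argument; the present paper does not reprove this theorem but imports it from \cite{Paper1}, where the proof proceeds along essentially these same lines (uniform bounds on one extra $\xi$-derivative and the H\"older bound in $x$ give equicontinuity, a diagonal argument over an exhaustion by compacts yields the subsequence, and the symbol estimates pass to the limit pointwise). The only cosmetic slip is the phrase ``finitely many admissible pairs,'' which should read ``countably many'' when $M=\infty$, but the diagonal extraction absorbs this without any change.
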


\begin{thm}\label{thm:SymbolReduktionNichtGlatt}
  Let $N \in \N_0 \cup \{ \infty \}$ with $N > n$. We define $\tilde{N}:= N-(n+1)$. Furthermore let $\mathscr{B} \subseteq C^{ \tilde{m} ,s} S^m_{0,0}(\RnRnRn; N)$ be bounded  with $\tilde{m} \in \N_0$, $m \in \R$ and $0<s<1$. For $a \in \mathscr{B}$ we define $a_L:\RnRn \rightarrow \C$ by
  \begin{align*}
    a_L(x,\xi) := \osint e^{-iy \cdot \eta} a(x, \eta + \xi, x+y) dy \dq \eta
  \end{align*}
  for all $x, \xi \in \Rn$. Then $\{a_L: a \in \mathscr{B}\} \subseteq C^{\tilde{m}, s} S^m_{0,0} (\RnRnx{x}{\xi}; \tilde{N})$ is bounded and
  \begin{align*}
    a(x,D_x, x') u = a_L(x,D_x) u \qquad \text{for all } a \in \mathscr{B}, u \in \s.
  \end{align*}
\end{thm}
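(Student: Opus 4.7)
My plan is to carry out the classical Kumano-go double-symbol reduction, handling the limited regularity of $a$ in the $(x,x')$-variables via a mollification-limit argument combined with careful integration by parts in the defining oscillatory integral. Concretely, I would (1) mollify each $a\in\mathscr B$ in the first and third slots to obtain $a_\varepsilon$ that is smooth in $(x,x')$ and uniformly bounded in $C^{\tilde m,s}S^m_{0,0}(\RnRnRn;N)$; (2) apply the smooth Kumano-go reduction to each $a_\varepsilon$ to obtain $(a_\varepsilon)_L$ together with the operator identity $a_\varepsilon(x,D_x,x')u=(a_\varepsilon)_L(x,D_x)u$ on $u\in\s$; (3) establish uniform symbol estimates on $\{(a_\varepsilon)_L\}$ in $C^{\tilde m,s}S^m_{0,0}(\RnRn;\tilde N)$ by integration by parts in the oscillatory integral; and (4) pass to the limit $\varepsilon\to 0^+$ using Theorem \ref{thm:pointwiseConvergence} to obtain $a_L$ and the operator identity.

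The core estimate in step~(3) relies on the two IBP identities $\<y\>^{-2l}(1-\Delta_\eta)^l e^{-iy\cdot\eta}=e^{-iy\cdot\eta}$ and $\<\eta\>^{-2k}(1-\Delta_y)^k e^{-iy\cdot\eta}=e^{-iy\cdot\eta}$. Differentiating $(a_\varepsilon)_L$ transfers $|\alpha|$ $\xi$-derivatives and $|\beta|$ derivatives in the first and third slots onto $a_\varepsilon$ inside the integral; applying the $\eta$-IBP with $2l=n+1$ (possible because $|\alpha|+(n+1)\leq N$ whenever $|\alpha|\leq\tilde N$) and the $y$-IBP with $2k$ large enough to tame the $\<{\eta+\xi}\>^m$-growth via Peetre's inequality $\<{\eta+\xi}\>^m\leq C\<\xi\>^m\<\eta\>^{|m|}$ yields an integrand dominated by $C|a|_{C^{\tilde m,s}S^m_{0,0}}\<\xi\>^m\<y\>^{-(n+1)}\<\eta\>^{-2k+|m|}$, uniformly in $\varepsilon$ and $a\in\mathscr B$, which is integrable on $dy\,\dq\eta$ for $2k>n+|m|$. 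This produces the uniform $S^m_{0,0}$-bound; the $C^{0,s}$-bound in $x$ follows by applying the same argument to the difference $(a_\varepsilon)_L(x+h,\xi)-(a_\varepsilon)_L(x,\xi)$ and invoking the Hölder bound on $a$ in $(x,x')$.

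The operator identity for smooth $a_\varepsilon$ is the standard formal computation: substituting $x'=x+y$ and Fourier-inverting $u(x+y)=\int e^{i(x+y)\cdot\zeta}\hat u(\zeta)\,\dq\zeta$, followed by a translation of the inner $\xi$-variable by $\zeta$, rearranges $a_\varepsilon(x,D_x,x')u(x)$ into $\int e^{ix\cdot\zeta}(a_\varepsilon)_L(x,\zeta)\hat u(\zeta)\,\dq\zeta$; the integral interchanges are justified by the same IBP decay used in step~(3). To pass $\varepsilon\to 0^+$: the $a_\varepsilon$-side converges to $a(x,D_x,x')u$ by dominated convergence in the regularised defining integral, while the $(a_\varepsilon)_L$-side is handled via Theorem \ref{thm:pointwiseConvergence} applied to the uniformly bounded family $\{(a_\varepsilon)_L\}$, which extracts a subsequence converging pointwise (with enough derivatives) to a limit $a_L$; combined with the uniform symbol bounds from step~(3) this identifies $a_L\in C^{\tilde m,s}S^m_{0,0}(\RnRn;\tilde N)$ and transfers the operator identity to the limit on $u\in\s$.

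The main obstacle is the regularity bookkeeping in step~(3): the $N$ $\xi$-derivatives of $a$ must be split between the $|\alpha|$ taken on $a_L$ and the $n+1$ consumed by the $\eta$-IBP, the $\tilde m$ derivatives in $(x,x')$ are used jointly for the $y$-IBP and for the symbol estimate on $\partial_x^\beta a_L$, and verifying that the resulting constants depend only on the $C^{\tilde m,s}S^m_{0,0}(\RnRnRn;N)$-seminorms, so that they are uniform over $\mathscr B$ and over $\varepsilon$, is where most of the work sits. The Hölder index $s$ transfers from $a$ to $a_L$ via the same IBP estimate applied to Hölder differences, and the operator identity is then inherited from the smooth case by the limit argument.
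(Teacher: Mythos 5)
Your core computational engine is the right one and essentially coincides with the paper's (which imports the proof from \cite[Theorem 4.15]{Paper1} but reproduces all the machinery: Theorems \ref{thm:ExistenceOfOscillatoryIntegral}--\ref{thm:OscillatoryIntegralGleichung}, Proposition \ref{prop:HilfslemmaIntAbschatzung}, and the proof of the $W^{\tilde m,q}_{uloc}$ analogue in Lemma \ref{lemma:AbschVonaLHmqUloc}): integrate by parts in $\eta$ using $n+1$ of the $N$ available $\xi$-derivatives to gain $\<{y}^{-(n+1)}$, integrate by parts in $y$ to gain $\<{\eta}^{-2k}$, and control $\<{\xi+\eta}^m$ by Peetre. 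The mollification wrapper around it, however, is both unnecessary and the source of two genuine gaps. First, your uniform-in-$\varepsilon$ bound for the $y$-integration by parts silently requires arbitrarily many derivatives of $a$ in the \emph{third} slot (you need $2k>n+|m|$ regardless of $\tilde m$, and $\tilde m=0$ is allowed). If, as your framing suggests, you are treating $a$ as only $C^{\tilde m,s}$ in $x'$ and restoring smoothness by mollification, then $\p_{x'}^{\gamma}a_\varepsilon$ for $|\gamma|>\tilde m$ blows up like $\varepsilon^{-(|\gamma|-\tilde m)}$ and the claimed uniformity fails. The step is rescued only because the double-symbol class $C^{\tilde m,s}S^m_{0,0}(\RnRnRn;N)$ is by definition $C^\infty$ in $x'$ (the definition restricts only $|\beta|\le \tilde m$ and $|\alpha|\le N$, not $\beta'$) — but then there is nothing to mollify: the $\eta$-parts hit the second slot ($C^N$), the $y$-parts hit the third slot ($C^\infty$), and the first slot is never differentiated in the IBP; one simply runs Proposition \ref{prop:HilfslemmaIntAbschatzung} with $X=C^{\tilde m,s}$ directly on $a$, which is exactly what the paper does.

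Second, your limit step does not deliver the stated conclusion. Theorem \ref{thm:pointwiseConvergence} applied to a family bounded in $C^{\tilde m,s}S^m_{0,0}(\RnRn;\tilde N)$ produces a limit only in $C^{\tilde m,s}S^m_{0,0}(\RnRn;\tilde N-1)$ — one $\xi$-derivative is lost — whereas the theorem asserts membership with parameter $\tilde N$; moreover it only yields a subsequential limit, which you must still identify with the directly defined oscillatory integral $a_L(x,\xi)=\osint e^{-iy\cdot\eta}a(x,\eta+\xi,x+y)\,dy\,\dq\eta$. Both defects are repaired only by proving $\pa{\alpha}(a_\varepsilon)_L\to\pa{\alpha}a_L$ for all $|\alpha|\le\tilde N$ via dominated convergence in the IBP-regularized, absolutely convergent representation of the oscillatory integral for the \emph{non-mollified} $a$ — at which point you have already established the existence and the symbol bounds for $a_L$ directly, and the entire mollification detour (including the approximation argument for the operator identity) becomes redundant. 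The fix is to drop steps (1), (2) and (4), treat $x$ as a parameter valued in the Banach space $C^{\tilde m,s}(\Rn)$, and apply your step (3) together with Theorems \ref{thm:VertauschenVonOsziIntUndAbleitungen} and \ref{thm:OscillatoryIntegralGleichung} to $a$ itself; the operator identity then follows from the oscillatory-integral Fubini at the same finite regularity.
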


For more details we refer to \cite[Theorem 4.3, Theorem 4.13 and Theorem 4.15]{Paper1}. In Theorem \ref{thm:classificationA10} and Theorem \ref{thm:classA00} we loose some regularity with respect to $\tilde{m}$.
This loss of regularity can be prevented as we will see in this paper. \\

In the present paper we proceed as follows: In Section \ref{section:Preliminaries} we summarize all notations and results needed later on. In particular we introduce the uniformly local Sobolev Spaces $W^{m,q}_{ uloc}$ and verify some important properties of these spaces, cf.\;Subsection \ref{subsection:HmqUlocAbschaetzung}. Section \ref{Kapitel: PDO} is devoted to the definition and the properties of pseudodifferential operators of certain symbol-classes needed in this paper. We begin with pseudodifferential operators with single symbols in Subsection \ref{Section:PropertiesOfPDO} while pseudodifferential operators with double symbols are treated in Section \ref{Section:DoubleSymbols}. 

The main purpose of Section \ref{ImprovementCharacterization} is to improve the characterization of non-smooth pseudodifferential operators with coefficients in Hölder spaces, cf. Theorem \ref{thm:classificationA10} and Theorem \ref{thm:classA00}. To this end we improve Theorem \ref{thm:pointwiseConvergence} in Subsection \ref{subsection:pointwiseConvergenceHmqUloc} and verify a result for the symbol reduction of pseudodifferential operators with coefficients in an uniformly local Sobolev space in Subsection \ref{subsection:SymbolReductionHmqUloc}. With those results at hand, we are able to improve the characterization of non-smooth pseudodifferential operators in Subsection \ref{CharactPDO}.

By means of this characterization we show several spectral invariance results for non-smooth pseudodifferential operators in Section \ref{SpectralInvariance}. Subsection \ref{section:SpectralInvarianceHölderCase} is devoted to the inverse of a non-smooth pseudodifferential operator $P$ in the symbol-class $C^{\tau} S^0_{0,0}(\Rn \times \Rn)$. We show that $P^{-1}$ is also a non-smooth pseudodifferential operator of the symbol-class $C^{s} S^0_{0,0}(\Rn \times \Rn)$, where $s < \tau$. Unfortunately, in contrast to the smooth case, we loose some smoothness of the coefficients. 
Our next goal is to prove the spectral invariance of non-smooth pseudodifferential operators of the class $C^{\tau} \Snn{0}{1}{0}{N}$ for sufficiently large $N$. To be more precise, we arrive at the following statement: The inverse of a non-smooth pseudodifferential operator of the order zero with coefficients in the Hölder space $C^{\tilde{m},\tau}(\Rn)$ is also a non-smooth pseudodifferential operator if its inverse is an element of $\mathscr{L}(H_q^r(\Rn))$ for one $|r| < \tilde{m}+ \tau$. This is the topic of Subsection 
\ref{section:SpectralInvarianceHölderCaseS10}. Beyond 
the characterization of non-smooth pseudodifferential operators we also use the technique of approximation with difference quotients for the proof of the above mentioned statement. We introduce this technique in Subsection \ref{section:DifferenceQuotients}. We are able to improve the results of Subsection \ref{section:SpectralInvarianceHölderCaseS10} in Subsection \ref{section:SpectralInvarianceWmqUlocCase} for certain subclasses of the non-smooth pseudodifferential operators with coefficients in Hölder spaces.\\
The present paper is based on a part of the PhD-thesis, cf. \cite{Diss}, of the second author in this paper advised by the first author.\\
\textbf{Acknowledgement:} We would like to thank Prof. Dr. Schrohe for his helpful suggestions of improvements.

\section{Preliminaries}\label{section:Preliminaries}

We assume $n \in \N$ throughout the whole paper unless otherwise noted. In particular $n \neq 0$. For $x \in \R$ we define 
\begin{align*}
  x^+:=\max \{0;x \} \qquad \text{and} \qquad \lfloor x \rfloor:= \max \{k \in \Z : k \leq x \}.
\end{align*}
Additionally $$\<{x}:=(1+|x|^2)^{1/2} \quad \text{for all } x \in \Rn \qquad \text { and } \qquad  \dq \xi:= (2 \pi)^{-n} d \xi.$$
Partial derivatives with respect to a variable $x\in \Rn$ scaled with the factor $-i$ are denoted by 
$$D_x^{\alpha}:= (-i)^{|\alpha|} \p^{\alpha}_x := (-i)^{|\alpha|} \p^{\alpha_1}_{x_1} \ldots \p^{\alpha_n}_{x_n} .$$ 
Here $\alpha =(\alpha_1, \ldots, \alpha_n) \in \Non$ is a \textit{multi-index}. 
For $j\in \{1,\ldots, n\}$ we define  $e_j \in \N^n_0$ as the $j$-the canonical unit vector, i.e., $(e_j)_k = 1$ if $k=j$ and $(e_j)_k = 0$ else.

Considering two Banach spaces $X,Y$ the set $\mathscr{L}(X,Y)$
contains of all linear and bounded operators $A:X \rightarrow Y$. If $X=Y$, we also just write $\mathscr{L}(X)$. \\

Iterated commutators of linear operators are defined in the usual way:

\begin{Def}\label{Def:IteratedCommutators}
  Let $X,Y \in \{ \s, \sd \}$ and $T: X \rightarrow Y$ be linear. We define the linear operators $\ad(-ix_j) T: X \rightarrow Y$ and $\ad(D_{x_j}) T: X \rightarrow Y$ for all $j\in \{ 1, \ldots, n\}$ and $u \in X$ by
  \begin{eqnarray*}
    \ad(-ix_j) T u:= -ix_j Tu + T \left( ix_j u \right) \quad \textrm{and} \quad \ad(D_{x_j}) T u:= D_{x_j} \left( T u \right) - T \left( D_{x_j} u \right).
  \end{eqnarray*}
  For arbitrary multi-indices $\alpha, \beta \in \N_0^n$ we denote the \textit{iterated commutator} of $T$ as
  \begin{eqnarray*}
    \ad(-ix)^{\alpha}\ad(D_{x})^{\beta} T := [\ad(-ix_1)]^{\alpha_1} \ldots [\ad(-ix_n)]^{\alpha_n} [\ad(D_{x_1})]^{\beta_1} \ldots [\ad(D_{x_n})]^{\beta_n} T.
  \end{eqnarray*}
\end{Def}

On account of the properties of the Fourier transformation we obtain:

\begin{bem}\label{bem:SymbolOfIteratedCommutatorNonSmooth}\label{bem:SymbolOfIteratedCommutator}
  Let $\tilde{m}\in \N_0$, $M \in \N_0 \cup \{ \infty \}$, $0< \tau \leq 1$, $m\in \R$ and $0 \leq \rho \leq 1$. We assume that $ p \in C^{\tilde{m}, \tau} S^m_{\rho,0} (\RnRn; M)$. Moreover, let $l \in \N$, $\alpha_1, \ldots, \alpha_l \in \Non$ and $\beta_1, \ldots, \beta_l \in \Non$ with $|\alpha_j + \beta_j| = 1$ for all $j \in \{ 1, \ldots, n\}$, $|\alpha| \leq M$ and $|\beta| \leq \tilde{m}$. Here $\alpha$ and $\beta$ are defined by $\alpha:= \alpha_1 + \ldots + \alpha_l$ and $\beta := \beta_1 + \ldots + \beta_l$. Then the operator 
  $$\ad(-ix)^{\alpha_1} \ad(D_x)^{\beta_1} \ldots \ad(-ix)^{\alpha_l} \ad(D_x)^{\beta_l} p(x,D_x)$$ 
  is a pseudodifferential operator with the symbol 
  $$\pa{\alpha} D^{\beta}_x  p(x,\xi) \in C^{\tilde{m}- |\beta|, \tau} S^{m-\rho |\alpha|}_{\rho,0} (\RnRnx{x}{\xi}; M-|\alpha|).$$
  If we even have $ p \in S^m_{\rho,0} (\RnRn)$, then $\pa{\alpha} D^{\beta}_x  p(x,\xi) \in S^{m-\rho |\alpha| }_{\rho,0} (\RnRn)$.
\end{bem}

Finally let us mention that the dual space of a topological vector space $V$ is denoted by $V'$.
In view of $V$ being a Banach space the duality product $V$ is denoted by $\skh{.}{.}{V; V'}$.

\subsection{Functions on $\Rn$ and Function Spaces}

For the convenience of the reader we introduce all functions and function spaces needed later on in this subsection. We start 
with the \textit{Hölder space}  of the order $m \in \N_0$ with Hölder continuity exponent $s \in (0,1]$ which is denoted by $C^{m,s}(\Rn)$ and also by $C^{m+s}(\Rn)$.  
Moreover for $s \in \R$, $1<p<\infty$ and $\<{D_x}^s:=\op(\<{\xi}^s) $ the set 
\begin{align}\label{BesselPotentialSpace} 
  H^s_p(\Rn):= \{ f \in \sd: \<{D_x}^s f \in L^p(\Rn) < \infty \}
\end{align}
is called  \textit{Bessel Potential space}.\\

For $y \in \Rn$ the translation function $\tau_y(g): \Rn \rightarrow \C$ of $g \in L^1(\Rn)$ is defined  as $\tau_y(g)(x):=g(x-y)$ for all $ x\in \Rn$.
    
Moreover a function $f : \Rn \rightarrow \C$ is \textit{homogeneous of degree $d\in \R$ (for $|x| \geq 1$)} if
    $f(rx) = r^d f(x) $ $x \in \Rn \text{ for all } | x | \geq 1 \text{ and } r \geq 1.$
Note that the derivatives $\p_x^{\alpha} f$, $\alpha \in  \Non$ with $|\alpha| \leq k$, of functions $f \in C^k(\Rn)$ which are homogeneous of degree $d$ are homogeneous of degree $d-|\alpha|$.\\

A frequently used ingredient for verifying several results in this paper is the dyadic partition of unity, i.e.\,, a partition of unity $(\varphi_j)_{j \in \N_0}$ on $\Rn$ which fulfilles the properties
\begin{align*}
  \supp \varphi_0 \subseteq \overline{B_2(0)} \qquad \text{and} \qquad \supp \varphi_j \subseteq \{ \xi \in \Rn: 2^{j-1} \leq |\xi| \leq 2^{j+1}\} 
\end{align*}
for all $ j \in \N$. Here $B_r(0)$ denotes the open ball with radius $r>0$ and center $0$.
A dyadic partition of unity can be constructed in the following way: We take $\varphi_0 \in C^{\infty}(\Rn)$ with $\varphi_0(\xi) =1$ for all $|\xi| \leq 1$ and $\varphi_0(\xi) =0$ for $|\xi| \geq 2$. Then we set $\varphi_j(\xi):= \varphi_0(2^{-j}\xi) - \varphi_0(2^{-j+1} \xi)$ for all $\xi \in \Rn$ and $j \in \N$.\\

We also will need the next statement, cf. \cite[Lemma 2.1]{Paper1}:

\begin{lemma}\label{lemma:CharacterizationOfBesselPotentialSpaces}
  Let $1<p<\infty$, $s < 0$ and $m := -\lfloor s \rfloor$. Then for each $f \in H^s_p(\Rn)$ there are functions $g_{\alpha} \in H^{s-\lfloor s \rfloor}_p(\Rn)$, where $\alpha \in \Non$ with $|\alpha| \leq m$, such that
  \begin{itemize}
    \item $f= \sum \limits_{|\alpha| \leq m} \p^{\alpha}_x g_{\alpha}$, 
    \item $\sum \limits_{|\alpha| \leq m} \| g_{\alpha} \|_{H^{s-\lfloor s \rfloor}_p} \leq C \| f \|_{H^s_p}$,
  \end{itemize}
  where $C$ is independent of $f$, $g_{\alpha}$.
\end{lemma}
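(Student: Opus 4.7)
The plan is to construct the functions $g_\alpha$ explicitly as Fourier multipliers applied to $f$, driven by the multinomial expansion of $\<{\xi}^{2m}$. Writing $|\xi|^2=\xi_1^2+\cdots+\xi_n^2$, the multinomial theorem applied to $(1+\xi_1^2+\cdots+\xi_n^2)^m$ yields
\begin{align*}
  \<{\xi}^{2m} \;=\; (1+|\xi|^2)^m \;=\; \sum_{|\beta|\le m} \frac{m!}{(m-|\beta|)!\,\beta!}\,\xi^{2\beta},
\end{align*}
and after dividing by $\<{\xi}^{2m}$ and multiplying the resulting identity by $\hat f$ I obtain
\begin{align*}
  \hat f(\xi) \;=\; \sum_{|\beta|\le m}(i\xi)^{\beta}\,\underbrace{\left[\frac{m!\,i^{-|\beta|}}{(m-|\beta|)!\,\beta!}\cdot\frac{\xi^{\beta}}{\<{\xi}^{2m}}\,\hat f(\xi)\right]}_{=:\,\hat g_\beta(\xi)}.
\end{align*}

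This motivates defining, for every $\beta\in\Non$ with $|\beta|\le m$,
\begin{align*}
  g_\beta \;:=\; \frac{m!\,i^{-|\beta|}}{(m-|\beta|)!\,\beta!}\,\op\!\left(\frac{\xi^{\beta}}{\<{\xi}^{2m}}\right)f \;\in\; \sd,
\end{align*}
so that taking inverse Fourier transforms in the identity above gives the decomposition $f=\sum_{|\beta|\le m}\p_x^{\beta}g_\beta$ in $\sd$. What then remains is to pin down the mapping properties of the Fourier multipliers $\op(\xi^{\beta}/\<{\xi}^{2m})$.

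Up to the factor $i^{|\beta|}$, the multiplier $\op(\xi^{\beta}/\<{\xi}^{2m})$ equals the composition $D_x^{\beta}\,\<{D_x}^{-2m}$. By the very definition of the Bessel potential spaces, $\<{D_x}^{-2m}$ is an isomorphism $H^s_p(\Rn)\to H^{s+2m}_p(\Rn)$, and $D_x^{\beta}$ maps $H^{s+2m}_p(\Rn)$ continuously into $H^{s+2m-|\beta|}_p(\Rn)$. Hence $\|g_\beta\|_{H^{s+2m-|\beta|}_p}\le C_\beta\|f\|_{H^s_p}$ with $C_\beta$ depending only on $m,p,\beta$. Because $|\beta|\le m$ gives $s+2m-|\beta|\ge s+m=s-\lfloor s\rfloor$, the continuous embedding $H^{s+2m-|\beta|}_p(\Rn)\hookrightarrow H^{s-\lfloor s\rfloor}_p(\Rn)$ yields $g_\beta\in H^{s-\lfloor s\rfloor}_p(\Rn)$ together with the required bound after summing over the finitely many $\beta$.

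The only point that requires some care is justifying the multiplication on the Fourier side when $f$ is merely a tempered distribution; this is harmless, however, because $\xi^{\beta}/\<{\xi}^{2m}$ is a smooth function of polynomial growth (in fact a symbol of order $|\beta|-2m\le 0$), so each product $\xi^\beta\<{\xi}^{-2m}\hat f$ is well defined in $\sd$ and the Fourier identity reduces to a pointwise identity of smooth Fourier multipliers. No further machinery is needed; the claimed constant $C$ depends only on $p$, $s$ and $n$.
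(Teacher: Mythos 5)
Your proof is correct, and it is the standard argument for this decomposition (the paper itself gives no proof here but defers to \cite[Lemma 2.1]{Paper1}, which proceeds in essentially the same way: expand $\<{\xi}^{2m}\<{\xi}^{-2m}=1$ multinomially, split each $\xi^{2\beta}$ as $\xi^{\beta}\cdot\xi^{\beta}$, and use that $\xi^{\beta}\<{\xi}^{-2m}\in S^{|\beta|-2m}_{1,0}$ is a bounded multiplier $H^s_p\to H^{s+2m-|\beta|}_p\hookrightarrow H^{s-\lfloor s\rfloor}_p$). All steps, including the sign/constant bookkeeping and the justification of the Fourier-side multiplication in $\sd$, check out.
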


\subsection{Uniformly Local Sobolev Spaces} \label{subsection:HmqUlocAbschaetzung}

\begin{Def}
  Let $1 \leq q \leq \infty$, $m \in \N_0$, $U \subseteq \Rn$ be open
  and $X$ be a Banach space. Then the space of all functions, which belong \textit{uniformly local} to $L^q(U;X)$ or $W^m_q(U;X)$ is denoted by 
  \begin{align*}
      L^q_{uloc}(U;X) &:= \{ f \in L^q_{loc}(U;X) : \| f \|_{L^q_{uloc}(U;X)}  < \infty \}, \\ 
      W^{m,q}_{uloc}(U;X) &:= \{ f \in L^q_{uloc}(U;X) : \p_x^{\alpha} f \in
    L^q_{uloc}(U;X) \text{ for all } |\alpha| \leq m \},
  \end{align*}
  respectively, where 
  \begin{align*}
    \| f \|_{L^q_{uloc}(U;X)} &:= \sup_{x \in U} \| f \|_{L^q(B_1(x) \cap U;X)} &  &\text{for } f \in L^q_{uloc}(U; X), \\
    \| f \|_{W^{m,q}_{uloc}(U;X)} &:= \sum_{|\alpha| \leq m} \| \p_x^{\alpha} f \|_{L^{q}_{uloc}(U;X)}  &  &\text{for } f \in W^{m,q}_{uloc}(U; X).
  \end{align*}
  If $X=\C$, we write $L^q_{uloc}(U)$
  instead of $ L^q_{uloc}(U;\C)$ and $W^{m,q}_{uloc}(U)$ 
  instead of $W^{m,q}_{uloc}(U;\C)$. Moreover, we also write $\|. \|_{L^q_{uloc}}$ 
  and $\|. \|_{W^{m,q}_{uloc}}$
  instead of $ \| . \|_{L^q_{uloc}(\Rn;\C)} $ and $\| . \|_{W^{m,q}_{uloc}(\Rn;\C)}$.
\end{Def}

The spaces $L^q_{uloc}(U;X)$ and $W^{m,q}_{uloc}(U;X)$ are Banach spaces. This can be verified by using the fact that $L^q(U;X)$ is a Banach space.
The norm $\| . \|_{W^{m,q}_{uloc}(U;X)}$ is equivalent to the norm $\| . \|'_{W^{m,q}_{uloc}(U;X)} $ defined by 
$$\| f \|'_{W^{m,q}_{uloc}(U;X)}:= \max_{|\alpha| \leq m} \| \p_x^{\alpha} f \|_{L^{q}_{uloc}(U;X)} \qquad \text{for } f \in W^{m,q}_{uloc}(U;X) .$$

\begin{lemma}\label{lemma:HölderInequalityForLq_ulocSpaces} \label{lemma:SobolevHoelderInequalityFor_LqUlocSpaces}
  Let $1 < q \leq \infty$, $m \in \N_0$, $X$ be a Banach space and $0< \tau \leq m-n/q$ with $\tau \notin \N$. Additionally let $U \subseteq \Rn$ be open and $V\subseteq U$ be compact. Then
  \begin{enumerate}
    \item $\| f \|_{L^q(V;X)} \leq C \| f \|_{L^q_{uloc}(U;X)}  \text{ for all } f \in L^q_{uloc}(U;X)$,
    \item $W^{m}_{q}(U;X) \subseteq W^{m,q}_{uloc}(U;X)$, 
    \item $\| D_{x_j} f \|_{W^{m,q}_{uloc}(U;X) } \leq \| f \|_{W^{m+1,q}_{uloc}(U;X)} \text{ for all } f \in W^{m+1,q}_{uloc}(U;X)$,
    \item $L^r_{uloc}(U;X) \subseteq L^q_{uloc}(U;X)$ for all $q \leq r < \infty$.
	  The embedding also holds true for $q=1$.
    \item $W^{m,q}_{uloc}(\Rn; X) \hookrightarrow C^{\tau}(\Rn; X)$ is continuous.
  \end{enumerate}
\end{lemma}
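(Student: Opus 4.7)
The plan is to treat the five items essentially in the order given, since each later item builds on a localization idea already used in earlier items. The unifying theme is that every statement reduces, via a translation-invariant cover of $\Rn$ (or $U$) by unit balls, to a \emph{local} estimate on a single such ball where the classical Lebesgue, Sobolev, or Morrey inequalities are available.

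For (i), I would cover the compact set $V$ by finitely many balls $B_1(x_1), \ldots, B_1(x_N)$ and use $\|f\|_{L^q(V;X)}^q \leq \sum_{i=1}^N \|f\|_{L^q(B_1(x_i)\cap U;X)}^q \leq N \|f\|_{L^q_{uloc}(U;X)}^q$. Item (ii) is immediate from the trivial inequality $\|f\|_{L^q(B_1(x)\cap U;X)} \leq \|f\|_{L^q(U;X)}$, applied to $f$ and to each derivative $\p_x^\alpha f$ with $|\alpha|\leq m$, then taking the supremum over $x\in U$. Item (iii) is purely bookkeeping: since $D_{x_j} = -i\p_{x_j}$, every derivative of $D_{x_j} f$ of order $\leq m$ is a derivative of $f$ of order $\leq m+1$, so the sum defining $\|D_{x_j}f\|_{W^{m,q}_{uloc}}$ is a subsum of the one defining $\|f\|_{W^{m+1,q}_{uloc}}$. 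Item (iv) is Hölder's inequality on the bounded set $B_1(x)\cap U$ (whose Lebesgue measure is bounded by $|B_1(0)|$ independently of $x$), giving $\|f\|_{L^q(B_1(x)\cap U;X)} \leq |B_1(0)|^{1/q-1/r} \|f\|_{L^r(B_1(x)\cap U;X)}$ for $q\leq r<\infty$; taking the supremum in $x$ yields the continuous embedding, and the same argument works at $q=1$.

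The main obstacle is item (v), the Morrey-type embedding $W^{m,q}_{uloc}(\Rn;X)\hookrightarrow C^\tau(\Rn;X)$. The plan is to combine the classical Sobolev embedding on each unit ball with the uniform-local control to patch a global Hölder estimate. First I would fix the Banach-space valued version of the Sobolev-Morrey embedding on a ball of radius $1$: for $0<\tau\leq m-n/q$ with $\tau\notin\N$, there is a constant $C_0$, independent of $x_0\in\Rn$ by translation invariance of the balls, such that
\begin{equation*}
  \|f\|_{C^\tau(B_1(x_0);X)} \leq C_0 \|f\|_{W^{m,q}(B_1(x_0);X)}.
\end{equation*}
In particular, taking the supremum over $x_0$ yields $\|f\|_{L^\infty(\Rn;X)} + \sup_{x_0}[f]_{C^\tau(B_1(x_0);X)} \leq C_0 \|f\|_{W^{m,q}_{uloc}(\Rn;X)}$, where $[\cdot]_{C^\tau}$ denotes the Hölder seminorm (or the full $C^{k,\tau-k}$ seminorm when $\tau>1$).

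It then remains to promote these local Hölder seminorms to a global Hölder seminorm on $\Rn$. For two points $x,y\in\Rn$ with $|x-y|\leq 1$, both lie in the common ball $B_1((x+y)/2)$, so the local estimate applies directly. For $|x-y|>1$ one uses $|f(x)-f(y)|/|x-y|^\tau \leq 2\|f\|_{L^\infty(\Rn;X)}$, and the higher-order pieces of the $C^\tau$-norm (when $\tau>1$) are bounded the same way using the previously controlled derivatives. Combining both ranges gives a global bound
\begin{equation*}
  \|f\|_{C^\tau(\Rn;X)} \leq C \|f\|_{W^{m,q}_{uloc}(\Rn;X)},
\end{equation*}
with $C$ depending only on $m,q,\tau,n$. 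The only subtlety is that the Banach-space-valued Sobolev embedding used on each ball should be invoked (either cited or reduced to the scalar case via the Bochner definition applied to $x\mapsto\|f(x)\|_X$), and that when $\tau$ crosses an integer one must split $C^\tau = C^{\lfloor\tau\rfloor,\tau-\lfloor\tau\rfloor}$ and argue the same way for each derivative of order $\leq\lfloor\tau\rfloor$; the hypothesis $\tau\notin\N$ ensures these classical embeddings are available.
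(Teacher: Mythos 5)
Your proposal is correct and follows essentially the same route as the paper: items (i)--(iv) are the immediate consequences of the definitions and of H\"older's inequality on bounded sets that the paper dispatches in one line, and for (v) you use exactly the paper's strategy of combining the translation-invariant Sobolev--Morrey embedding $W^{m,q}(B_1(x_0);X)\hookrightarrow C^{\tau}(\overline{B_1(x_0)};X)$ (the paper cites Amann for the vector-valued version) with the splitting of the H\"older quotient into $|x-y|\leq 1$ (handled inside a common unit ball) and $|x-y|>1$ (handled by the sup norm). The one caveat is your parenthetical suggestion to reduce the vector-valued embedding to the scalar case by applying Morrey to $x\mapsto\|f(x)\|_X$: this only controls $\bigl|\,\|f(x)\|_X-\|f(y)\|_X\bigr|$ rather than $\|f(x)-f(y)\|_X$, so one should instead cite the vector-valued embedding directly (as the paper does) or rerun Morrey's proof with Bochner integrals.
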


\begin{proof}
  The properties (i)-(iii) immediately follow from the definition of the uniformly local Sobolev Spaces. An application of the H\"older inequality for bounded sets yields (iv). 
  
  In order to prove the continuous embedding (v) let $\alpha \in \Non$ with $|\alpha| \leq \lfloor \tau \rfloor$ be arbitrary. Then we obtain 
  \begin{align}\label{eq33}
    \sup_{x \neq y} \frac{\| \p^{\alpha}_xf(x)- \p^{\alpha}_y f(y)\|_{X}}{ |x-y|^{\tau- \lfloor \tau \rfloor}} 
    \leq \sup_{x \in \Rn} \|\p^{\alpha}_x f \|_{C^{\tau- \lfloor \tau \rfloor}(\overline{B_1(x)};X)} + 2 \|\p^{\alpha}_x f \|_{C_b^0(\Rn;X)}
  \end{align}
  if we split the supremum of the left side in the supremum over $|x-y|>1$ and the rest. Using inequality (\ref{eq33}) and $\|f\|_{C_b^{\lfloor \tau \rfloor}(\Rn;X) } \leq \sup_{x\in \Rn} \|f\|_{C^{\tau}(\overline{B_1(x)};X )}$ for all functions $f \in C^{\tau}(\Rn;X)$, we get 
  \begin{align}\label{eq:sofort}
    \|f\|_{C^{\tau}(\Rn;X)} 
    &\leq C \sup_{x \in \Rn} \|f\|_{C^{\tau}(\overline{B_1(x)};X)} \qquad \text{for all } f \in C^{\tau}(\Rn;X).
  \end{align}
  Making use of Corollary 4.3 in \cite{AmannPaper} and the embeddings (3.1)-(3.3) and (3.6) in \cite{AmannPaper} yields the continuous embedding $W^m_q(B_1(0); X) \hookrightarrow C^{\tau}(\overline{B_1(0)};X)$. Together with inequality (\ref{eq:sofort}) we obtain
  \begin{align*}
    \|f\|_{C^{\tau}(\Rn;X)} 
    &\leq \sup_{x \in \Rn} \|f\|_{C^{\tau}(\overline{B_1(x)};X)}
    = \sup_{x \in \Rn} \|f(x+.)\|_{C^{\tau}(\overline{B_1(0)};X)}\\
    &\leq C \sup_{x \in \Rn} \|f(x+.)\|_{W^{m,q}(B_1(0);X)}
    =C \|f\|_{W^{m,q}_{uloc}(\Rn;X)}\raisebox{-1mm}{.}
  \end{align*}
  \vspace*{-1cm}

\end{proof}

Now we want to discuss the case $X=L^q_{uloc}(\R^m; \C)$: 

\begin{prop}\label{prop:MessbarkeitBezglProduktmas}
  Let $1 \leq q < \infty$ and $m \in \N$. For $a \in
  L^q_{uloc}(\Rn; L^q_{uloc}(\R^m;\C))$ we define 
  $\tilde{a}:\R^m \times \Rn \rightarrow \C$ via 
  $$\tilde{a}(x,y):=a (y)(x) \qquad \text{for all } y \in \Rn \text{ and } x \in \R^m.$$
  Then $\tilde{a}$ is measurable with respect to the product measure
  of $\R^m \times \Rn$.
\end{prop}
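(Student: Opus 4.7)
The plan is to reduce the statement to the classical Lebesgue--Bochner--Fubini identification $L^q(U; L^q(V)) \cong L^q(V \times U)$ (valid for $1 \leq q < \infty$ and $\sigma$-finite measures) by localizing on bounded rectangles in $\R^m \times \Rn$. The reason a direct ``pointwise'' argument runs into trouble is the well-known subtlety that, for a strongly measurable $a \colon \Rn \to L^q_{uloc}(\R^m)$, the approximating simple functions $s_k(y) \to a(y)$ converge only in the norm of $L^q_{uloc}(\R^m)$ for a.e.\ $y$, and extracting a further subsequence along which $s_k(y)(x) \to a(y)(x)$ pointwise would a priori depend on $y$.

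First I would fix bounded open sets $U \subseteq \Rn$ and $V \subseteq \R^m$ and consider the restriction map $r_V \colon L^q_{uloc}(\R^m) \to L^q(V)$, which is bounded linear with $\| r_V g \|_{L^q(V)} \leq C_V \| g \|_{L^q_{uloc}(\R^m)}$ (cover $V$ by finitely many unit balls and apply Lemma \ref{lemma:HölderInequalityForLq_ulocSpaces}~(i)). Composition of a strongly measurable function with a bounded linear map is strongly measurable, hence $r_V \circ a \colon \Rn \to L^q(V)$ is strongly measurable. The analogous covering argument applied to $U$, together with the pointwise estimate $\| (r_V \circ a)(y) \|_{L^q(V)} \leq C_V \| a(y) \|_{L^q_{uloc}(\R^m)}$, then yields $(r_V \circ a)|_U \in L^q(U; L^q(V))$, which is a genuine Lebesgue--Bochner space, not a $uloc$-one.

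Second, I would invoke the classical isometric isomorphism $L^q(U; L^q(V)) \cong L^q(V \times U)$ to obtain a jointly measurable representative $\tilde{a}_{U,V} \in L^q(V \times U)$ of $(r_V \circ a)|_U$. Third, I would exhaust $\R^m \times \Rn$ by the countable family of rectangles $V_k \times U_k := B_k(0) \times B_k(0)$. On each overlap $(V_k \cap V_l) \times (U_k \cap U_l)$ the local representatives agree almost everywhere by the uniqueness part of the Lebesgue--Bochner identification, so they glue to a jointly measurable function on all of $\R^m \times \Rn$ which, for a.e.\ $y \in \Rn$, agrees pointwise a.e.\ in $x$ with $a(y) \in L^q_{uloc}(\R^m)$.

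The main conceptual point --- and the only non-routine one --- is making precise the definition $\tilde{a}(x,y) := a(y)(x)$: for each fixed $y$ the value $a(y)(x)$ is defined only up to an $x$-null set, so the proposition must be read as the assertion that the resulting equivalence class admits a jointly measurable representative. Once this is recognized, the work reduces to the $uloc$-to-$L^q$ localization sketched above; the remaining Fubini-type assertion on bounded rectangles is standard Bochner theory. I do not expect any real technical obstacle beyond a careful tracking of representatives.
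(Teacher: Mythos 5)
Your argument is correct, and it is worth noting that the paper itself does not actually write out a proof of Proposition \ref{prop:MessbarkeitBezglProduktmas}: it only points to the measurability results in Chapter 3 of \cite{Elstrodt} and to \cite[Proposition 4.16]{Diss}. Your route --- compose $a$ with the bounded restriction maps $r_V\colon L^q_{uloc}(\R^m)\to L^q(V)$ so as to land in a genuine Bochner space $L^q(U;L^q(V))$ over bounded rectangles, invoke the identification $L^q(U;L^q(V))\cong L^q(V\times U)$ (available since $q<\infty$ and $L^q(V)$ is separable), and glue the local jointly measurable representatives along a countable exhaustion using a.e.\ uniqueness on overlaps --- is a legitimate and essentially standard way to fill this in; at bottom it is the same simple-function approximation the cited references rely on, just packaged through the Bochner--Fubini isomorphism instead of carried out by hand. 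You are also right to make explicit the point the paper glosses over: since $a(y)$ is only an equivalence class for each $y$, the assertion can only mean that \emph{some} representative of $(x,y)\mapsto a(y)(x)$ is product-measurable, and your construction delivers exactly that, together with the statement that for a.e.\ $y$ the slice $\tilde a(\cdot,y)$ represents $a(y)$. The only step I would ask you to write out is the integrability bound $\int_U\|r_V(a(y))\|^q_{L^q(V)}\,dy<\infty$, which follows from Lemma \ref{lemma:SobolevHoelderInequalityFor_LqUlocSpaces} (i) applied in the $y$-variable to $y\mapsto\|a(y)\|_{L^q_{uloc}(\R^m)}\in L^q_{uloc}(\Rn)$; with that in place there is no gap.
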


\begin{proof}
  With the properties of measurability at hand, cf.\,e.\,g.\,Chapter 3 of \cite{Elstrodt}, one can prove the claim. For more details we refer to \cite[Proposition 4.16]{Diss}. 
\end{proof}

\begin{bem}\label{bem:EinbettungVonLq_ulocRaumen}
  Let $1 \leq q \leq r < \infty$ and $m \in \N$. Then the 
embedding 
  \begin{align*}
    L^q_{uloc}(\Rn; L^q_{uloc}(\R^m)) \hookrightarrow L^q_{uloc}(\Rn \times \R^m) \qquad \text{is continuous.}
  \end{align*}
\end{bem}

\begin{proof}
  Since $L^q_{uloc}(\R^m)$ is a Banach space, we obtain the claim by
Proposition \ref{prop:MessbarkeitBezglProduktmas}, the definition of
these spaces and by $B_1(x,y) \subseteq B_1(x) \times B_1(y)$ for all $x \in \R^m$ and $y \in \Rn$.
\end{proof}

\begin{lemma}\label{lemma:AbleitungWiederInWmq_uloc}
  Let $m, \tilde{m} \in \N_0$, $1 < q < \infty$ and $\alpha \in \Non$ with
$|\alpha| \leq \tilde{m}$.  Assuming $a \in  W^{m,q}_{uloc}(\Rn;
W^{\tilde{m},q}_{uloc} (\Rn))$, we obtain $$ \p_x^{\alpha} a \in
W^{m,q}_{uloc}(\Rn; W^{\tilde{m}-|\alpha|,q}_{uloc} (\Rn_x)).$$
\end{lemma}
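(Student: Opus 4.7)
The plan is to exhibit $\p_x^{\alpha}$ as a bounded linear operator between the target function spaces, and then to observe that such operators commute with the outer $y$-derivatives defining $W^{m,q}_{uloc}(\Rn;\,\cdot\,)$.

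First I would record the elementary fact that the linear map
\[
  T_{\alpha}: W^{\tilde{m},q}_{uloc}(\Rn) \to W^{\tilde{m}-|\alpha|,q}_{uloc}(\Rn), \qquad f \mapsto \p_x^{\alpha} f,
\]
is well-defined and bounded. Indeed, for $|\beta| \leq \tilde{m}-|\alpha|$ one has $\p_x^{\beta}(\p_x^{\alpha} f) = \p_x^{\alpha+\beta} f \in L^q_{uloc}(\Rn)$ with $\|\p_x^{\alpha+\beta} f\|_{L^q_{uloc}} \leq \|f\|_{W^{\tilde{m},q}_{uloc}}$, which is just an unwinding of the definition of the uniformly local Sobolev norm.

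Second, I would prove the general statement that for any Banach spaces $X$, $Y$ and any $T \in \mathscr{L}(X,Y)$, composition with $T$ maps $W^{m,q}_{uloc}(\Rn; X)$ continuously into $W^{m,q}_{uloc}(\Rn; Y)$, and moreover $\p_y^{\beta}(T \circ u) = T \circ (\p_y^{\beta} u)$ holds for all $|\beta| \leq m$. The $L^q_{uloc}$-continuity is immediate from $\|T(u(y))\|_Y \leq \|T\|_{\mathscr{L}(X,Y)} \|u(y)\|_X$ combined with the definition of the $L^q_{uloc}$-norm. For the commutation, I would test against an arbitrary $\varphi \in C_c^{\infty}(\Rn)$ and use that bounded linear operators commute with Bochner integrals:
\[
  \int_{\Rn} (T \circ u)(y)\, \p_y^{\beta}\varphi(y)\, dy = T\!\left( \int_{\Rn} u(y)\, \p_y^{\beta}\varphi(y)\, dy \right) = (-1)^{|\beta|} \int_{\Rn} T\bigl(\p_y^{\beta} u(y)\bigr)\, \varphi(y)\, dy,
\]
which identifies the $Y$-valued distributional derivative of $T \circ u$ with $T \circ (\p_y^{\beta} u) \in L^q_{uloc}(\Rn; Y)$.

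Combining these two steps with $T = T_{\alpha}$ and $u = a$ yields $\p_x^{\alpha} a \in W^{m,q}_{uloc}(\Rn; W^{\tilde{m}-|\alpha|,q}_{uloc}(\Rn_x))$ together with the estimate $\|\p_x^{\alpha} a\|_{W^{m,q}_{uloc}(\Rn;\, W^{\tilde{m}-|\alpha|,q}_{uloc})} \leq C\, \|a\|_{W^{m,q}_{uloc}(\Rn;\, W^{\tilde{m},q}_{uloc})}$. The only mildly technical point is the commutation of $T$ with the Bochner integral in the distributional derivative computation, but this is a standard consequence of continuity of $T$ together with approximation by simple functions, and is the only place where the vector-valued nature of $a$ enters in an essential way.
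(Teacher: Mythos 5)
Your proposal is correct and follows essentially the same route as the paper: the paper deduces the lemma from its Proposition \ref{prop:TfInWmq_uloc} (composition with a bounded linear operator $T:X\to Y$ preserves $W^{m,q}_{uloc}$, proved by testing against $\varphi\in C_c^{\infty}$ and commuting $T$ with the integral), applied to the bounded map $\p_x^{\alpha}: W^{\tilde{m},q}_{uloc}(\Rn)\to W^{\tilde{m}-|\alpha|,q}_{uloc}(\Rn)$ from Lemma \ref{lemma:HölderInequalityForLq_ulocSpaces}. Both of your steps match the paper's argument exactly.
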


This lemma is a direct consequence of the next proposition:

\begin{prop}\label{prop:TfInWmq_uloc}
  Let $1 < q < \infty$, $U \subseteq \Rn$ be an open subset and $X,Y$ be two
Banach spaces. Moreover, let $T: X \rightarrow Y$ be a linear bounded operator
and $f \in W^{m,q}_{uloc}(U,X)$. If we define $\tilde{T}f: U
\rightarrow Y$ by 
    $(\tilde{T}f)(x):= T(f(x))$ for all $ x \in U$,
  we obtain $\tilde{T}f \in W^{m,q}_{uloc}(U,Y)$.
\end{prop}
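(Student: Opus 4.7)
The plan is to verify first that $\tilde Tf$ is Bochner measurable as a $Y$-valued map, second that weak derivatives commute with $T$ in the sense $\p_x^\alpha(\tilde Tf) = \tilde T(\p_x^\alpha f)$ for $|\alpha|\le m$, and finally to read off the norm estimate directly from the operator norm of $T$.

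For measurability I would argue as follows. Since $f\in W^{m,q}_{uloc}(U,X)\subseteq L^q_{loc}(U;X)$, the map $f$ is strongly measurable, hence by Pettis' theorem almost everywhere the pointwise limit of simple $X$-valued functions $f_k$. Because $T$ is continuous, $\tilde Tf_k$ are simple $Y$-valued functions and $T(f_k(x))\to T(f(x))$ in $Y$ for a.e.~$x\in U$; this gives strong measurability of $\tilde Tf$. The pointwise estimate $\|(\tilde Tf)(x)\|_Y\le\|T\|\|f(x)\|_X$ then yields $\tilde Tf\in L^q_{loc}(U;Y)$ and, taking $\|\cdot\|_{L^q(B_1(x)\cap U;\cdot)}$ and the supremum over $x\in U$, the estimate
\begin{align*}
  \|\tilde Tf\|_{L^q_{uloc}(U;Y)}\le \|T\|_{\mathscr{L}(X,Y)}\,\|f\|_{L^q_{uloc}(U;X)}.
\end{align*}

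Next I would check that $T$ intertwines with weak derivatives. Fix $\alpha\in\Non$ with $|\alpha|\le m$ and $\varphi\in C_c^\infty(U)$. Using the fact that continuous linear operators commute with Bochner integration, I obtain
\begin{align*}
  \int_U (\tilde Tf)(x)\,\p_x^\alpha\varphi(x)\,dx
  &= T\!\left(\int_U f(x)\,\p_x^\alpha\varphi(x)\,dx\right)\\
  &= (-1)^{|\alpha|}\, T\!\left(\int_U (\p_x^\alpha f)(x)\,\varphi(x)\,dx\right)\\
  &= (-1)^{|\alpha|}\int_U \tilde T(\p_x^\alpha f)(x)\,\varphi(x)\,dx,
\end{align*}
where the second equality uses the definition of the weak derivative $\p_x^\alpha f\in L^q_{loc}(U;X)$. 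Hence $\p_x^\alpha(\tilde Tf)=\tilde T(\p_x^\alpha f)$ in the sense of distributions with values in $Y$. Applying the $L^q_{uloc}$-estimate established above to each $\p_x^\alpha f\in L^q_{uloc}(U;X)$ shows $\p_x^\alpha(\tilde Tf)\in L^q_{uloc}(U;Y)$ for every $|\alpha|\le m$, together with
\begin{align*}
  \|\tilde Tf\|_{W^{m,q}_{uloc}(U;Y)}\le \|T\|_{\mathscr{L}(X,Y)}\,\|f\|_{W^{m,q}_{uloc}(U;X)}.
\end{align*}

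The only non-routine point is the commutation of $T$ with the Bochner integral in the computation above; this is a standard property of continuous linear maps applied to Bochner-integrable functions, so the proof essentially reduces to combining Pettis measurability, boundedness of $T$, and the defining integration-by-parts identity for the weak derivative. Lemma \ref{lemma:AbleitungWiederInWmq_uloc} then follows by applying Proposition \ref{prop:TfInWmq_uloc} with $X=W^{\tilde m,q}_{uloc}(\Rn)$, $Y=W^{\tilde m-|\alpha|,q}_{uloc}(\Rn)$ and $T=\p_x^\alpha$, which is bounded between these spaces by Lemma \ref{lemma:SobolevHoelderInequalityFor_LqUlocSpaces}(iii).
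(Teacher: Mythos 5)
Your proposal is correct and follows essentially the same route as the paper: the key step in both is the integration-by-parts identity obtained by commuting the bounded operator $T$ with the Bochner integral, which yields $\p_x^{\alpha}(\tilde{T}f)=\tilde{T}(\p_x^{\alpha}f)$ weakly, followed by the obvious operator-norm estimate. Your added care about strong measurability via Pettis' theorem and the explicit quantitative bound $\|\tilde Tf\|_{W^{m,q}_{uloc}(U;Y)}\le \|T\|_{\mathscr{L}(X,Y)}\|f\|_{W^{m,q}_{uloc}(U;X)}$ are fine refinements of what the paper states more tersely.
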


\begin{proof}
  Let $\alpha \in \Non$ with $|\alpha| \leq m$ and  $\varphi \in
C_c^{\infty}(U)$. 
  Due to Lemma \ref{lemma:HölderInequalityForLq_ulocSpaces} we have  
  $\left( \p_x^{\alpha} f \right) \varphi, f \left( \p_x^{\alpha} \varphi \right) \in L^1(U;X)$. 
  Together with $f \in W^{m,q}_{uloc}(U;X)$ and 
the linearity of $T$ we obtain
  \begin{align*}
    &\int\limits_U \tilde{T} \left( \p_x^{\alpha} f\right)(x) \varphi(x) dx 
    = T\int\limits_U \left( \p_x^{\alpha} f(x) \right) \varphi(x) dx 
    =(-1)^{|\alpha|} T\int\limits_U f(x) \p_x^{\alpha} \varphi(x) dx\\ 
    &\qquad =(-1)^{|\alpha|} \int\limits_U T \left[ f(x) \p_x^{\alpha} \varphi(x)
\right] dx 
    =(-1)^{|\alpha|} \int\limits_U (\tilde{T} f) (x) \p_x^{\alpha} \varphi(x) dx.
  \end{align*}
  Consequently the $\alpha$-th weak derivative of $\tilde{T}f$ exists and is given by $\p_x^{\alpha} (\tilde{T} f) =
\tilde{T}(\p_x^{\alpha} f)$. Hence $\tilde{T}f \in W^{m,q}_{uloc}(U;Y)$
because of
  \begin{align*}
    \| \tilde{T}f \|_{ W^{m,q}_{uloc}(U;Y)} 
    = \max_{|\alpha| \leq m} \| \tilde{T} ( \p_x^{\alpha} f ) \|_{ L^{q}_{uloc}(U;Y)} 
    \leq C \max_{|\alpha| \leq m} \sup_{x \in U } \int \limits_{B_1(x) \cap U} \hspace{-0.1cm} \| \p_x^{\alpha} f(x) \|_X dx 
    \leq C.
  \end{align*}
  \vspace*{-1cm}
  
\end{proof}

\begin{bem}\label{bem:FubiniBeiWmq_loc_Wmq_loc_Wertig}
    Let $1 < q < \infty$ and $\varphi, \psi \in
C^{\infty}_c(\Rn)$. Moreover, let $f \in L^{q}_{uloc}(\Rn_y;
L^{q}_{uloc} (\Rn_x))$. An easy consequence of Fubini's Theorem, Lemma \ref{lemma:HölderInequalityForLq_ulocSpaces} and Remark \ref{bem:EinbettungVonLq_ulocRaumen} is:
    \begin{align*}
      \iint f(x,y) \varphi(x) \psi(y) dx dy = \iint f(x,y) \varphi(x) \psi(y) dy
dx.
    \end{align*}
\end{bem}

\begin{bem}\label{bem:VertauschbarkeitDerAbleitungInHmq_uloc}
  Let $m, \tilde{m} \in \N_0$, $1 < q < \infty$ and $a \in  W^{m,q}_{uloc}(\Rn;
W^{\tilde{m},q}_{uloc} (\Rn))$. We choose  $\varf{\alpha}{l} \in \Non$ and
$\varf{\beta}{l} \in \Non$ with $|\alpha_1| + |\beta_1| = \ldots = |\alpha_l| +
|\beta_l| =1$, $|\alpha| \leq \tilde{m}$, $|\beta| \leq m$ and $l \in \N$ and define
$\alpha := \vara{\alpha}{l}$ and $\beta := \vara{\beta}{l}$. Then
  \begin{align*}
    \p_x^{\alpha_1} \p_y^{\beta_1} \ldots \p_x^{\alpha_l} \p_y^{\beta_l} a(x,y)
= \p_x^{\alpha} \p_y^{\beta} a(x,y) \qquad \text{for almost all } x,y \in \Rn.
  \end{align*}
\end{bem}

For the proof we refer to \cite[Remark 4.22]{Diss}.

\begin{lemma}\label{lemma:FunktionInWmq_ulocWmq_uloc_wertig}
  Let $m \in \N_0$, $1 < q < \infty$ and $N \in \N_0$. We consider a measurable function $a:\RnRn \rightarrow \C$ such that for each $\alpha
\in \Non$ with $|\alpha| \leq N$ we have
  \begin{itemize}
    \item $\p_y^{\alpha} a(., y) \in W^{m,q}_{uloc}(\Rn)$ for all $y \in \Rn$,
    \item $a(x,.) \in W^{N,q}_{uloc}(\Rn)$ for all $x \in \Rn$, 
    \item $\sup\limits_{y \in \Rn} \| \p_y^{\alpha} a(., y)
\|_{W^{m,q}_{uloc}(\Rn)} < C_{\alpha}$ for a constant $C_{\alpha}>0$.
  \end{itemize}
  Then $a \in W^{N,q}_{uloc}(\Rn_y; W^{m,q}_{uloc} (\Rn_x))$.
\end{lemma}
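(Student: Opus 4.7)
The plan is to regard $a$ as the function $F(y) := a(\cdot, y)$ valued in the Banach space $X := W^{m,q}_{uloc}(\Rn_x)$, and to verify the three defining properties of $F \in W^{N,q}_{uloc}(\Rn_y; X)$: measurability into $X$, the $L^q_{uloc}$-bound, and existence of $X$-valued weak derivatives up to order $N$ satisfying the correct integration-by-parts identity. Throughout, the natural candidates for the weak derivatives are $F_\alpha(y) := \p_y^\alpha a(\cdot, y) \in X$ for $|\alpha| \leq N$, which are well defined by the first hypothesis.

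First I would establish that $F_\alpha$ is measurable into $X$ for each $|\alpha| \leq N$. Using the joint measurability of $\p_y^\alpha a$ on $\RnRn$ (obtained along the same lines as Proposition \ref{prop:MessbarkeitBezglProduktmas}) together with the local structure of the $L^q_{uloc}$-norm, measurability follows by restricting to balls $B_1(x_0)$ where $L^q(B_1(x_0))$ and $W^{m,q}(B_1(x_0))$ are separable. The $L^q_{uloc}$-bound then comes directly from the third hypothesis:
\[
\sup_{y_0 \in \Rn} \int_{B_1(y_0)} \|F_\alpha(y)\|_X^q \, dy \leq |B_1(0)| \cdot C_\alpha^q,
\]
so that $F_\alpha \in L^q_{uloc}(\Rn_y; X)$ for every $|\alpha| \leq N$.

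It then remains to identify $F_\alpha$ with the $X$-valued weak derivative $\p_y^\alpha F$. Fix $\varphi \in C_c^\infty(\Rn_y)$. Since $a(x, \cdot) \in W^{N,q}_{uloc}(\Rn_y)$ for every $x$, classical integration by parts in the $y$-variable yields the pointwise identity
\[
\int_{\Rn_y} a(x, y) \p_y^\alpha \varphi(y) \, dy = (-1)^{|\alpha|} \int_{\Rn_y} \p_y^\alpha a(x, y) \varphi(y) \, dy \qquad \text{for all } x \in \Rn.
\]
An argument analogous to Proposition \ref{prop:TfInWmq_uloc}, applied to continuous linear functionals on $X$ arising from pairings with test functions in $C_c^\infty(\Rn_x)$ (together with Fubini's theorem as in Remark \ref{bem:FubiniBeiWmq_loc_Wmq_loc_Wertig}), lifts this pointwise identity to equality of the Bochner integrals in $X$; this shows $\p_y^\alpha F = F_\alpha$ as $X$-valued distributions. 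Combining with the $L^q_{uloc}$-bound above yields the claimed membership $a \in W^{N,q}_{uloc}(\Rn_y; X)$.

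The main technical hurdle is that $X$ need not be separable, which forces one to handle measurability and the identification of Bochner-integral identities by passing through the separable local spaces $L^q(B_1(x_0))$ and $W^{m,q}(B_1(x_0))$; this is precisely where Propositions \ref{prop:MessbarkeitBezglProduktmas} and \ref{prop:TfInWmq_uloc} of the preceding subsection become essential.
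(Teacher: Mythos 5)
Your proposal is correct and follows essentially the same route as the paper's (very terse) proof: the paper likewise observes that $a(x,\cdot)\in W^{N,q}_{uloc}(\Rn)$ gives the weak derivatives in the sense of $\mathscr{D}'(\Rn_y;W^{m,q}_{uloc}(\Rn_x))$ and then bounds $\|a\|_{W^{N,q}_{uloc}(\Rn_y;W^{m,q}_{uloc}(\Rn_x))}$ directly by the uniform constants $C_\alpha$. Your version merely spells out the measurability and weak-derivative identification steps that the paper leaves implicit.
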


\begin{proof}
  Because of $a(x,.) \in W^{N,q}_{uloc}(\Rn)$,
  the $\alpha$-the weak derivative of $a$ in the sense of $\mathscr{D}'(\Rn_y; W^{m,q}_{uloc}(\Rn_x))$ exists for all $\alpha \in \Non$
with $|\alpha| \leq N$.  
  Hence the claim holds:
  \begin{align*}
    \| a \|_{ W^{N,q}_{uloc}(\Rn_y; W^{m,q}_{uloc} (\Rn_x)) } 
    &\leq \sum_{|\alpha | \leq N} \sup_{y \in \Rn} \left\{ \int_{B_1(y)} \left\|
\p_y^{\alpha} a(.,z) \right\|^q_{  W^{m,q}_{uloc} (\Rn) } dz
\right\}^{1/q} \\
    &\leq \sum_{|\alpha | \leq N} C_{\alpha, q} \sup_{y \in \Rn}
|B_1(y)|^{1/q} \leq C_{N,q,n}.
  \end{align*}
  \vspace*{-1cm}

\end{proof}

With all these results at hand, we are able to show: 

\begin{lemma}\label{lemma:AbleitungenInLq_uloc}
  Let $1 < q < \infty$ and $\tilde{m}, N \in \N_0$. Furthermore, let $\mathscr{B}$ be a set of all measurable functions $a: \RnRnRn
\rightarrow \C$ such that 
  \begin{itemize}
    \item $\p_y^{\alpha} a(., \xi, y) \in W^{\tilde{m},q}_{uloc}(\Rn)$ for all
$\xi, y \in \Rn$ and each $\alpha \in \Non$ with $|\alpha| \leq N$,
    \item $a(x,\xi,.) \in W^{N, q}_{uloc}(\Rn)$ for all $x, \xi \in \Rn$.
  \end{itemize}
  Additionally let $m \in \N_0$ such that 
  for every $\alpha \in \Non$ with $|\alpha| \leq N$ we have
  \begin{align*}
    \sup_{y\in \Rn} \| \p_y^{\alpha} a(., \xi, y)
\|_{W^{\tilde{m},q}_{uloc}(\Rn)} \leq C_{\alpha, q} \<{\xi}^m \qquad \text{for all } \xi \in \Rn, a \in\mathscr{B}.
  \end{align*}
If we define $b: \RnRnRn \rightarrow \C$ for a fixed but arbitrary $a \in
\mathscr{B}$ by
  \begin{align*}
    b(x, \xi, y) := a(x, \xi, x+y) \qquad \text{for all } x, \xi, y \in \Rn, 
  \end{align*}
  we get for each $\alpha, \beta \in \Non$ with $|\beta| \leq \tilde{m}$ and $|\alpha| +
|\beta| \leq N$ and for all $\xi \in \Rn$:
  \begin{align*}
    \| \p_y^{\alpha} \p_x^{\beta} b(x, \xi, y) \|_{L^q_{uloc}(\RnRnx{x}{y}) } <
C_{\alpha, q} \<{\xi}^m,
  \end{align*}
  where $C_{\alpha, q}$ is independent of $a \in \mathscr{B}$ and $\xi \in \Rn$.
\end{lemma}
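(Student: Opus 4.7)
The idea is to reduce the claim to the uniform bound furnished by the hypothesis on $a$, after carrying out the change of variables hidden in the definition of $b$. First I would expand $\partial_y^\alpha \partial_x^\beta b$ by the chain rule. Writing $u$ and $v$ for the first and third arguments of $a$, observe that $\partial_{y_j}$ hits only $v$ while $\partial_{x_j}$ hits both $u$ and $v$, so an elementary Leibniz-type computation yields
\begin{align*}
  \partial_x^\beta \partial_y^\alpha b(x,\xi,y) = \sum_{\gamma \leq \beta} \binom{\beta}{\gamma}\bigl(\partial_u^{\beta-\gamma} \partial_v^{\alpha+\gamma} a\bigr)(x,\xi,x+y).
\end{align*}
The assumptions $|\beta| \leq \tilde{m}$ and $|\alpha|+|\beta| \leq N$ guarantee $|\beta-\gamma| \leq \tilde{m}$ and $|\alpha+\gamma| \leq N$ for every $\gamma \leq \beta$, so each derivative on the right is controlled by the given bound.

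To make this pointwise formula rigorous at the level of weak derivatives in the $W^{m,q}_{uloc}$-vector-valued setting, I would use Lemma \ref{lemma:FunktionInWmq_ulocWmq_uloc_wertig} to embed $a(\cdot,\xi,\cdot)$ into $W^{N,q}_{uloc}(\R^n_v; W^{\tilde{m},q}_{uloc}(\R^n_u))$, then apply Proposition \ref{prop:TfInWmq_uloc} and Remark \ref{bem:VertauschbarkeitDerAbleitungInHmq_uloc} to commute and combine the partial derivatives along the affine substitution $(x,y)\mapsto(x,x+y)$. Since this substitution is smooth with constant Jacobian, no additional regularity issues arise.

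For each summand set $c_\gamma(x,y) := (\partial_u^{\beta-\gamma}\partial_v^{\alpha+\gamma}a)(x,\xi,x+y)$. I would change variables $(x,y) \mapsto (x,z) = (x,x+y)$, which has Jacobian one and under which the unit ball $B_1(x_0,y_0) \subseteq \R^{2n}$ is contained in the product $B_1(x_0) \times B_2(x_0+y_0)$, because $|z-(x_0+y_0)| \leq |z-x-y_0|+|x-x_0| \leq 2$ on this region. Together with Fubini (which applies by Proposition \ref{prop:MessbarkeitBezglProduktmas}) this yields
\begin{align*}
  \int_{B_1(x_0,y_0)} |c_\gamma(x,y)|^q\, dx\, dy \leq \int_{B_2(x_0+y_0)} \int_{B_1(x_0)} |(\partial_u^{\beta-\gamma}\partial_v^{\alpha+\gamma}a)(x,\xi,z)|^q\, dx\, dz.
\end{align*}
The inner integral is dominated by $\|\partial_v^{\alpha+\gamma} a(\cdot,\xi,z)\|_{W^{\tilde{m},q}_{uloc}(\R^n_x)}^q$, which by hypothesis is bounded by $C_{\alpha+\gamma,q}^q\<{\xi}^{mq}$ uniformly in $z$ and $a \in \mathscr{B}$. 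The outer integration is over a fixed-volume ball, so the whole quantity is bounded by $C\<{\xi}^{mq}$ uniformly in $(x_0,y_0)$. Summing over $\gamma \leq \beta$ and taking suprema gives the claim.

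The main technical obstacle is not the estimate itself, which is essentially a change-of-variables computation, but rather making sure the chain rule is valid in the distributional sense given only the modest regularity of $a$ in the hypothesis. I expect this to be handled cleanly by Lemma \ref{lemma:FunktionInWmq_ulocWmq_uloc_wertig}, Proposition \ref{prop:TfInWmq_uloc} and Remark \ref{bem:VertauschbarkeitDerAbleitungInHmq_uloc} developed earlier in this subsection.
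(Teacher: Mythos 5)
Your proposal is correct and follows essentially the same route as the paper's proof: the Leibniz expansion of $\p_x^{\beta}$ along the substitution $x'\mapsto x+y$ (your sum over $\gamma\leq\beta$ is the paper's sum over $\beta_1+\beta_2=\beta$), measurability via Lemma \ref{lemma:FunktionInWmq_ulocWmq_uloc_wertig} and Proposition \ref{prop:MessbarkeitBezglProduktmas}, commuting derivatives via Remark \ref{bem:VertauschbarkeitDerAbleitungInHmq_uloc}, and the same ball-inclusion $B_1(x_0,y_0)\subseteq B_1(x_0)\times B_2(x_0+y_0)$ with Tonelli to reduce to the hypothesis. No gaps.
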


\begin{proof}
  Lemma \ref{lemma:FunktionInWmq_ulocWmq_uloc_wertig} and Remark
\ref{lemma:AbleitungWiederInWmq_uloc} imply 
  $$\p_y^{\alpha} \p_x^{\beta} a (x,\xi,y) \in W^{N-|\alpha|,q}_{uloc}
\left(\Rn_y; W^{\tilde{m}-|\beta|, q}_{uloc}(\Rn_x) \right) \subseteq
L^q_{uloc}(\Rn_y; L^q_{uloc}(\Rn_x))$$
  for all $\xi \in \Rn$ and $\alpha, \beta \in \Non$ with $|\alpha| \leq N$ and
$|\beta| \leq \tilde{m}$. On account of Proposition
\ref{prop:MessbarkeitBezglProduktmas} we derive the measurability of
$\p_y^{\alpha} \p_x^{\beta} a (x,\xi,y)$ with respect to the product measure of $\RnRnx{x}{y}$ for a fixed $\xi \in \Rn$. 
  Using Tonelli's theorem twice and substituting $\hat{y}:= z + \tilde{y}$,
we obtain for each  $\alpha, \beta \in \Non$ with $|\alpha|
\leq N$ and $|\beta| \leq \tilde{m}:$
  \begin{align*}
    &\int \limits_{B_1(x,y)} \left| \left(\p_y^{\alpha} \p_x^{\beta} a\right)
(z, \xi, z+ \tilde{y}) \right|^q d(z, \tilde{y}) 
    \leq \int \limits_{B_1(x) \times B_1(y)} \left| \left(\p_y^{\alpha}
\p_x^{\beta} a\right) (z, \xi, z+ \tilde{y}) \right|^q d(z, \tilde{y}) \\
    &\qquad = \int \limits_{B_1(x)} \int \limits_{B_1(y)} \left|
\left(\p_y^{\alpha} \p_x^{\beta} a\right) (z, \xi, z+ \tilde{y}) \right|^q d
\tilde{y} \; dz
    = \int \limits_{B_1(x)} \int \limits_{B_1(y+z)} \left|
\left(\p_y^{\alpha} \p_x^{\beta} a\right) (z, \xi, \hat{y}) \right|^q d \hat{y}
\; dz \\
    &\qquad \leq \int \limits_{B_1(x)} \int \limits_{B_2(y+x)} \left|
\p_{\hat{y}}^{\alpha} \p_z^{\beta} a (z, \xi, \hat{y}) \right|^q d \hat{y} \; dz
    = \int \limits_{B_2(y+x)} \int \limits_{B_1(x)} \left|
\p_{\hat{y}}^{\alpha} \p_z^{\beta} a (z, \xi, \hat{y}) \right|^q dz  \; d
\hat{y} \\
    &\qquad \leq \int \limits_{B_2(y+x)} \left\| \p_{\hat{y}}^{\alpha} \p_x^{\beta} a
(x, \xi, \hat{y}) \right\|^q_{L^q_{uloc}(\Rn_x)}  \; d \hat{y} 
   \leq \hspace{-0.1cm} \int \limits_{B_2(y+x)} \sup_{\hat{y} \in \Rn}
\left\| \p_{\hat{y}}^{\alpha} a (x, \xi, \hat{y})
\right\|^q_{W^{\tilde{m},q}_{uloc}(\Rn_x)}  \; d \hat{y} \\
     &\qquad \leq C_{\alpha, q, n} \<{\xi}^m,
  \end{align*}
 for all $x,\xi, y \in \Rn$ and $a \in
\mathscr{B}$. 
Finally, let $\alpha, \beta \in \Non$ with $|\beta| \leq \tilde{m}$ and $|\alpha|+|\beta| \leq N$ be arbitrary. 
An application of Remark \ref{bem:VertauschbarkeitDerAbleitungInHmq_uloc},
the Leibniz rule and the previous inequality provides:
  \begin{align*}
    \|\p_y^{\alpha} \p_x^{\beta} b(x, \xi, y)\|_{L^q_{uloc}(\RnRnx{x}{y})} 
    &\leq \hspace{-0.27cm} \sum_{\beta_1 + \beta_2 = \beta} \sup_{x,y \in \Rn}
\left\{ \int_{B_1(x,y)} \hspace{-0.5cm} \left| \left( \p_y^{\alpha + \beta_1} \p_x^{\beta_2} a
\right) (z, \xi, z+ \tilde{y}) \right|^q \right\}^{1/q} \\
    &\leq C_{\alpha, \beta, q, n} \<{\xi}^m \qquad \text{for all } \xi \in \Rn \text{ and } a \in \mathscr{B}.
  \end{align*}
  \vspace*{-1cm}

\end{proof}

\begin{lemma}\label{lemma:AbschaetzungInHmq_uloc}
  Let $1 < q < \infty$, $m \in \R$ and $\tilde{m} \in \N_0 $
with $\tilde{m}>n/q$. Moreover, let $\mathscr{B}$ be a set of measurable
functions $a: \RnRnRn \rightarrow \C$ with the following property: 
  \begin{itemize}
    \item $\p_y^{\alpha} a(., \xi, y) \in W^{\tilde{m},q}_{uloc}(\Rn)$ for all
$\xi, y \in \Rn$ and each $\alpha \in \Non$ with $|\alpha| \leq 2\tilde{m}$,
    \item $a(x,\xi,.) \in W^{2\tilde{m}, q}_{uloc}(\Rn)$ for all $x, \xi \in
\Rn$.
  \end{itemize}
  Additionally we assume that for all $\alpha \in \Non$ with $|\alpha| \leq 2\tilde{m}$ there is a constant $C_{\alpha, q}$ such that
  \begin{align*}
    \sup_{y\in \Rn} \| \p_y^{\alpha} a(., \xi, y)
\|_{W^{\tilde{m},q}_{uloc}(\Rn)} \leq C_{\alpha, q} \<{\xi}^m \qquad \text{for all } \xi \in \Rn, a\in \mathscr{B}.
  \end{align*}
Then we have for some $C_{m, q} < \infty$:
  \begin{align*}
    \sup_{y\in \Rn} \| a(x, \xi, x+y) \|_{W^{\tilde{m},q}_{uloc}(\Rn_x)} \leq
C_{m, q} \<{\xi}^m \qquad \text{for all } \xi \in \Rn, a\in \mathscr{B}.
  \end{align*}
\end{lemma}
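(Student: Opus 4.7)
The plan is to lift the uniform-in-$y$ integrated bounds that are obtained (by a Tonelli--substitution argument identical to the one in the proof of Lemma~\ref{lemma:AbleitungenInLq_uloc}) to a uniform-in-$y$ pointwise bound by means of the Banach-space-valued Sobolev embedding $W^{\tilde{m},q}_{uloc}(\Rn)\hookrightarrow C^0(\Rn)$ applied in the $y$-variable. This is precisely where the hypothesis $\tilde{m}>n/q$ is used; the factor $2\tilde{m}$ in the derivative count of the hypothesis exactly accommodates the loss of $\tilde{m}$ derivatives in that embedding. Throughout I write $\p_z^\gamma a$ for derivatives of $a$ in its third argument, to distinguish them from derivatives of $b$ in its $y$-argument.

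\textbf{Step 1 (Leibniz reduction).} For $|\beta|\leq \tilde{m}$ the chain rule yields $\p_x^\beta[a(x,\xi,x+y)] = \sum_{\beta_1+\beta_2=\beta}\binom{\beta}{\beta_1}(\p_x^{\beta_1}\p_z^{\beta_2} a)(x,\xi,x+y)$, so it suffices to show for every pair with $|\beta_1|+|\beta_2|\leq \tilde{m}$ that
\begin{equation*}
  \sup_{y\in\Rn}\|(\p_x^{\beta_1}\p_z^{\beta_2} a)(\cdot,\xi,\cdot+y)\|_{L^q_{uloc}(\Rn_x)} \leq C\<{\xi}^m.
\end{equation*}

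\textbf{Step 2 (Integrated bound in $y$).} Fixing $x_0\in\Rn$, I regard $F:\Rn_y\to L^q(B_1(x_0))$, defined by $F(y) := (\p_x^{\beta_1}\p_z^{\beta_2} a)(\cdot,\xi,\cdot+y)|_{B_1(x_0)}$. Its weak $y$-derivatives are $\p_y^\alpha F(y)(x) = (\p_x^{\beta_1}\p_z^{\alpha+\beta_2} a)(x,\xi,x+y)$, and $|\alpha+\beta_2|\leq 2\tilde{m}$ places us within the hypothesis. Mimicking the Tonelli and substitution $w := x+y$ steps from the proof of Lemma~\ref{lemma:AbleitungenInLq_uloc}, I obtain
\begin{equation*}
  \int_{B_1(y_0)}\|\p_y^\alpha F(y)\|^q_{L^q(B_1(x_0))}\,dy \leq \int_{B_2(x_0+y_0)}\|\p_z^{\alpha+\beta_2}a(\cdot,\xi,w)\|^q_{W^{\tilde{m},q}_{uloc}(\Rn_x)}\,dw \leq C\<{\xi}^{mq}
\end{equation*}
uniformly in $x_0, y_0\in\Rn$ and $a\in\mathscr{B}$. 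Thus $F\in W^{\tilde{m},q}_{uloc}(\Rn_y;L^q(B_1(x_0)))$ with norm bounded by $C\<{\xi}^m$ independently of $x_0$.

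\textbf{Step 3 (Sobolev lift and conclusion).} Since $\tilde{m}>n/q$, one can choose $\tau\in(0,\tilde{m}-n/q]$ with $\tau\notin\N$, and Lemma~\ref{lemma:SobolevHoelderInequalityFor_LqUlocSpaces}~(v) yields a continuous embedding $W^{\tilde{m},q}_{uloc}(\Rn_y;L^q(B_1(x_0)))\hookrightarrow C^\tau(\Rn_y;L^q(B_1(x_0)))$ whose constant does not depend on $x_0$. This gives $\sup_{y\in\Rn}\|F(y)\|_{L^q(B_1(x_0))}\leq C\<{\xi}^m$ uniformly in $x_0\in\Rn$, and taking the supremum over $x_0$ proves Step~1 and hence the lemma. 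The main technical obstacle I anticipate is the careful verification that $F$ is strongly measurable and weakly differentiable with the identified derivatives as an $L^q(B_1(x_0))$-valued map on $\Rn_y$, but this follows routinely from Proposition~\ref{prop:MessbarkeitBezglProduktmas}, Lemma~\ref{lemma:FunktionInWmq_ulocWmq_uloc_wertig}, and the pointwise regularity of $a$ in both the $x$- and $z$-variables.
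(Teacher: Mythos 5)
Your argument is correct and follows the paper's own proof in all essentials: both rest on the Tonelli--substitution computation of Lemma \ref{lemma:AbleitungenInLq_uloc} and then spend $\tilde{m}$ of the $2\tilde{m}$ available third-variable derivatives on a Sobolev embedding in the $y$-variable to upgrade the integrated bound to a supremum over $y$, which is exactly why the hypothesis carries the factor $2\tilde{m}$. The only cosmetic difference is that you invoke the Banach-space-valued embedding of Lemma \ref{lemma:SobolevHoelderInequalityFor_LqUlocSpaces}~(v) for the $L^q(B_1(x_0))$-valued map $y \mapsto F(y)$ (which is legitimate, since all the spaces $L^q(B_1(x_0))$ are isometric by translation, so a single embedding constant serves for all $x_0$), whereas the paper applies the scalar embedding $W^{\tilde{m}}_q(B_1(y)) \hookrightarrow L^{\infty}(B_1(y))$ pointwise in the first variable under the integral sign, cf.\ its inequality (\ref{p68}); the two yield the same estimate.
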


\begin{proof}
  First of all 
  we choose a finite cover
$(U_i)_{i=1}^N$ of $B_2(0,0)$ with open balls of radius 1. Denoting for
all $i \in \{ 1, \ldots, n\}$ and each $x,y \in \Rn$ the set $U_i(x,y)$ as the
translation of $U_i$ by $(x,y)$, $(U_i(x,y))_{i=1}^N$ is a finite cover
of $B_2(x,y)$ with open balls of radius 1. For all $\alpha, \beta
\in \Non$ with $|\beta| \leq 2 \tilde{m}$ and $|\alpha| \leq \tilde{m}$ we obtain:
  \begin{align}\label{p67}
    &\sup_{x,y \in \Rn} \left[ \int_{B_2(x,y)} \left| \p^{\beta}_{\tilde{y}} \p^{\alpha}_z a(z,\xi, z+ \tilde{y}) \right|^q d(\tilde{y},z) \right]^{1/q} \notag \\ 
    &\qquad \qquad \qquad \leq C_q \sum_{i=1}^N \sup_{x,y \in \Rn} \left[ \int_{U_i(x,y)} \left| \p^{\beta}_{\tilde{y}} \p^{\alpha}_z a(z,\xi, z+ \tilde{y}) \right|^q d(\tilde{y},z) \right]^{1/q} \notag \\ 
    &\qquad \qquad \qquad  \leq C_q \|\p^{\beta}_{y} \p^{\alpha}_x a(x,\xi, x+ y) \|_{L^q_{uloc}(\Rn_x \times \Rn_{y})} \text{ for all } a \in \mathscr{B}, \xi \in \Rn.
  \end{align}
  Due to Lemma \ref{lemma:FunktionInWmq_ulocWmq_uloc_wertig}  and Remark
\ref{lemma:AbleitungWiederInWmq_uloc} we get
  $$\p_y^{\alpha} \p_x^{\beta} a (x,\xi,y) \in W^{2\tilde{m}-|\alpha|,q}_{uloc}
\left(\Rn_y; W^{\tilde{m}-|\beta|, q}_{uloc}(\Rn_x) \right) \subseteq
L^q_{uloc}(\Rn_y; L^q_{uloc}(\Rn_x))$$
  for all $\xi \in \Rn$ and $\alpha, \beta \in \Non$ with $|\alpha| \leq 2
\tilde{m}$ and $|\beta| \leq \tilde{m}$. This implies the measurability of
$\p_y^{\alpha} \p_x^{\beta} a (x,\xi,y)$ with respect to the product measure of
$\RnRnx{x}{y}$ for every fixed $\xi \in \Rn$ as stated in Proposition
\ref{prop:MessbarkeitBezglProduktmas}. 
 We define 
$b(x, \xi, y) := a(x, \xi, x+y)$ for all $x, \xi, y \in \Rn$. Using the Sobolev
embedding theorem and Tonelli's theorem, we
obtain for each $\alpha \in \Non$ with $|\alpha| \leq \tilde{m}$:
  \begin{align}\label{p68}
    \int \limits_{B_1(x)} \sup_{\tilde{y} \in B_1(y) } \left| \p_z^{\alpha}
b(z,\xi, \tilde{y}) \right|^q \; d z 
    &\leq C_q \int \limits_{B_1(x)}  \left\| \p_z^{\alpha} b(z,\xi, .)
\right\|^q_{ W^{\tilde{m} }_q( B_1(y) ) } d z \notag\\
    &\leq C_q \sum_{|\beta| \leq \tilde{m}} \; \int \limits_{B_1(x) \times B_1(y)}  \left|
\p^{\beta}_{\tilde{y}} \p_z^{\alpha} b(z,\xi, \tilde{y}) \right|^q d (\tilde{y},
 z) \notag\\
    &\leq C_q \sum_{|\beta| \leq \tilde{m}} \; \int \limits_{B_2(x,y)}  \left|
\p^{\beta}_{\tilde{y}} \p_z^{\alpha} b(z,\xi, \tilde{y}) \right|^q d (\tilde{y},
 z)
  \end{align}  
  for all $a \in \mathscr{B}$ and $x,y,\xi \in \Rn$. 
Therefore (\ref{p68}),
(\ref{p67}) and Lemma \ref{lemma:AbleitungenInLq_uloc} yield
  \begin{align*}
    &\sup_{y \in \Rn} \| a(x, \xi, x+y) \|_{W^{\tilde{m},q}_{uloc}(\Rn_x)} 
    \leq \sum_{|\alpha| \leq \tilde{m}} \sup_{y \in \Rn}  \left\| \p_x^{\alpha} b(x,
\xi, y) \right\|_{L^{q}_{uloc}(\Rn_x)} \\
    &\qquad \leq \sum_{|\alpha| \leq \tilde{m}} \sup_{x,y \in \Rn} \left\{ \int_{B_1(x)}
\sup_{\tilde{y} \in B_1(y)} \left|\p_z^{\alpha} b(z, \xi, \tilde{y}) \right|^{q} dz
\right\}^{1/q} \\
    &\qquad \leq C_q \sum_{|\alpha| \leq \tilde{m}} \sum_{|\beta| \leq \tilde{m}} \sup_{x,y \in
\Rn} \left\{ \int_{B_2(x,y)}  \left| \p^{\beta}_{\tilde{y}} \p_z^{\alpha}
b(z,\xi, \tilde{y}) \right|^q d (\tilde{y},  z) \right\}^{1/q} \\
    &\qquad \leq C_q \sum_{|\alpha| \leq \tilde{m}} \sum_{|\beta| \leq \tilde{m}}  \left\|
\p^{\beta}_{y} \p_x^{\alpha} b(x,\xi, y)
\right\|_{L^q_{uloc}(\RnRnx{x}{y})} 
    \leq C_{m,q,n} \<{\xi}^m  
  \end{align*}
  for all $a \in \mathscr{B}$ and $\xi \in \Rn$.
\end{proof}

\subsection{Extension of the Space of Amplitudes}\label{section:ExtensionSpaceOfAmplitudes}

While verifying results in the field of pseudodifferential operators one often uses the properties of oscillatory integrals defined by
\begin{align*}
  \osint e^{-iy \cdot \eta} a(y,\eta) dy \dq \eta := \lim_{\e \rightarrow 0} \iint \chi(\e y, \e \eta) e^{-iy \cdot \eta} a(y,\eta) dy \dq \eta
\end{align*}
for all elements $a$ of the \textit{extension of the space of amplitudes} $\mathscr{A}^{m,N}_{\tau}(\RnRn)$ $(m,\tau \in \R, N \in \N_0 \cup \{ \infty \})$, the set of all functions $a:\Rn \times \Rn \rightarrow \C$ with the following properties: For all
$\alpha, \beta \in \Non$ with $|\alpha| \leq N$ we have
  \begin{enumerate}
    \item[i)] $\p^{\alpha}_{\eta} \p^{\beta}_{y} a(y,\eta) \in C^0(\RnRnx{y}{\eta})$,
    \item[ii)] $\left|\p^{\alpha}_{\eta} \p^{\beta}_{y} a(y, \eta) \right| \leq C_{\alpha, \beta} (1 + |\eta|)^m (1 + |y|)^{\tau}$ for all $y, \eta \in \Rn$,
  \end{enumerate} 

If $N=\infty$ we also write $\mathscr{A}^{m}_{\tau}(\RnRn)$ instead of $\mathscr{A}^{m,\infty}_{\tau}(\RnRn)$.\\

Now we summarize the properties of the oscillatory integral we need later on. For more details we refer to \cite[Subsection 2.3]{Paper1}. In the following we use for all $m \in \N$ the next definition:
\begin{align*}
  A^m(D_{x},\xi) &:= \<{\xi}^{-m} \<{D_x}^{m} \quad &\text{ if } m \text{ is even},\\
  A^m(D_{x},\xi) &:= \<{\xi}^{-m-1} \<{D_x}^{m-1} -\sum_{j=1}^n \<{\xi}^{-m} \frac{\xi_j}{\<{\xi} } \<{D_x}^{m-1} D_{x_j} \quad &\text{ else}.
\end{align*}

\begin{thm}\label{thm:ExistenceOfOscillatoryIntegral}
  Let $m, \tau \in \R$ and $N \in \N_0 \cup \{ \infty\}$ with $N  > n+ \tau$. Moreover, let $\chi \in
\mathcal{S}(\RnRn)$ with $\chi(0,0)=1$ be arbitrary. Then the
\textbf{oscillatory integral}
  \begin{align*}
    \osint e^{-iy \cdot \eta} a(y,\eta) dy \dq \eta := \lim_{\e \rightarrow 0}
\iint \chi(\e y, \e \eta) e^{-iy \cdot \eta} a(y,\eta) dy \dq \eta
  \end{align*}
  exists for each $a \in \mathscr{A}^{m,N}_{\tau}(\RnRn)$. Additionally for all $l,l' \in
\N_0$ with $l > n+m$ and $N \geq l' > n + \tau$ we have
  \begin{align*}
    \osint e^{-iy \cdot \eta} a(y,\eta) dy \dq \eta = \iint e^{-iy \cdot \eta}
A^{l'}(D_{\eta},y) [ A^{l}(D_{y},\eta) a(y,\eta) ]
dy \dq \eta.
  \end{align*}
  Therefore the definition does not depend on the choice of $\chi$.
\end{thm}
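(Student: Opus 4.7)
The plan is to reduce the claim to a standard integration-by-parts argument, exploiting the fact that the operators $A^l(D_y,\eta)$ and $A^{l'}(D_\eta,y)$ are specifically designed so that $A^l(D_y,\eta) e^{-iy\cdot\eta} = e^{-iy\cdot\eta}$ and $A^{l'}(D_\eta,y) e^{-iy\cdot\eta} = e^{-iy\cdot\eta}$. I would begin by verifying both of these identities by direct computation: in the even case this is immediate from $\<{D_y}^k e^{-iy\cdot\eta} = \<{\eta}^k e^{-iy\cdot\eta}$, while in the odd case a short algebraic calculation using $D_{y_j} e^{-iy\cdot\eta} = -\eta_j e^{-iy\cdot\eta}$ together with the identity $(1+\sum_j \eta_j^2)/\<{\eta}^2 = 1$ confirms that the particular linear combination defining $A^l$ returns $e^{-iy\cdot\eta}$.

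Fixing $\chi$ and $\e > 0$, I would set $I_\e(a) := \iint \chi(\e y, \e\eta) e^{-iy\cdot\eta} a(y,\eta)\, dy\, \dq \eta$. Since $\chi \in \mathcal{S}(\RnRn)$ decays faster than any polynomial and $a$ has at most polynomial growth together with sufficiently many continuous derivatives, the integrand is absolutely integrable and smooth enough to admit $l$ integrations by parts in $y$ and $l'$ integrations by parts in $\eta$. Substituting the identity $e^{-iy\cdot\eta} = A^{l'}(D_\eta,y)\, A^l(D_y,\eta) e^{-iy\cdot\eta}$ and integrating by parts yields
\begin{align*}
  I_\e(a) = \iint e^{-iy\cdot\eta}\, A^{l'}(D_\eta,y) \bigl[ A^l(D_y,\eta)\bigl(\chi(\e y,\e\eta) a(y,\eta)\bigr)\bigr]\, dy\, \dq\eta,
\end{align*}
where the sign-free transition relies on the fact that the specific combinations defining $A^l$ and $A^{l'}$ are formally self-transpose in the unconjugated pairing (even powers of $D$ contribute no sign, and the odd-$l$ correction terms are arranged so that the transposition signs cancel). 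Expanding via the Leibniz rule, the integrand splits into a principal contribution $\chi(\e y,\e\eta)\, A^{l'}[A^l a]$ and error terms in which at least one derivative falls on $\chi(\e\, \cdot,\e\, \cdot)$, producing a factor $\e^{|\gamma|}$ with $|\gamma|\ge 1$ and a uniformly bounded translate $(\p^\gamma \chi)(\e y,\e\eta)$.

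The key estimate, which follows from the defining bounds of $\mathscr{A}^{m,N}_\tau(\RnRn)$ and the structure $A^l \sim \<{\eta}^{-l}\<{D_y}^l$, $A^{l'} \sim \<{y}^{-l'}\<{D_\eta}^{l'}$, is
\begin{align*}
  \bigl| A^{l'}(D_\eta,y)\bigl[A^l(D_y,\eta) a(y,\eta)\bigr] \bigr| \leq C\, \<{y}^{\tau - l'}\<{\eta}^{m - l},
\end{align*}
which is absolutely integrable over $\RnRn$ precisely under the hypotheses $l > n+m$ and $l' > n+\tau$, and the analogous estimate, with the same right-hand side (up to a constant), persists for every error term uniformly in $\e \in (0,1]$. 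Combined with the pointwise convergence $\chi(\e y,\e\eta) \to \chi(0,0) = 1$, dominated convergence then gives the asserted formula, and since its right-hand side involves no reference to $\chi$ the limit itself is independent of the cutoff. The main obstacle I anticipate is the uniform-in-$\e$ control of the Leibniz error terms: each $(\p^\gamma\chi)(\e y, \e \eta)$ is only globally bounded without $\e$-independent decay, so one has to check carefully that after each Leibniz expansion enough factors of $\<{y}^{-l'}$ and $\<{\eta}^{-l}$ remain on the $a$-side to ensure integrability, the prefactor $\e^{|\gamma|}$ then driving the error terms to zero.
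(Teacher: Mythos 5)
Your overall strategy --- regularize with $\chi(\e y,\e\eta)$, insert the identity $A^{l}(D_y,\eta)e^{-iy\cdot\eta}=e^{-iy\cdot\eta}$, integrate by parts, expand by the Leibniz rule, and conclude by dominated convergence using the bound $|A^{l'}(D_\eta,y)[A^{l}(D_y,\eta)a]|\leq C\<{y}^{\tau-l'}\<{\eta}^{m-l}$ --- is exactly the standard argument and the one used in the source the paper cites. The key estimate, the role of the hypotheses $l>n+m$ and $N\geq l'>n+\tau$, and the uniform-in-$\e$ control of the error terms carrying a factor $\e^{|\gamma|}(\p^{\gamma}\chi)(\e y,\e\eta)$ are all handled correctly.

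There is, however, a genuine flaw in your justification of the integration-by-parts step. For odd $l$ the operator $A^{l}(D_y,\eta)$ is \emph{not} formally self-transpose with respect to the bilinear pairing $\iint uv\,dy\,d\eta$: its correction term $\<{\eta}^{-l}\frac{\eta_j}{\<{\eta}}\<{D_y}^{l-1}D_{y_j}$ contains an odd total number of factors $D_{y_j}$, and since ${}^{t}D_{y_j}=-D_{y_j}$ this term changes sign under transposition. Hence moving $A^{l}$ from $e^{-iy\cdot\eta}$ onto $\chi(\e\,\cdot,\e\,\cdot)\,a$ produces ${}^{t}A^{l}$, not $A^{l}$, and the two readings are genuinely inequivalent: for $l=1$, $n=1$ one computes $A^{1}(D_y,\eta)e^{-iy\eta}=e^{-iy\eta}$ but ${}^{t}A^{1}(D_y,\eta)e^{-iy\eta}=\frac{1-\eta^{2}}{\<{\eta}^{2}}e^{-iy\eta}$. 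The same remark applies to $A^{l'}(D_\eta,y)$ for odd $l'$. Your argument is therefore complete only for even $l,l'$; this does give existence and independence of $\chi$ whenever an even $l'$ with $N\geq l'>n+\tau$ exists, but it does not yield the displayed formula for all admissible $l,l'$ (nor existence in the case where the only admissible $l'\leq N$ is odd). To close the gap you must track the transpose explicitly --- i.e. determine which of $A^{l}$ and ${}^{t}A^{l}$ reproduces the exponential and which one consequently lands on the amplitude --- rather than appeal to a cancellation of signs that does not occur.
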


\begin{thm}\label{thm:VertauschenVonOsziIntUndAbleitungen}
  Let $m,\tau \in \R$, $N \in \N_0 \cup \{ \infty \}$ and $k \in  \N$ with $N  > k + \tau$. We set $\tilde{\tau}:= \tau$ if $\tau \geq -k$, $\tilde{\tau}:= -k-0.5$ if $\tau \in \Z$ and $\tau < -k$ and $\tilde{\tau}:= -k-(|\tau| - \lfloor -\tau \rfloor)/2$ else. We define $\hat{\tau}:= \tau_+$ if $\tau \geq -k$ and $\hat{\tau}:= \tau- \tilde{\tau}$ else. For $a \in \mathscr{A}^{m,N}_{\tau}(\R^{n+k} \times
\R^{n+k})$ we define 
  \begin{align*}
    b(y,\eta) := \osint e^{-iy' \cdot \eta'} a(y,y',\eta,\eta') dy' \dq \eta' \qquad \text{for all } y, \eta \in \Rn.
  \end{align*}
  Let $M:= \max\{ m \in \N_0: N-m \geq l > k+ \tilde{\tau} \text{ for one } l \in \N_0 \}$. Then $b$ is an element of $\mathscr{A}^{m_+, M}_{\hat{\tau}}(\R^{n} \times \R^{n})$  and for each  $\alpha, \beta \in \Non$ with $|\beta| \leq M$ we have:
  \begin{align*}
    \p_y^{\alpha} \p_{\eta}^{\beta} b(y,\eta) = \osint  e^{-iy' \cdot \eta'}
\p_y^{\alpha} \p_{\eta}^{\beta} a(y,y',\eta,\eta') dy' \dq \eta' \qquad \text{for all } y, \eta \in \Rn.
  \end{align*}
\end{thm}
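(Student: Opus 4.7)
The strategy is to rewrite $b$ as an absolutely convergent integral by means of Theorem~\ref{thm:ExistenceOfOscillatoryIntegral}, then differentiate under the integral sign and extract the correct growth in $(y,\eta)$. First, I would pick an even $l \in \N_0$ with $l > k + m$ and an $l' \in \N_0$ with $k + \tilde\tau < l' \leq N$. Such an $l'$ exists: if $\tau \geq -k$ the assumption $N > k+\tau = k+\tilde\tau$ suffices, and if $\tau < -k$ then $\tilde\tau < -k$ so that $l'=0$ is admissible. Theorem~\ref{thm:ExistenceOfOscillatoryIntegral} applied to the integral in $(y',\eta') \in \R^k \times \R^k$ then yields the representation
\begin{align*}
  b(y,\eta) = \iint e^{-iy'\cdot\eta'}\, A^{l'}(D_{\eta'},y')\bigl[A^l(D_{y'},\eta')\, a(y,y',\eta,\eta')\bigr]\, dy'\,\dq\eta',
\end{align*}
as an absolutely convergent integral. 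Using the explicit form of $A^l$, $A^{l'}$, the Leibniz rule and the amplitude estimate for $a$, one obtains
\begin{align*}
  \bigl|A^{l'}(D_{\eta'},y')A^l(D_{y'},\eta')\, a\bigr| \leq C\,\langle\eta'\rangle^{-l}\langle y'\rangle^{-l'}\,(1+|\eta|+|\eta'|)^m(1+|y|+|y'|)^\tau.
\end{align*}

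The central step is to read off the correct $(y,\eta)$-growth from this bound. For the $\eta$-factor I would use the Peetre-type inequality $(1+|\eta|+|\eta'|)^m \leq C\,\langle\eta\rangle^{m_+}\langle\eta'\rangle^{|m|}$, which absorbs the $\eta'$-growth into $\langle\eta'\rangle^{-l+|m|}$; this is integrable over $\R^k_{\eta'}$ by the choice of $l$ (increasing $l$ further if necessary) and leaves the desired factor $\langle\eta\rangle^{m_+}$. For the $y$-factor I would distinguish two cases according to the definition of $\tilde\tau,\hat\tau$: if $\tau \geq -k$, decompose $(1+|y|+|y'|)^\tau \leq C\,\langle y\rangle^{\tau_+}\langle y'\rangle^{|\tau|}$, yielding $\langle y'\rangle^{-l'+|\tau|}$, integrable thanks to $l'>k+\tau$, and residual $y$-growth $\langle y\rangle^{\tau_+} = \langle y\rangle^{\hat\tau}$; if $\tau<-k$, use instead $(1+|y|+|y'|)^\tau \leq C\,\langle y\rangle^{\hat\tau}\langle y'\rangle^{\tilde\tau}$ with the values of $\tilde\tau,\hat\tau$ supplied by the statement, where the slightly non-integer choice of $\tilde\tau$ for integer $\tau$ avoids the borderline non-integrable factor $\langle y'\rangle^{-k}$. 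Either way, $|b(y,\eta)| \leq C\,\langle y\rangle^{\hat\tau}\langle\eta\rangle^{m_+}$.

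For the derivatives $\partial_y^\alpha\partial_\eta^\beta b$ with $|\beta|\leq M$, the key observation is that $\partial_y^\alpha\partial_\eta^\beta a \in \mathscr{A}^{m,N-|\beta|}_\tau(\R^{n+k}\times\R^{n+k})$ obeys the same amplitude bound as $a$, and by the very definition of $M$ there still exists $l'\in\N_0$ with $k+\tilde\tau<l'\leq N-|\beta|$. Hence the integrand obtained by formally differentiating under the integral sign admits a joint $(y',\eta')$-integrable majorant, uniformly for $(y,\eta)$ in any compact set. The standard dominated-convergence theorem for parameter-dependent integrals then legitimates both the continuity of $\partial_y^\alpha\partial_\eta^\beta b$ in $(y,\eta)$ and the identity
\begin{align*}
  \partial_y^\alpha\partial_\eta^\beta b(y,\eta) = \iint e^{-iy'\cdot\eta'}\,A^{l'}(D_{\eta'},y')\bigl[A^l(D_{y'},\eta')\,\partial_y^\alpha\partial_\eta^\beta a(y,y',\eta,\eta')\bigr]\, dy'\,\dq\eta';
\end{align*}
rewriting the right-hand side as an oscillatory integral via Theorem~\ref{thm:ExistenceOfOscillatoryIntegral} (its value being independent of the cut-off) gives the asserted commutation formula, while applying the previous paragraph to $\partial_y^\alpha\partial_\eta^\beta a$ yields $b\in\mathscr{A}^{m_+,M}_{\hat\tau}(\R^n\times\R^n)$.

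The principal obstacle is the bookkeeping in the middle paragraph: choosing the correct split of $(1+|y|+|y'|)^\tau$ so that the residual $y$-growth is exactly $\hat\tau$ and the $y'$-factor becomes integrable at the precise threshold $l'>k+\tilde\tau$. The subcase of integer $\tau<-k$ is the delicate one and explains why $\tilde\tau$ is perturbed by $\tfrac12$ away from $\tau$ in the statement.
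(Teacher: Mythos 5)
Your strategy---rewriting $b$ as an absolutely convergent integral via $A^{l'}(D_{\eta'},y')A^{l}(D_{y'},\eta')$ using Theorem \ref{thm:ExistenceOfOscillatoryIntegral}, then differentiating under the integral sign with a dominated-convergence argument and re-identifying the result as the oscillatory integral of $\p_y^{\alpha}\p_{\eta}^{\beta}a$---is exactly the standard route; the paper itself only quotes this theorem from \cite{Paper1}, and the argument there is of the same type, so the approach is fine, as is your use of the definition of $M$ to guarantee an admissible $l'\le N-|\beta|$. One subcase of your bookkeeping does not close as written: for $-k\le\tau<0$ you majorize $(1+|y|+|y'|)^{\tau}\le C\langle y\rangle^{\tau_+}\langle y'\rangle^{|\tau|}$, which leaves $\langle y'\rangle^{-l'+|\tau|}$ and therefore requires $l'>k+|\tau|=k-\tau$; but you only have $l'\le N$ with $N>k+\tau$, so such an $l'$ need not exist (take $k=2$, $\tau=-1$, $N=2$). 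The fix is to discard $|y|$ rather than $|y'|$: for $\tau<0$ use $(1+|y|+|y'|)^{\tau}\le C\langle y'\rangle^{\tau}$, giving $\langle y'\rangle^{-l'+\tau}$, integrable precisely when $l'>k+\tau=k+\tilde{\tau}$, with residual $y$-growth $\langle y\rangle^{0}=\langle y\rangle^{\hat{\tau}}$. With that correction the remaining steps, including the split $\tilde{\tau}+\hat{\tau}=\tau$ with $\tilde{\tau}<-k$ in the case $\tau<-k$, go through as you describe.
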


\begin{thm}\label{thm:OscillatoryIntegralGleichung}
 Let $m,\tau \in \R$ and $N \in  \N_0 \cup \{ \infty \}$ with $N > n + \tau$. 
 Moreover, let $l_0, \tilde{l}_0 \in \N_0$ with $\tilde{l}_0 \leq N$. Then
 \begin{align*}
    \osint e^{-iy \cdot \eta} a(y,\eta) dy \dq \eta = \osint e^{-iy \cdot \eta}
 A^{\tilde{l}_0}(D_{\eta},y) A^{l_0}(D_y, \eta) a(y,\eta) 
dy \dq \eta
 \end{align*}
 for every $a \in \mathscr{A}^{m, N}_{\tau}(\R^{n} \times \R^{n})$.
\end{thm}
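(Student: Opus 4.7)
I would obtain the identity by transferring the operator $A^{\tilde l_0}(D_\eta,y) A^{l_0}(D_y,\eta)$ from the exponential $e^{-iy\cdot\eta}$ onto the amplitude $a$ via integration by parts inside the regularising cut-off $\chi(\e y, \e\eta)$. The design of $A^m$ is made precisely so that
\begin{equation*}
A^m(D_y,\eta)\,e^{-iy\cdot\eta} = e^{-iy\cdot\eta} = A^m(D_\eta,y)\,e^{-iy\cdot\eta} \qquad (m\in\N_0),
\end{equation*}
as one checks by a direct computation from $D_{y_j}e^{-iy\cdot\eta} = -\eta_j e^{-iy\cdot\eta}$ together with $\<{D_y}^{2k}e^{-iy\cdot\eta} = \<{\eta}^{2k} e^{-iy\cdot\eta}$.

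\textbf{Step 1: the right-hand side is well defined.} First I would verify that $b := A^{\tilde l_0}(D_\eta,y) A^{l_0}(D_y,\eta) a$ belongs to $\mathscr A^{m-l_0,\,N-\tilde l_0}_{\tau-\tilde l_0}(\RnRn)$. Amplitudes in $\mathscr A^{m,N}_\tau$ are smooth in $y$ with every $y$-derivative obeying the same $(1+|\eta|)^m(1+|y|)^\tau$-bound, so the differential operator $A^{l_0}(D_y,\eta)$ (order $l_0$ in $y$, coefficients bounded by $\<{\eta}^{-l_0}$) improves the $\eta$-decay by $l_0$ and preserves everything else, while $A^{\tilde l_0}(D_\eta,y)$ (order $\tilde l_0\leq N$ in $\eta$, coefficients bounded by $\<{y}^{-\tilde l_0}$) drops both the admissible number of $\eta$-derivatives and the $y$-growth by $\tilde l_0$. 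Since $N > n+\tau$ gives $N-\tilde l_0 > n+(\tau-\tilde l_0)$, Theorem~\ref{thm:ExistenceOfOscillatoryIntegral} ensures that $\osint e^{-iy\cdot\eta} b(y,\eta)\,dy\,\dq\eta$ exists.

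\textbf{Step 2: regularisation, integration by parts, and passage to the limit.} Fix $\chi\in\mathcal S(\RnRn)$ with $\chi(0,0)=1$ and set $I_\e := \iint \chi(\e y,\e\eta)\,e^{-iy\cdot\eta}\,a(y,\eta)\,dy\,\dq\eta$ for $\e>0$; by definition $I_\e$ converges to the left-hand side of the claim as $\e\to 0$. Inserting the basic identity of the plan gives
\begin{equation*}
I_\e = \iint \chi(\e y,\e\eta)\,\bigl[A^{\tilde l_0}(D_\eta,y) A^{l_0}(D_y,\eta) e^{-iy\cdot\eta}\bigr]\,a(y,\eta)\,dy\,\dq\eta,
\end{equation*}
and since, for each fixed $\e>0$, the factor $\chi(\e y,\e\eta)a(y,\eta)$ is Schwartz in both variables, integrating by parts $l_0$ times in $y$ and $\tilde l_0$ times in $\eta$ transfers the two operators onto $\chi(\e y,\e\eta)a(y,\eta)$. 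The Leibniz rule then separates the result into a main term $\iint \chi(\e y,\e\eta) e^{-iy\cdot\eta} b(y,\eta)\,dy\,\dq\eta$, which tends to $\osint e^{-iy\cdot\eta} b(y,\eta)\,dy\,\dq\eta$ by Step~1, and a remainder $R_\e$ whose summands each contain a factor $\e^{|\gamma|} (\p^\gamma\chi)(\e y,\e\eta)$ with $|\gamma|\geq 1$; dominated convergence combined with another application of the $A^l$-trick from Theorem~\ref{thm:ExistenceOfOscillatoryIntegral} yields $R_\e\to 0$, which proves the claim.

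\textbf{Main obstacle.} The technical heart of the argument is the control of $R_\e$: because the support of $\chi(\e y,\e\eta)$ inflates like $\e^{-1}$ in each direction, the factor $\e^{|\gamma|}$ alone cannot be used naively. One must exploit the Schwartz decay of $\p^\gamma\chi$ together, if necessary, with an additional insertion of $A^l, A^{l'}$ operators so as to dominate the remainder, uniformly in $\e\in(0,1]$, by an $L^1(\RnRn)$ function; this is precisely the regularisation mechanism already present in the proof of Theorem~\ref{thm:ExistenceOfOscillatoryIntegral} and, while tedious, is by now routine.
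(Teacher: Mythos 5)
Your argument is correct and is essentially the standard proof of this statement (the paper itself defers the proof to \cite{Paper1}): regularize with $\chi(\e y,\e\eta)$, use $A^{\tilde l_0}(D_\eta,y)A^{l_0}(D_y,\eta)e^{-iy\cdot\eta}=e^{-iy\cdot\eta}$, integrate by parts, and observe that the main term converges to $\osint e^{-iy\cdot\eta}b\,dy\,\dq\eta$ with $b\in\mathscr{A}^{m-l_0,N-\tilde l_0}_{\tau-\tilde l_0}(\RnRn)$ (admissible because $N-\tilde l_0>n+\tau-\tilde l_0$), while each remainder term carries a factor $\e^{|\gamma|}$ with $|\gamma|\geq 1$ multiplying an integral that stays bounded as $\e\to 0$, being itself a regularized oscillatory integral of an admissible amplitude. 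One cosmetic point: for finite $N$ the product $\chi(\e y,\e\eta)a(y,\eta)$ is only $C^N$ in $\eta$, not Schwartz, but since $\tilde l_0\leq N$ the integrations by parts you perform are still legitimate.
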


\section{Pseudodifferential Operators} \label{Kapitel: PDO}

During the whole section we assume $X^{\tilde{m}}_q \in \left\{ W^{\tilde{m},q}_{uloc}; H^{\tilde{m}}_q \right\}$ for $1<q<\infty$, $\tilde{m} \in \R$ if $X^{\tilde{m}}_q = H^{\tilde{m}}_q$ and $\tilde{m} \in \N_0$ else unless otherwise noted.
\subsection{Pseudodifferential Operators with Single Symbols} \label{Section:PropertiesOfPDO} 
The non-smooth symbol-class with coefficients in 
$X^{\tilde{m}}_q$ was already introduced in \cite{Marschall2}. 
Analogous to the definition of $C^{m,s}\Sallgn{\tilde{m}}{\rho}{\delta}{n}{n}{M}$ we define

\begin{Def}
	Let $1 < q < \infty$, $m \in \R$, $\tilde{m} \in \R$ if $X^{\tilde{m}}_q = H^{\tilde{m}}_q$ and $\tilde{m} \in \N_0$ else with $\tilde{m}>n/q$. Moreover, let $M \in \N_0 \cup \{ \infty \}$ and $0 \leq \rho \leq 1$. Then the \textit{symbol-class} $X^{\tilde{m}}_{q} \Sallgn{m}{\rho}{0}{n}{n}{M}$
	is the set of all functions $p:\R^n_x \times\R^n_{\xi} \rightarrow \C$ such that
	\begin{itemize}
		\item[i)] $\p_x^{\beta} p(x, .) \in C^M(\Rn)$ for all $x \in \Rn$,
		\item[ii)] $\p_x^{\beta} \pa{\alpha} p \in C^{0}(\R^n_x \times \R^n_{\xi})$,
		\item[iii)] $\p^{\alpha}_{\xi} p(.,\xi) \in X^{\tilde{m}_q}$ and $\| \pa{\alpha} p(.,\xi)  \|_{X^{\tilde{m}}_{q}} \leq C_{\alpha}\<{\xi}^{m-\rho|\alpha|}$ for all $\xi \in \R^n$
	\end{itemize}
  holds for all $\alpha, \beta \in \N_0^n$ with $|\alpha| \leq M$ and $|\beta| < \tilde{m} -n/q$. 
  The function $p$ is called 
  \textit{(non-smooth) symbol} and $m$ is called \textit{order} of $p$. If $M= \infty$, the symbols are smooth in $\xi$. In this case we write $X^{\tilde{m}}_{ q} \Sallg{m}{\rho}{0}{n}{n}$ instead of $X^{\tilde{m}}_{q}\Sallgn{m}{\rho}{0}{n}{n}{ \infty }$.
\end{Def}

We define for all $k \in \N$ with $k \leq M$ the semi-norms
  \begin{align*}
    |a|^{(\tilde{m})}_k:= \sup_{\xi \in \R^n} \max_{|\alpha| \leq k} \| \pa{\alpha} p(.,\xi)  \|_{X^{\tilde{m}}_{q}} \<{\xi}^{-\tilde{m}+\rho|\alpha|} 
  \end{align*}
  for all $a \in X^{\tilde{m}}_{ q}\Sallgn{\tilde{m}}{\rho}{0}{n}{n}{ M }$. Equipped with the family of semi-norms $(|.|^{(\tilde{m})}_{k})_{k \in \{0,\ldots, M\}}$ the symbol-class 
  $X^{\tilde{m}}_{ q}\Sallgn{\tilde{m}}{\rho}{0}{n}{n}{ M }$ is a topological vector space.\\

Using Lemma \ref{lemma:SobolevHoelderInequalityFor_LqUlocSpaces} if $X^{\tilde{m}}_{ q} = W^{\tilde{m},q}_{uloc}$ and the continuous embedding $H^{\tilde{m}}_q(\Rn) \hookrightarrow C^{\tau}(\Rn)$ else, which is an consequence of Corollary 6.13 in \cite{PDO}, Lemma 6.5 in \cite{PDO}, Theorem 6.15 in \cite{PDO} and Remark 6.4 in \cite{PDO}, we obtain:

\begin{lemma}\label{lemma:einbettungSymbolklassen}
  Let $1 < q < \infty$, $m \in \R$ and $\tilde{m} \in \R$ if $X^{\tilde{m}}_q = H^{\tilde{m}}_q$ and $\tilde{m} \in \N_0$ else with $\tilde{m}>n/q$. Moreover, let $M \in \N_0 \cup \{ \infty \}$ and $0 \leq \rho \leq 1$. Assuming $0 < \tau \leq \tilde{m}-n/q$, $\tau \notin \N$ we have
  \begin{align*}
    X^{\tilde{m}}_{q} \Sallgn{m}{\rho}{0}{n}{n}{M} \subseteq C^{\tau} \Sallgn{m}{\rho}{0}{n}{n}{M}.
  \end{align*}
\end{lemma}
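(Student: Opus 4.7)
The claim is essentially a pointwise transfer of regularity: every symbol in $X^{\tilde{m}}_{q}\Sallgn{m}{\rho}{0}{n}{n}{M}$ should be shown to sit in $C^{\tau}\Sallgn{m}{\rho}{0}{n}{n}{M}$ with its derivatives in $\xi$ having the correct $\<\xi>$-weighted $C^{\tau}$ bounds in $x$. The plan is to reduce everything to the continuous embedding of the coefficient space $X^{\tilde{m}}_{q}(\Rn)$ into $C^{\tau}(\Rn)$, applied frozen at each $\xi$ to the maps $x \mapsto \pa{\alpha} p(x,\xi)$.

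First I would fix $p \in X^{\tilde{m}}_{q}\Sallgn{m}{\rho}{0}{n}{n}{M}$ and $\tau \in (0, \tilde{m}-n/q]$ with $\tau \notin \N$, and invoke the embedding
\begin{align*}
  X^{\tilde{m}}_{q}(\Rn) \hookrightarrow C^{\tau}(\Rn),
\end{align*}
which in the case $X^{\tilde{m}}_{q}=W^{\tilde{m},q}_{uloc}$ is exactly part (v) of Lemma \ref{lemma:SobolevHoelderInequalityFor_LqUlocSpaces}, and in the case $X^{\tilde{m}}_{q}=H^{\tilde{m}}_{q}$ is the standard Sobolev embedding as stated via the references in \cite{PDO}. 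Applying this embedding to $\pa{\alpha} p(\cdot,\xi) \in X^{\tilde{m}}_{q}(\Rn)$ for each fixed $\xi \in \Rn$ and each $|\alpha| \le M$ yields
\begin{align*}
  \|\pa{\alpha} p(\cdot,\xi)\|_{C^{\tau}(\Rn)} \le C \,\|\pa{\alpha} p(\cdot,\xi)\|_{X^{\tilde{m}}_{q}} \le C\, C_{\alpha}\<\xi>^{m-\rho|\alpha|},
\end{align*}
with a constant independent of $\xi$, giving property (iii) of the $C^{\tau}$ symbol class.

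For properties (i) and (ii) — smoothness in $\xi$ up to order $M$ and joint continuity of the mixed derivatives in $(x,\xi)$ — I would observe that they are already part of the definition of $X^{\tilde{m}}_{q}\Sallgn{m}{\rho}{0}{n}{n}{M}$ for every multi-index $\beta$ with $|\beta| < \tilde{m}-n/q$. Since $\tau \notin \N$ and $\tau \le \tilde{m}-n/q$, one has $\lfloor \tau \rfloor < \tau \le \tilde{m}-n/q$, so any $\beta$ required by the $C^{\tau}$ symbol class definition (i.e.\ with $|\beta| \le \lfloor \tau\rfloor$) is admissible in the $X^{\tilde{m}}_{q}$ class; hence properties (i)--(ii) are inherited directly without extra work.

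Since the embedding $X^{\tilde{m}}_{q}\hookrightarrow C^{\tau}$ is already cited as known, the proof is essentially a bookkeeping argument; the only genuinely subtle point is making sure the strict inequality $\lfloor \tau \rfloor < \tilde{m}-n/q$ (guaranteed by $\tau \notin \N$) is used to line up the range of admissible $\beta$'s on both sides, which is why the assumption $\tau \notin \N$ appears in the statement. If the inequality were not strict, one would need the endpoint case of the Sobolev embedding into the Zygmund space $C^{\tau}_{\ast}$ rather than the Hölder space $C^{\tau}$, which would force a slight weakening of the target symbol class.
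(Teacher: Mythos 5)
Your proposal is correct and is essentially the paper's own argument: the paper derives the lemma in one sentence from the embedding $W^{\tilde{m},q}_{uloc}(\Rn)\hookrightarrow C^{\tau}(\Rn)$ of Lemma \ref{lemma:SobolevHoelderInequalityFor_LqUlocSpaces}\,(v) in the uniformly local case and from the Sobolev embedding $H^{\tilde{m}}_q(\Rn)\hookrightarrow C^{\tau}(\Rn)$ otherwise, applied to $\pa{\alpha}p(\cdot,\xi)$ for each fixed $\xi$, exactly as you do. Your additional bookkeeping on the admissible range of $\beta$ (using $\lfloor\tau\rfloor<\tau\leq\tilde{m}-n/q$) is a correct and slightly more explicit version of what the paper leaves implicit.
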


Due to the last lemma we already defined
the associated pseudodifferential operator $p(x,D_x)$ to a non-smooth symbol $p \in X^{\tilde{m}}_{ q} \Sallgn{m}{\rho}{0}{n}{n}{M}$. The set of all non-smooth pseudodifferential operators with symbols in $X^{\tilde{m}}_{q}\Sallgn{m}{\rho}{0}{n}{n}{M}$ is denoted by $\op X^{\tilde{m}}_{q}\Sallgn{m}{\rho}{0}{n}{n}{M}$. 
If $M=\infty$ we write $\op X^{\tilde{m}}_{q}\Sallg{m}{\rho}{0}{n}{n}$ 
instead of $\op X^{\tilde{m}}_{q}\Sallgn{m}{\rho}{0}{n}{n}{\infty}$.\\

The iterated commutators of a non-smooth pseudodifferential operator with coefficients in $X^{\tilde{m}}_{q}$ are also non-smooth pseudodifferential operators. For the proof we refer to \cite[Remark 4.28]{Diss} and \cite[Remark 4.31]{Diss}.

\begin{bem}\label{bem:SymbolOfIteratedCommutatorNonSmoothInUniformlyLocallySobolevSpaces} \label{bem:SymbolOfIteratedCommutatorNonSmoothInSobolevSpaces}
  Let $1 < q < \infty$, $M \in \N_0 \cup \{ \infty\}$ and $\tilde{m} \in \R$ if $X^{\tilde{m}}_q = H^{\tilde{m}}_q$ and $\tilde{m} \in \N_0$ else with $\tilde{m}>n/q$. Additionally let $m\in \R$, $0 \leq \rho \leq 1$ and $ p \in X^{\tilde{m}}_{q} S^m_{\rho,0} (\RnRn; M)$. Moreover, let $l \in \N$, $\alpha_1, \ldots, \alpha_l \in \Non$ and $\beta_1, \ldots, \beta_l \in \Non$ with $|\alpha_j + \beta_j| = 1$ for all $j \in \{ 1, \ldots, n\}$, $|\alpha| \leq M$ and $|\beta| < \tilde{m} - n/q$, where 
  $\alpha:= \alpha_1 + \ldots + \alpha_l$ and $\beta := \beta_1 + \ldots + \beta_l$. Then
  $$\ad(-ix)^{\alpha_1} \ad(D_x)^{\beta_1} \ldots \ad(-ix)^{\alpha_l} \ad(D_x)^{\beta_l} p(x,D_x)$$ 
  is a pseudodifferential operator with the symbol 
  $$\pa{\alpha} D^{\beta}_x  p(x,\xi) \in X^{\tilde{m}- |\beta|}_{q} S^{m-\rho |\alpha| }_{\rho,0} ( \RnRnx{x}{\xi}; M-|\alpha|).$$
\end{bem}


In applications to partial differential equations, many pseudodifferential operators are classical ones. They are defined in the following way:

\begin{Def}
  Let $m \in \R$, $1<q<\infty$ and $\tilde{m} \in \N_0$ with $\tilde{m}>n/q$. 
  Then $p \in W^{\tilde{m},q}_{uloc} S^m_{1,0}(\RnRn)$ is a \textit{classical symbol of the order m} if $p$ has a asymptotic expansion
  \begin{align*}
    p(x, \xi) \sim \sum_{j \in \N_0} p_j(x, \xi),
  \end{align*}
  where $p_j$ are homogeneous of degree $m-j$ in $\xi$ (for $|\xi| \geq 1$ ) for all $j\in \N_0$
  in the sense, that for all $N \in \N$ we have 
  \begin{align*}
    p(x,\xi) - \sum_{ j < N} p_j(x, \xi) \in W^{\tilde{m},q}_{uloc} S^{m-N}_{1,0}(\RnRn).
  \end{align*}
  The set of all classical symbols of the order m is denoted by $W^{\tilde{m},q}_{uloc} S^m_{cl}(\RnRn)$.
\end{Def}

For a more general definition we refer to \cite{Taylor2}.
As an immediate consequence of the previous definition we obtain:
$$W^{\tilde{m},q}_{uloc} S^m_{cl}(\RnRn) \subseteq W^{\tilde{m},q}_{uloc} S^m_{1,0}(\RnRn).$$ 
By means of Remark \ref{bem:SymbolOfIteratedCommutatorNonSmoothInUniformlyLocallySobolevSpaces} and the definition of classical symbols we are able to verify the next remark. We refer to \cite[Remark 4.35]{Diss}for more details.

\begin{bem}\label{bem:SymbolOfIteratedCommutatorNonSmoothInUniformlyLocallySobolevSpacesClassical}
  Let $1 < q < \infty$, $\tilde{m} \in \N_0$ with $\tilde{m}>n/q$ and $m\in \R$. We assume $ p \in W^{\tilde{m}, q}_{uloc} S^m_{cl} (\RnRn)$. Moreover, let $l \in \N$, $\alpha_1, \ldots, \alpha_l \in \Non$ and $\beta_1, \ldots, \beta_l \in \Non$ with $|\alpha_j + \beta_j| = 1$ for all $j \in \{ 1, \ldots, n\}$ and $|\beta| < \tilde{m} - n/q$. Here 
  $\alpha:= \alpha_1 + \ldots + \alpha_l$ and $\beta := \beta_1 + \ldots + \beta_l$. Then
  $$\ad(-ix)^{\alpha_1} \ad(D_x)^{\beta_1} \ldots \ad(-ix)^{\alpha_l} \ad(D_x)^{\beta_l} p(x,D_x)$$ 
  is a pseudodifferential operator with the symbol 
  $$\pa{\alpha} D^{\beta}_x  p(x,\xi) \in W^{\tilde{m}- |\beta|, q}_{uloc} S^{m- |\alpha| }_{cl} (\Rn_x \times \Rn_{\xi}).$$
\end{bem}


Smooth pseudodifferential operators are bounded as maps between several function spaces, cf. Theorem 3.6 in \cite{PDO}, Theorem 6.19 in \cite{PDO} and Theorem 5.20 in \cite{PDO}.

\begin{thm} \label{thm:stetigAufS} \label{thm:stetigInHolderZygmundRaum} \label{thm:stetigInBesselPotRaum}
  Let $p \in \Sn{m}{1}{0}$ with $m \in \R$ and $1<q<\infty$. Then $p(x,D_x)$ extends to a bounded linear operator
  \begin{itemize}
    \item $p(x,D_x): \s \rightarrow \s$,
    \item $p(x,D_x): C_*^{s+m}(\Rn) \rightarrow C^s_*(\Rn)$ for all $ s >0$  with $ s+m>0$,
    \item $p(x,D_x): H_q^{s+m}(\Rn) \rightarrow H^s_q(\Rn)$ for all $ s \in \R$.
  \end{itemize}
  More precisely, for every $k \in \N_0$ there is a constant $C_k>0$ such that
  \begin{align*}
    |p(x,D_x)f|_{k,\mathcal{S}} \leq C_k |p|^{(m)}_k |f|_{\tilde{m}, \mathcal{S}} \qquad \textrm{for all } f \in \s,\\
  \end{align*}
  where 
  $\tilde{m}:= \max \{ 0, m + 2(n+1) + k\}$ if $m \in \Z$ and $\tilde{m}:= \max \{ 0, \lfloor m \rfloor + 2n+3 + k\}$ else.
  Moreover there are some $C_{s,m,q}>0$ and $k \in \N_0$, independent of $p$, such that
  \begin{align*}
    \| p(x,D_x) \|_{ \mathscr{L} (H_q^{s+m}, H^s_q) } \leq C_{s,m,q} |p|^{(m)}_k \qquad \textrm{for all } s \in \R.
  \end{align*}
\end{thm}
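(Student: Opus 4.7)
The plan splits along the three asserted boundedness statements, each of which is a classical result for smooth pseudodifferential operators and is quoted here from \cite{PDO}.

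For the \emph{Schwartz continuity} together with the explicit seminorm bound, I would proceed by integration by parts. For multi-indices $\alpha, \beta \in \Non$ with $|\alpha|, |\beta| \leq k$, I would expand $x^\beta D_x^\alpha p(x,D_x) f(x)$ via the Leibniz rule and then replace $x^\beta e^{ix\cdot\xi}$ by $(-D_\xi)^\beta e^{ix\cdot\xi}$ in order to shift $D_\xi^\beta$ onto the factor $D_x^{\gamma} p(x,\xi)\,\xi^{\gamma'}\hat f(\xi)$ by integration by parts in $\xi$. Distributing $D_\xi^{\beta}$ a second time through Leibniz, each resulting summand can be bounded by $|p|^{(m)}_k$ combined with an estimate $|D_\xi^{\delta}\hat f(\xi)| \leq C_N \<{\xi}^{-N}|f|_{\tilde m, \mathcal{S}}$ coming from the Schwartz character of $\hat f$. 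Integrability of the remaining factor $\<{\xi}^{m+k-N}$ over $\R^n$ then forces $\tilde m$ to be large enough to absorb both the growth $\<{\xi}^{m+k}$ produced by integration by parts and the $n+1$ further powers needed for integrability; the splitting into integer and non-integer $m$ corresponds to whether one must beat $\<{\xi}^m$ or $\<{\xi}^{\lfloor m \rfloor + 1}$.

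For the \emph{Bessel potential and H\"older--Zygmund continuity}, the natural approach is conjugation with Bessel potentials. Setting $q(x,D_x) := \<{D_x}^{s}\, p(x,D_x)\, \<{D_x}^{-(s+m)}$, the smooth symbol calculus (H\"ormander's composition formula with asymptotic expansion) yields $q \in \Sn{0}{1}{0}$ with seminorms of $q$ bounded by finitely many seminorms $|p|^{(m)}_{k'}$. Since $\<{D_x}^t$ induces isomorphisms $H^{s+t}_q(\R^n) \to H^s_q(\R^n)$ for every $s, t \in \R$ and $C^{s+t}_*(\R^n) \to C^s_*(\R^n)$ whenever $s, s+t > 0$, each with explicit seminorm bounds, the claim reduces to showing that every $q \in \Sn{0}{1}{0}$ gives a bounded map $L^q \to L^q$ and $C^\theta_* \to C^\theta_*$ for $\theta > 0$, with operator norm dominated by a single seminorm of $q$.

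The main obstacle lies precisely in this last reduction: the $L^q$- and $C^\theta_*$-boundedness of operators in $\op \Sn{0}{1}{0}$. The standard argument relies on Littlewood--Paley theory. One decomposes $q(x,\xi) = \sum_j q_j(x,\xi)$ dyadically in $\xi$, checks that the kernels of the pieces $q_j(x,D_x)$ satisfy Calder\'on--Zygmund-type estimates uniformly in $j$, and reassembles them using a vector-valued Mihlin multiplier theorem for the $H^s_q$-statement and the dyadic definition of $C^\theta_*$ for the H\"older--Zygmund statement. Since the kernel bounds involve only finitely many derivatives of $q$, tracking them through the composition formula translates them into a bound in terms of $|p|^{(m)}_{k'}$ with $k'$ depending only on $n$, $m$ and $s$, giving the claimed seminorm-uniform estimate.
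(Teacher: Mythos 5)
This theorem is not proved in the paper at all: it is quoted verbatim from \cite{PDO} (Theorems 3.6, 5.20 and 6.19 there), and your sketch is essentially the standard argument given in that reference — integration by parts in $\xi$ for the Schwartz estimate, reduction via conjugation with $\<{D_x}^{t}$ to the order-zero case, and Littlewood--Paley/Calder\'on--Zygmund kernel estimates for the $H^s_q$- and $C^s_*$-boundedness, with the seminorm dependence tracked through the finitely many derivatives entering the kernel bounds. Your proposal is correct and matches the intended (cited) proof.
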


The previous theorem enables us to prove a characterization of the Bessel potential spaces. The last missing piece towards this result is

\begin{prop}\label{prop:CommutatorEstimationOfPartitionOfUnityWithDerivative}
  Let $1 < p < \infty$ and $s <0$. We consider a partition of unity $( \psi_j )_{j \in \Z^n} \subseteq C^{\infty}_c(\Rn)$ with
    $\psi_j(x) = \psi_0(x-j)$ for all $x\in \Rn$ and  $j \in \Z^n$.
  Then for all $\alpha \in \Non$ with $|\alpha| \leq -\lfloor s \rfloor$ we have
  \begin{align}\label{p106}
    \| [\p_x^{\alpha}, \psi_j] f \|_{H^s_p} \leq C \| f \|_{ H^{s-\lfloor s \rfloor}_p } \qquad \text{for all } f \in H^{s-\lfloor s \rfloor}_p (\Rn) \text{ and } j \in \Z^n.
  \end{align}
\end{prop}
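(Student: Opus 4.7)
My plan is to expand the commutator via the Leibniz rule and reduce the estimate to two ingredients: uniform $H^s_p$-boundedness of multiplication by translated cutoffs, and a straightforward loss of $|\gamma|$ derivatives on $f$ compensated by the higher regularity $s-\lfloor s \rfloor > s$. Concretely, applying Leibniz and subtracting the $\gamma=\alpha$ term yields
\begin{align*}
  [\p_x^{\alpha}, \psi_j] f = \sum_{\gamma < \alpha} \binom{\alpha}{\gamma} (\p_x^{\alpha-\gamma}\psi_j)(\p_x^{\gamma}f),
\end{align*}
where in every summand $\gamma \neq \alpha$ forces $|\gamma| \leq |\alpha|-1 \leq -\lfloor s \rfloor -1$. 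It thus suffices to bound each summand in $H^s_p$, uniformly in $j\in\Z^n$.

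For the multiplier, the key observation is that $(\p_x^{\alpha-\gamma}\psi_j)(x) = (\p^{\alpha-\gamma}\psi_0)(x-j)$ is a translate of a fixed function in $C^\infty_c(\Rn)$. Viewed as the symbol $a_j(x,\xi) := (\p^{\alpha-\gamma}\psi_0)(x-j)$ of a pseudodifferential operator, it lies in $S^0_{1,0}(\RnRn)$ and its seminorms
\begin{align*}
  |a_j|^{(0)}_k = \max_{|\beta| \leq k} \|\p^{\beta}\p^{\alpha-\gamma}\psi_0\|_{L^\infty}
\end{align*}
are independent of $j$. Hence Theorem \ref{thm:stetigInBesselPotRaum} gives a constant $C>0$, independent of $j$, such that $\|(\p_x^{\alpha-\gamma}\psi_j) u \|_{H^s_p} \leq C\|u\|_{H^s_p}$ for every $u\in H^s_p(\Rn)$.

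Applying this with $u = \p_x^{\gamma}f$, it remains to show $\|\p_x^{\gamma} f\|_{H^s_p} \leq C\|f\|_{H^{s-\lfloor s\rfloor}_p}$. Since $\p_x^{\gamma}: H^{s-\lfloor s\rfloor}_p(\Rn) \to H^{s-\lfloor s\rfloor - |\gamma|}_p(\Rn)$ is bounded and $|\gamma| \leq -\lfloor s \rfloor - 1$ gives $s - \lfloor s\rfloor - |\gamma| \geq s+1 > s$, the continuous embedding $H^{s-\lfloor s\rfloor - |\gamma|}_p(\Rn) \hookrightarrow H^s_p(\Rn)$ yields exactly the required bound. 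Summing the finitely many contributions produces (\ref{p106}). There is no substantive obstacle: the only point demanding care is the $j$-uniformity of the constant, which is automatic from the translation invariance of the symbol seminorms $|a_j|^{(0)}_k$.
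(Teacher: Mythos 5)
Your proof is correct and follows essentially the same route as the paper's: the paper likewise reduces the claim to Theorem \ref{thm:stetigInBesselPotRaum}, using $\p_x^{\alpha} \in \op \Sn{|\alpha|}{1}{0}$ and the uniform boundedness of $\{\psi_j\}_{j\in\Z^n}$ in $\Sn{0}{1}{0}$ coming from translation invariance of the seminorms. Your Leibniz expansion, the observation that each surviving term loses at most $-\lfloor s\rfloor-1$ derivatives, and the concluding embedding $H^{s-\lfloor s\rfloor-|\gamma|}_p \hookrightarrow H^s_p$ are exactly the details the paper delegates to the thesis.
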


\begin{proof}
  With $\p_x^{\alpha} \in \op \Sn{|\alpha|}{1}{0}$ and the boundedness of $\{ \psi_j \}_{j \in \Z^n} \subseteq \Sn{0}{1}{0}$ at hand the claim is a consequence of Theorem \ref{thm:stetigInBesselPotRaum}. For more details we refer to \cite[Proposition 3.19]{Diss}. 
\end{proof}

\begin{prop}\label{prop:NormequivalenceOfSobolevSpace}
  Let $1 < p < \infty$ and $s \in \R$. Moreover, let $( \psi_j )_{j \in \Z^n} \subseteq C^{\infty}_c(\Rn)$ be a partition of unity with
  \begin{itemize}
   \item $\psi_0 (x) =1 $ for all $x \in [0,1]^n$,
   \item $\psi_j(x) = \psi_0(x-j)$ for all $x\in \Rn$ and $j \in \Z^n$.
  \end{itemize} 
  Then we obtain
  \begin{align*}
    \| f \|_{H^s_p(\Rn)} \simeq \left( \sum_{j \in \Z^n} \| \psi_j f \|_{H^s_p(\Rn)}^p \right)^{1/p}.
  \end{align*}
\end{prop}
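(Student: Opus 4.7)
The plan is to establish the two inequalities separately, reduce the case $s<0$ to the case $s\geq 0$ via the commutator estimate of Proposition~\ref{prop:CommutatorEstimationOfPartitionOfUnityWithDerivative}, and then deduce the reverse inequality for all $s\in\R$ by duality.

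\textbf{Direction ``sum $\leq$ norm''.} For $s=0$ this is immediate from the finite-overlap property of the supports $\supp\psi_j$ (at most $N=N(n)$ supports meet at any point and $|\psi_j|\leq 1$). For an integer $s=k\geq 1$ one obtains $\sum_j \|\psi_j f\|_{W^{k,p}}^p \leq C\|f\|_{W^{k,p}}^p$ by Leibniz combined with the same finite-overlap observation applied to the derivatives $\p_x^\beta \psi_j$. For non-integer $s>0$ I would apply complex interpolation to the linear map $T:f\mapsto(\psi_j f)_j$, viewed as going $H^s_p\to L^p(\Z^n;H^s_p)$, using $[H^0_p,H^k_p]_{s/k}=H^s_p$ together with the standard identification $[L^p(\Z^n;H^0_p),L^p(\Z^n;H^k_p)]_{s/k}=L^p(\Z^n;H^s_p)$. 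For $s<0$, apply Lemma~\ref{lemma:CharacterizationOfBesselPotentialSpaces} to write $f=\sum_{|\alpha|\leq m}\p_x^\alpha g_\alpha$ with $m=-\lfloor s\rfloor$ and $\sum_\alpha\|g_\alpha\|_{H^{s-\lfloor s\rfloor}_p}\leq C\|f\|_{H^s_p}$, so that
\begin{align*}
  \psi_j f=\sum_{|\alpha|\leq m}\bigl[\p_x^\alpha(\psi_j g_\alpha)-[\p_x^\alpha,\psi_j]g_\alpha\bigr].
\end{align*}
The first summand is controlled in $H^s_p$ by $\|\psi_j g_\alpha\|_{H^{s+|\alpha|}_p}\leq C\|\psi_j g_\alpha\|_{H^{s-\lfloor s\rfloor}_p}$ via Theorem~\ref{thm:stetigInBesselPotRaum} and the embedding $H^{s-\lfloor s\rfloor}_p\hookrightarrow H^{s+|\alpha|}_p$. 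For the commutator I would introduce translates $\tilde\psi_j$ of a fixed $C_c^\infty$ bump with $\tilde\psi_j\equiv 1$ on $\supp\psi_j$ and bounded overlap, so that $[\p_x^\alpha,\psi_j]g_\alpha=[\p_x^\alpha,\psi_j](\tilde\psi_j g_\alpha)$; Proposition~\ref{prop:CommutatorEstimationOfPartitionOfUnityWithDerivative} then bounds this by $C\|\tilde\psi_j g_\alpha\|_{H^{s-\lfloor s\rfloor}_p}$. Summing over $j$ and applying the already established non-negative-index case to both $(\psi_j)$ and $(\tilde\psi_j)$ closes the estimate.

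\textbf{Direction ``norm $\leq$ sum''.} For any $s\in\R$ I would use duality, $(H^s_p)'=H^{-s}_{p'}$. For $f\in H^s_p$ and $g\in H^{-s}_{p'}$ with $\|g\|_{H^{-s}_{p'}}\leq 1$, decompose $\sk{f}{g}=\sum_j\sk{\psi_j f}{\tilde\psi_j g}$ using $\sum_j\psi_j=1$ and $\tilde\psi_j\equiv 1$ on $\supp\psi_j$; bound each term by $\|\psi_j f\|_{H^s_p}\|\tilde\psi_j g\|_{H^{-s}_{p'}}$, apply H\"older in $\ell^p/\ell^{p'}$, and control the $\ell^{p'}$-factor by $C\|g\|_{H^{-s}_{p'}}$ via the ``sum $\leq$ norm'' estimate just established, now with parameters $(-s,p',\tilde\psi_j)$.

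The main obstacle is the $s<0$ case of the first direction: Proposition~\ref{prop:CommutatorEstimationOfPartitionOfUnityWithDerivative} does not by itself provide $\ell^p$-summability in $j$, so the localization device $[\p_x^\alpha,\psi_j]g_\alpha=[\p_x^\alpha,\psi_j](\tilde\psi_j g_\alpha)$ is essential to replace the non-summable bound $C\|g_\alpha\|_{H^{s-\lfloor s\rfloor}_p}$ by $C\|\tilde\psi_j g_\alpha\|_{H^{s-\lfloor s\rfloor}_p}$, which is then summable by the non-negative-index case.
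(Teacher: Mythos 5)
Your proposal is correct and follows essentially the same route as the paper: the reverse inequality by duality after inserting cutoffs equal to $1$ on the supports, and the negative-index forward inequality via Lemma \ref{lemma:CharacterizationOfBesselPotentialSpaces}, the commutator bound of Proposition \ref{prop:CommutatorEstimationOfPartitionOfUnityWithDerivative} localized through the auxiliary bumps, and reduction to the case $s\geq 0$. The only differences are cosmetic: the paper simply cites Marschall for $s\geq 0$ where you sketch a direct Leibniz/interpolation proof, and it uses $\eta_j=\sum_{k\in Z+j}\psi_k$ where you use translated bumps $\tilde\psi_j$.
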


\begin{proof}
  The case $s \geq 0$ follows directly from \cite[Theorem 1.3]{Marschall2}. Therefore let $s<0$. In order to show 
  \begin{align}\label{p107}
    \| f \|_{H^s_p(\Rn)} \leq C \left( \sum_{j \in \Z^n} \| \psi_j f \|_{H^s_p(\Rn)}^p \right)^{1/p} \qquad \text{for all } f \in  H^s_p(\Rn)
  \end{align}
  let $f \in H^s_p(\Rn)$ and $g \in H^{-s}_q(\Rn)$ with $ 1/p + 1/q = 1$ be arbitrary. We define 
  \begin{align*}
    \eta_0 = \sum_{k \in Z} \psi_k, \qquad \text{where } Z:= \{ k \in \Z^n : \supp \psi_0 \cap \supp \psi_k \neq \varnothing  \}
  \end{align*}
  and $\eta_j (x) := \eta_0 (x-j)$ for all $x \in \Rn$ and $j \in \Z^n$. An application of the partition of unity and H\"older's inequality for sequence spaces first and the case $-s >0$ afterwards provides
  \begin{align*}
    \left|\skh{f}{g}{H^s_p; H^{-s}_q}\right| 
    &\leq \sum_{j \in \Z^n} \left|\skh{\eta_j \psi_j f}{g}{H^s_p; H^{-s}_q} \right| 
    \leq  \sum_{j \in \Z^n} \| \psi_j f \|_{H^s_p} \| \eta_j g \|_{ H^{-s}_q } \\
    &\leq  \left( \sum_{j \in \Z^n} \| \psi_j f \|^p_{H^s_p} \right)^{1/p} \left( \sum_{j \in \Z^n} \| \eta_j g \|^q_{ H^{-s}_q } \right)^{1/q} \\
    &\leq  C_{q, Z} \left( \sum_{j \in \Z^n} \| \psi_j f \|^p_{H^s_p} \right)^{1/p} \left( \sum_{j \in \Z^n} \| \psi_j g \|^q_{ H^{-s}_q } \right)^{1/q} \\
    &\leq  C_{q, Z} \left( \sum_{j \in \Z^n} \| \psi_j f \|^p_{H^s_p} \right)^{1/p}  \| g \|_{ H^{-s}_q }.
  \end{align*}
  Consequently we get (\ref{p107}) by duality and the previous inequality.\\
  Now we define $\eta_j$ for every $j \in \Z^n$ as before and $m:=-\lfloor s \rfloor$. Additionally we choose an arbitrary $f \in H^s_p(\Rn)$. The existence of $g_{\alpha} \in H^{s - \lfloor s \rfloor}_p(\Rn)$, $\alpha \in \Non$ with $|\alpha| \leq m$, fulfilling  $f= \sum_{|\alpha| \leq m} \p^{\alpha}_x g_{\alpha}$ and $\sum_{|\alpha| \leq m} \| g_{\alpha} \|_{H^{s-\lfloor s \rfloor}_p} \leq C \| f \|_{H^s_p}$ was verified in Lemma \ref{lemma:CharacterizationOfBesselPotentialSpaces}. Together with Proposition \ref{prop:CommutatorEstimationOfPartitionOfUnityWithDerivative}, $\eta_j \equiv 1$ on $\supp \psi_j$ and Theorem  \ref{thm:stetigInBesselPotRaum} we obtain
  \begin{align*}
    \| \psi_j f \|^p_{H^s_p}
    &\leq \sum_{|\alpha| \leq m} \| \psi_j  \p^{\alpha}_x g_{\alpha} \|^p_{H^s_p}
    = \sum_{|\alpha| \leq m} \| \psi_j  \p^{\alpha}_x \{ g_{\alpha} \eta_j \} \|^p_{H^s_p} \\
    &\leq \sum_{|\alpha| \leq m} \left\{ \|  \p^{\alpha}_x \{ \psi_j  g_{\alpha} \eta_j \} \|_{H^s_p} + \| [\p^{\alpha}_x, \psi_j] ( g_{\alpha} \eta_j ) \|_{H^s_p} \right\}^p \\
    &\leq \sum_{|\alpha| \leq m} \left\{ \|  \p^{\alpha}_x \{ \psi_j  g_{\alpha} \} \|_{H^s_p} + C \|  g_{\alpha} \eta_j  \|_{H^{s- \lfloor s \rfloor }_p} \right\}^p \\
    &\leq \sum_{k \in Z +j} \sum_{|\alpha| \leq m} \left\{ \|  \p^{\alpha}_x \{ \psi_k  g_{\alpha} \} \|_{H^s_p} + C \|  g_{\alpha} \psi_k  \|_{H^{s- \lfloor s \rfloor }_p} \right\}^p\\
    &\leq C \sum_{k \in Z +j} \sum_{|\alpha| \leq m} \| \psi_k  g_{\alpha} \|_{H^{s- \lfloor s \rfloor }_p}^p.
  \end{align*}
  Using the case $s \geq 0$ provides:
  \begin{align*}
    \sum_{j \in \Z^n} \| \psi_j f \|_{H^s_p}^p
    &\leq C \sum_{j \in \Z^n} \sum_{k \in Z +j} \sum_{|\alpha| \leq m} \| \psi_k  g_{\alpha} \|_{H^{s- \lfloor s \rfloor }_p}^p
    \leq C \sum_{|\alpha| \leq m} \sum_{j \in \Z^n} \| \psi_j  g_{\alpha} \|_{H^{s- \lfloor s \rfloor }_p}^p \\
    &\leq C \sum_{|\alpha| \leq m} \| g_{\alpha} \|_{H^{s- \lfloor s \rfloor }_p}^p
    \leq C \| f \|_{H^s_p}^p.
  \end{align*}
  \vspace*{-1cm}

\end{proof}

There are also  
boundedness results for non-smooth pseudodifferential operators,  
cf. Lemma 3.4 in \cite{Paper1}, Theorem 3.7 in \cite{Paper1}, Theorem 2.1 in \cite{Marschall},  Lemma 2.9 in \cite{Marschall}:

\begin{lemma}\label{lemma:stetigkeitInS}
  Let $1 < q < \infty$, $\tilde{m}\in \N_0$ with $\tilde{m}>n/q$ and $\tau>0$. We consider $X \in \{ C^{\tau}(\Rn),C^{\tau}_{\ast}(\Rn),  X^{\tilde{m},q} \}$. Let $m\in \R$, $M \in \N_0 \cup \{ \infty \}$ and $\delta=0$ in the case $X \notin \{  C^{\tilde{m},\tau}, C^{\tilde{m} + \tau}_{\ast} \}$ and $0 \leq \rho,\delta \leq 1$ else. Assuming a bounded subset $\mathscr{B} \subseteq X\Sallgn{m}{\rho}{\delta}{n}{n}{M}$,  we obtain the boundedness of $\{ p(x, D_x): p \in \mathscr{B} \} $ as a subset of $ \mathscr{L}(\s, X)$. 
\end{lemma}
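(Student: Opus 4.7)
The plan is to move the $X$-norm inside the integral representation
$$p(x,D_x)u(x) = \int_{\R^n} e^{ix\cdot\xi}\, p(x,\xi)\, \hat u(\xi)\, \dq\xi, \qquad u \in \mathcal{S}(\R^n),$$
and then exploit the rapid decay of $\hat u$. Concretely, I would first establish the Minkowski-type estimate
$$\|p(\cdot,D_x) u\|_X \leq \int_{\R^n} \|e^{i\,\cdot\,\xi} p(\cdot,\xi)\|_{X}\, |\hat u(\xi)|\, \dq\xi,$$
valid for each of the candidate spaces $X \in \{C^\tau(\R^n), C^\tau_\ast(\R^n), X^{\tilde m}_q\}$. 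For $X \in \{W^{\tilde m,q}_{uloc}, H^{\tilde m}_q\}$ this is the standard integral form of Minkowski's inequality; for the Hölder and Hölder--Zygmund cases it is obtained by taking the relevant $L^\infty$-semi-norm and the Hölder or Littlewood--Paley semi-norms inside the integral via the triangle inequality. This reduces the claim to a pointwise-in-$\xi$ estimate of the form
$$\|e^{i\,\cdot\,\xi} p(\cdot,\xi)\|_{X} \leq C\,\langle\xi\rangle^{m+N_X},$$
with $N_X$ and $C$ depending only on $X$, $\rho$, $\delta$, $\tilde m$, $\tau$ and on finitely many of the semi-norms $|p|^{(m)}_k$, and therefore uniformly controlled on $\mathscr{B}$.

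The auxiliary estimate I would verify case by case. For $X = C^\tau(\R^n)$ with $\tau \in (0,1)$ and $\delta = 0$, the inequality $|e^{ix\cdot\xi}-e^{iy\cdot\xi}| \leq C\min(1, \langle\xi\rangle|x-y|)$ combined with
$$|e^{ix\cdot\xi}p(x,\xi) - e^{iy\cdot\xi}p(y,\xi)| \leq |p(x,\xi)-p(y,\xi)| + \|p(\cdot,\xi)\|_{L^\infty}\,|e^{ix\cdot\xi}-e^{iy\cdot\xi}|$$
yields $N_X = \tau$. For the higher-order Hölder case $X = C^{\tilde m + \tau}(\R^n)$, where $\delta > 0$ is permitted, the Leibniz rule applied to $\p_x^\beta(e^{ix\cdot\xi}p(x,\xi))$ produces terms $(i\xi)^{\beta_1} e^{ix\cdot\xi}\p_x^{\beta_2}p(x,\xi)$; combining the factor $\langle\xi\rangle^{|\beta_1|}$ with the Hölder symbol bound $\|\p_x^{\beta_2} p(\cdot,\xi)\|_{C^\tau} \leq C\langle\xi\rangle^{m+\delta|\beta_2|}$ and the $\tau$-case above gives $N_X \leq \tilde m + \tau + \delta\tilde m$. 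The Hölder--Zygmund case is analogous, using the dyadic characterization of $C^\tau_\ast$. For $X \in \{W^{\tilde m,q}_{uloc}, H^{\tilde m}_q\}$ (where $\delta = 0$), the same Leibniz expansion on the $\tilde m$ derivatives that appear in the norm yields $N_X = \tilde m$ (with the Bessel case relying in addition on the equivalent dyadic norm and the fact that $\tilde m > n/q > 0$).

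Combining the two steps with $\hat u \in \mathcal{S}(\R^n)$, there exist an index $k \in \N_0$ and a constant $C' < \infty$ with $\int_{\R^n} \langle\xi\rangle^{m+N_X}|\hat u(\xi)|\,\dq\xi \leq C'\,|u|_{k,\mathcal{S}}$, and hence
$$\|p(\cdot,D_x)u\|_X \leq C''\,|u|_{k,\mathcal{S}} \qquad \text{for all } p \in \mathscr{B},\ u \in \mathcal{S}(\R^n),$$
which is the desired claim. The main obstacle is the Minkowski-type exchange of norm and integral for the Hölder and Hölder--Zygmund cases, since these are not $L^q$-spaces and a formal appeal to Minkowski's inequality does not apply; I expect this to require a direct argument via the modulus of continuity (or via the Littlewood--Paley projectors in the Zygmund case), together with the absolute convergence of the $\xi$-integrand, which is already guaranteed by the Schwartz decay of $\hat u$. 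Once this exchange is secured, the remaining bookkeeping of Leibniz factors and the extraction of $N_X$ in the four cases is routine.
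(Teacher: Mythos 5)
The paper does not actually prove this lemma here (it is quoted from \cite{Paper1}, Lemma 3.4), but your argument is correct and is essentially the standard one used there: push the $X$-norm under the Fourier integral (scalar triangle inequality for the H\"older/Zygmund seminorms, Minkowski's integral inequality for the $L^q$-based norms), bound $\|e^{i\,\cdot\,\xi}p(\cdot,\xi)\|_X$ polynomially in $\langle\xi\rangle$ via the Leibniz rule and the $\alpha=0$ symbol estimates, and absorb that growth into the rapid decay of $\hat u$, all with constants controlled by finitely many symbol seminorms and hence uniform over $\mathscr{B}$. The only points to spell out in a final write-up are the ones you already flag: justifying the interchange (for $W^{\tilde m,q}_{uloc}$ one should work with weak derivatives under the $\xi$-integral, since $\p_x^\beta p(\cdot,\xi)$ need not be pointwise bounded) and the integer-order Zygmund case via the Littlewood--Paley characterization.
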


\begin{thm} \label{thm:stetigInHoelderRaum}
  Let $m \in \R$, $0 \leq \delta \leq \rho \leq 1$ with $\rho > 0$ and $1 < p < \infty $. Additionally let $\tau > \frac{1- \rho}{1-\delta} \cdot \frac{n}{2}$ if $\rho <1$ and $\tau>0$ if $\rho =1$ respectively. Moreover, let $N \in \N \cup \{ \infty \}$ with $N> n/2$ for $2 \leq p < \infty$ and $N>n/p$ else, $k_p := (1- \rho)n \left| 1/2 - 1/p \right|$ and let $\mathscr{B} \subseteq C^{\tau}_{\ast} \Snn{m- k_p}{\rho}{\delta}{N}$ be bounded. Then for each $s \in \R$ with
  $$(1-\rho)\frac{n}{p}-(1-\delta)\tau < s < \tau$$
  there is some $C_s>0$ such that
  \begin{align*}
    \| a(x, D_x)f \|_{ H_p^{s} } \leq C_s \| f \|_{ H_p^{s+m} } \qquad \text{for all } f \in H_p^{s+m}(\Rn) \text{ and } a \in \mathscr{B}.
  \end{align*}
\end{thm}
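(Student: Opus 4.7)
My plan is to adapt the symbol--smoothing strategy of Marschall combined with paradifferential calculus. Fix an auxiliary parameter $\delta_1 \in (\delta, 1)$, to be chosen sufficiently close to $\delta$. With a Littlewood--Paley partition of unity $(\Psi_j)_{j\ge 0}$ in the $\xi$-variable and a corresponding family $(J_j)_{j\ge 0}$ of mollifiers acting in the $x$-variable at scale $2^{-j\delta_1}$, I would decompose every $a \in \mathscr{B}$ as
$$a = a^{\#} + a^{\flat}, \qquad a^{\#}(x,\xi) := \sum_{j \geq 0} (J_j a(\cdot,\xi))(x)\,\Psi_j(\xi).$$
Routine estimates (using the $C^{\tau}_{\ast}$ norm of $a$ and the scaling of $J_j$) show that $a^{\#}$ lies in the smooth Hörmander class $\Snn{m-k_p}{\rho}{\delta_1}{\infty}$ and that $a^{\flat} \in C^{\tau}_{\ast}\Snn{m-k_p-\tau(\delta_1-\delta)}{\rho}{\delta_1}{N}$, with seminorms bounded uniformly over $\mathscr{B}$.

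For the smooth part $a^{\#}(x,D_x)$, Fefferman's boundedness theorem for $\op\Snn{0}{\rho}{\delta_1}{\infty}$ on $L^p$, together with Theorem \ref{thm:stetigInBesselPotRaum} and interpolation, yields continuity $H^{s+m}_p(\Rn) \to H^{s}_p(\Rn)$ for every $s \in \R$, provided $\delta_1 < 1$. The shift $k_p$ in the order of $a^{\#}$ is precisely what absorbs the Fefferman loss for non-classical symbols.

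The main obstacle is the remainder $a^{\flat}$. Here I would apply a Littlewood--Paley decomposition in $x$, writing $a^{\flat} = \sum_{k \ge 0} a^{\flat}_k$ with each $a^{\flat}_k$ supported at $x$-frequencies of size $\sim 2^k$. The $C^{\tau}_{\ast}$-regularity translates into the dyadic estimate
$$\sup_{\xi \in \Rn} \<{\xi}^{-m+k_p+\tau(\delta_1-\delta)}\,\|a^{\flat}_k(\cdot,\xi)\|_{L^\infty(\Rn)} \lesssim 2^{-k\tau},$$
uniformly in $a \in \mathscr{B}$. Decomposing also $f = \sum_l f_l$ in Littlewood--Paley frequencies reduces the estimate for $a^{\flat}(x,D_x) f$ in $H^s_p$ to a bilinear paraproduct-type sum, to be analysed via the Littlewood--Paley characterisations of $H^s_p$ and $C^{\tau}_{\ast}$. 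The admissible range $(1-\rho) n/p - (1-\delta)\tau < s < \tau$ emerges naturally from this analysis: the upper bound $s < \tau$ secures summability of the high-frequency-in-$x$ pieces (where only $C^{\tau}_{\ast}$-control is available), while the lower bound controls the low-frequency-in-$x$ diagonal terms, where Fefferman's correction $k_p$ reappears through the $L^p$-norms of dyadic $\xi$-projectors acting on $f$.

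Combining the estimates for $a^{\#}$ and $a^{\flat}$ and noting that all constants depend on only finitely many seminorms of $a$ in $C^{\tau}_{\ast}\Snn{m-k_p}{\rho}{\delta}{N}$ (the hypothesis $N > n/2$ or $N > n/p$ being precisely what controls the $\pa{\alpha}$-derivatives needed in the Littlewood--Paley analysis), we obtain the stated uniform bound over $\mathscr{B}$. The heaviest technical work lies in the third step --- the paraproduct estimates and the careful bookkeeping of the double Littlewood--Paley sum --- and this is where the exact interval of admissible $s$ is pinned down.
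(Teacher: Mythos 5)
You should first be aware that the paper does not prove this theorem at all: it is quoted verbatim as a known result, namely Theorem 2.1 of Marschall's paper \cite{Marschall} (the surrounding text lists it together with Lemma 2.9 of \cite{Marschall} and results from \cite{Paper1}). Your outline is therefore not an alternative to the paper's argument but a sketch of the argument behind the cited source: symbol smoothing $a=a^{\#}+a^{\flat}$ with an auxiliary parameter $\delta_1\in(\delta,1)$, a smooth Hörmander-class part handled by Fefferman-type $L^p$ bounds, and a remainder of lower order treated by a double Littlewood--Paley (paraproduct) analysis. This is exactly the route of Marschall and of Taylor's treatment in \cite{Taylor2}, and the order gain $\tau(\delta_1-\delta)$ you claim for $a^{\flat}$ is the standard one.

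As a proof, however, the submission has a genuine gap: the step that actually produces the theorem --- the bilinear Littlewood--Paley estimate for $a^{\flat}(x,D_x)$ that pins down the interval $(1-\rho)\frac{n}{p}-(1-\delta)\tau<s<\tau$ --- is only announced, and you yourself flag it as ``the heaviest technical work''. Without it nothing is proved; the endpoints of the $s$-interval, the role of the hypothesis on $N$ (which enters through kernel estimates for the dyadic blocks, not merely ``bookkeeping''), and the uniformity over $\mathscr{B}$ all live in that step. Two further points need care rather than a wave of the hand. First, when $\delta=\rho$ one is forced to take $\delta_1>\rho$, so $a^{\#}\in \Snn{m-k_p}{\rho}{\delta_1}{\infty}$ lies in a class where $\delta_1>\rho$; Fefferman's theorem and the symbolic calculus do not apply off the shelf there, and establishing $H^{s+m}_p\to H^s_p$ boundedness of $a^{\#}(x,D_x)$ for \emph{all} $s\in\R$ in that regime requires an argument (conjugation by $\<{D_x}^s$ and a remainder estimate valid for $\delta_1<1$). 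Second, the definition of $C^{\tau}_{\ast}\Snn{m}{\rho}{\delta}{N}$ for $\delta>0$ carries a $\<{\xi}^{\delta}$-loss in the Hölder norms of the coefficients, and your dyadic estimate for $a^{\flat}_k$ must be derived from that weighted condition, not from the $\delta=0$ definition given in the introduction of the paper. If your goal is to use the theorem rather than reprove it, the correct move is simply to cite \cite[Theorem 2.1]{Marschall}, as the paper does.
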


\begin{thm} \label{thm:1stetigInHoelderRaum00}
  Let $m \in \R$ and $\tau > \frac{n}{2}$. Moreover, let $N \in \N \cup \{ \infty \}$ with $N> n/2$ and $a \in C^{\tau}_{\ast} \Snn{m}{0}{0}{N}$. Then for each $s \in \R$ with
  $n/2-\tau < s < \tau$
  we have
  \begin{align*}
    \| a(x, D_x)f \|_{ H_2^{s} } \leq C_s \| f \|_{ H_2^{s+m} } \qquad \text{for all } f \in H_2^{s+m}(\Rn).
  \end{align*}
\end{thm}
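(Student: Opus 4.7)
The plan is to reduce the theorem to smooth symbols by a Littlewood--Paley decomposition of $a$ in the $x$-variable and to apply a Calder\'on--Vaillancourt-type $L^2$ bound to each smooth piece, weighted by the H\"older--Zygmund smallness of its $x$-coefficient.

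Let $(\varphi_j)_{j\in\N_0}$ be the dyadic partition of unity from the Preliminaries and set
\begin{align*}
a_j(x,\xi):=\mathscr{F}^{-1}_{\eta\to x}\bigl[\varphi_j(\eta)\,\mathscr{F}_{y\to\eta}a(y,\xi)(\eta)\bigr](x),
\end{align*}
so that $a=\sum_{j\ge 0}a_j$ in an appropriate sense. Each $a_j(\cdot,\xi)$ is smooth in $x$ with $x$-Fourier support in a dyadic annulus of radius $\sim 2^j$. From $\|\pa{\alpha}a(\cdot,\xi)\|_{C^{\tau}_{\ast}}\le C_\alpha\<{\xi}^m$ and the defining Littlewood--Paley characterisation of $C^{\tau}_{\ast}$ together with Bernstein's inequality on dyadic blocks one obtains
\begin{align*}
\|\p_x^\beta \pa{\alpha} a_j(\cdot,\xi)\|_{L^\infty(\Rn)}\le C_{\alpha,\beta}\,2^{j(|\beta|-\tau)}\<{\xi}^m\qquad \text{for all } |\alpha|\le N,\ \beta\in\N_0^n.
\end{align*}
Hence each $a_j$ belongs to the smooth class $\Snn{m}{0}{0}{N}$ with symbol seminorms decaying like $2^{-j\tau}$; here the hypothesis $N>n/2$ supplies the number of $\xi$-derivatives required by the smooth Calder\'on--Vaillancourt theorem.

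Next I would decompose $f=\sum_k\Delta_k f$ via a Littlewood--Paley partition $\Delta_k=\psi_k(D_x)$ and examine the pieces $a_j(x,D_x)\Delta_k f$. The crucial spectral observation is that multiplication by $a_j(\cdot,\xi)$ shifts $x$-frequencies by at most $2^j$, so the Fourier support of $a_j(x,D_x)\Delta_k f$ lies in $\{|\zeta|\le C\max(2^j,2^k)\}$. Splitting $\sum_{j,k}$ into the regimes $j\le k-C$, $|j-k|\le C$ and $j\ge k+C$, one obtains in each case
\begin{align*}
\|\Delta_\ell\bigl[a_j(x,D_x)\Delta_k f\bigr]\|_{L^2}\le C\,2^{-j\tau}\<{2^k}^m\,\|\Delta_k f\|_{L^2},
\end{align*}
with the left side nonvanishing only for a bounded range of $\ell$ around $\max(j,k)$. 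Inserting this into the norm equivalence $\|u\|_{H^s_2}^2\sim\sum_\ell 4^{\ell s}\|\Delta_\ell u\|_{L^2}^2$ from Proposition \ref{prop:NormequivalenceOfSobolevSpace} and summing by Schur's test in the indices $j,k,\ell$ yields the desired bound.

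The main obstacle is the \emph{high--low interaction} $j\ge k+C$, where the output lives at frequency $2^j\gg 2^k$. There the series to control is essentially $\sum_{j>k}2^{2j(s-\tau)}\<{2^k}^{2(s+m)}\|\Delta_k f\|_{L^2}^2$, which converges only when $s<\tau$. Dually, the low--high interaction produces a series summable precisely when $s>n/2-\tau$, the extra $n/2$ arising from the cost of passing from the $L^2$-type characterisation of $C^{\tau}_{\ast}$-smoothness to the pointwise $L^\infty$ bounds used on the $a_j$. The assumption $\tau>n/2$ is exactly what opens the non-empty window $(n/2-\tau,\tau)$ for $s$ and guarantees summability of the Schur-type series, giving the claimed boundedness.
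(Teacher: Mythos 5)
First, a remark on the comparison: the paper does not prove this theorem at all — it is quoted from Marschall \cite{Marschall} (Theorem 2.1/Lemma 2.9 there) without proof. Your overall strategy (dyadic decomposition of the symbol in $x$, Littlewood--Paley decomposition of $f$, trichotomy of frequency interactions, Schur summation against the $\ell^2$-characterisation of $H^s_2$) is indeed the standard route to this result and is essentially the one Marschall follows, so the plan is sound.

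There is, however, a genuine gap at your central display. The estimate
\begin{align*}
\|\Delta_\ell[a_j(x,D_x)\Delta_kf]\|_{L^2}\le C\,2^{-j\tau}\<{2^k}^m\|\Delta_kf\|_{L^2}
\end{align*}
is neither proved nor true in general. To bound $a_j(x,D_x)\Delta_k$ on $L^2$ using only $N>n/2$ derivatives in $\xi$ and the $L^\infty_x$ bound $2^{-j\tau}\<{\xi}^m$, one has to decompose the $\xi$-support of $\psi_k$ further into unit cubes $Q_\nu$ (this is where $N>n/2$ really enters, producing an integrable convolution kernel on each cube); the outputs of the individual cubes are then modulated in $x$ by frequencies of size $2^j$, so their Fourier supports overlap with multiplicity of order $2^{\min(j,k)n}$ and almost-orthogonality costs a factor $2^{\min(j,k)n/2}$. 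A symbol piece of the form $2^{-j\tau}\sum_\nu c_\nu e^{ix\cdot\eta_{j,\nu}}\chi_{Q_\nu}(\xi)$ with suitably chosen $|\eta_{j,\nu}|\sim 2^j$ shows that this loss is attained when $j\gtrsim k$. Relatedly, your claim that the output is supported near frequency $\max(2^j,2^k)$ fails in the resonant regime $|j-k|\le C$, where the output fills all of $\{|\zeta|\le C2^k\}$; it is precisely this low-frequency output of the resonant term, combined with the $2^{kn/2}$ loss, that forces $s>n/2-\tau$ (for negative $s$), not the ``low--high interaction'' as you state. As written, your displayed bounds together with your localisation claim would produce a different (too large) range of $s$, and the $n/2$ is reinstated only by an unquantified remark at the end. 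To close the argument you must (i) prove the $L^2$ bound for $a_j(x,D_x)\Delta_k$ with the correct factor $2^{-j\tau}2^{\min(j,k)n/2}\<{2^k}^m$ via the unit-cube decomposition and an explicit overlap count, and (ii) redo the three-regime summation with these factors, allowing the resonant term to output at all frequencies below $C2^k$.
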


\begin{thm} \label{thm:stetigInHoelderRaum00}
  Let $m \in \R$, $N> n/2$, $\tau > 0$. Moreover let $P$ be an element of $ \op C^{\tau}_{\ast} \Snn{m-n/2}{0}{0}{N}$. Then
  \begin{align*}
    P: H_2^{s+m}(\Rn) \rightarrow H_2^s(\Rn) \qquad \textrm{is continuous for all } -\tau < s < \tau.
  \end{align*}
\end{thm}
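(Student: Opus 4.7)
The plan is to prove Theorem \ref{thm:stetigInHoelderRaum00} by a dyadic decomposition of the symbol $p$ in its first variable. Fix a dyadic partition of unity $(\varphi_j)_{j\in \N_0}$ on $\Rn_\eta$ and set
$$p_j(x,\xi) := \mathscr{F}^{-1}\!\bigl[\varphi_j(\eta)\, \mathscr{F}[p(\cdot,\xi)](\eta)\bigr](x),$$
so that $p = \sum_{j \geq 0} p_j$ and each $p_j(\cdot,\xi)$ is smooth in $x$ with Fourier support in $\{|\eta| \leq 2^{j+1}\}$. Combining the Hölder--Zygmund bound $\|\p_\xi^\alpha p(\cdot,\xi)\|_{C^\tau_\ast} \leq C_\alpha\<{\xi}^{m-n/2}$ with Bernstein's inequality for band-limited functions gives
$$\|\p_x^\beta \p_\xi^\alpha p_j(\cdot,\xi)\|_{L^\infty_x} \leq C_{\alpha,\beta}\, 2^{j(|\beta|-\tau)}\,\<{\xi}^{m-n/2} \qquad (|\alpha| \leq N,\ \beta \in \Non),$$
so that every $p_j$ lies in $S^{m-n/2}_{0,0}(\RnRn)$ as a smooth symbol with controlled seminorms.

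For each fixed $j$ I would then use the classical boundedness of smooth pseudodifferential operators of class $S^{m-n/2}_{0,0}$ between Bessel potential spaces: the loss of $n/2$ in the order inherent to the Hörmander class $S^m_{0,0}$ is here precisely absorbed in the symbol order shift by $-n/2$, so that $p_j(x,D_x): H^{s+m}_2(\Rn) \to H^s_2(\Rn)$ is bounded for all $s \in \R$. Tracking the constants should give an estimate of the form
$$\|p_j(x,D_x)\|_{\mathscr{L}(H^{s+m}_2,H^s_2)} \leq C\, 2^{j(|s|-\tau)}.$$
Since $-\tau < s < \tau$ implies $|s|-\tau < 0$, summation in $j$ combined with the triangle inequality yields the continuity of $P$; the non-negative range $s \geq 0$ can be treated directly and the negative range $s < 0$ follows by duality applied to each smooth $p_j(x,D_x)$, whose adjoint admits a symbol in the same class (modulo harmless lower-order errors on the smooth pieces).

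The principal obstacle is the sharp $j$-dependence of the operator norm displayed above. A naive use of the $L^\infty$-seminorms of $p_j$ gives a bound proportional to $2^{j|\beta|}$ in the number of $x$-derivatives, which is too crude to sum. What saves the argument is the Fourier localization of $p_j(\cdot,\xi)$ to the shell $|\eta| \sim 2^j$ combined with the Littlewood--Paley characterization of $H^s_2$: effectively only $\lfloor |s| \rfloor + 1$ derivatives in $x$ are needed to evaluate $\|p_j(x,D_x)f\|_{H^s_2}$, contributing only $2^{j|s|}$ and leaving the net decay $2^{j(|s|-\tau)}$. Making this rigorous amounts to a paraproduct analysis in the style of Bony, splitting $p_j(x,D_x)f$ according to whether the $x$-frequency of $p_j$ is much smaller than, comparable to, or much larger than the $\xi$-frequency support of $f$, and estimating each piece separately.
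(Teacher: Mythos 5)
The paper itself contains no proof of this statement: Theorem \ref{thm:stetigInHoelderRaum00} is quoted from the literature (it is Lemma 2.9 in Marschall's 1987 paper, cited just above it), so your proposal has to stand on its own. Your architecture --- dyadic symbol smoothing in the $x$-variable, Bernstein estimates for the band-limited pieces $p_j$, and summation over $j$ using $|s|<\tau$ --- is indeed the standard route in Marschall's and related proofs, so the plan is of the right shape. As written, however, it has two genuine gaps.

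First, each piece $p_j$ is smooth in $x$ but still only $C^N$ in $\xi$ with $N>n/2$: convolving in the first variable creates no $\xi$-derivatives, so $p_j \in S^{m-n/2}_{0,0}(\RnRn;N)$ and not $S^{m-n/2}_{0,0}(\RnRn)$. You therefore cannot invoke ``the classical boundedness of smooth pseudodifferential operators of class $S^{m-n/2}_{0,0}$'', and your account of where the $n/2$ goes is backwards: a genuinely smooth symbol in $S^{0}_{0,0}$ yields an operator bounded on $H^s_2$ for \emph{every} $s\in\R$ with \emph{no} loss of order (Calder\'on--Vaillancourt plus composition with $\<{D_x}^{\pm s}\in\op S^{\pm s}_{1,0}$). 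The shift by $-n/2$ and the hypothesis $N>n/2$ are the price of finite $\xi$-regularity, and that price has to be paid inside the estimate for each fixed $j$ (typically via a further dyadic decomposition in $\xi$, or a reduction to elementary symbols, with a Cotlar/Schur argument that consumes more than $n/2$ $\xi$-derivatives). Second, and decisively, the bound $\|p_j(x,D_x)\|_{\mathscr{L}(H^{s+m}_2,H^s_2)}\leq C\,2^{j(|s|-\tau)}$ with $C$ uniform in $j$ \emph{is} the theorem; everything else is bookkeeping. You state it, correctly observe that the naive seminorm bound $2^{j|\beta|}$ is not summable, and then defer the real work to an unexecuted ``paraproduct analysis in the style of Bony'', without specifying how the three frequency regimes are estimated, where $N>n/2$ enters, or why the constant is uniform in $j$. (The closing remark that the adjoint of $p_j(x,D_x)$ has symbol in the same class ``modulo harmless lower-order errors'' is also unjustified in the $(0,0)$-calculus, where the expansion of the adjoint symbol gains nothing in the order.) Until the displayed operator-norm estimate is actually proved, the proposal remains a plan rather than a proof.
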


On account of the following lemma, cf. Lemma 3.5 in \cite{Paper1}, we can prove similar boundedness results for non-smooth pseudodifferential operators.

\begin{lemma}\label{lemma:UnabhangigkeitVomSymbol}
  Let $N \in \N_0 \cup \{ \infty \}$ and $X$ be a Banach space with $C^{\infty}_c(\Rn) \subseteq X \subseteq C^0(\Rn)$. Additionally let $m$, $\rho$ and $\delta$ be as in the last lemma. 
  We consider that $\mathscr{B}$ is the topological vector space $S^m_{\rho, \delta} (\RnRn)$ or $X S^m_{\rho, \delta} (\RnRn;N)$. In the case $\mathscr{B}=S^m_{\rho, \delta} (\RnRn)$ we set $N:=\infty$. Moreover, let $X_1,X_2$ be two Banach spaces such that
  \begin{enumerate}
    \item[i)] $\s \subseteq X_1, X_2 \subseteq \sd$,
    \item[ii)] $\s$ is dense in $X_1$ and in $ X'_2$,
    \item[iii)] $a(x,D_x) \in \mathscr{L}(X_1, X_2)$ for all $a \in \mathscr{B}$. 
  \end{enumerate}
  Then there is a $k \in \N$ with $k \leq N$  such that
  \begin{align*}
    \| a(x, D_x)f \|_{\mathscr{L}(X_1;X_2)} \leq C |a|^{(m)}_{k} \qquad \text{for all } a \in \mathscr{B}.
  \end{align*}
\end{lemma}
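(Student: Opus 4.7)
The plan is to apply the closed graph theorem to the assignment $T\colon\mathscr{B}\to\mathscr{L}(X_1,X_2)$ defined by $T(a):=a(x,D_x)$, which is well-defined by hypothesis (iii) and maps into a Banach space because $X_2$ is Banach.

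First I would equip $\mathscr{B}$ with the locally convex topology generated by the seminorms $(|\cdot|^{(m)}_k)_{0\leq k\leq N}$. For $\mathscr{B}=S^m_{\rho,\delta}(\RnRn)$ this is the standard Fr\'echet topology. For $\mathscr{B}=XS^m_{\rho,\delta}(\RnRn;N)$ with $N<\infty$ the family is finite and one obtains a Banach space; completeness is inherited from $X$ by a routine Cauchy-sequence argument pointwise in $\xi$. In either case $\mathscr{B}$ is an F-space.

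The critical step is showing that $T$ has closed graph. I would take a sequence $a_j\to a$ in $\mathscr{B}$ with $T(a_j)\to B$ in $\mathscr{L}(X_1,X_2)$ and identify the limit of $T(a_j)f$ for $f\in\s$ in two different ways. From $|a_j-a|^{(m)}_0\to 0$ one obtains pointwise convergence $a_j(x,\xi)\to a(x,\xi)$ together with a uniform bound $|a_j(x,\xi)|\leq C\<{\xi}^m$, so that dominated convergence with majorant $C\<{\xi}^m|\hat f(\xi)|$ (integrable since $\hat f\in\s$) yields
\begin{align*}
a_j(x,D_x)f(x)=\intr e^{ix\cdot\xi}a_j(x,\xi)\hat f(\xi)\dq\xi\longrightarrow a(x,D_x)f(x)
\end{align*}
pointwise, and consequently in $\sd$. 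On the other hand $T(a_j)f\to Bf$ in $X_2$, and the continuous embedding $X_2\hookrightarrow\sd$ from (i) transfers this to convergence in $\sd$ as well. Uniqueness of distributional limits gives $Bf=T(a)f$ for every $f\in\s$. Density of $\s$ in $X_1$ from (ii) combined with the continuity of $B$ and $T(a)$ on $X_1$ then extends this equality to $B=T(a)$ in $\mathscr{L}(X_1,X_2)$.

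Once $T$ is shown to have closed graph between F-spaces, the closed graph theorem forces $T$ to be continuous. Because the target carries the operator norm and the seminorms $|\cdot|^{(m)}_k$ are non-decreasing in $k$, continuity at the origin immediately produces some $k\in\N$ with $k\leq N$ and a constant $C>0$ such that $\|T(a)\|_{\mathscr{L}(X_1,X_2)}\leq C|a|^{(m)}_k$ for all $a\in\mathscr{B}$, which is the claim. The main delicate point in the argument is the reconciliation of the two limit identifications in the closed graph step, where assumption (i) is used; the density of $\s$ in $X'_2$ from (ii) is not strictly required in my approach but could alternatively be used to identify the limits via the duality pairing.
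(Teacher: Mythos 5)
The paper does not actually prove this lemma here --- it is quoted from \cite[Lemma 3.5]{Paper1} --- but your closed-graph argument is exactly the standard proof of statements of this type and is correct: $\mathscr{B}$ is an F-space under the stated seminorms, $\mathscr{L}(X_1,X_2)$ is Banach, and closedness of the graph reduces to identifying the two limits of $T(a_j)f$ for $f\in\s$ inside $\sd$, after which continuity of $T$ yields the single seminorm $|\cdot|^{(m)}_k$. One correction to your closing remark: hypothesis (i) is only a set inclusion, so it does not by itself make $X_2\hookrightarrow\sd$ continuous; the step ``$T(a_j)f\to Bf$ in $X_2$ implies convergence in $\sd$'' (equivalently, the identification $Bf=a(x,D_x)f$) really does use that $\s\subseteq X'_2$ with the compatible duality pairing, i.e.\ hypothesis (ii) is not dispensable in the way you suggest, only its density part is replaceable by the fact that both limits already lie in $X_2\subseteq\sd$.
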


\begin{thm}\label{thm:Marschall,Thm2.2}
  Let $1 < p,q < \infty$ and $m, \tilde{m} \in \R$ with $\tilde{m} > n/q$. Moreover, let $\mathscr{B} \subseteq H^{ \tilde{m} }_q \Sn{m}{1}{0}$ be bounded. Then for each $s \in \R$ with
  \begin{align*}
    n(1/p + 1/q - 1)^{+} - \tilde{m} < s \leq \tilde{m} -n(1/q- 1/p)^{+}
  \end{align*}
  we have
  \begin{align*}
    \| a(x,D_x) f \|_{H^s_p} \leq C_s \| f \|_{ H^{s+m}_p } \qquad \text{for all } f \in H^{s+m}_p(\Rn) \text{ and all } a \in \mathscr{B}.
  \end{align*}
\end{thm}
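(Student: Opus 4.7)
The plan is to establish the bound via a Marschall-style paradifferential decomposition of the symbol. I would first write $a(x,\xi) = a^{\sharp}(x,\xi) + a^{b}(x,\xi)$ using a dyadic partition of unity $(\varphi_j)_{j \in \N_0}$ in $\xi$ and an $x$-mollifier at scale $2^{-j\delta}$ for a small $\delta \in (0,1)$, so that the smooth part $a^{\sharp}$ lies in the classical Hörmander class $S^m_{1,\delta}(\RnRn)$ with semi-norms bounded uniformly in $a \in \mathscr{B}$, and therefore gives a bounded map $H^{s+m}_p \to H^s_p$ for every $s \in \R$ by the standard smooth theory (the extension of Theorem \ref{thm:stetigInBesselPotRaum} to $\delta < 1$ is classical). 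The rough remainder $a^{b}$ retains the $H^{\tilde{m}}_q$-regularity of $a$ in $x$, but Bernstein's inequality applied to the $\xi$-frequency localization turns the mollification parameter into geometric decay, providing a factor $2^{-j\tilde{m}\delta}$ on the $L^q$-norm in $x$ of the dyadic piece $a^{b}\varphi_j$.

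The main estimate is then on $a^{b}(x,D_x)$. Decomposing $a^{b}(x,D_x)u = \sum_j b_j(x,D_x) u$, where each $b_j$ is supported in $\{|\xi| \simeq 2^j\}$ and its coefficient satisfies $\|b_j(\cdot,\xi)\|_{H^{\tilde{m}}_q(\Rn)} \leq C\, 2^{j(m-\tilde{m}\delta)}$, I would bound each piece by combining the Sobolev embedding $H^{\tilde{m}}_q(\Rn) \hookrightarrow L^{r}(\Rn)$ (which holds for a range of $r$ because $\tilde{m} > n/q$) with Hölder's inequality, choosing $r = r(p,q)$ so that $\tfrac{1}{p} = \tfrac{1}{r} + \tfrac{1}{\tilde{p}}$ for an appropriate $\tilde{p}$. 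Reassembling via the vector-valued Littlewood-Paley characterization of $H^s_p$ then yields the desired bound for all $s$ with $0 \leq s \leq \tilde{m} - n(1/q-1/p)^{+}$, provided $\delta$ is chosen small enough that the weight $2^{js}$ is absorbed by the geometric factor $2^{-j\tilde{m}\delta}$ combined with the embedding exponent.

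The negative-$s$ range $n(1/p+1/q-1)^{+} - \tilde{m} < s < 0$ I would recover by duality: the bound $\|a(x,D_x)f\|_{H^s_p} \leq C\|f\|_{H^{s+m}_p}$ is equivalent to the adjoint bound $\|a(x,D_x)^{*} g\|_{H^{-s-m}_{p'}} \leq C\|g\|_{H^{-s}_{p'}}$, and the formal adjoint of $a(x,D_x)$ is a pseudodifferential operator with double symbol $\overline{a(x',\xi)}$. Applying the symbol reduction for non-smooth double symbols (Theorem \ref{thm:SymbolReduktionNichtGlatt}) yields a single symbol in a class of the same type, with $\tilde{m}$ decreased by $n+1$ which is harmless after one initially assumes $\tilde{m}$ larger than needed, so the positive-$s$ case applied to the adjoint delivers the bound for $a(x,D_x)$ on the negative-$s$ range, and the duality $n(1/p+1/q-1)^{+} = n(1/q-1/p')^{+}$ accounts for the asymmetric endpoints.

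The main obstacle I expect is achieving the \emph{sharp} endpoint $s = \tilde{m} - n(1/q-1/p)^{+}$ in the second step: at this endpoint the Sobolev embedding used to pass from $H^{\tilde{m}}_q$ coefficients to $L^r$ is tight, so the geometric decay in $j$ must \emph{exactly} compensate the weight $2^{js}$ from the Littlewood-Paley reassembly, leaving the series over $j$ at best borderline convergent and requiring a careful square-function / duality argument rather than a crude triangle inequality. This is also what distinguishes the statement from the strictly weaker bound one obtains by plugging Lemma \ref{lemma:einbettungSymbolklassen} into Theorem \ref{thm:stetigInHoelderRaum}, which only yields strict inequalities in $s$ on a smaller interval.
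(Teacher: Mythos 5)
The paper does not reprove the mapping property at all: it quotes \cite[Theorem 2.2]{Marschall2} for the boundedness of each individual $a(x,D_x):H^{s+m}_p\rightarrow H^s_p$ and then deduces the uniformity of $C_s$ over the bounded set $\mathscr{B}$ from Lemma \ref{lemma:UnabhangigkeitVomSymbol}, using the density of $\s$ in $H^{s+m}_p(\Rn)$ and $H^{-s}_{p'}(\Rn)$. Your proposal instead sets out to reprove Marschall's theorem by symbol smoothing; that is a genuinely different (and much longer) route, essentially Marschall's own argument, but as written it has two concrete gaps, so it does not yet constitute a proof.

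First, the endpoint $s=\tilde{m}-n(1/q-1/p)^{+}$ is part of the assertion, and you explicitly leave it open: you acknowledge that there the dyadic series is at best borderline convergent and defer to an unspecified ``careful square-function / duality argument''. Since this endpoint (together with uniformity in $a\in\mathscr{B}$) is precisely what the theorem adds beyond the soft consequence of Lemma \ref{lemma:einbettungSymbolklassen} and Theorem \ref{thm:stetigInHoelderRaum}, the argument is incomplete exactly where it matters. Second, the duality step is not sound as formulated. Theorem \ref{thm:SymbolReduktionNichtGlatt} reduces double symbols of the class $C^{\tilde{m},s}S^m_{0,0}(\RnRnRn;N)$ --- H\"older coefficients and $\rho=0$ --- to single symbols of the same H\"older-coefficient, $\rho=0$ type; applied to $\overline{a(x',\xi)}$ it does not return a symbol in $H^{\tilde{m}}_q \Sn{m}{1}{0}$, so ``the positive-$s$ case applied to the adjoint'' is not available (and the $S^m_{0,0}$ boundedness results in the paper are restricted to $p=2$). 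Moreover, what that reduction loses is $\xi$-derivatives, $N\mapsto N-(n+1)$, not the coefficient regularity $\tilde{m}$; and ``initially assuming $\tilde{m}$ larger than needed'' is not permissible, since $\tilde{m}$ is fixed by the hypothesis of the theorem being proved. To run the duality argument you would need an adjoint/symbol-reduction statement within the $H^{\tilde{m}}_q S^m_{1,0}$ calculus, which neither the paper nor your sketch provides.
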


\begin{proof}
  Let $s$ be as in the assumptions. Due to \cite[Theorem 2.2]{Marschall2} we get $a(x,D_x) \in \mathscr{L}(H^{s+m}_p, H^{s}_p)$. 
  Thus it remains to verify whether $C_s$ is independent of $a \in \mathscr{B}$. We define $p'$ by $1/p + 1/p' = 1$. Since $\s$ is dense in $H^{s+m}_p(\Rn)$ and $H^{-s}_{p'}(\Rn)$,
  the theorem holds due to Lemma \ref{lemma:UnabhangigkeitVomSymbol}.
\end{proof}

Note that the last theorem even holds for $0<p \leq \infty$ and $q \in \{1, \infty\}$ if $\sharp \mathscr{B} = 1$, cf.\;\cite[Theorem 2.2]{Marschall2}. A similar result holds for pseudodifferential operators of the class $H^{ \tilde{m} }_q \Sn{m}{1}{\delta}$, $0 \leq \delta \leq 1$, if $\sharp \mathscr{B} = 1$, cf.\;\cite[Theorem 2.2]{Marschall2}. \\

As a consequence of the previous theorem we obtain the next result. For more details we refer to \cite[Lemma 4.53]{Diss}.

\begin{lemma}\label{lemma:BesselPotentialEstimation}
  Let $1< p,q < \infty$ and $m \in \R$ with $m > n/q$. Assuming $$n \left(1/p + 1/q -1 \right)^{+} - m < s \leq m - n \left( 1/q - 1/p \right)^{+},$$ there is some $C_s>0$ such that
  \begin{align}\label{p79}
    \|a b\|_{H^s_p} \leq C_s \| a \|_{H^m_q} \| b \|_{H^s_p} \qquad \text{for all } a \in H^m_q(\Rn), b \in H^s_p(\Rn).
  \end{align}  
\end{lemma}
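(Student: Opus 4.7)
The plan is to realise the multiplication $b \mapsto a \cdot b$ as a pseudodifferential operator with a $\xi$-independent symbol and then apply Theorem \ref{thm:Marschall,Thm2.2}. For $a \in H^m_q(\Rn)$, define $p(x,\xi) := a(x)$. Then $p(x, D_x) b = a \cdot b$ for every $b \in \s$; moreover $\pa{\alpha} p \equiv 0$ for $|\alpha| \geq 1$, so $p \in H^m_q S^0_{1,0}(\RnRn)$ with
$$|p|^{(0)}_k = \sup_{\xi \in \Rn} \max_{|\alpha| \leq k} \|\pa{\alpha} p(\cdot, \xi)\|_{H^m_q} \<{\xi}^{|\alpha|} = \|a\|_{H^m_q}$$
for every $k \in \N_0$.

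The hypothesis $m > n/q$ and the condition $n(1/p + 1/q - 1)^+ - m < s \leq m - n(1/q - 1/p)^+$ are exactly the assumptions of Theorem \ref{thm:Marschall,Thm2.2} when one sets the Sobolev index $\tilde{m} = m$ and the symbol order equal to $0$. Thus, for every fixed $R > 0$ the set $\mathscr{B}_R := \{p(x,\xi) = a(x) : \|a\|_{H^m_q} \leq R\}$ is a bounded subset of $H^m_q S^0_{1,0}(\RnRn)$, and Theorem \ref{thm:Marschall,Thm2.2} yields the uniform boundedness of the family $\{p(x,D_x) : p \in \mathscr{B}_R\}$ in $\mathscr{L}(H^s_p, H^s_p)$. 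In particular, for the whole symbol class $H^m_q S^0_{1,0}(\RnRn)$ every induced multiplication operator lies in $\mathscr{L}(H^s_p, H^s_p)$.

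To extract the precise linear dependence on $\|a\|_{H^m_q}$ demanded by (\ref{p79}), I would invoke Lemma \ref{lemma:UnabhangigkeitVomSymbol} with $X_1 = X_2 = H^s_p(\Rn)$ and with $\mathscr{B} = H^m_q S^0_{1,0}(\RnRn)$: the Schwartz space $\s$ is dense in $H^s_p(\Rn)$ and in its dual $H^{-s}_{p'}(\Rn)$ with $1/p + 1/p' = 1$, and by the previous step hypothesis iii) is satisfied. Consequently there exist $k \in \N_0$ and $C_s > 0$, independent of $a$, with
$$\|ab\|_{H^s_p} = \|p(x, D_x) b\|_{H^s_p} \leq C_s |p|^{(0)}_k \|b\|_{H^s_p} = C_s \|a\|_{H^m_q} \|b\|_{H^s_p}$$
first for $b \in \s$ and then, by density, for all $b \in H^s_p(\Rn)$. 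The only genuine work, once the symbolic reinterpretation is made, is to confirm the identity $|p|^{(0)}_k = \|a\|_{H^m_q}$ and to check the density assumptions entering Lemma \ref{lemma:UnabhangigkeitVomSymbol}; both are routine, so no substantial obstacle is anticipated.
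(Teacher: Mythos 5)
Your proposal is correct and is essentially the paper's own argument: the paper derives this lemma precisely "as a consequence of the previous theorem," i.e.\ by viewing multiplication by $a$ as the pseudodifferential operator with the $\xi$-independent symbol $p(x,\xi)=a(x)\in H^m_q S^0_{1,0}(\RnRn)$ and applying Theorem \ref{thm:Marschall,Thm2.2} (whose index hypotheses with $\tilde{m}=m$ and order $0$ match the lemma's exactly), with the linear dependence on $\|a\|_{H^m_q}$ obtained via Lemma \ref{lemma:UnabhangigkeitVomSymbol}. The only cosmetic alternative would be to get the linear dependence by scaling $a$ to the unit ball of $H^m_q$, but this changes nothing of substance.
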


\begin{thm}\label{thm:boundednessOfSymbolsWithCoefficientsInUniformlyBoundedSobolevSpaces}
  Let $m \in \R$, $1 < p,q < \infty$ and $\tilde{m} \in \N$ with $\tilde{m}>n/q$. We assume that $\mathscr{B} \subseteq W^{\tilde{m}, q}_{uloc} \Sn{m}{1}{0}$ is bounded. Then for each $s \in \R$ with
  \begin{align}\label{eq57}
    -  \tilde{m} + n/q < s \leq \tilde{m} - n(1/q - 1/p)^+
  \end{align}
  there is some $C_s>0$ such that
  \begin{align*}
    \| a(x, D_x)f \|_{H^{s}_p} \leq C_s \| f \|_{H^{s+m}_p} \qquad \text{for all } f \in  H^{s+m}_p(\Rn) \text{ and } a \in \mathscr{B}.
  \end{align*}
\end{thm}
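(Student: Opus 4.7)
The plan is to reduce the claim to Theorem \ref{thm:Marschall,Thm2.2}, which handles the full Sobolev case $H^{\tilde{m}}_q \Sn{m}{1}{0}$, by spatially localizing both the symbol $a$ and the function $f$. The key observation is that, by translation invariance of the $W^{\tilde{m},q}_{uloc}$-norm on unit balls, multiplying $a$ by a compactly supported cutoff produces a symbol belonging to the full $H^{\tilde{m}}_q$-class, with bounds that are uniform in the translate. Fix the partition of unity $(\psi_j)_{j \in \Z^n}$ of Proposition \ref{prop:NormequivalenceOfSobolevSpace} and choose $\eta_0 \in C^{\infty}_c(\Rn)$ with $\eta_0 \equiv 1$ on a neighborhood of $\supp \psi_0$; set $\eta_j(x):=\eta_0(x-j)$. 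Since $\psi_j\eta_j = \psi_j$, one has the identity
\[
\psi_j \cdot a(x,D_x) f = \psi_j \cdot (\eta_j a)(x,D_x) f, \qquad f \in \s,
\]
because the multiplication by $\eta_j(x)$ enters the oscillatory integral on the $x$-side. Moreover, the family $\{\eta_j a : j \in \Z^n,\, a \in \mathscr{B}\}$ is bounded in $H^{\tilde{m}}_q \Sn{m}{1}{0}$ with a bound depending only on $\mathscr{B}$, $\eta_0$, and $n,q,\tilde{m}$.

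Invoking Proposition \ref{prop:NormequivalenceOfSobolevSpace} on the left-hand side,
\[
\|a(x,D_x) f\|_{H^s_p}^p \simeq \sum_{j \in \Z^n} \|\psi_j (\eta_j a)(x,D_x) f\|_{H^s_p}^p,
\]
and decomposing $f = \sum_{k \in \Z^n} \psi_k f$ on the right, I would split the double sum into a near-diagonal part $|j-k|\leq K$ and a far-diagonal part $|j-k|>K$, where $K$ is chosen so that $\supp \eta_j \cap \supp \psi_k = \emptyset$ whenever $|j-k|>K$. For the near-diagonal part, Theorem \ref{thm:Marschall,Thm2.2} applied to $\eta_j a$ yields
\[
\|\psi_j (\eta_j a)(x,D_x)(\psi_k f)\|_{H^s_p} \leq C \|\psi_k f\|_{H^{s+m}_p},
\]
uniformly in $j,k,a$, valid for every $s$ in our range (which is contained in the broader range of Theorem \ref{thm:Marschall,Thm2.2} since $n(1/p+1/q-1)^+ \leq n/q$). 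Summing over the boundedly many near-diagonal $k$ and reapplying Proposition \ref{prop:NormequivalenceOfSobolevSpace} to $f$ gives the bound $C\|f\|_{H^{s+m}_p}^p$ for this contribution.

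The main obstacle is the far-diagonal part, where $\supp \eta_j$ and $\supp \psi_k$ are separated by a distance of order $|j-k|$. Here I would exploit the full smoothness of $\eta_j a$ in $\xi$ together with the symbol bounds $|\pa{\alpha}(\eta_j a)(x,\xi)| \leq C_{\alpha} \<{\xi}^{m-|\alpha|}$ (uniform in $j$ and $a \in \mathscr{B}$): iterated integration by parts in $\xi$ in the oscillatory integral defining the Schwartz kernel $K_j(x,y)$ of $(\eta_j a)(x,D_x)$ yields
\[
|K_j(x,y)| \leq C_N \<{x-y}^{-N}, \qquad x \in \supp \eta_j,\; N \in \N_0,
\]
with $C_N$ independent of $j$ and $a$. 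For negative $s$ this is complemented by Lemma \ref{lemma:CharacterizationOfBesselPotentialSpaces}, commutator estimates of $\p_x^{\alpha}$ with the cutoffs in the spirit of Proposition \ref{prop:CommutatorEstimationOfPartitionOfUnityWithDerivative}, and the boundedness of $\p_x^{\alpha} \in \op \Sn{|\alpha|}{1}{0}$ from Theorem \ref{thm:stetigInBesselPotRaum}. Together these imply
\[
\|\psi_j (\eta_j a)(x,D_x)(\psi_k f)\|_{H^s_p} \leq C_N (1+|j-k|)^{-N} \|\psi_k f\|_{H^{s+m}_p}
\]
for $|j-k|>K$, and Young's inequality for discrete convolutions on $\ell^p(\Z^n)$, applied with $N$ large enough that $(1+|\cdot|)^{-N} \in \ell^1(\Z^n)$, sums this far-diagonal contribution to $C\|f\|_{H^{s+m}_p}^p$, completing the proof.
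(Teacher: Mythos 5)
Your overall strategy --- cut the symbol off with $\eta_j$ so that $\{\eta_j a\}$ is bounded in $H^{\tilde{m}}_q\Sn{m}{1}{0}$ uniformly in $j$ and $a$, and reassemble via Proposition \ref{prop:NormequivalenceOfSobolevSpace} --- is coherent and is a genuinely different route from the paper's. The paper does not localize at all: it quotes \cite[Theorem 2.6]{Marschall2}, which already covers symbols with $W^{\tilde{m},q}_{uloc}$-coefficients, to get $a(x,D_x)\in\mathscr{L}(H^{s+m}_p,H^s_p)$ for each fixed $a$, and then obtains uniformity over the bounded set $\mathscr{B}$ from the density/closed-graph argument of Lemma \ref{lemma:UnabhangigkeitVomSymbol}. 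Your near-diagonal analysis is sound: the uniform bound on $\{\eta_j a\}$ is essentially the computation (\ref{p78}) that the paper itself performs in the proof of Theorem \ref{thm:boundednessOfClassicalSymbolsInSobolevSpaceUniformly}, and your range of $s$ is indeed contained in that of Theorem \ref{thm:Marschall,Thm2.2} since $n(1/p+1/q-1)^{+}\leq n/q$.

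The genuine gap is the far-diagonal estimate, which you rightly call the main obstacle but then only assert. The pointwise decay $|K_j(x,y)|\leq C_N\<{x-y}^{-N}$ obtained by integrating by parts in $\xi$ uses only the $L^{\infty}_x$ bounds on $\pa{\alpha}a$ (available via Lemma \ref{lemma:einbettungSymbolklassen}) and cannot by itself produce an $H^{s+m}_p\to H^s_p$ bound for $s\neq 0$. For $s>0$ one must differentiate the kernel in $x$, and $\p_x^{\beta}(\eta_j a)(x,\xi)$ for $|\beta|$ between $\lfloor\tilde{m}-n/q\rfloor+1$ and $\tilde{m}$ is controlled only in $L^q$ in $x$, not pointwise; this is exactly where the upper restriction $s\leq\tilde{m}-n(1/q-1/p)^{+}$ has to enter, and your sketch never uses that restriction in the off-diagonal part. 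For $s+m<0$ you invoke Lemma \ref{lemma:CharacterizationOfBesselPotentialSpaces} to write $\psi_k f=\sum_{\alpha}\p_x^{\alpha}g_{\alpha}$, but the $g_{\alpha}$ produced by that lemma are not supported near $k$, so the support separation on which the whole far-diagonal argument rests is destroyed at precisely this step. As written, the claimed bound $\|\psi_j(\eta_j a)(x,D_x)(\psi_k f)\|_{H^s_p}\leq C_N(1+|j-k|)^{-N}\|\psi_k f\|_{H^{s+m}_p}$ for the full range of $s$ is essentially as strong as the theorem itself, so the hardest step of your proof is a restatement of the goal rather than a reduction of it. Either carry that estimate out in detail (integer $s\geq 0$ first, then duality and interpolation), or argue as the paper does via \cite[Theorem 2.6]{Marschall2} combined with Lemma \ref{lemma:UnabhangigkeitVomSymbol}.
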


\begin{proof}
  On account of \cite[Theorem 2.6]{Marschall2} we have $a(x,D_x) \in \mathscr{L}(H^{s+m}_p, H^{s}_p)$ 
  for each $a \in \mathscr{B}$. Therefore it remains to show that $C_s$ is independent of $a \in \mathscr{B}$. We define $p'$ by $1/p + 1/p' = 1$. Since $\s$ is dense in $H^{s+m}_p(\Rn)$ and $H^{-s}_{p'}(\Rn)$,
  the theorem holds true because of Lemma \ref{lemma:UnabhangigkeitVomSymbol}.
\end{proof}

We remark that the previous theorem even holds for $0<p < \infty$ and $q =1$ if $\sharp \mathscr{B} = 1$ and if $s \in \R$ fulfills
\begin{align*}
    n( \max\{ 1, 1/p\} -1 ) -  \tilde{m} + n/q < s \leq \tilde{m} - n(1/q - 1/p)^+
  \end{align*}
instead of the assumption (\ref{eq57}) due to \cite[Theorem 2.6]{Marschall2}. \\

Next we want to improve the previous statement for classical pseudodifferential operators of the symbol-class $W^{\tilde{m}, q}_{uloc} \Sn{m}{1}{0}$. To this end we have to develop some further tools.

\begin{Def}
   Let $\Sigma$ be $[\sigma, \infty)$ or $(\sigma, \infty)$ with $\sigma \in \R \cup \{ -\infty \}$. Then a family of Banach spaces $\{ X^s: s \in \Sigma \}$ is called \textit{scale}, if
   \begin{enumerate}
      \item $C^{\infty}_c(\Rn) \subseteq X_s \subseteq \sd$ for all $s \in \Sigma$,
      \item $X^s \subseteq X^t$ for all $s,t \in \Sigma$ with $t<s$,
      \item $P: X^{s+m} \rightarrow X^s$ for all differential operators $P$ of the order $m \in \N_0$ with smooth coefficients and all $s \in \Sigma$.
   \end{enumerate}
\end{Def}

\begin{Def}
  A scale $\{ X^s: s \in \Sigma \}$ is called \textit{microlocalizable} if for every symbol $p \in S^m_{1,0}(\RnRn)$ the operator
  \begin{align*}
    p(x, D_x): X^{s+m} \rightarrow X^s
  \end{align*}
  is bounded for all $s \in \Sigma$ with $s+m \in \Sigma$.
\end{Def}

\begin{bsp}\label{bsp:microlocalizable}
  Let $1 < p < \infty$. Due to Theorem \ref{thm:stetigInBesselPotRaum} the sets $\{H^s_p(\Rn):  s \in \R \}$ and $\{ C^s_*(\Rn): 0< s < \infty \}$ are microlocalizable.
\end{bsp}

On account of \cite[Proposition 1.1B]{Taylor2} the next statement holds:

\begin{prop}\label{prop:BoundednessOfClassicalSymbols}
  Let  
  $\tilde{m} \in \N_0$
  and $1<q < \infty$. If $\{  X^s :  s \in \Sigma \}$ is microlocalizable and $p \in W^{\tilde{m}, q}_{uloc} S^m_{cl}(\RnRn)$, then 
  \begin{align*}
    p(x, D_x): X^{s+m} \rightarrow X^s
  \end{align*}
  is bounded for $s \in \Sigma$, provided that $s+m \in \Sigma$ and
  $$\| ab \|_{X^s} \leq C \| a \|_{X^s} \| b \|_{W^{\tilde{m}, q}_{uloc}} \qquad \text{ for all } a \in X^s \text{ and } b \in W^{\tilde{m}, q}_{uloc}(\Rn).$$
\end{prop}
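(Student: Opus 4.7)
The plan is to decompose $p(x,D_x)$ into a convergent sum of operators of the form $a(x) q(D_x)$, where $a \in W^{\tilde{m}, q}_{uloc}(\Rn)$ and $q \in S^{\mu}_{1,0}(\Rn)$ is a smooth $x$-independent Fourier multiplier. For every such summand, microlocalizability of the scale yields $q(D_x) \in \mathscr{L}(X^{t+\mu}, X^t)$, and the multiplication hypothesis gives that pointwise multiplication by $a$ is bounded on every $X^s$; composing the two bounds will give $a(x) q(D_x) \in \mathscr{L}(X^{t+\mu}, X^s)$ whenever the indices align on the scale.

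Using the definition of classical symbol, for any $N \in \N$ I would write
\begin{align*}
  p(x,\xi) = \sum_{j<N} \chi(\xi) p_j(x,\xi) + r_N(x,\xi),
\end{align*}
with $p_j$ homogeneous of degree $m-j$ in $\xi$ for $|\xi|\geq 1$, $\chi \in C^{\infty}(\Rn)$ equal to one for $|\xi|\geq 1$ and vanishing near the origin, and $r_N \in W^{\tilde{m},q}_{uloc} S^{m-N}_{1,0}(\RnRn)$. For each $j$, I would expand the angular restriction of $p_j$ in an $L^2(S^{n-1})$-orthonormal basis of spherical harmonics $(Y_l)_{l \geq 0}$ of degrees $d_l$, obtaining the representation
\begin{align*}
  \chi(\xi) p_j(x,\xi) = \sum_{l=0}^{\infty} a_{j,l}(x) b_{j,l}(\xi),
\end{align*}
where $a_{j,l}(x) := \int_{S^{n-1}} p_j(x,\omega) \overline{Y_l(\omega)}\, d\sigma(\omega)$ and $b_{j,l}(\xi) := \chi(\xi) |\xi|^{m-j} Y_l(\xi/|\xi|) \in S^{m-j}_{1,0}(\Rn)$. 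Proposition \ref{prop:TfInWmq_uloc}, applied to the bounded linear functional integration-against-$\overline{Y_l}$ on the unit sphere, shows $a_{j,l} \in W^{\tilde{m}, q}_{uloc}(\Rn)$, while repeated integration by parts against the Laplace--Beltrami operator on $S^{n-1}$, combined with the smoothness of $p_j$ in $\xi$ on $|\xi|=1$, yields the rapid decay
\begin{align*}
  \| a_{j,l} \|_{W^{\tilde{m}, q}_{uloc}} \leq C_{j,M} (1 + d_l)^{-M} \qquad \text{for every } M \in \N,
\end{align*}
whereas the relevant seminorms of $b_{j,l}$ grow only polynomially in $d_l$.

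Combining these estimates, microlocalizability gives $b_{j,l}(D_x) \in \mathscr{L}(X^{s+m}, X^{s+j})$ (choose $t = s+j \in \Sigma$ and $\mu = m-j$), and the scale embedding $X^{s+j} \hookrightarrow X^s$ together with the multiplication hypothesis provides multiplication by $a_{j,l}$ as a bounded map $X^{s+j} \to X^s$ of norm $\leq C \| a_{j,l}\|_{W^{\tilde{m}, q}_{uloc}}$. The rapid decay of $\| a_{j,l}\|_{W^{\tilde{m}, q}_{uloc}}$ against the polynomial growth of $|b_{j,l}|^{(m-j)}_k$ therefore makes the series $\sum_l a_{j,l}(x) b_{j,l}(D_x)$ absolutely convergent in $\mathscr{L}(X^{s+m}, X^s)$, handling each $p_j$. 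The principal obstacle is then the residual $r_N$, since the microlocalizability hypothesis is stated only for smooth symbols in $S^{m-N}_{1,0}(\RnRn)$ and does not directly cover operators with non-smooth coefficients. I would address this by iterating the construction: $r_N$ is itself classical of order $m-N$, so the same spherical-harmonic expansion reduces $r_N(x,D_x)$ to an absolutely convergent series plus a residue of arbitrarily low order $m-N'$; choosing $N'$ large enough, the surviving residue can be represented as $a(x)$ times a fixed smoothing $x$-independent multiplier, for which the multiplication hypothesis combined with microlocalizability applied to this smooth symbol produces the required bound $X^{s+m} \to X^s$ directly. Summing all contributions then yields the claimed boundedness of $p(x,D_x)$.
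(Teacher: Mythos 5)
The paper's own ``proof'' is a one-line citation of Proposition~1.1.B in \cite{Taylor2}, and the spherical-harmonics separation of variables you carry out for the homogeneous terms $p_j$ is exactly the argument behind that citation: expand $p_j(x,\omega)$ on $S^{n-1}$, get rapidly decreasing coefficients $a_{j,l}\in W^{\tilde m,q}_{uloc}$ via the Laplace--Beltrami operator, and compose the multiplier bound (microlocalizability) with the module bound ($\|ab\|_{X^s}\leq C\|a\|_{X^s}\|b\|_{W^{\tilde m,q}_{uloc}}$). That part is sound, modulo the usual tacit assumption that the operator norm of $q(D_x)$ on the scale is controlled by finitely many seminorms of $q$ in $S^\mu_{1,0}$ (needed to sum over $l$; it follows from a closed-graph argument and the paper is equally silent about it).

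The genuine gap is your treatment of the remainder. You correctly identify $r_N\in W^{\tilde m,q}_{uloc}S^{m-N}_{1,0}$ as the obstacle, but the proposed resolution --- ``choosing $N'$ large enough, the surviving residue can be represented as $a(x)$ times a fixed smoothing $x$-independent multiplier'' --- is false: no matter how negative its order, a general element of $W^{\tilde m,q}_{uloc}S^{m-N'}_{1,0}$ does not have separated variables, and iterating the classical expansion never removes the remainder, it only lowers its order. Since the remainder retains non-smooth $x$-dependence, microlocalizability never applies to it directly, so as written the proof does not close. What is actually needed is a second, non-homogeneous separation-of-variables device for low-order symbols with $W^{\tilde m,q}_{uloc}$ coefficients. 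Two standard options: (i) for $N$ large, use the kernel bound of Theorem~\ref{thm:KernelRepresentationNonsmoothCase} to write $r_N(x,D_x)\<{D_x}^{-m}u(x)=\int k(x,z)\,(\tau_z \<{D_x}^{-m} u)(x)\,dz$ with $\|k(\cdot,z)\|_{W^{\tilde m,q}_{uloc}}\leq C_L\<{z}^{-L}$; each translation $\tau_z=e^{-iz\cdot D_x}$ is an $S^0_{1,0}$ multiplier with seminorms growing polynomially in $z$, so the module hypothesis plus microlocalizability makes the superposition absolutely convergent in $\mathscr{L}(X^{s})$, and $\<{D_x}^{m}$ is handled by microlocalizability; or (ii) decompose $r_N$ dyadically in $\xi$ and expand each piece in a Fourier series in $\xi$ (the Coifman--Meyer/Marschall ``elementary symbol'' decomposition), which again produces a series $\sum c_{j,k}(x)m_{j,k}(\xi)$ with rapidly decreasing $W^{\tilde m,q}_{uloc}$-coefficients against polynomially bounded multipliers. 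Either route completes the argument; without one of them the proof of the proposition is incomplete.
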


Making use of the previous proposition and of Lemma \ref{lemma:UnabhangigkeitVomSymbol} we get:

\begin{thm}\label{thm:boundednessOfClassicalSymbolsInSobolevSpaceUniformly}\label{thm:boundednessOfClassicalSymbolsInSobolevSpace}
  Let $m \in \R$, $1 < p,q < \infty$ and $\tilde{m} \in \N$ with $\tilde{m}>n/q$. Assuming a bounded subset $\mathscr{B} \subseteq W^{\tilde{m},q}_{uloc} S^m_{cl}(\RnRn)$ we get for each $s \in \R$ with
  \begin{align*}
    n \left(1/p + 1/q -1 \right)^{+} - \tilde{m} < s \leq \tilde{m} - n \left( 1/q - 1/p \right)^{+}
  \end{align*}
  the existence of a constant $C_s>0$ such that
  \begin{align}\label{Stern}
    \| a(x, D_x)f \|_{ H_p^{s} } \leq C_s \| f \|_{ H_p^{s+m} } \qquad \text{for all } f \in H_p^{s+m}(\Rn) \text{ and } a \in \mathscr{B}.
  \end{align}
\end{thm}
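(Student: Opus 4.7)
The plan is to apply Proposition \ref{prop:BoundednessOfClassicalSymbols} with the microlocalizable scale $\{H^s_p(\Rn): s\in\R\}$ (cf.\;Example \ref{bsp:microlocalizable}) to get individual boundedness of each $a(x,D_x)$, and then invoke Lemma \ref{lemma:UnabhangigkeitVomSymbol} to obtain a uniform constant $C_s$ on $\mathscr{B}$.

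\textbf{Step 1.} I would first verify the multiplication estimate required by Proposition \ref{prop:BoundednessOfClassicalSymbols}, namely
\begin{align*}
  \|fg\|_{H^s_p(\Rn)} \leq C_s \|f\|_{H^s_p(\Rn)} \|g\|_{W^{\tilde{m},q}_{uloc}(\Rn)}
\end{align*}
for all $f \in H^s_p(\Rn)$, $g \in W^{\tilde{m},q}_{uloc}(\Rn)$ and every $s$ in the admissible range. This should follow from a partition of unity argument: choose a translation-invariant partition $(\psi_j)_{j\in \Z^n}$ as in Proposition \ref{prop:NormequivalenceOfSobolevSpace} together with enlarged cutoffs $\eta_j \in C^{\infty}_c(\Rn)$, obtained by translating a fixed $\eta_0$, with $\eta_j \equiv 1$ on $\supp \psi_j$ and $\sup_{j \in \Z^n}\|\eta_j g\|_{H^{\tilde{m}}_q} \leq C\|g\|_{W^{\tilde{m},q}_{uloc}}$. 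Writing $\psi_j(fg) = (\psi_j f)(\eta_j g)$ and applying Lemma \ref{lemma:BesselPotentialEstimation}, whose admissible range for $s$ matches ours exactly, gives $\|\psi_j(fg)\|_{H^s_p} \leq C\|\psi_j f\|_{H^s_p}\|g\|_{W^{\tilde{m},q}_{uloc}}$. Taking $p$-th powers, summing over $j$ and reapplying Proposition \ref{prop:NormequivalenceOfSobolevSpace} then yields the desired estimate.

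\textbf{Step 2.} Proposition \ref{prop:BoundednessOfClassicalSymbols} then provides, for each individual $a \in \mathscr{B}$ and each $s$ in the stated range, the boundedness $a(x,D_x) : H^{s+m}_p(\Rn) \to H^s_p(\Rn)$. Since $\s$ is dense in both $H^{s+m}_p(\Rn)$ and $H^{-s}_{p'}(\Rn) \simeq (H^s_p(\Rn))'$, Lemma \ref{lemma:UnabhangigkeitVomSymbol}, applied with $X_1 := H^{s+m}_p$ and $X_2 := H^s_p$, produces some $k\in \N$ and $C>0$, independent of $a$, with $\|a(x,D_x)\|_{\mathscr{L}(H^{s+m}_p,H^s_p)} \leq C|a|^{(m)}_k$. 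Since the semi-norms are uniformly bounded on the bounded set $\mathscr{B}$, this yields (\ref{Stern}).

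The main obstacle is Step 1: the multiplication estimate must hold throughout the full range of $s$ permitted by Lemma \ref{lemma:BesselPotentialEstimation}, with a constant uniform over the shifts $j$, which requires careful bookkeeping of the cutoff constants and the appeal to Proposition \ref{prop:NormequivalenceOfSobolevSpace} for both positive and negative $s$. A second, more minor technical issue is that Lemma \ref{lemma:UnabhangigkeitVomSymbol} as formulated takes $\mathscr{B}$ to be the full topological vector space $W^{\tilde{m},q}_{uloc} S^m_{1,0}(\RnRn)$, for which pointwise boundedness outside the range of Theorem \ref{thm:boundednessOfSymbolsWithCoefficientsInUniformlyBoundedSobolevSpaces} is not available; one therefore has to rerun the closed-graph portion of its proof on the closed Fr\'echet subspace $W^{\tilde{m},q}_{uloc} S^m_{cl}(\RnRn)$, which is unproblematic since the limit of a convergent sequence of classical symbols remains a symbol in $W^{\tilde{m},q}_{uloc} S^m_{1,0}(\RnRn)$ and the oscillatory integral defining $a(x,D_x)u$ passes to the limit on $\s$.
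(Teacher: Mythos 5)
Your proposal follows essentially the same route as the paper's own proof: the multiplication estimate $\| fg \|_{H^s_p} \leq C_s \| f \|_{H^s_p} \| g \|_{W^{\tilde{m},q}_{uloc}}$ is established via the translation-invariant partition of unity, Proposition \ref{prop:NormequivalenceOfSobolevSpace} and Lemma \ref{lemma:BesselPotentialEstimation}, then Proposition \ref{prop:BoundednessOfClassicalSymbols} gives individual boundedness and Lemma \ref{lemma:UnabhangigkeitVomSymbol} the uniform constant. Your closing remark on adapting Lemma \ref{lemma:UnabhangigkeitVomSymbol} to the classical subclass is a fair observation, but the argument is the one in the paper.
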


\begin{proof}
  Due to Example \ref{bsp:microlocalizable}  inequality (\ref{Stern}) is a  
  direct consequence of Proposition \ref{prop:BoundednessOfClassicalSymbols} for each fixed $a \in  \mathscr{B}$ if we have for every $1<p<\infty$ and $s \in \R$ fulfilling the assumptions:
  \begin{align}\label{p77}
    \| fg \|_{H^s_p} \leq C_s \| f \|_{H^s_p} \| g \|_{W^{\tilde{m},q}_{uloc}} \qquad \text{for all } f \in H^s_p(\Rn), g \in W^{\tilde{m},q}_{uloc}(\Rn).
  \end{align}
  Let $s$ and $p$ be as in the assumptions. In view of Proposition \ref{prop:NormequivalenceOfSobolevSpace} we may choose a partition of unity $( \psi_j )_{j \in \Z^n} \subseteq C^{\infty}_c(\Rn)$ with the properties $\supp \psi_0 \subseteq [-\e, \e ]^n$ for one fixed $\e>0$ and $\psi_j(x) = \psi_0(x-j)$ for all $x\in \Rn$ and $j \in \Z^n$. With $Z_j:= \{k\in \Z^n: \supp \psi_k \cap \supp \psi_j \neq \varnothing\}$, we define
  \begin{align*}
    \eta_j (x) := \sum_{k \in Z_j} \psi_k(x) \qquad \text{for all } x \in \Rn \text{ and every } j \in \Z^n.
  \end{align*}
  Choosing a finite cover $\left( B_1(x_i) \right)_{i=1}^N$, $ N \in \N$, of $\supp \eta_0$ with open balls of radius $1$ provides a finite cover $\left( B_1(x_i + j) \right)_{i=1}^N$ of $\supp \eta_j$ with open balls of radius $1$. Hence $N$ is independent of $j \in \Z^n$. By means of the Leibniz rule we obtain:
  \begin{align}\label{p78}
    \|\eta_j g\|^q_{H^{\tilde{m}}_{q}(\Rn)} 
    &\leq \sum_{|\alpha| \leq \tilde{m}} \sum_{\alpha_1 + \alpha_2 = \alpha} C_{\alpha_1, \alpha_2} \int_{\supp \eta_j} |\p^{\alpha_1}_x 	     \eta_j(x)|^q | \p^{\alpha_2}_x g(x) |^q dx \notag \\
    &\leq \sum_{|\alpha| \leq \tilde{m}} \sum_{\alpha_1 + \alpha_2 = \alpha} C_{\alpha_1, \alpha_2} \sum_{i=1}^N \int_{B_1(x_i + j)} |\p^{\alpha_2}_x g(x) |^q dx \notag\\
    &\leq C_{\tilde{m}} \| g \|^q_{W^{\tilde{m},q}_{uloc}(\Rn)}  
  \end{align}
  for all $g \in W^{\tilde{m},q}_{uloc}(\Rn)$ and $j \in \Z^n$.  
  Together with Proposition \ref{prop:NormequivalenceOfSobolevSpace} and Lemma \ref{lemma:BesselPotentialEstimation} we conclude inequality (\ref{p77}):
  \begin{align*}
    \|f g\|^p_{H^s_p} 
    &\leq C_s \sum_{j \in \Z^n } \| (\psi_j f)(\eta_j g) \|_{H^s_p}^p 
    \leq C_{s,\tilde{m}} \sum_{j \in \Z^n } \| \psi_j f \|_{H^s_p}^p \| \eta_j g \|^p_{H^{\tilde{m}}_q} \\
    &\leq C_{s,\tilde{m}} \| g \|^p_{ W^{\tilde{m},q}_{uloc} } \sum_{j \in \Z^n } \| \psi_j f \|_{H^s_p}^p 
    \leq C_{s,\tilde{m}} \| g \|^p_{ W^{\tilde{m},q}_{uloc} } \| f \|_{H^s_p}^p  
  \end{align*}
  for all $f \in H^s_p(\Rn)$ and $ g \in W^{\tilde{m},q}_{uloc}(\Rn)$.
  It remains to verify whether the constant $C_s$ is independent of $a \in \mathscr{B}$. We define $p'$ by $1/p + 1/p' = 1$. Since $\s$ is dense in $H^{s+m}_p(\Rn)$ and in $H^{-s}_{p'}(\Rn)$ 
  the theorem holds because of Lemma \ref{lemma:UnabhangigkeitVomSymbol}.
\end{proof}

\subsection{Kernel Representation}\label{section:KernelRepresentationSmoothCase}

The present subsection is devoted to  the kernel representation of a non-smooth pseudodifferential operator $p(x,D_x)$, whose symbol is in the class $X\Sn{m}{1}{0}$ for a Banach space $X$ with $C^{\infty}_c(\Rn) \subseteq X \subseteq C^0(\Rn)$, we refer to \cite[Chapter 1]{Taylor2} for the definition of these symbol-classes. In particular we can choose $X \in \{ C^{\tilde{m},\tau}, C^{\tilde{m} + \tau}_{\ast}, H^{\tilde{m}}_q, W^{\tilde{m}, q}_{uloc} \}$ with $\tilde{m} \in \N_0$, $0< \tau \leq 1$ and $1<q < \infty$.

\begin{thm}\label{thm:KernelRepresentationNonsmoothCase}
  Let $p \in X\Sn{m}{1}{0}$, where $C^{\infty}_c(\Rn) \subseteq X \subseteq C^0(\Rn)$ is a Banach space and $m \in \R$. Then there is a function $k: \Rn \times (\Rn \backslash \{ 0 \}) \rightarrow \C$ such that $k(x, .) \in C^{\infty}( \Rn \backslash \{ 0 \} )$ for all $x \in \Rn$ and 
  \begin{align*}
    p(x, D_x) u(x) = \int k(x, x-y)u(y)dy \qquad \text{for all } x \notin \supp u
  \end{align*}
  for all $u \in \s$. Moreover, for every $\alpha \in \Non$ and each $N \in \N_0$  the kernel $k$ satisfies
  \begin{align*}
    \| \p_z^{\alpha} k(.,z) \|_X 
    \leq  \left\{  \begin{array}{l l}
		      C_{\alpha, N} |z|^{-n-m-|\alpha|} \<{z}^{-N} & \text{ if } n+m+|\alpha| >0, \\ 
		      C_{\alpha, N} (1 + \left| \log|z| \right| )\<{z}^{-N} & \text{ if } n+m+|\alpha| =0, \\
		      C_{\alpha, N} \<{z}^{-N} & \text{ if } n+m+|\alpha| <0			   
                   \end{array}
	  \right.
  \end{align*}
  uniformly in $z \in \Rn \backslash \{ 0 \}$.
\end{thm}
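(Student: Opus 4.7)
My plan is to build $k$ as an $X$-valued oscillatory integral in $\xi$, verify the representation via Fubini after regularization, and establish the pointwise bounds by splitting the $\xi$-integration according to the size of $|z|$. The identity I want to realize is (formally)
\[
k(x,z) = \int e^{iz\cdot\xi}p(x,\xi)\dq\xi,
\]
but since $p(\cdot,\xi) \in X$ only with polynomial growth in $\xi$, this has to be interpreted as a Bochner-valued oscillatory integral in $X$.

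\textbf{Step 1 (existence of $k$).} For $\e > 0$ and $\chi \in \s$ with $\chi(0)=1$, define the absolutely convergent $X$-valued integral
\[
k_\e(x,z) := \int e^{iz\cdot\xi}\chi(\e\xi)p(x,\xi)\dq\xi.
\]
For $z \neq 0$ I use the identity $(-\Delta_\xi)e^{iz\cdot\xi} = |z|^2 e^{iz\cdot\xi}$, differentiate under the integral in $z$, and integrate by parts $N$ times in $\xi$ to arrive at
\[
\p_z^\alpha k_\e(x,z) = |z|^{-2N}\int e^{iz\cdot\xi}(-\Delta_\xi)^N\bigl[(i\xi)^\alpha \chi(\e\xi)p(x,\xi)\bigr]\dq\xi.
\]
Once $N$ is large enough that $m+|\alpha|-2N < -n$, the symbol estimate $\|\pa{\beta}p(\cdot,\xi)\|_X \leq C\<\xi\>^{m-|\beta|}$ gives an integrand bounded in $X$-norm by $C\<\xi\>^{m+|\alpha|-2N}$ uniformly in $\e$. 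Bochner dominated convergence in $X$ yields a limit $\p_z^\alpha k(x,z) \in X$ for every $z\neq 0$ and every $\alpha \in \Non$, and an iteration shows $k(x,\cdot) \in C^\infty(\Rn \setminus \{0\})$ for each fixed $x$.

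\textbf{Step 2 (representation formula).} For $u \in \s$ and $x \notin \supp u$ I write
\[
p(x,D_x)u(x) = \lim_{\e \to 0}\int e^{ix\cdot\xi}\chi(\e\xi)p(x,\xi)\hat{u}(\xi)\dq\xi,
\]
and for each fixed $\e$ Fubini applies (the double integrand being absolutely integrable), giving $\int k_\e(x,x-y)u(y)\,dy$. Since $d := \dist(x,\supp u) > 0$, the bound in Step 1 shows $k_\e(x,x-y) \to k(x,x-y)$ uniformly for $y\in\supp u$, with a majorant integrable against $|u(y)|$ (using the rapid decay of $u$ to control the tail where $|y|$ is large). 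Passing $\e \to 0$ inside the $y$-integral yields the claimed identity.

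\textbf{Step 3 (kernel estimates).} Fix a smooth cutoff $\psi$ with $\psi \equiv 1$ on $B_{1/2}(0)$ and $\supp \psi \subseteq B_1(0)$, and split $\p_z^\alpha k(\cdot,z) = I_1(z) + I_2(z)$ according to the weight $\psi(|z|\xi)$ and $1-\psi(|z|\xi)$. For $I_1$ a direct estimate in the $X$-norm gives
\[
\|I_1(z)\|_X \leq C\int_{|\xi|\leq 2/|z|}\<\xi\>^{m+|\alpha|}\,d\xi,
\]
which produces the claimed factor $|z|^{-n-m-|\alpha|}$, the logarithmic term, or a constant according to the sign of $n+m+|\alpha|$. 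For $I_2$ integrate by parts $N$ times with $(-\Delta_\xi)^N$, landing on an integrand of $X$-norm $\lesssim \<\xi\>^{m+|\alpha|-2N}$ supported in $|\xi| \geq 1/(2|z|)$; choosing $N$ large enough provides the same bound. To obtain the extra factor $\<z\>^{-N}$ for $|z| \geq 1$, forgo the splitting and integrate by parts $2N$ times directly in the unsplit integral to gain $|z|^{-2N}$, with $N$ arbitrary.

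The main obstacle I anticipate is the careful handling of the $X$-valued oscillatory integral: because $X$ need not embed into $L^\infty$ with uniform constants, pointwise-in-$x$ reasoning is unavailable and everything must be done through Bochner integration and the symbol bounds $\|\pa{\alpha}p(\cdot,\xi)\|_X \leq C_\alpha \<\xi\>^{m-|\alpha|}$, which have to carry through all the integration-by-parts, differentiation-under-the-integral, and limit-passage steps above.
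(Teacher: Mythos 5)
Your argument is correct, but it takes a recognizably different technical route from the paper. The paper follows \cite[Theorem 5.12]{PDO}: it decomposes the operator dyadically, $p(x,D_x)f=\sum_{j\ge 0}p(x,D_x)\varphi_j(D_x)f$ with a dyadic partition of unity $(\varphi_j)_j$, constructs the kernel $k_j$ of each band-limited piece $p(x,D_x)\varphi_j(D_x)$ exactly as in the smooth case (replacing pointwise symbol bounds by the $X$-norm bounds $\|\pa{\alpha}p(\cdot,\xi)\|_X\le C_\alpha\<{\xi}^{m-|\alpha|}$), and obtains $k=\sum_j k_j$ together with the estimates by summing the dyadic bounds, the split of the sum at $2^j\sim|z|^{-1}$ playing the role of your cutoff $\psi(|z|\xi)$. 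You instead define $k$ directly as an $\e$-regularized $X$-valued oscillatory integral and use a single continuous frequency splitting at $|\xi|\sim|z|^{-1}$; the underlying mechanism (trading powers of $|z|^{-1}$ for $\xi$-decay via $(-\Delta_\xi)^N$) is identical. The dyadic route buys you absolutely convergent integrals with compact frequency support at every stage, so no $\e$-regularization or dominated-convergence bookkeeping is needed, and it meshes with the convergence of $\sum_j p(x,D_x)\varphi_j(D_x)f$ in $X$ (Lemma \ref{lemma:stetigkeitInS}) used for the representation formula; your route is more self-contained but puts the burden on the limit passages $\e\to 0$, which you handle correctly (including the uniform $S^0_{1,0}$-bounds on $\{\chi(\e\cdot)\}_{0<\e\le 1}$ that your "uniformly in $\e$" claim implicitly requires). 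The only point I would ask you to make explicit is the Bochner measurability/continuity of $\xi\mapsto\pa{\alpha}p(\cdot,\xi)\in X$ needed to run Minkowski's integral inequality for the $X$-norm of your integrals — though the paper's own proof is equally silent on this and it is harmless for the concrete choices of $X$ in play.
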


\begin{proof}
  We are able to prove the statements in a similar way as in \cite[Theorem 5.12]{PDO}. The main idea of the proof is to decompose
  \begin{align*}
    p(x, D_x) f = \sum_{j=0}^{\infty} p(x, D_x) \varphi_j(D_x) f \qquad \text{for all } f \in \s
  \end{align*}
  where $(\varphi_j)_{j \in \N_0}$ is a dyadic partition of unity. The series converges in $X$ due to Lemma \ref{lemma:stetigkeitInS}. First of all we want to construct a kernel $k_j$ of $p_j(x, D_x):= p(x, D_x) \varphi_j(D_x)$ for each $j \in \N_0$. This can be made in the same way as in the smooth case. We just have to use $\|\pa{\alpha} p(., \xi)\|_{X} \leq C_{\alpha} \<{\xi}^{m- |\alpha|}$ instead of $ |\pa{\alpha} p(., \xi)| \leq C_{\alpha} \<{\xi}^{m- |\alpha|}$ for all $\alpha \in  \Non$ and all $\xi \in \Rn$. Afterwards we use this kernel decompositions to construct the kernel of $p(x, D_x)$ as in the smooth case. By means of $X \subseteq C^0(\Rn)$ we get the absolute and uniform convergence of $k(x,z)=\sum_{j = 0}^{\infty} k_j(x, z)$.
\end{proof}

\begin{bem}\label{bem:KernelRepresentationSmoothCase}
  If we even have $p \in S^m_{1,0}(\RnRn)$ in the previous theorem, we can show that $k(.,z)$ is smooth for all $z  \in \Rn$ while applying Theorem \ref{thm:KernelRepresentationNonsmoothCase} with $X= C^{\tau}_{\ast}$ for all $\tau \in \R$. This result already was shown in \;e.g.\;\cite[Theorem 5.12]{PDO}.
\end{bem}

\subsection{Double Symbols}\label{Section:DoubleSymbols}

\begin{Def}
	Let $0<s \leq 1$, $m \in \N_0$, $1<q<\infty$ and $\tilde{m},m'\in \R$. Furthermore, let $N \in \N_0 \cup \{ \infty \}$, $0 \leq \rho \leq 1$ and $X \in \{ C^{m,s}, W^{m,q}_{uloc} \}$. Additionally let $m>n/q$ in the case $X=W^{m,q}_{uloc}$. Then the space of \textit{non-smooth double symbols} $X S^{\tilde{m},m'}_{\rho, 0}(\RnRn \times \RnRn;N)$ 
	is the set of all $p:\R^n_x \times\R^n_{\xi} \times \R^n_{x'} \times\R^n_{\xi'} \rightarrow \C$ such that
	\begin{itemize}
		\item[i)] $\pa{\alpha} \p^{\beta'}_{x'} \p_{\xi'}^{\alpha'} p(.\xi, x', \xi') \in X$ for all $\xi,x',\xi' \in \Rn$ and $\p_x^{\beta} \pa{\alpha} \p^{\beta'}_{x'} \p_{\xi'}^{\alpha'} p \in C^{0}(\R^n_x \times \R^n_{\xi} \times \R^n_{x'} \times\R^n_{\xi'})$,
		\item[ii)] $\| \pa{\alpha}  \p^{\beta'}_{x'} \p_{\xi'}^{\alpha'} p(.,\xi, x', \xi')  \|_{X} \leq C_{\alpha, \beta', \alpha'} \<{\xi}^{\tilde{m}-\rho|\alpha|} \<{\xi'}^{m'-\rho|\alpha'|}$ for all $\xi, x', \xi' \in \Rn$ 
	\end{itemize}
  and $\beta, \alpha, \beta', \alpha' \in \N_0^n$ with $|\beta| \leq m$ and $|\alpha| \leq N$.  
  In the case $N= \infty$ we write $X S^{\tilde{m},m'}_{\rho, 0}(\RnRn \times \RnRn)$
  instead of $X S^{\tilde{m},m'}_{\rho, 0}(\RnRn \times \RnRn;\infty)$. Furthermore, we define the set of semi-norms $\{|.|^{\tilde{m},m'}_k : k \in \N_0 \}$ by
  \begin{align*}
    |p|^{\tilde{m},m'}_k \hspace{-1.2mm}:= \hspace{-0.2mm} \max_{\substack{ |\alpha| + |\beta'| + |\alpha'| \leq k \\ |\alpha| \leq N} } \sup_{\xi, x', \xi' \in \Rn} \hspace{-2mm} \| \pa{\alpha}  \p^{\beta'}_{x'} \p_{\xi'}^{\alpha'} p(.,\xi, x', \xi')  \|_{X} \<{\xi}^{-(\tilde{m}-\rho|\alpha|)} \<{\xi'}^{-(m'-\rho|\alpha'|)}.
  \end{align*}
\end{Def}

Because of the previous definition $p \in X \Sallg{\tilde{m}}{\rho}{\delta}{n}{N}$ is often called a \textit{non-smooth single symbol}. \\

We define the associated operator of a non-smooth double symbol as follows:

\begin{Def}
  Let $0<s \leq 1$, $m \in \N_0$, $1<q<\infty$ and $\tilde{m},m'\in \R$. Furthermore, let $N \in \N_0 \cup \{ \infty \}$, $0 \leq \rho \leq 1$ and $X \in \{ C^{m,s}, W^{m,q}_{uloc} \}$. Additionally let $m>n/q$ in the case $X=W^{m,q}_{uloc}$.
  Assuming $ p \in X S^{\tilde{m},m'}_{\rho, 0}(\RnRn \times \RnRn; N) $,  we define the pseudodifferential operator $P = p(x,D_x, x', D_{x'})$ 
  such that for all $u \in \s$ and $x \in \Rn$
  \begin{align*}
    P u(x) := \osiint e^{-i(y \cdot \xi + y' \xi')} p(x,\xi,x+y,\xi') u(x+y+y')dy dy' \dq \xi \dq \xi' .
  \end{align*}
  The existence of the previous oscillatory integral is a consequence of the properties of such integrals. For more details we refer to \cite[Lemma 4.64]{Diss}.
\end{Def}

The set $\op X S^{\tilde{m},m'}_{\rho, 0}(\RnRn \times \RnRn; N) $ consists of all non-smooth pseudodifferential operators whose double symbols are in  $ X S^{\tilde{m},m'}_{\rho, 0}(\RnRn \times \RnRn; N) $.
Moreover for $0<s \leq 1$, $m \in \N_0$, $N \in \N_0 \cup \{ \infty \}$ and $\tilde{m} \in \R$ we denote the space $X S^{\tilde{m}}_{\rho, 0}(\RnRn \times \Rn;N)$ 
as the set of all $p \in X S^{\tilde{m},0}_{\rho, 0}(\RnRn \times \RnRn;N)$ which are independent of $\xi'$.
The pseudodifferential operator $p(x,D_x,x')$ is defined
by $$p(x,D_x,x') := p(x,D_x, x', D_{x'}).$$
 Additionally $\op X S^{\tilde{m}}_{\rho, 0}(\RnRn \times \Rn;N)$ is the set of all non-smooth pseudodifferential operators whose double symbols are in $X S^{\tilde{m}}_{\rho, 0}(\RnRn \times \Rn;N)$.\\

On account of Lemma \ref{lemma:SobolevHoelderInequalityFor_LqUlocSpaces} and the definition of the non-smooth symbol-classes we obtain:

\begin{bem}\label{bem:einbettungSymbolklassenDoppelsymbol}
  Let $1 < q < \infty$, $m,m' \in \R$ and $\tilde{m}\in \N_0$ with $\tilde{m}>n/q$. Moreover, let $N \in \N_0 \cup \{ \infty \}$ and $0 \leq \rho \leq 1$. Assuming $0 < \tau \leq \tilde{m}-n/q$, $\tau \notin \N$, we have
  \begin{align*}
    W^{\tilde{m}, q}_{uloc} S^{m,m'}_{\rho, 0}(\RnRn \times \RnRn;N) \subseteq C^{\tau} S^{m,m'}_{\rho, 0}(\RnRn \times \RnRn;N).
  \end{align*}
\end{bem}

Later we will need the following statement:

\begin{bem}\label{bem:beschrTeilmengeVonWmq_ulocSymbolen}
  Let $m \in \R$, $1 < q < \infty$ and $\tilde{m} \in \N_0$ with $\tilde{m} > n/q$. Moreover, let $0 \leq \rho \leq 1$  and $N \in \N_0 \cup \{ \infty \}$. If
$\mathscr{B} \subseteq W^{\tilde{m}, q}_{uloc} S^m_{\rho,0}(\RnRnRn;N)$ is a
bounded subset, we get the boundedness of
  \begin{align*}
    \mathscr{B}':= \left\{ \p^{\gamma}_y \pa{\delta} a: a \in \mathscr{B}
\right\} \subseteq W^{\tilde{m},q}_{uloc} S^{m- \rho
|\delta|}_{\rho,0}(\RnRnRn;N-|\delta|)
  \end{align*}
  for each $\gamma, \delta \in \Non$ with $|\delta| \leq N$.
\end{bem}

By means of  Remark \ref{bem:beschrTeilmengeVonWmq_ulocSymbolen} and Lemma \ref{lemma:AbschaetzungInHmq_uloc} we get:

\begin{lemma}\label{lemma:SymbolAbschatzungHmqUloc}
  Let $m \in \R$, $1 < q < \infty$ and $\tilde{m} \in \N_0$ with $\tilde{m} > n/q$. Moreover, let $0 \leq \rho \leq 1$ and $N \in \N_0 \cup \{ \infty \}$.
  Assuming a bounded subset $\mathscr{B}$ of $W^{\tilde{m}, q}_{uloc}
S^m_{\rho,0}(\RnRnRn;N)$, we can show for each $\gamma, \delta \in \Non$ with
$|\delta| \leq N$ there is some $C_{\tilde{m}, q, \gamma, \delta}$ such that
  \begin{align*}
    \sup_{y\in \Rn} \| \p^{\gamma}_y \pa{\delta} a(x, \xi, x+y)
\|_{W^{\tilde{m}, q}_{uloc}(\Rn_{x})} \leq C_{\tilde{m}, q, \gamma, \delta}
\<{\xi}^{m- \rho |\delta|} \qquad \text{for all } a\in \mathscr{B}, \xi \in \Rn.
  \end{align*}
\end{lemma}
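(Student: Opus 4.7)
The statement is really a direct chaining of the two already-established ingredients, so the plan is short and essentially mechanical; I would organize it in two stages.

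First I would invoke Remark \ref{bem:beschrTeilmengeVonWmq_ulocSymbolen} with the multi-indices $\gamma$ (in the $y$-slot) and $\delta$ (in the $\xi$-slot). This delivers, for the given bounded subset $\mathscr{B} \subseteq W^{\tilde{m}, q}_{uloc} S^m_{\rho,0}(\RnRnRn;N)$, the boundedness of
\begin{equation*}
  \mathscr{B}' := \{ \p^{\gamma}_y \pa{\delta} a : a \in \mathscr{B} \} \subseteq W^{\tilde{m},q}_{uloc} S^{m- \rho |\delta|}_{\rho,0}(\RnRnRn;N-|\delta|)
\end{equation*}
whenever $|\delta| \leq N$. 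In particular, every $b \in \mathscr{B}'$ inherits the uniform estimates and regularity that define this symbol class: $b(x,\xi,\cdot)$ has arbitrarily many derivatives in the third variable (there is no upper bound on the differentiation order in $x'$ in the symbol-class definition), $\p_y^{\alpha} b(\cdot,\xi,y)\in W^{\tilde{m},q}_{uloc}(\Rn)$ for every $\alpha$, and $\sup_{y\in\Rn}\|\p_y^{\alpha} b(\cdot,\xi,y)\|_{W^{\tilde{m},q}_{uloc}} \leq C_{\alpha,q} \<{\xi}^{m-\rho|\delta|}$ with a constant independent of $b\in\mathscr{B}'$ and $\xi$.

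Second, I would apply Lemma \ref{lemma:AbschaetzungInHmq_uloc} to $\mathscr{B}'$, playing the role of the abstract family of symbols there. All its hypotheses are satisfied by the previous paragraph (the regularity condition $|\alpha|\leq 2\tilde m$ in $y$ is fine since the symbol class permits all orders of derivatives in the third variable, and the assumption $\tilde m > n/q$ is granted). The lemma then yields exactly
\begin{equation*}
  \sup_{y\in \Rn} \| b(x, \xi, x+y) \|_{W^{\tilde{m},q}_{uloc}(\Rn_{x})} \leq C_{\tilde m, q} \<{\xi}^{m-\rho|\delta|} \quad \text{for all } b\in \mathscr{B}',\ \xi\in\Rn,
\end{equation*}
with a constant independent of $b$ and $\xi$. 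Substituting $b = \p^{\gamma}_y \pa{\delta} a$ back in gives the claimed estimate and concludes the argument.

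There is no real obstacle here, as the heavy lifting was done in Lemma \ref{lemma:AbschaetzungInHmq_uloc}: the only bookkeeping is to notice that the symbol-class assumption provides unlimited $y$-regularity (so the $2\tilde m$ threshold causes no trouble) and that the derivatives $\p^{\gamma}_y \pa{\delta}$ simply shift the order from $m$ to $m - \rho|\delta|$ while preserving boundedness of the family.
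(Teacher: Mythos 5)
Your proposal is correct and matches the paper's own argument exactly: the paper likewise obtains the lemma by combining Remark \ref{bem:beschrTeilmengeVonWmq_ulocSymbolen} (to pass to the bounded family $\mathscr{B}'=\{\p_y^{\gamma}\pa{\delta}a\}$ of order $m-\rho|\delta|$) with Lemma \ref{lemma:AbschaetzungInHmq_uloc}. Your observation that the unlimited $x'$-regularity in the double-symbol class makes the $2\tilde m$ threshold harmless is the right (and only) point of bookkeeping.
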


\section{Improvement of the Characterization} \label{ImprovementCharacterization}

In this section we show that the operator $T$ of Theorem \ref{thm:classificationA10} is even an element of $\op W^{\tilde{m}, q}_{uloc} \Snn{m}{1}{0}{\tilde{M}-1}$. The proof of this statement is essentially the same as the one of Theorem \ref{thm:classificationA10}. We just have to replace the results for pseudodifferential operators with coefficients in a Hölder space with analogous ones for pseudodifferential operators with coefficients in an uniformly local Sobolev space. 

The main difficulty comes along with the symbol reduction of non-smooth double symbols of the class $X S^m_{0,0} (\RnRnRnRn; M)$ to non-smooth single symbols with coefficients in $X $, where $X=W^{\tilde{m}, q}_{uloc}$. Both cases, $X=C^{\tilde{m},\tau}$ and $X=W^{\tilde{m}, q}_{uloc}$ make use of
\begin{align*}
  \sup_{y \in \Rn} \| \p_y^{\gamma} \p_{\xi}^{\delta} a(.,\xi, .+y)\|_{X} \leq C \<{\xi}^m,
\end{align*}
where $a \in X S^m_{0,0} (\RnRnRn; M)$ and $\gamma, \delta \in \Non$ with $|\delta| \leq M$. While this estimate directly follows from the definition of the symbol-class in the case $X=C^{\tilde{m},\tau}$, this proof turned out to be rather tedious for the case $X=W^{\tilde{m}, q}_{uloc}$ in Subsection \ref{subsection:HmqUlocAbschaetzung}. The symbol reduction for uniformly local Sobolev spaces is the subject of Subsection \ref{subsection:SymbolReductionHmqUloc}. 
But let us begin with the improvement of Theorem \ref{thm:pointwiseConvergence}.

\subsection{Pointwise Convergence in $W^{\tilde{m}, q}_{uloc} S^0_{0,0}$}\label{subsection:pointwiseConvergenceHmqUloc}

\begin{thm}\label{thm:pointwiseConvergenceHmqUloc}
  Let $M \in \N_0 \cup \{ \infty \}$, $\tilde{m} \in \N_0$ and $1< q < \infty$.  
  Furthermore,  let $( p_{\e} )_{\e > 0 } \subseteq W^{\tilde{m}, q}_{uloc} \Snn{0}{0}{0}{M}$ be bounded. Then there is a subsequence $( p_{\e_l} )_{l \in \N } \subseteq ( p_{\e} )_{\e > 0 }$ with $\e_l \rightarrow 0$ for $l \rightarrow \infty$ and a $p:\RnRn \rightarrow \C$ such that
  \begin{itemize}
    \item[i)] $p(x,.) \in C^{M-1}(\Rn)$ for all $x \in \Rn$,
    \item[ii)] $\p_x^{\beta} \pa{\alpha} p \in C^0(\RnRnx{x}{\xi})$,
    \item[iii)] $\p_x^{\beta} \pa{\alpha} p_{\e_l} \xrightarrow[]{l \rightarrow \infty} \p_x^{\beta} \pa{\alpha} p$ uniformly on each compact set of $\RnRn$ 
  \end{itemize}
  for every $\alpha, \beta \in \Non$ with $|\alpha| \leq M-1$ and $|\beta| < \tilde{m}-n/q$. Moreover, 
  $$p \in  W^{\tilde{m}, q}_{uloc} \Snn{0}{0}{0}{M-1}.$$
\end{thm}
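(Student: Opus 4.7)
The plan is to reduce to the already established Hölder case, Theorem \ref{thm:pointwiseConvergence}, and then upgrade the pointwise convergence to the required $W^{\tilde{m},q}_{uloc}$-bounds by a weak compactness argument together with the lower semicontinuity of the norm.

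First, pick $\tau \in (0, \tilde{m}-n/q)$ with $\tau \notin \N$, set $m':= \lfloor \tau \rfloor$ and $s := \tau - m' \in (0,1)$. By Lemma \ref{lemma:einbettungSymbolklassen} the sequence $(p_\e)_{\e>0}$ is also bounded in $C^{m',s}S^0_{0,0}(\RnRn;M)$. Applying Theorem \ref{thm:pointwiseConvergence} produces a subsequence $(p_{\e_l})_{l\in\N}$ with $\e_l \to 0$ and a function $p:\RnRn\to\C$ such that $\p_x^{\beta}\pa{\alpha}p \in C^0(\RnRn)$ and $\p_x^{\beta}\pa{\alpha}p_{\e_l} \to \p_x^{\beta}\pa{\alpha}p$ uniformly on compacta, for all $|\alpha|\leq M-1$ and $|\beta|\leq m'$. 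In particular this covers every $|\beta|< \tilde{m}-n/q$, giving items (i)--(iii) of the theorem.

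It remains to show that $p \in W^{\tilde{m},q}_{uloc}\Snn{0}{0}{0}{M-1}$, i.e.\ that $\|\pa{\alpha}p(.,\xi)\|_{W^{\tilde{m},q}_{uloc}} \leq C_\alpha$ for every $\alpha\in\Non$ with $|\alpha|\leq M-1$ uniformly in $\xi$. Fix such $\alpha$ and $\xi$, and fix $x_0\in\Rn$. The family $(\pa{\alpha}p_{\e_l}(.,\xi))_l$ is bounded in $W^{\tilde{m},q}(B_1(x_0))$ by a constant $C_\alpha$ independent of $x_0$ and $\xi$, because the original sequence was bounded in $W^{\tilde{m},q}_{uloc}S^0_{0,0}(M)$. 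Since $1<q<\infty$, the space $W^{\tilde{m},q}(B_1(x_0))$ is reflexive, so any subsequence has a further weakly convergent subsequence. The uniform convergence $\pa{\alpha}p_{\e_l} \to \pa{\alpha}p$ on $\overline{B_1(x_0)}$ forces convergence in $L^q(B_1(x_0))$, hence identifies the weak $W^{\tilde{m},q}(B_1(x_0))$-limit of any such subsequence as $\pa{\alpha}p(.,\xi)$. By uniqueness of the limit the whole sequence satisfies $\pa{\alpha}p_{\e_l}(.,\xi) \rightharpoonup \pa{\alpha}p(.,\xi)$ in $W^{\tilde{m},q}(B_1(x_0))$, and lower semicontinuity of the norm yields
\begin{align*}
  \|\pa{\alpha}p(.,\xi)\|_{W^{\tilde{m},q}(B_1(x_0))}
  \leq \liminf_{l\to\infty} \|\pa{\alpha}p_{\e_l}(.,\xi)\|_{W^{\tilde{m},q}(B_1(x_0))}
  \leq C_\alpha.
\end{align*}
Taking the supremum over $x_0\in\Rn$ gives the desired bound.

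The main obstacle is conceptual rather than technical: Theorem \ref{thm:pointwiseConvergence} only delivers pointwise (indeed locally uniform) convergence of \emph{classical} $x$-derivatives up to order $m' < \tilde{m}-n/q$, whereas we need control of \emph{weak} $x$-derivatives of $\pa{\alpha}p$ up to the full order $\tilde{m}$. The weak compactness step above bridges this gap without requiring any pointwise convergence of the high-order weak derivatives; uniform boundedness in $W^{\tilde{m},q}(B_1(x_0))$ combined with pointwise identification of the limit on a dense set suffices.
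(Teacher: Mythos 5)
Your proposal is correct and follows essentially the same route as the paper: reduce to the Hölder case via Lemma \ref{lemma:einbettungSymbolklassen} and Theorem \ref{thm:pointwiseConvergence}, then recover the $W^{\tilde{m},q}_{uloc}$-bounds on $\pa{\alpha}p(\cdot,\xi)$ by reflexivity, identification of the weak limit through the locally uniform convergence, the subsequence principle, and weak lower semicontinuity of the norm. The only cosmetic difference is that the paper runs this argument on a fixed lattice of unit balls $B_1(z_j)$ and then covers an arbitrary $B_1(x_0)$ by finitely many of them, whereas you work directly with arbitrary $x_0$; both are valid.
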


\begin{proof}
  Let $\tau \leq \tilde{m} - n/q$ with $\tau \notin \N$. 
  On account of Lemma \ref{lemma:einbettungSymbolklassen}
  we are able to apply Theorem \ref{thm:pointwiseConvergence} and get 
  the existence of a subsequence $( p_{\e_l} )_{l \in \N } \subseteq ( p_{\e} )_{\e > 0 }$ with $\e_l \rightarrow 0$ for $l \rightarrow \infty$ which fulfills the properties $i)$, $ii)$ and $iii)$ for some $p \in C^{\tau} \Snn{0}{0}{0}{M-1}$. Thus it remains to show $p \in W^{\tilde{m}, q}_{uloc} \Snn{0}{0}{0}{M-1}$. By means of $i)$ and $ii)$, we just have to check $\p_{\xi}^{\alpha} p(.,\xi)  \in W^{\tilde{m}, q}_{uloc}(\Rn) $ for all $\xi \in \Rn$ and 
  \begin{align*}
    \| \pa{\alpha} p(.,\xi) \|_{W^{\tilde{m}, q}_{uloc}(\Rn) } \leq C_{\alpha} \qquad \text{for all } \xi \in \Rn,
  \end{align*}
  for all $\alpha \in \Non$ with $|\alpha| \leq M-1$. Let $\alpha \in \Non$ with $|\alpha| \leq M-1$ and $\xi \in \Rn$ be arbitrary but fixed. Moreover let  $z_j \in \left\{ n^{-1/2} z : z \in \Z^n \right\}$ for each $j \in \N$ such that $\Rn= \bigcup_{j \in \N} B_1(z_j)$. The boundedness of $( p_{\e_l} )_{l \in \N } \subseteq W^{\tilde{m}, q}_{uloc} \Snn{0}{0}{0}{M}$ yields
  \begin{align}\label{p109}
    \| \pa{\alpha} p_{\e_l}(., \xi) \|_{H^{\tilde{m}}_q (B_1(z_j))} 
    \leq \| \pa{\alpha} p_{\e_l}(., \xi) \|_{W^{\tilde{m}, q}_{uloc}(\Rn) } 
    \leq C_{\alpha},
  \end{align}
  for all $j,l \in \N$ and $\xi \in \Rn$. 
  Let $j \in \N$ be arbitrary but fixed. Because of the reflexivity of $H^{\tilde{m}}_q(B_1(z_j))$
  there is a subsequence $( \pa{\alpha} p_{{\e_l}_m} )_{m \in \N} $ of $ ( \pa{\alpha} p_{\e_l} )_{l \in \N}$ such that
  \begin{align*}
    \pa{\alpha} p_{{\e_l}_m} (., \xi) \rightharpoonup q_{\alpha,\xi,j} \qquad \text{in } H^{\tilde{m}}_q(B_1(z_j))
  \end{align*}
  for $m \rightarrow \infty$. The compact embedding $H^{\tilde{m}}_q(B_1(z_j)) \hookrightarrow \hookrightarrow C^{0} (\overline{B_1(z_j)})$ even gives us 
  \begin{align*}
    \pa{\alpha} p_{{\e_l}_m} (., \xi) \xrightarrow[]{m \rightarrow \infty} q_{\alpha,\xi,j} \qquad \text{in } C^0(\overline{B_1(z_j)}).
  \end{align*}
  Together with $iii)$ the uniqueness of the limit provides $q_{\alpha,\xi,j} = \pa{\alpha} p(., \xi)$. Consequently every arbitrary weak convergent subsequence of $ ( \pa{\alpha} p_{\e_l} )_{l \in \N}$ has the same weakly limit. Hence an application of \cite[Chapter 3, Lemma 0.3]{Ruzicka} implies
  \begin{align*}
    \pa{\alpha} p_{\e_l}(., \xi)  \rightharpoonup   \pa{\alpha} p(., \xi) \qquad \text{in } H^{\tilde{m}}_q(B_1(z_j))
  \end{align*}
  for $m \rightarrow \infty$. Using the previous weak convergence and (\ref{p109}), we get for all $j \in \N$:
  \begin{align}\label{eq2}
    \| \pa{\alpha} p(., \xi) \|_{H^{\tilde{m}}_q (B_1(z_j))} 
    &\leq \liminf_{l \rightarrow \infty} \| \pa{\alpha} p_{\e_l}(., \xi) \|_{H^{\tilde{m}}_q (B_1(z_j))}
    \leq \sup_{l \in \N} \| \pa{\alpha} p_{\e_l}(., \xi) \|_{H^{\tilde{m}}_q (B_1(z_j))} \notag\\
    &\leq C_{\alpha} \qquad \text{ for all } j \in \N \text{ and } \xi \in \Rn.
  \end{align}
  Since there is an $N \in \N$, independent of $x_0 \in \Rn$, with
    $B_1(x_0) \subseteq \bigcup_{k=1}^N B_1(z_{j_k})$
  for $j_1, \ldots , j_N \in \N$,
  we get together with inequality (\ref{eq2})
  \begin{align*}
    \sup_{\xi \in \Rn} \| \pa{\alpha} p(.,\xi) \|_{ W^{\tilde{m},q}_{uloc} (\Rn) }  
    = \sup_{\xi \in \Rn} \sup_{x_0 \in \Rn } \| \pa{\alpha} p(.,\xi) \|_{ H^{\tilde{m}}_{q} (B_1(x_0)) }
    \leq C_{\alpha}.
  \end{align*}
  \vspace*{-1cm}

\end{proof}

\subsection{Symbol Reduction of Double Symbols in $W^{\tilde{m}, q}_{uloc} S^m_{0,0}$}\label{subsection:SymbolReductionHmqUloc}

The last missing piece towards a better characterization is the improvement of Theorem \ref{thm:SymbolReduktionNichtGlatt}. For this we need the next proposition:

\begin{prop}\label{prop:IntAbschatzungHmqUloc}
  Let $1< q < \infty$, $\tilde{m} \in \N_0$ with $\tilde{m} > n/q$ and $m \in \R$. Additionally let $N \in \N_0 \cup \{ \infty \}$ with $n < N$. Moreover, let $\mathscr{B} \subseteq W^{\tilde{m},q}_{uloc} S^{m}_{0,0}(\RnRnRn; N)$ be bounded and $a \in \mathscr{B}$. Considering $l_0 \in \N_0$ with $n < l_0 \leq N$, we define
  \begin{align*}
    r(x,\xi,\eta,y):= A^{l_0}(D_{\eta},y) a(x,\xi+\eta,x+y) \qquad \text{for all } x, \xi, \eta, y \in \Rn.
  \end{align*}
  Then $\int e^{-iy\cdot \eta} r(x,\xi, \eta, y) dy \in L^{1}(\Rn_{\eta})$ for all $x, \xi \in \Rn$. If we define
  \begin{align*}
    I(x, \xi) := \int \left[ \int e^{-iy\cdot \eta } r(x,\xi,\eta,y) dy \right] \dq \eta
  \end{align*}
  for each $x, \xi \in \Rn$, then there is a constant $C$, independent of $\xi \in \Rn$ and $a \in \mathscr{B}$, such that
  \begin{align*}
    \left\| I(., \xi) \right\|_{ W^{\tilde{m},q}_{uloc} } \leq C \<{\xi}^m \qquad \text{for all } \xi \in \Rn.
  \end{align*}
\end{prop}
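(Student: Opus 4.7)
The plan is to reduce matters to an absolutely convergent double integral in $y$ and $\eta$ by integration by parts in $y$ against $e^{-iy\cdot\eta}$. This will simultaneously establish the $L^1(\Rn_\eta)$ claim for the inner integrand and, via Minkowski's integral inequality applied to the Banach space $W^{\tilde{m},q}_{uloc}(\Rn_x)$, produce the desired uniform bound on $\|I(\cdot,\xi)\|_{W^{\tilde{m},q}_{uloc}}$.

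More concretely, recall that for $l_0$ even we have $A^{l_0}(D_\eta,y) = \<y\>^{-l_0}\<D_\eta\>^{l_0}$, and the odd case differs only by an extra bounded factor $y_j/\<y\>$. Writing $\<D_\eta\>^{l_0}$ as a differential operator in $\eta$ and applying the chain rule gives a representation
\begin{equation*}
  r(x,\xi,\eta,y) \;=\; \<y\>^{-l_0}\sum_{|\gamma|\leq l_0} c_\gamma(y)\,\pa{\gamma} a(x,\xi+\eta,x+y),
\end{equation*}
with coefficients $c_\gamma(y)$ uniformly bounded. For any $N_1 \in \N$, the identity $(1-\Delta_y)^{N_1} e^{-iy\cdot\eta} = \<\eta\>^{2N_1} e^{-iy\cdot\eta}$ permits an integration by parts in $y$ (justified by $\<y\>^{-l_0}$-decay), yielding
\begin{equation*}
  \int e^{-iy\cdot\eta} r(x,\xi,\eta,y)\,dy
  \;=\; \<\eta\>^{-2N_1} \int e^{-iy\cdot\eta} (1-\Delta_y)^{N_1} r(x,\xi,\eta,y)\,dy.
\end{equation*}

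Expanding $(1-\Delta_y)^{N_1}$ via the Leibniz rule produces a finite sum of terms of the form $\tilde c(y)\,\p_{x'}^{\delta}\pa{\gamma} a(x,\xi+\eta,x+y)$, where $|\tilde c(y)|\leq C\<y\>^{-l_0}$, $|\gamma| \leq l_0 \leq N$ and $|\delta|\leq 2N_1$. Lemma~\ref{lemma:SymbolAbschatzungHmqUloc} then bounds the $W^{\tilde{m},q}_{uloc}(\Rn_x)$-norm of each such term, uniformly in $y$ and in $a\in\mathscr{B}$, by $C\<\xi+\eta\>^m$. Consequently
\begin{equation*}
  \bigl\|(1-\Delta_y)^{N_1} r(\cdot,\xi,\eta,y)\bigr\|_{W^{\tilde{m},q}_{uloc}} \;\leq\; C\,\<y\>^{-l_0}\<\xi+\eta\>^m.
\end{equation*}
Since $l_0>n$, the $y$-integral of this bound is finite; choosing $N_1$ so that $2N_1>n+|m|$ and invoking Peetre's inequality $\<\xi+\eta\>^m\leq C\<\xi\>^m\<\eta\>^{|m|}$ then yields integrability in $(y,\eta)$ and hence the first claim of the proposition.

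For the uniform estimate, I interpret the iterated integral as a Bochner integral in the Banach space $W^{\tilde{m},q}_{uloc}(\Rn_x)$; Minkowski's integral inequality combined with the preceding pointwise estimate gives
\begin{equation*}
  \|I(\cdot,\xi)\|_{W^{\tilde{m},q}_{uloc}}
  \;\leq\; C\iint \<\eta\>^{-2N_1}\<y\>^{-l_0}\<\xi+\eta\>^m\,dy\,\dq\eta
  \;\leq\; C\<\xi\>^m,
\end{equation*}
with a constant $C$ independent of $\xi$ and $a\in\mathscr{B}$. The main technical step is the passage from the pointwise definition of $I(x,\xi)$ to its Bochner-integral interpretation in $W^{\tilde{m},q}_{uloc}$; this requires Bochner measurability of the integrand, which follows from the continuity properties of $r$ and Proposition~\ref{prop:MessbarkeitBezglProduktmas} together with standard facts about vector-valued integration.
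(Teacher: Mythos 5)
Your proof is correct and follows essentially the same route as the paper: the paper verifies hypothesis (\ref{SternStern}) of the abstract Proposition \ref{prop:HilfslemmaIntAbschatzung} (with $X=W^{\tilde{m},q}_{uloc}$, which embeds into $L^{\infty}$ since $\tilde{m}>n/q$) by means of Lemma \ref{lemma:SymbolAbschatzungHmqUloc} and then simply cites that proposition, whereas you re-derive its content (integration by parts in $y$ to gain $\langle\eta\rangle^{-2N_1}$, Peetre's inequality, Minkowski) inline. The decisive ingredient in both arguments is the same, namely the uniform bound $\sup_{y}\|\p_y^{\gamma}\pa{\delta}a(\cdot,\xi+\eta,\cdot+y)\|_{W^{\tilde{m},q}_{uloc}}\leq C\langle\xi+\eta\rangle^{m}$ supplied by Lemma \ref{lemma:SymbolAbschatzungHmqUloc}.
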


The proof of the previous proposition is based on the following Proposition, c.f. Proposition 4.6 of \cite{Paper1}:

 \begin{prop}\label{prop:HilfslemmaIntAbschatzung}
  Let $m \in \R$ and $X$ be a Banach space with $X \hookrightarrow L^{\infty}(\Rn)$.
  Considering an $ l_0 \in \N_0$ with $-l_0 < -n$, let $\mathscr{B}$ be a set of functions $r: \RnRnRnRn \rightarrow \C$ which are smooth with respect to the fourth variable such that for all $l \in \N_0$ there is some $C_l>0 $ such that
  \begin{align}\label{SternStern}
    \| \<{D_y}^{2l} r(.,\xi,\eta,y) \|_X \leq C_{l} \<{y}^{-l_0} \<{\xi + \eta}^m \qquad \text{for all } \xi, \eta, y \in \Rn, r \in \mathscr{B}.
  \end{align}
  Then $ \int e^{-iy\cdot \eta } r(x,\xi, \eta,y) dy \in L^1(\Rn_{\eta})$ for all $x,\xi \in \Rn$. If we define  
  \begin{align*}
    I(x,\xi) := \int \left[ \int e^{-iy\cdot \eta } r(x,\xi,\eta,y) dy \right] \dq \eta 
  \end{align*}
  for $x, \xi \in \Rn$ and $r \in \mathscr{B}$ we have for some $C>0$
  \begin{align*}
    \left\| I(.,\xi) \right\|_{X} \leq C \<{\xi}^m \qquad \text{ for all } \xi \in \Rn \text{ and } r \in \mathscr{B}.
  \end{align*}
\end{prop}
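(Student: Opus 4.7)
The plan is a standard integration-by-parts in $y$, leveraging the identity $\<{D_y}^{2l} e^{-iy\cdot\eta} = \<{\eta}^{2l} e^{-iy\cdot\eta}$ to convert oscillation into decay in $\eta$ at the cost of differentiating $r$ in $y$, which is precisely what hypothesis (\ref{SternStern}) controls.

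\textbf{Step 1 (Integration by parts in $y$).} Fix $x,\xi,\eta \in \Rn$ and $l \in \N_0$. Since $r(x,\xi,\eta,\cdot)$ is smooth in $y$ and (\ref{SternStern}) together with $X \hookrightarrow L^{\infty}(\Rn)$ provides polynomial decay in $y$, a formal integration by parts $2l$ times yields
\begin{align*}
  \int e^{-iy\cdot\eta} r(x,\xi,\eta,y)\,dy = \<{\eta}^{-2l} \int e^{-iy\cdot\eta} \<{D_y}^{2l} r(x,\xi,\eta,y)\,dy,
\end{align*}
which may be verified on the Fourier side via $\<{\eta}^{2l}\mathscr{F}_{y\to\eta}[r] = \mathscr{F}_{y\to\eta}[\<{D_y}^{2l} r]$.

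\textbf{Step 2 (Bound on the inner integral).} Hypothesis (\ref{SternStern}) gives $\|\<{D_y}^{2l} r(\cdot,\xi,\eta,y)\|_X \leq C_l \<{y}^{-l_0} \<{\xi+\eta}^m$, with $C_l$ uniform over $\mathscr{B}$. Since $l_0 > n$, the $y$-integrand is Bochner integrable into $X$, and $X \hookrightarrow L^{\infty}(\Rn)$ ensures its pointwise values (at any $x$) coincide with the $X$-valued Bochner integral. With $\tilde{C}_l := C_l \int_{\Rn} \<{y}^{-l_0}\,dy < \infty$, the standard Bochner-norm estimate gives
\begin{align*}
  \left\| \int e^{-iy\cdot\eta} r(\cdot,\xi,\eta,y)\,dy \right\|_X \leq \tilde{C}_l \<{\eta}^{-2l} \<{\xi+\eta}^m \qquad \text{for all } \xi, \eta \in \Rn,\; r \in \mathscr{B}.
\end{align*}

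\textbf{Step 3 (Peetre and outer $\eta$-integration).} Choose $l$ with $2l > n+|m|$ and apply Peetre's inequality $\<{\xi+\eta}^m \leq 2^{|m|/2} \<{\xi}^m \<{\eta}^{|m|}$. Then $\<{\eta}^{-2l+|m|}$ is integrable on $\Rn_{\eta}$; evaluating the bound of Step 2 at any $x$ yields the $L^1(\Rn_{\eta})$-claim for $\int e^{-iy\cdot\eta} r(x,\xi,\eta,y)\,dy$, and monotonicity of the Bochner integral gives
\begin{align*}
  \|I(\cdot,\xi)\|_X \leq \int \tilde{C}_l \<{\eta}^{-2l} \<{\xi+\eta}^m \dq\eta \leq 2^{|m|/2}\tilde{C}_l \<{\xi}^m \int \<{\eta}^{-2l+|m|} \dq\eta = C\<{\xi}^m,
\end{align*}
with $C$ independent of $\xi \in \Rn$ and $r \in \mathscr{B}$.

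The main obstacle is the clean handling of the $X$-valued Bochner integral and the commutation of $\|\cdot\|_X$ with it; both rely on $X \hookrightarrow L^{\infty}(\Rn)$, which reconciles the pointwise and Bochner interpretations of $\int e^{-iy\cdot\eta} r(\cdot,\xi,\eta,y)\,dy$. Apart from this, the proof is a routine $(l_0,l)$-balancing: $l_0 > n$ handles the $y$-integration and $l > (n+|m|)/2$ handles the $\eta$-integration after Peetre.
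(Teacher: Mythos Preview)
Your argument is correct and is precisely the standard one: transfer decay from $y$ to $\eta$ via $\<{D_y}^{2l}e^{-iy\cdot\eta}=\<{\eta}^{2l}e^{-iy\cdot\eta}$, bound the resulting $X$-valued integral using hypothesis~(\ref{SternStern}) together with $l_0>n$, and then apply Peetre's inequality with $2l>n+|m|$ to make the $\eta$-integral converge. The paper does not give its own proof here but quotes the result from \cite{Paper1}, Proposition~4.6, where the same strategy is used; your closing remark that the only delicate point is reconciling the pointwise and Bochner interpretations of the $X$-valued integral is exactly right and is what the embedding $X\hookrightarrow L^\infty(\Rn)$ is there for.
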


\begin{proof}[Proof of Proposition \ref{prop:IntAbschatzungHmqUloc}]
  Due to Lemma \ref{lemma:SymbolAbschatzungHmqUloc} 
  and of Proposition \ref{prop:HilfslemmaIntAbschatzung} the claim holds. 
\end{proof}

In the same manner as in the proof of Lemma 4.9 of \cite{Paper1} the previous result enables us to prove the next lemma:

\begin{lemma}\label{lemma:AbschVonaLHmqUloc}
  Let $1< q < \infty$, $\tilde{m} \in \N_0$ with $\tilde{m} > n/q$ and $m \in \R$. Additionally let $N \in \N_0 \cup \{ \infty \}$ with $n < N$.  We define $\tilde{N}:= N-(n+1)$. Moreover, let $\mathscr{B} \subseteq W^{\tilde{m},q}_{uloc} S^{m}_{0,0}(\RnRnRn; N)$ be bounded. If we define for each $a \in \mathscr{B}$ the function $a_L: \RnRn \rightarrow \C$ as in Theorem \ref{thm:SymbolReduktionNichtGlatt}
  we have for each $\gamma \in \Non $ with $|\gamma| \leq \tilde{N}$ 
  \begin{align} \label{eq:Hilfsabsch}
    \| \p^{\gamma}_{\xi} a_L (., \xi) \|_{W^{\tilde{m},q}_{uloc}(\Rn)} \leq C_{\gamma} \<{\xi}^m \qquad \text{for all } \xi \in \Rn \text{ and } a \in \mathscr{B}
  \end{align}
  for some $C_{\gamma}>0$.
\end{lemma}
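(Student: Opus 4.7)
The plan is to follow the same strategy as in the proof of Lemma 4.9 of \cite{Paper1}, substituting the Hölder-space tools with the uniformly local Sobolev-space machinery developed in Subsection \ref{subsection:HmqUlocAbschaetzung} (in particular Lemma \ref{lemma:SymbolAbschatzungHmqUloc}), whose key output is already packaged in Proposition \ref{prop:IntAbschatzungHmqUloc}.

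First I would fix $a\in\mathscr{B}$ and rewrite $a_L$ so that the oscillatory integral becomes absolutely convergent. Choose $l_0\in\N_0$ with $n<l_0\leq N$ and apply Theorem \ref{thm:OscillatoryIntegralGleichung} to obtain
\begin{align*}
  a_L(x,\xi)=\int\left[\int e^{-iy\cdot\eta}\,A^{l_0}(D_y,\eta)\,a(x,\eta+\xi,x+y)\,dy\right]\dq\eta
\end{align*}
for all $x,\xi\in\Rn$, where the inner integral converges absolutely by the symbol estimates together with the $W^{\tilde{m},q}_{uloc}\hookrightarrow C^0_b$ embedding of Lemma \ref{lemma:SobolevHoelderInequalityFor_LqUlocSpaces}(v).

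Next I would pull the $\xi$-derivative $\p_\xi^\gamma$ with $|\gamma|\leq\tilde{N}=N-(n+1)$ inside the oscillatory integral. The amplitude $(y,\eta)\mapsto a(x,\eta+\xi,x+y)$ lies in a suitable $\mathscr{A}^{m,N}_{0}$-class for each fixed $x,\xi$, so Theorem \ref{thm:VertauschenVonOsziIntUndAbleitungen} applies and yields
\begin{align*}
  \p_\xi^\gamma a_L(x,\xi)=\int\left[\int e^{-iy\cdot\eta}\,A^{l_0}(D_y,\eta)\,(\p_\xi^\gamma a)(x,\eta+\xi,x+y)\,dy\right]\dq\eta,
\end{align*}
the loss of $n+1$ in the order of admissible $|\gamma|$ being exactly the reason to pass from $N$ to $\tilde{N}$.

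Now I would combine this with Remark \ref{bem:beschrTeilmengeVonWmq_ulocSymbolen}: the set $\{\p_\xi^\gamma a:a\in\mathscr{B}\}$ is bounded in $W^{\tilde{m},q}_{uloc} S^{m}_{0,0}(\RnRnRn;N-|\gamma|)$, and since $N-|\gamma|\geq n+1>n$, Proposition \ref{prop:IntAbschatzungHmqUloc} applies with $\p_\xi^\gamma a$ in place of $a$. This directly gives
\begin{align*}
  \|\p_\xi^\gamma a_L(\cdot,\xi)\|_{W^{\tilde{m},q}_{uloc}}\leq C_\gamma\<{\xi}^m\qquad\text{for all }\xi\in\Rn,\ a\in\mathscr{B},
\end{align*}
with $C_\gamma$ uniform over $\mathscr{B}$.

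The main obstacle is not any single step but making sure the bookkeeping is consistent: one must verify that commuting $\p_\xi^\gamma$ with the oscillatory integral (Theorem \ref{thm:VertauschenVonOsziIntUndAbleitungen}) is compatible with the fact that the target norm is $W^{\tilde{m},q}_{uloc}$ rather than a pointwise bound. This is precisely where Proposition \ref{prop:IntAbschatzungHmqUloc} does the heavy lifting: its proof (via Proposition \ref{prop:HilfslemmaIntAbschatzung} applied with $X=W^{\tilde{m},q}_{uloc}$) rests on the non-trivial estimate $\sup_{y\in\Rn}\|\p_y^\gamma\p_\xi^\delta a(\cdot,\xi,\cdot+y)\|_{W^{\tilde{m},q}_{uloc}}\leq C\<{\xi}^{m-\rho|\delta|}$ of Lemma \ref{lemma:SymbolAbschatzungHmqUloc}, which in turn uses Lemma \ref{lemma:AbschaetzungInHmq_uloc} and thus requires the assumption $\tilde{m}>n/q$. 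Once these ingredients are invoked in the correct order, the uniformity in $a\in\mathscr{B}$ follows because each constant at each step depends only on finitely many semi-norms of $a$ as a symbol, which are uniformly controlled on $\mathscr{B}$.
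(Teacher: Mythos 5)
Your overall strategy is exactly the paper's: establish the case $\gamma=0$ via Theorem \ref{thm:OscillatoryIntegralGleichung} together with Proposition \ref{prop:IntAbschatzungHmqUloc}, then reduce general $\gamma$ with $|\gamma|\leq\tilde{N}$ to that case by commuting $\p_{\xi}^{\gamma}$ with the oscillatory integral (Theorem \ref{thm:VertauschenVonOsziIntUndAbleitungen}) and applying the $\gamma=0$ case to the bounded set $\{\p_{\xi}^{\gamma}a : a\in\mathscr{B}\}\subseteq W^{\tilde{m},q}_{uloc}S^{m}_{0,0}(\RnRnRn;N-|\gamma|)$ provided by Remark \ref{bem:beschrTeilmengeVonWmq_ulocSymbolen}. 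This is precisely how the paper argues.

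There is, however, one concrete error in your regularization step. You insert $A^{l_0}(D_y,\eta)$ and claim the inner $y$-integral then converges absolutely. It does not: $A^{l_0}(D_y,\eta)\,a(x,\eta+\xi,x+y)$ consists of terms of the form $\<{\eta}^{-l_0}\p_y^{\kappa}\left[a(x,\eta+\xi,x+y)\right]$, which decay in $\eta$ but are merely bounded in $y$ (the double symbol has no decay in its $x'$-slot), so $\int e^{-iy\cdot\eta}(\cdots)\,dy$ is not absolutely convergent, and the embedding $W^{\tilde{m},q}_{uloc}\hookrightarrow C^0_b$ you invoke concerns the $x$-variable and is irrelevant to $y$-integrability. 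The operator you need is the one Proposition \ref{prop:IntAbschatzungHmqUloc} is built around, namely $A^{l_0}(D_{\eta},y)$, which takes up to $l_0\leq N$ derivatives in the $\xi$-slot (this is the only reason the constraint $l_0\leq N$ appears at all) and produces the factor $\<{y}^{-l_0}$ with $l_0>n$ that renders the $y$-integral absolutely convergent. The subsequent $L^1(\Rn_{\eta})$-integrability of the inner integral is then not automatic but is part of the content of Proposition \ref{prop:HilfslemmaIntAbschatzung}, obtained by further integrations by parts in $y$ using the smoothness of $a$ in $x'$. With the two arguments of $A^{l_0}$ restored to their correct order, your argument coincides with the paper's proof.
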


\begin{proof}
  With Lemma \ref{lemma:einbettungSymbolklassen} and Theorem \ref{thm:SymbolReduktionNichtGlatt} at hand, it remains to show (\ref{eq:Hilfsabsch}).
  Due to 
  $a \in \mathscr{A}_0^{m,N}(\RnRnx{y}{\eta})$ 
  and $N-\tilde{N} =k >n$
  we can apply Theorem \ref{thm:OscillatoryIntegralGleichung} and Proposition \ref{prop:IntAbschatzungHmqUloc} and get for each $x,\xi \in \Rn$, $a \in \mathscr{B}$ and $ l_0 \in \N_0$ with $n < l_0 \leq N$:
  \begin{align*}
    \|a_L(x,\xi)\|_{  W^{\tilde{m},q}_{uloc}(\Rn_x) } &= \left\| \osint  e^{-iy \cdot \eta} A^{l_0}(D_{\eta},y)  a(x,\xi+\eta,x+y) dy \dq \eta \right\|_{  W^{\tilde{m},q}_{uloc}(\Rn_{x}) } \\
    &=\left\| \iint e^{-iy \cdot \eta} A^{l_0}(D_{\eta},y) a(x,\xi+\eta,x+y) dy \dq \eta \right\|_{  W^{\tilde{m},q}_{uloc}(\Rn_{x})  } \\
    &\leq C \<{\xi}^m \qquad \text{for all } \xi \in \Rn \text{ and } a \in \mathscr{B}.
  \end{align*}
  For more details concerning the second equality we refer to Proposition 4.8 of \cite{Paper1}.
  Thus the theorem holds for $\gamma =0$. Now let $\gamma \in \Non$ with $|\gamma| \leq \tilde{N}$. 
  Because of $N-\tilde{N} =2k >n$ for a $k \in \N_0$ and $a \in \mathscr{A}_0^{m,N}(\RnRnx{y}{\eta})$ we can apply Theorem \ref{thm:VertauschenVonOsziIntUndAbleitungen} and get
  \begin{align*}
    \p^{\gamma}_{\xi} a_L(x, \xi) = \osint e^{-iy \cdot \eta} \p^{\gamma}_{\xi} a(x, \eta + \xi, x+y) dy \dq \eta.
  \end{align*}
  On account of
  Remark \ref{bem:beschrTeilmengeVonWmq_ulocSymbolen} the first case, applied on the set $\mathscr{B}^{\gamma}$, yields 
  (\ref{eq:Hilfsabsch}).
\end{proof}

 The previous lemma enables us to show the improvement of the symbol reduction in the non-smooth case:

\begin{thm}\label{thm:SymbolReduktionNichtGlattHmqUloc}
  Let $1<q< \infty$, $\tilde{m} \in \N$ with $\tilde{m} > n/q$ and $m \in \R$. Additionally let $N \in \N_0 \cup \{ \infty \}$ with $N > n$. We define $\tilde{N}:= N-(n+1)$. Furthermore, let $\mathscr{B}$ be a bounded subset of $ W^{\tilde{m},q}_{uloc} S^{m}_{0,0}(\RnRnRn; N)$. For $a \in \mathscr{B}$ we define $a_L:\RnRn \rightarrow \C$ as in Theorem \ref{thm:SymbolReduktionNichtGlatt}.
  Then $\{a_L: a \in \mathscr{B} \} \subseteq  W^{\tilde{m},q}_{uloc} S^m_{0,0} (\RnRnx{x}{\xi}; \tilde{N})$ is bounded and we have
  \begin{align}\label{eq18}
    a(x,D_x, x') u = a_L(x,D_x) u \qquad \text{for all } a \in \mathscr{B} \text{ and } u \in \s.
  \end{align}
\end{thm}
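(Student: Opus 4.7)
The plan is to upgrade the already–proven Hölder version of symbol reduction (Theorem \ref{thm:SymbolReduktionNichtGlatt}) using the uniformly local Sobolev estimate provided by Lemma \ref{lemma:AbschVonaLHmqUloc}. The hard analytic work (bounding the oscillatory integral in the $W^{\tilde{m},q}_{uloc}$-norm via Propositions \ref{prop:IntAbschatzungHmqUloc} and \ref{prop:HilfslemmaIntAbschatzung}) has in fact already been carried out in that lemma, so the present theorem is essentially a bookkeeping and assembly result.

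First I would fix some $\tau \in (0, \tilde{m}-n/q]$ with $\tau \notin \N$ (which exists because of $\tilde{m} > n/q$). By Remark \ref{bem:einbettungSymbolklassenDoppelsymbol}, $\mathscr{B}$ is then a bounded subset of $C^{\tau} S^{m}_{0,0}(\RnRnRn; N)$. Applying Theorem \ref{thm:SymbolReduktionNichtGlatt} in this Hölder setting simultaneously gives me two things: on the one hand, $\{a_L : a \in \mathscr{B}\}$ is bounded in $C^{\tau} S^{m}_{0,0}(\RnRn; \tilde{N})$, which supplies the continuity and differentiability properties needed for the symbol class, namely that $\p_x^{\beta} a_L(x,\cdot) \in C^{\tilde{N}}(\Rn)$ and $\p_x^{\beta} \pa{\alpha} a_L \in C^{0}(\RnRn)$ for $|\alpha| \leq \tilde{N}$ and $|\beta| < \tilde{m}-n/q$; on the other hand, it immediately delivers the operator identity (\ref{eq18}), $a(x,D_x,x')u = a_L(x,D_x)u$ for all $u \in \s$ and $a \in \mathscr{B}$, since this identity is a pointwise consequence of oscillatory-integral manipulations that do not care whether the coefficients are Hölder or Sobolev.

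Next I would supply the quantitative estimate in the symbol class $W^{\tilde{m},q}_{uloc} S^m_{0,0}(\RnRn; \tilde{N})$. This is precisely the content of Lemma \ref{lemma:AbschVonaLHmqUloc}: for every $\gamma \in \Non$ with $|\gamma| \leq \tilde{N}$ there exists a constant $C_{\gamma}>0$, depending only on the semi-norm bounds of $\mathscr{B}$, such that
\begin{align*}
   \| \p^{\gamma}_{\xi} a_L(\cdot,\xi) \|_{ W^{\tilde{m},q}_{uloc}(\Rn) } \leq C_{\gamma} \<{\xi}^{m} \qquad \text{for all } \xi \in \Rn,\ a \in \mathscr{B}.
\end{align*}
Combined with the continuity properties inherited from the Hölder step, this verifies conditions (i)--(iii) of membership of $a_L$ in $W^{\tilde{m},q}_{uloc} S^m_{0,0}(\RnRn; \tilde{N})$ and, crucially, does so with constants uniform over $a \in \mathscr{B}$, establishing the claimed boundedness of $\{a_L : a \in \mathscr{B}\}$.

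The only point where one might still worry is uniformity: the identity (\ref{eq18}) is per element of $\mathscr{B}$, so it causes no issue, and the symbol class estimate (iii) is uniform by Lemma \ref{lemma:AbschVonaLHmqUloc}, which was already set up for bounded families rather than single symbols. The main conceptual obstacle — that the oscillatory integral $\osint e^{-iy\cdot \eta} a(x,\xi+\eta,x+y)\,dy\,\dq\eta$ remains bounded in the uniformly local Sobolev norm in $x$ — was resolved in Subsection \ref{subsection:SymbolReductionHmqUloc}; thus the present theorem is reduced to combining that estimate with the earlier Hölder reduction.
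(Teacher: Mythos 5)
Your proposal is correct and follows exactly the route of the paper's own (one-line) proof: embed $\mathscr{B}$ into a Hölder double-symbol class via Remark \ref{bem:einbettungSymbolklassenDoppelsymbol}, invoke Theorem \ref{thm:SymbolReduktionNichtGlatt} for the continuity properties and the operator identity (\ref{eq18}), and supply the uniform $W^{\tilde{m},q}_{uloc}$-estimate from Lemma \ref{lemma:AbschVonaLHmqUloc}. Your write-up simply makes explicit the bookkeeping that the paper leaves implicit.
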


\begin{proof}
  On account of Remark \ref{bem:einbettungSymbolklassenDoppelsymbol}, Theorem \ref{thm:SymbolReduktionNichtGlatt} and Lemma \ref{lemma:AbschVonaLHmqUloc} the claim holds.
\end{proof}

\subsection[Characterization of Non-Smooth Operators$]{Characterization of Non-Smooth Pseudodifferential Operators}\label{CharactPDO}

With all the work done in the last subsections we are now in the position to improve Theorem \ref{thm:classificationA10} and Theorem \ref{thm:classA00}.

\begin{thm} \label{thm:classA00HmqUloc}
  Let $m\in \R$, $1<q<\infty$, $\tilde{m} \in \N_0$ with $\tilde{m}> n/q$. Additionally let $M \in \N_0 \cup \{ \infty \}$ with $M  >n$. We define $\tilde{M}:=M-(n+1)$. Considering $T \in \mathcal{A}^{m,M}_{0,0}(\tilde{m},q)$ and $\tilde{M} \geq 1$ we have 
  $$T \in \op W^{\tilde{m},q}_{uloc} S^m_{0,0}(\RnRn; \tilde{M}-1) \cap \mathscr{L}(H^m_q(\Rn),L^q(\Rn)).$$
\end{thm}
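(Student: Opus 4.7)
The plan is to mirror the proof of Theorem \ref{thm:classA00} (and of Theorem \ref{thm:classificationA10}) line by line, substituting the uniformly local Sobolev-valued versions of the two main ingredients established in Subsections \ref{subsection:pointwiseConvergenceHmqUloc} and \ref{subsection:SymbolReductionHmqUloc}. Since $W^{\tilde m, q}_{uloc}(\Rn) \hookrightarrow C^{\tau}(\Rn)$ for some $0<\tau<\tilde m-n/q$ by Lemma \ref{lemma:SobolevHoelderInequalityFor_LqUlocSpaces}(v), Theorem \ref{thm:classA00} already provides a symbol $p \in C^{\tau} S^m_{0,0}(\RnRn; \tilde M - 1)$ with $T = \op(p)$ and $T \in \mathscr{L}(H^m_q, L^q)$; what is left is to upgrade the $C^{\tau}$-regularity in $x$ to $W^{\tilde m,q}_{uloc}$-regularity.

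First, following the Beals/Ueberberg strategy employed for Theorem \ref{thm:classA00}, I would construct an approximating family $(T_\varepsilon)_{\varepsilon>0}$ of smooth pseudodifferential operators (e.g.\ by conjugating $T$ with a mollifier-induced cut-off, so that the iterated commutator structure is preserved). The symbols $a_\varepsilon$ of $T_\varepsilon$ arise naturally as double symbols. Using $T \in \mathcal{A}^{m,M}_{0,0}(\tilde m, q)$, the uniform $\mathscr{L}(H^{m}_q,L^q)$-bounds on $\ad(-ix)^\alpha \ad(D_x)^\beta T$ for $|\alpha|\le M$, $|\beta|\le \tilde m$ translate via the standard Beals-type argument into the $W^{\tilde m,q}_{uloc}$-norm bounds
\[
\sup_{\xi,x'\in \Rn}\|\pa{\alpha} \p_{x'}^{\beta'}a_\varepsilon(\cdot,\xi,x')\|_{W^{\tilde m,q}_{uloc}}\<{\xi}^{-m}\le C_{\alpha,\beta'}
\]
uniformly in $\varepsilon>0$, for all $|\alpha|\le M$ and $|\beta'|$ permitted. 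Thus $\{a_\varepsilon\}_{\varepsilon>0}\subseteq W^{\tilde m,q}_{uloc} S^m_{0,0}(\RnRnRn;M)$ is bounded.

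Second, I would apply the new symbol reduction Theorem \ref{thm:SymbolReduktionNichtGlattHmqUloc} to the bounded family $\{a_\varepsilon\}_\varepsilon$, producing single symbols $p_\varepsilon := (a_\varepsilon)_L$ which form a bounded subset of $W^{\tilde m,q}_{uloc} S^m_{0,0}(\RnRn;\tilde M)$ with $T_\varepsilon = \op(p_\varepsilon)$. Third, applying the improved pointwise convergence Theorem \ref{thm:pointwiseConvergenceHmqUloc} (after extracting a diagonal subsequence and normalizing with $\<{\xi}^{-m}$ to reduce to the order-$0$ setting) yields a subsequence $p_{\varepsilon_l}\to p^\ast$ whose derivatives converge uniformly on compact sets, with $p^\ast\in W^{\tilde m,q}_{uloc}S^m_{0,0}(\RnRn;\tilde M-1)$.

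Finally, I would identify $p^\ast$ with the symbol $p$ already produced by Theorem \ref{thm:classA00}: both are limits of the same (sub)sequence of smooth symbols, the limit being unique by the uniform convergence on compacta, so $T = \op(p^\ast)$ follows by testing against $u\in\s$ and passing to the limit using the $\mathscr{L}(\s,W^{\tilde m,q}_{uloc})$-boundedness from Lemma \ref{lemma:stetigkeitInS} applied to $\{p_{\varepsilon_l}\}_l$. The main obstacle is the first step: verifying that the approximating double symbols $a_\varepsilon$ indeed carry \emph{uniform} $W^{\tilde m,q}_{uloc}$-bounds in $x$ (rather than merely $C^\tau$-bounds), which requires a careful bookkeeping of how the $\tilde m$ commutators with $D_x$ convert into $\tilde m$ weak $x$-derivatives controlled in $L^q_{uloc}$ — here the hypothesis $\tilde m>n/q$ ensures the embedding into $C^0$ needed to make pointwise values of $p_\varepsilon$ meaningful and the passage to the limit compatible with both the strong operator topology and the symbol topology.
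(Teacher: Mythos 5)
Your proposal follows essentially the same route as the paper: rerun the Beals--Ueberberg argument behind Theorem \ref{thm:classA00}, using the embedding $H^{\tilde{m}}_q(\Rn)\hookrightarrow W^{\tilde{m},q}_{uloc}(\Rn)$ to turn the commutator bounds into uniform $W^{\tilde{m},q}_{uloc}$-bounds on the approximating double symbols, then substitute Theorem \ref{thm:SymbolReduktionNichtGlattHmqUloc} for the symbol reduction and Theorem \ref{thm:pointwiseConvergenceHmqUloc} for the compactness step, with the order-reducing normalization handling general $m$. This matches the paper's proof, so no further comment is needed.
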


\begin{proof}
  The proof of the claim is essentially the same as that one of Theorem \ref{thm:classA00}, cf. Subsection 4.4 in \cite{Paper1}. In the case $m=0$ we just have to replace the results for pseudodifferential operators with coefficients in  Hölder spaces with corresponding ones for pseudodifferential operators with coefficients in uniformly local Sobolev spaces. \\
  Therefore we have to use the continuous embedding $H^{\tilde{m}}_q(\Rn) \hookrightarrow W^{\tilde{m},q}_{uloc}(\Rn)$  instead of the continuous embedding $H^{\tilde{m}}_q(\Rn) \hookrightarrow C^{\tau}(\Rn)$ in step two. Additionally we have to apply Theorem \ref{thm:SymbolReduktionNichtGlattHmqUloc} instead of Theorem \ref{thm:SymbolReduktionNichtGlatt} in the second step. \\
  In step three we have to replace Theorem \ref{thm:pointwiseConvergence} with Theorem \ref{thm:pointwiseConvergenceHmqUloc}. We also use the fact that $W^{\tilde{m},q}_{uloc} S^0_{0,0}(\RnRn; \tilde{M})$ is a subset of $ C^{\tau} S^0_{0,0}(\RnRn; \tilde{M})$ for every $\tau \in(0,  \tilde{m}-n/q]$ with $\tau \notin \N$, cf.\;Lemma \ref{lemma:einbettungSymbolklassen}. \\
  The general case $m \in \R$ can be verified in the same way as to the general case of Theorem \ref{thm:classA00}, cf. Lemma 4.21 of \cite{Paper1} by means of the order reducing pseudodifferential operator $\Lambda^{-m}$ with symbol $\<{\xi}^{-m}$ and the case $m=0$. Instead of corresponding results for pseudodifferential operators with coefficients in the Hölder spaces we use that $P \Lambda^{-m} \in  \op W^{\tilde{m},q}_{uloc} S^0_{0,0}(\RnRn; M) \cap \mathscr{L}(L^q(\Rn))$ implies $ P \in \op W^{\tilde{m},q}_{uloc} S^m_{0,0}(\RnRn; M) \cap \mathscr{L}(H^m_q(\Rn),L^q(\Rn))$ here. For more details we refer to \cite[Section 5.6.3]{Diss}. 
\end{proof}

\begin{thm}\label{thm:classificationA10HmqUloc}
  Let  $m\in \R$, $1<q<\infty$, $\tilde{m} \in \N_0$ with $\tilde{m}>n/q$. Additionally let $M \in \N_0$ with $M >n$. We define $\tilde{M}:=M-(n+1)$. Considering $P \in \mathcal{A}^{m, M}_{1,0}(\tilde{m},q)$ and $\tilde{M} \geq 1$ we obtain
  $$P \in \op W^{\tilde{m},q}_{uloc} S^m_{1,0}(\RnRn; \tilde{M}-1) \cap \mathscr{L}(H^m_q(\Rn),L^q(\Rn)).$$
\end{thm}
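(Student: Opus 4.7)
The plan is to mirror the proof of Theorem \ref{thm:classA00HmqUloc}, which itself follows Beals--Ueberberg's three-step characterization scheme as executed for the Hölder-valued case in Theorem \ref{thm:classificationA10}. The only systematic change is to replace every Hölder-valued tool by its $W^{\tilde{m},q}_{uloc}$-valued analogue established in Sections \ref{section:Preliminaries}--\ref{ImprovementCharacterization}.

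First, I would reduce to the case $m=0$. Setting $\Lambda^{-m} := \op(\<{\xi}^{-m}) \in \op S^{-m}_{1,0}(\RnRn)$, the iterated commutator calculus and Theorem \ref{thm:stetigInBesselPotRaum} give that $P \in \mathcal{A}^{m,M}_{1,0}(\tilde{m},q)$ if and only if $P\Lambda^{-m} \in \mathcal{A}^{0,M}_{1,0}(\tilde{m},q)$. Once the $m=0$ version of the theorem yields $P\Lambda^{-m} \in \op W^{\tilde{m},q}_{uloc} S^0_{1,0}(\RnRn;\tilde{M}-1)$, composing on the right with $\Lambda^m \in \op S^m_{1,0}$ and invoking the composition calculus between a non-smooth $W^{\tilde{m},q}_{uloc}$-symbol and a classical smooth one (as in Lemma 4.21 of \cite{Paper1}) produces $P \in \op W^{\tilde{m},q}_{uloc} S^m_{1,0}(\RnRn;\tilde{M}-1)$.

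For the core case $m=0$ I would run the same three steps as in Theorem \ref{thm:classA00HmqUloc}. In Step 1 I regularize $P$ in the spatial variable to obtain a net of smooth pseudodifferential operators $P_\varepsilon$ whose double symbols $p_\varepsilon(x,\xi,x')$ are built from translates of $P$; the commutator hypothesis $\adc{\alpha}{\beta}P \in \mathscr{L}(H^{-|\alpha|}_q,L^q)$ together with the embedding $H^{\tilde{m}}_q(\Rn) \hookrightarrow W^{\tilde{m},q}_{uloc}(\Rn)$ (used in place of $H^{\tilde{m}}_q \hookrightarrow C^\tau$) yields uniform bounds
\begin{align*}
\sup_{\xi,x'} \<{\xi}^{|\alpha|} \|\pa{\alpha}\p_{x'}^{\beta'} p_\varepsilon(\cdot,\xi,x')\|_{W^{\tilde{m},q}_{uloc}} \leq C_{\alpha,\beta'}
\end{align*}
for $|\alpha|\leq M$, $|\beta'|\leq \tilde{m}$. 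In Step 2 I apply the new symbol-reduction result Theorem \ref{thm:SymbolReduktionNichtGlattHmqUloc} (instead of Theorem \ref{thm:SymbolReduktionNichtGlatt}) to the bounded family $\{p_\varepsilon\}$ in $W^{\tilde{m},q}_{uloc}S^0_{1,0}(\RnRnRn;M)$, producing $\{(p_\varepsilon)_L\}$ bounded in $W^{\tilde{m},q}_{uloc}S^0_{1,0}(\RnRn;\tilde{M})$ with $p_\varepsilon(x,D_x,x')u = (p_\varepsilon)_L(x,D_x)u$ on $\s$. In Step 3, the $\rho=1$ analogue of Theorem \ref{thm:pointwiseConvergenceHmqUloc} (whose proof is verbatim the same once the $\<{\xi}^{-|\alpha|}$-factor is carried through, combined with Lemma \ref{lemma:einbettungSymbolklassen} so that $C^\tau$-pointwise convergence is available) supplies a subsequence $(p_{\varepsilon_l})_L \to p$ in the sense of $(i)$--$(iii)$ there, with $p \in W^{\tilde{m},q}_{uloc} S^0_{1,0}(\RnRn;\tilde{M}-1)$. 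Finally the identification $\op(p)=P$ on $\s$ is inherited from the convergence $P_{\varepsilon_l}\to P$ obtained from the mollification.

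The main obstacle is the uniform $W^{\tilde{m},q}_{uloc}$-bound in Step 2: translating the commutator hypotheses of $P$ into estimates on the double symbol requires that the Hölder estimates used in \cite{Paper1} be upgraded to uniformly local Sobolev estimates. This is exactly what Theorem \ref{thm:SymbolReduktionNichtGlattHmqUloc} is designed for, but its hypothesis in turn relies on Lemma \ref{lemma:AbschaetzungInHmq_uloc} and the auxiliary material of Subsection \ref{subsection:HmqUlocAbschaetzung} (in particular Proposition \ref{prop:TfInWmq_uloc} and Lemma \ref{lemma:AbleitungenInLq_uloc}), so all the technical preparation in Section \ref{section:Preliminaries} feeds into this single point. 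Once this bound is in place the rest of the argument transfers without any new idea.
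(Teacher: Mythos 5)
Your overall strategy---rerunning the three-step Beals--Ueberberg scheme directly for $\rho=1$ with $W^{\tilde{m},q}_{uloc}$-valued tools---is not what the paper does, and as written it has a genuine gap in Step 2. Theorem \ref{thm:SymbolReduktionNichtGlattHmqUloc} (like Theorem \ref{thm:SymbolReduktionNichtGlatt}) is stated and proved only for double symbols in $W^{\tilde{m},q}_{uloc}S^{m}_{0,0}(\RnRnRn;N)$, and its conclusion places $a_L$ only in $W^{\tilde{m},q}_{uloc}S^m_{0,0}(\RnRnx{x}{\xi};\tilde{N})$. Applying it to your family $\{p_\varepsilon\}$ therefore yields no $\<{\xi}^{-|\alpha|}$ gain for $\pa{\alpha}(p_\varepsilon)_L$; the assertion that $\{(p_\varepsilon)_L\}$ is bounded in $W^{\tilde{m},q}_{uloc}S^0_{1,0}(\RnRn;\tilde{N})$ is unsupported. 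A genuine $\rho=1$ symbol reduction would require an asymptotic expansion with controlled remainders and is established neither in this paper nor in \cite{Paper1}. The same caveat applies, less seriously, to the ``$\rho=1$ analogue of Theorem \ref{thm:pointwiseConvergenceHmqUloc}'' invoked in your Step 3, which is also not available in the paper.

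The paper's proof sidesteps all of this with a short bootstrap that never touches $\rho=1$ symbol reduction. It first applies Theorem \ref{thm:classificationA10} to obtain $P=\op(p)$ with $p\in C^{\tau}S^m_{1,0}(\RnRn;\tilde{M}-1)$. Then, for each $\alpha$ with $|\alpha|\leq\tilde{M}-1$, it uses (from the proof of Theorem \ref{thm:classificationA10}) that $\ad(-ix)^{\alpha}P\in\mathcal{A}^{m-|\alpha|,M-|\alpha|}_{1,0}(\tilde{m},q)$, hence lies in the corresponding $\rho=0$ class, and that the symbol of $\ad(-ix)^{\alpha}P$ is $\pa{\alpha}p$. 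Applying the already-proved $\rho=0$ result, Theorem \ref{thm:classA00HmqUloc}, to each of these operators of order $m-|\alpha|$ yields $\|\pa{\alpha}p(\cdot,\xi)\|_{W^{\tilde{m},q}_{uloc}}\leq C_{\alpha}\<{\xi}^{m-|\alpha|}$, which is exactly the missing $S^m_{1,0}$ estimate. If you reroute your argument through this commutator bootstrap, so that only the $\rho=0$ machinery (which the paper does provide) is ever needed, the proof closes; as proposed, it does not.
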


\begin{proof}
  Let $\tau \in (0, \tilde{m}-n/q]$, $\tau \notin \N$. An application of Theorem \ref{thm:classificationA10} yields
  $$P \in \op C^{\tau}S^m_{1,0}(\RnRn; \tilde{M}-1) \cap \mathscr{L}(H^m_q(\Rn),L^q(\Rn)).$$
  Let $\alpha \in \Non$ with $|\alpha| \leq \tilde{M}-1$ be arbitrary. Due to the proof of Theorem \ref{thm:classificationA10}, cf.\,\cite[Theorem 4.22]{Paper1} we know that $\ad(-ix)^{\alpha} P \in \mathcal{A}_{1,0}^{m-|\alpha|, M - |\alpha|}(\tilde{m},q)$. Hence an application of Theorem \ref{thm:classA00HmqUloc} provides
  \begin{align*}
	\ad(-ix)^{\alpha} P \in \op W^{\tilde{m},q}_{uloc} S^{m-|\alpha|}_{0,0}(\RnRn; \tilde{M}-|\alpha|-1).
  \end{align*}
  On account of the proof of Theorem \ref{thm:classificationA10} the symbol of $\ad(-ix)^{\alpha} P$ is $\pa{\alpha} p(x,\xi)$. This implies $P \in \op W^{\tilde{m},q}_{uloc}S^m_{1,0}(\RnRn; \tilde{M}-1)$, since
  \begin{align*}
    \| \pa{\alpha} p(.,\xi) \|_{  W^{\tilde{m},q}_{uloc}(\Rn) } \leq C_{\alpha} \<{\xi}^{m-|\alpha|} \qquad \textrm{for all } \xi \in \R^n.
  \end{align*}
  and $|\alpha| \leq \tilde{M}-1$ is arbitrary.
\end{proof}

If we use Theorem \ref{thm:classificationA10HmqUloc} and Theorem \ref{thm:classA00HmqUloc} instead of Theorem \ref{thm:classificationA10} and Theorem \ref{thm:classA00} in the proof of Theorem 5.1 in \cite{Paper1}, we can improve the result for the symbol composition of non-smooth pseudodifferential operators:  

\begin{thm}\label{thm:symbolComposition}
  Let $m_i \in \R$, $M_i \in \N \cup \{ \infty \}$ and $\rho_i \in \{0,1\}$ for $i \in \{ 1,2 \}$. Additionally let $0< \tau_i < 1$ and $\tilde{m}_i \in \N_0$ be such that $\tau_i + \tilde{m}_i > (1-\rho_i)n/2$ for $i \in \{ 1,2 \}$. We define $k_{i}:=(1-\rho_i)n/2$ for $i \in \{ 1,2\}$, $\rho:= \min\{\rho_1; \rho_2 \}$ and $m:= m_1+ m_2 + k_{1} + k_{2}$.   
  Moreover, let $\tilde{m}, M \in \N$ and $1< q < \infty$ be such that
  \begin{itemize}
    \item[i)] $M \leq \min \left\{ M_i- \max \{ n/q; n/2 \}: i \in \{1,2\} \right\}$,
    \item[ii)] $n/q < \tilde{m} \leq \min\{ \tilde{m}_1 ; \tilde{m}_2\}$,
    \item[iii)] $\tilde{m} < \tilde{m}_2 + \tau_2  - m_1 - k_{1}$,
    \item[iv)] $\rho M + \tilde{m} < \tilde{m}_2 + \tau_2 + m_1  + k_{1}$,
    \item[v)] $\tilde{M} \geq 1$, where $\tilde{M}:= M-(n+1)$, 
    \item[vi)] $q=2$ in the case $(\rho_1, \rho_2) \neq (1,1)$.
  \end{itemize}
  Considering two symbols $p_i \in C^{\tilde{m}_i, \tau_i} \Snn{m_i}{\rho_i}{0}{M_i}$, $i \in \{1,2\}$, we obtain
  \begin{align*}
    p_1(x,D_x) p_2(x,D_x) \in \op W^{\tilde{m},q }_{uloc} \Snn{m}{\rho}{0}{\tilde{M}-1}.
  \end{align*}
\end{thm}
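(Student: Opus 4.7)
The plan is to follow the template of Theorem 5.1 in \cite{Paper1}: first show that the composition $P := p_1(x, D_x) p_2(x, D_x)$ lies in the characterization class $\mathcal{A}^{m, M}_{\rho, 0}(\tilde{m}, q)$, and then invoke the improved characterization results, namely Theorem \ref{thm:classA00HmqUloc} in the case $\rho = 0$ (forced by $(\rho_1,\rho_2) \neq (1,1)$ together with (vi)) and Theorem \ref{thm:classificationA10HmqUloc} in the case $\rho = 1$, to obtain the desired membership in $\op W^{\tilde{m}, q}_{uloc} S^{m}_{\rho, 0}(\RnRn; \tilde{M}-1)$.

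To verify $P \in \mathcal{A}^{m, M}_{\rho, 0}(\tilde{m}, q)$, I would iterate the Leibniz-type identities
\begin{align*}
\ad(-ix_j)(AB) &= [\ad(-ix_j) A]\, B + A\, [\ad(-ix_j) B],\\
\ad(D_{x_j})(AB) &= [\ad(D_{x_j}) A]\, B + A\, [\ad(D_{x_j}) B],
\end{align*}
so that every iterated commutator $\adc{\alpha}{\beta} P$ can be written as a finite sum of compositions $A_{\alpha', \beta'} B_{\alpha'', \beta''}$ with $\alpha' + \alpha'' = \alpha$ and $\beta' + \beta'' = \beta$. By Remark \ref{bem:SymbolOfIteratedCommutatorNonSmooth}, these factors are pseudodifferential operators whose symbols lie in $C^{\tilde{m}_1 - |\beta'|, \tau_1} S^{m_1 - \rho_1 |\alpha'|}_{\rho_1, 0}(\RnRn; M_1 - |\alpha'|)$ and $C^{\tilde{m}_2 - |\beta''|, \tau_2} S^{m_2 - \rho_2 |\alpha''|}_{\rho_2, 0}(\RnRn; M_2 - |\alpha''|)$ respectively, for appropriate $|\alpha'| \leq M$, $|\beta'| \leq \tilde{m}_1 + \tau_1$ and their counterparts for the second factor.

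The remaining work is to show that each summand $A_{\alpha', \beta'} B_{\alpha'', \beta''}$ defines a bounded operator $H^{m - \rho|\alpha|}_q(\Rn) \to L^q(\Rn)$. I would insert an intermediate Bessel potential space $H^{s}_q(\Rn)$ and split the composition as $H^{m-\rho|\alpha|}_q \xrightarrow{B_{\alpha'',\beta''}} H^{s}_q \xrightarrow{A_{\alpha',\beta'}} L^q$. The second-factor boundedness is furnished by Theorem \ref{thm:stetigInHoelderRaum} when $\rho_2 = 1$, or by Theorem \ref{thm:stetigInHoelderRaum00} / Theorem \ref{thm:1stetigInHoelderRaum00} together with the Hölder-Zygmund embedding when $\rho_2 = 0$ (at which point assumption (vi) forces $q = 2$, so these $L^2$-based theorems apply); the first factor is handled analogously. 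The built-in order shift $k_1 + k_2$ in the definition of $m$ is exactly the $L^p$-loss $(1-\rho_i)n|1/2 - 1/p|$ appearing in Theorem \ref{thm:stetigInHoelderRaum}, and conditions (i), (ii), (v) guarantee sufficient differentiability in $\xi$ to match the characterization theorems, while (iii) and (iv) are designed to guarantee that the admissible window for the intermediate exponent $s$ is non-empty.

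The main obstacle is the careful bookkeeping behind that last point. Each Leibniz summand $A_{\alpha', \beta'} B_{\alpha'', \beta''}$ carries a different order shift and smoothness decrement $(|\alpha'|, |\alpha''|, |\beta'|, |\beta''|)$, so the admissible range for $s$ varies with the decomposition; it must be verified that (iii) is precisely the estimate needed to keep the upper constraint $s < \tau_2 + \tilde{m}_2 - |\beta''|$ above the lower constraint produced by the order $m_1 - \rho_1|\alpha'|$ of the outer factor, and that (iv) plays the symmetric role at the other extreme $|\alpha| = M$. Once this worst-case analysis is carried out uniformly in the decomposition and combined with $|\alpha|\leq M$, $|\beta| \leq \tilde{m}$, all hypotheses of the appropriate characterization theorem are met and the conclusion follows at once.
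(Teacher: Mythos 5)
Your proposal is correct and follows essentially the same route as the paper, which proves this theorem by running the argument of Theorem 5.1 in \cite{Paper1} — Leibniz decomposition of the iterated commutators of $p_1(x,D_x)p_2(x,D_x)$, boundedness of each factor via Theorem \ref{thm:stetigInHoelderRaum} (resp. the $L^2$-based results under (vi)), hence membership in $\mathcal{A}^{m,M}_{\rho,0}(\tilde{m},q)$ — and then substitutes the improved characterization results, Theorem \ref{thm:classificationA10HmqUloc} and Theorem \ref{thm:classA00HmqUloc}, for the original ones. Your reading of the roles of conditions (i)--(vi), including that (vi) forces $q=2$ exactly when $\rho=0$, matches the paper's intent.
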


We are even able to improve this result for non-smooth pseudodifferential operators with coefficients in the uniformly local Sobolev Spaces:

\begin{thm}\label{thm:SymbolCompositionWmqUloc}
  Let $m_i \in \R$ and $1 < q_i < \infty$ for $i \in \{ 1,2 \}$. Additionally let $\tilde{m}_i \in \N_0$ with $\tilde{m}_i > n/{q_i}$ for $i \in \{ 1,2 \}$. We define $m:= m_1+ m_2$.   
  Moreover let $\tilde{m}, M \in \N$ and $1< q < \infty$ be such that
  \begin{itemize}
    \item[i)] $n/q < \tilde{m} < \min\{ \tilde{m}_1-n/{q_1}; \tilde{m}_2-n/{q_2} \}$,
    \item[ii)] $\tilde{m} \leq \tilde{m}_2 -n(1/q_2- 1/q)^+ - m_1$,
    \item[iii)] $M + \tilde{m} < \tilde{m}_2 -n/q_2 + m_1$,
    \item[iv)] $\tilde{M} \geq 1$, where $\tilde{M}:= M-(n+1)$. 
  \end{itemize}
  Considering two symbols $p_i \in W^{\tilde{m}_i, q_i}_{uloc} \Sn{m_i}{1}{0}$, $i \in \{1,2\}$, we obtain
  \begin{align*}
    p_1(x,D_x) p_2(x,D_x) \in \op W^{\tilde{m},q}_{uloc} \Snn{m}{1}{0}{\tilde{M}-1}.
  \end{align*}
\end{thm}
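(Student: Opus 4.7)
The plan is to verify that $P := p_1(x,D_x)p_2(x,D_x)$ belongs to the characterization set $\mathcal{A}^{m,M}_{1,0}(\tilde{m},q)$ and then invoke Theorem \ref{thm:classificationA10HmqUloc} to conclude. Assumptions iii), iv) guarantee $\tilde{M} \geq 1$ and $M > n$, so that theorem is applicable once membership in $\mathcal{A}^{m,M}_{1,0}(\tilde{m},q)$ is established.

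For the membership step, let $l\in \N$ and $\alpha_1,\beta_1,\ldots,\alpha_l,\beta_l\in \Non$ with $|\alpha_j|+|\beta_j|=1$, $|\alpha|\leq M$, $|\beta|\leq \tilde{m}$. The commutators $\ad(-ix_j)$ and $\ad(D_{x_j})$ are derivations, so
\begin{align*}
  \adc{\alpha}{\beta}(p_1(x,D_x)p_2(x,D_x))
  = \sum_{S} \bigl( \adc{\alpha^{S}}{\beta^{S}} p_1(x,D_x)\bigr) \circ \bigl(\adc{\alpha^{S^c}}{\beta^{S^c}} p_2(x,D_x)\bigr),
\end{align*}
where $S$ runs over subsets of $\{1,\ldots,l\}$ and $\alpha^S := \sum_{j\in S}\alpha_j$, $\beta^S := \sum_{j\in S}\beta_j$. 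By Remark \ref{bem:SymbolOfIteratedCommutatorNonSmoothInUniformlyLocallySobolevSpaces}, the $i$-th factor is a pseudodifferential operator with symbol in $W^{\tilde{m}_i-|\beta^{(i)}|,q_i}_{uloc}S^{m_i-|\alpha^{(i)}|}_{1,0}(\RnRn)$.

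The crux is to choose an intermediate Bessel potential space $H^s_q(\Rn)$ so that Theorem \ref{thm:boundednessOfSymbolsWithCoefficientsInUniformlyBoundedSobolevSpaces} applies to both factors. Picking $s := m_1 - |\alpha^S|$, the second factor must map $H^{m-|\alpha|}_q = H^{s+m_2-|\alpha^{S^c}|}_q \to H^s_q$; the admissible range from Theorem \ref{thm:boundednessOfSymbolsWithCoefficientsInUniformlyBoundedSobolevSpaces} reads
\begin{align*}
  -\tilde{m}_2 + |\beta^{S^c}| + n/q_2 < m_1-|\alpha^S| \leq \tilde{m}_2 - |\beta^{S^c}| - n(1/q_2-1/q)^+,
\end{align*}
which follows from assumption iii) ($|\alpha^S|+|\beta^{S^c}| \leq M+\tilde{m} < \tilde{m}_2 - n/q_2 + m_1$) and assumption ii) (in the worst case $|\alpha^S|=0$, $|\beta^{S^c}|=\tilde{m}$). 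The first factor then must map $H^s_q = H^{m_1-|\alpha^S|}_q \to L^q$; here one applies Theorem \ref{thm:boundednessOfSymbolsWithCoefficientsInUniformlyBoundedSobolevSpaces} at $s=0$, needing $|\beta^S| < \tilde{m}_1 - n/q_1$, which is assumption i). Summing over $S$ gives the continuity of the iterated commutator as a map $H^{m-|\alpha|}_q(\Rn)\to L^q(\Rn)$, and the special case $l=0$ yields $P\in\mathscr{L}(H^m_q(\Rn),L^q(\Rn))$. Hence $P\in \mathcal{A}^{m,M}_{1,0}(\tilde{m},q)$, and Theorem \ref{thm:classificationA10HmqUloc} delivers $P\in \op W^{\tilde{m},q}_{uloc}\Snn{m}{1}{0}{\tilde{M}-1}$.

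The main obstacle is bookkeeping: one must verify that the inequalities defining the admissible $s$-range in Theorem \ref{thm:boundednessOfSymbolsWithCoefficientsInUniformlyBoundedSobolevSpaces} remain valid for every subset $S$ and every admissible $(\alpha,\beta)$ simultaneously, which is precisely what the four technical hypotheses i)--iv) encode. Apart from this check, the proof is a clean reduction: distribute commutators by Leibniz, identify symbols via Remark \ref{bem:SymbolOfIteratedCommutatorNonSmoothInUniformlyLocallySobolevSpaces}, bound summand-by-summand via the uniformly local Sobolev boundedness theorem, and close with the improved characterization Theorem \ref{thm:classificationA10HmqUloc}.
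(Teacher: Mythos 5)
Your proposal is correct and follows essentially the same route as the paper: the paper's proof simply refers back to the composition argument of Theorem 5.1 in \cite{Paper1}, replacing the H\"older-coefficient commutator remark by Remark \ref{bem:SymbolOfIteratedCommutatorNonSmoothInUniformlyLocallySobolevSpaces} and the H\"older boundedness theorem by Theorem \ref{thm:boundednessOfSymbolsWithCoefficientsInUniformlyBoundedSobolevSpaces}, and then closes with the improved characterization Theorem \ref{thm:classificationA10HmqUloc} — exactly the Leibniz-distribution, intermediate-space, and characterization steps you spell out. Your bookkeeping of how hypotheses i)--iii) furnish the admissible $s$-ranges for both factors is accurate and in fact more explicit than the paper's own writeup.
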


\begin{proof}
  The proof of the theorem is essentially the same as that one of Theorem 5.1 in \cite{Paper1}. We just have to replace Remark \ref{bem:SymbolOfIteratedCommutatorNonSmooth} with Remark \ref{bem:SymbolOfIteratedCommutatorNonSmoothInUniformlyLocallySobolevSpaces} and Theorem \ref{thm:stetigInHoelderRaum} with Theorem \ref{thm:boundednessOfSymbolsWithCoefficientsInUniformlyBoundedSobolevSpaces}.
\end{proof}

In the same way as the statement of Theorem \ref{thm:symbolComposition} it should be possible to verify a similar result for the composition of two pseudodifferential operators of the symbol-class $H^{\tilde{m}}_{q} S^m_{1,0} (\RnRn)$ by using Remark \ref{bem:SymbolOfIteratedCommutatorNonSmoothInSobolevSpaces} and Theorem \ref{thm:Marschall,Thm2.2} instead of Remark \ref{bem:SymbolOfIteratedCommutatorNonSmooth} and Theorem \ref{thm:stetigInHoelderRaum}.

We also could consider the composition of two non-smooth pseudodifferential operators whose coefficients are either in a Hölder space, in a Bessel potential space or in an uniformly local Sobolev spaces, however in different spaces. Adapting the proof of Theorem \ref{thm:symbolComposition} one could obtain similar results for these cases. 
 
\section{Spectral Invariance}\label{SpectralInvariance}

\subsection[The Inverse of an Operator with Symbol in $C^{\tau} S^{0}_{0,0} $]{The Inverse of a Pseudodifferential Operator in the Symbol-Class $C^{\tau} S^{0}_{0,0} $}\label{section:SpectralInvarianceHölderCase}

In the present subsection

\begin{thm}\label{thm:SpectralinvarianceS000}
  Let $\tilde{m} \in \N_0$ and $0 < \tau < 1$. We assume $$\hat{m}:=\max \{ k \in \N_0: \tilde{m}+\tau-k> n/2 \}>n/2.$$  For every $p \in C^{\tilde{m},\tau} \Sn{0}{0}{0} $ with $p(x,D_x)^{-1} \in \mathscr{L}(L^2(\Rn))$ we get
  \begin{align*}
    p(x,D_x)^{-1} \in \op W^{\hat{m},2}_{uloc} S^0_{0,0}(\RnRn) \subseteq \op C^s \Sn{0}{0}{0}
  \end{align*}
   for all $s \in (0, \hat{m} - n/2]$ with $ s \notin \N$.
\end{thm}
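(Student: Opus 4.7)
The plan is to apply the characterization theorem, Theorem \ref{thm:classA00HmqUloc}, to $T := p(x,D_x)^{-1}$ with $m=0$, $q=2$, $\tilde{m}=\hat{m}$ and $M=\infty$. This will yield $T \in \op W^{\hat{m},2}_{uloc} S^0_{0,0}(\RnRn)$, and the second inclusion into $\op C^{s} S^0_{0,0}(\RnRn)$ for $s \in (0,\hat{m}-n/2]$, $s \notin \N$, is then just Lemma \ref{lemma:einbettungSymbolklassen}. The assumption $\hat{m} > n/2$ is exactly the size condition $\tilde{m} > n/q$ required by Theorem \ref{thm:classA00HmqUloc}, and $M=\infty > n$ is automatic. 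So the task reduces to verifying $T \in \mathcal{A}^{0,\infty}_{0,0}(\hat{m},2)$, i.e.\ that for every $\alpha \in \Non$ and every $\beta \in \Non$ with $|\beta|\leq \hat{m}$ the iterated commutator $\adc{\alpha}{\beta} T$ extends to a bounded operator on $L^2(\Rn)$.

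To handle these commutators of the inverse I would start from the algebraic identity
\begin{align*}
\ad(A)(T) = -T \cdot \ad(A)(P) \cdot T,
\end{align*}
obtained by applying the derivation $\ad(A)$ to $PT = I$. Iterating this $l$ times and using the Leibniz property of $\ad(A)$, the iterated commutator $\adc{\alpha}{\beta} T$ becomes a finite linear combination of products of the shape
\begin{align*}
T \cdot Q_1 \cdot T \cdot Q_2 \cdot T \cdots T \cdot Q_r \cdot T,
\end{align*}
where each $Q_k$ is an iterated commutator of $P$ built from a subset of the single commutators $\ad(-ix_j)$, $\ad(D_{x_j})$ appearing in $\adc{\alpha}{\beta}$, and these subsets form a partition of the original $l$-tuple.

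By Remark \ref{bem:SymbolOfIteratedCommutator}, every such $Q_k$ is a pseudodifferential operator whose symbol equals the pure derivative $\pa{\sigma_k} \p_x^{\gamma_k} p$, which lies in $C^{\tilde{m}-|\gamma_k|,\tau} S^0_{0,0}(\RnRn)$, with $\sum_k |\sigma_k| = |\alpha|$ and $\sum_k |\gamma_k| = |\beta|$. Since $|\beta| \leq \hat{m}$ forces $|\gamma_k| \leq \hat{m}$ for each $k$, the Hölder--Zygmund regularity of the symbol of $Q_k$ is $\tilde{m}-|\gamma_k|+\tau \geq \tilde{m}+\tau-\hat{m} > n/2$ by the very definition of $\hat{m}$. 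Hence Theorem \ref{thm:1stetigInHoelderRaum00} (applied with ambient order $m=0$ at the index $s=0$, which is admissible because the Hölder--Zygmund index exceeds $n/2$ strictly) yields $Q_k \in \mathscr{L}(L^2(\Rn))$; combined with $T \in \mathscr{L}(L^2(\Rn))$, every product of the above form is bounded on $L^2$, and therefore so is $\adc{\alpha}{\beta} T$. Thus $T \in \mathcal{A}^{0,\infty}_{0,0}(\hat{m},2)$, and Theorem \ref{thm:classA00HmqUloc} concludes the proof.

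The main obstacle I anticipate is not the estimates but the rigorous justification of the derivation identity: the operator $\ad(-ix_j)T$ involves multiplying $L^2$-functions by the unbounded function $-ix_j$, so $\ad(A)(T) = -T\ad(A)(P)T$ is a priori only a formal identity. The standard remedy is to verify everything first on the Schwartz space, where the compositions make sense as operators $\s \to \sd$, and then extend by density using the $L^2$-bounds derived above — exactly the pattern used in \cite{Paper1} and in the classical Beals--Ueberberg argument. Once this is set up, the rest is combinatorial bookkeeping of the partition structure of the iterated commutator sum.
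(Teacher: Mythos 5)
Your proposal is correct and follows essentially the same route as the paper: reduce to showing $P^{-1}\in\mathcal{A}^{0}_{0,0}(\hat{m},2)$ via the iterated-commutator identities for the inverse (which the paper packages as Remark \ref{bem:Formal identity}, including exactly the density/extension argument you anticipate for making the formal identity rigorous), bound the commutators of $P$ on $L^2$ using the regularity count $\tilde{m}+\tau-|\beta|>n/2$ guaranteed by the definition of $\hat{m}$, and then invoke Theorem \ref{thm:classA00HmqUloc} together with Lemma \ref{lemma:einbettungSymbolklassen}. No gaps.
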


Ueberberg proved a similar result for the smooth case, cf.\;\cite[Theorem 4.3]{Ueberberg}:

\begin{thm}\label{thm:SpektralinvarianzGlatterFall}
  Let $1< q < \infty$ and $0 \leq \delta \leq \rho \leq 1$ with $\delta <1$.
  \begin{itemize}
    \item[i)] Considering a symbol $p \in \Sn{0}{\rho}{\delta}$ where $p(x, D_x)^{-1} \in \mathscr{L}(L^2(\Rn))$ we obtain $p(x, D_x)^{-1} \in \op \Sn{0}{\rho}{\delta}$.
    \item[ii)] Assuming a symbol $p \in \Sn{0}{1}{\delta}$ where $p(x, D_x)^{-1} \in \mathscr{L}(L^q(\Rn))$ we get $p(x, D_x)^{-1} \in \op \Sn{0}{1}{\delta}$.
  \end{itemize}
\end{thm}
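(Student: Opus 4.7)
The plan is to deduce both (i) and (ii) from Beals' smooth characterization theorem, which is the smooth analogue of Theorem \ref{thm:classificationA10}: an operator $T$ belongs to $\op S^0_{\rho,\delta}(\RnRn)$ if and only if $T\in\mathscr{L}(L^r(\Rn))$ (with $r=2$ for general $\rho$, and any $r\in(1,\infty)$ when $\rho=1$) and for every $\alpha,\beta\in\Non$ the iterated commutator $\ad(-ix)^\alpha\ad(D_x)^\beta T$ extends continuously from $H^{-\rho|\alpha|+\delta|\beta|}_r(\Rn)$ to $L^r(\Rn)$. It therefore suffices to verify these mapping properties for $T:=P^{-1}$.

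The algebraic input is the derivation identity $\ad(A)(P^{-1})=-P^{-1}\ad(A)(P)P^{-1}$, which iterates into a Leibniz-type expansion
\begin{align*}
\ad(-ix)^\alpha\ad(D_x)^\beta P^{-1}
=\sum c_{\vec\alpha,\vec\beta}\; P^{-1}Q_1 P^{-1}Q_2 P^{-1}\cdots Q_l P^{-1},
\end{align*}
where the sum runs over all decompositions $\alpha=\alpha_1+\cdots+\alpha_l$ and $\beta=\beta_1+\cdots+\beta_l$, and each factor $Q_j=\ad(-ix)^{\alpha_j}\ad(D_x)^{\beta_j}P$ is, by the smooth version of Remark \ref{bem:SymbolOfIteratedCommutator}, a pseudodifferential operator in $\op S^{-\rho|\alpha_j|+\delta|\beta_j|}_{\rho,\delta}$. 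By Theorem \ref{thm:stetigInBesselPotRaum} each $Q_j$ maps $H^{s-\rho|\alpha_j|+\delta|\beta_j|}_r\to H^s_r$ for every $s\in\R$.

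Before telescoping these mapping properties, one must promote the hypothesis $P^{-1}\in\mathscr{L}(L^r)$ to $P^{-1}\in\mathscr{L}(H^s_r)$ for all $s\in\R$. This is a bootstrap argument based on the identity $\langle D_x\rangle^s P^{-1}=P^{-1}\langle D_x\rangle^s-P^{-1}[\langle D_x\rangle^s,P]P^{-1}$; since $\delta<1$, the commutator $[\langle D_x\rangle^s,P]$ has order strictly less than $s$, which allows one to climb (and dually descend) in steps controlled by $1-\delta$. Once this is done, each summand of the Leibniz expansion is a composition of operators whose orders telescope to $-\rho|\alpha|+\delta|\beta|$, with $P^{-1}$ preserving every intermediate Bessel potential space, so the whole iterated commutator maps $H^{-\rho|\alpha|+\delta|\beta|}_r\to L^r$ continuously. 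Applying Beals' characterization then yields $P^{-1}\in\op S^0_{\rho,\delta}$. In case (ii) the step that needs $\rho=1$ is precisely the $L^q$-boundedness of $\op S^0_{1,\delta}$, available from Calderón--Zygmund theory; for $\rho<1$ only $L^2$ is known, which is the reason for the split statement.

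The main obstacle is the bootstrap stage, especially in the endpoint regime $\rho=\delta$ allowed in case (i) (where the commutator $[\langle D_x\rangle^s,P]$ has the same order as $\langle D_x\rangle^s P$ itself). There a direct induction on $s$ fails; instead one factors $\langle D_x\rangle^k$ into $k$ first-order vector fields and applies the derivation identity once per factor, thereby trading the problematic fractional commutator for finitely many integer-order commutators $[D_{x_j},P]\in\op S^\delta_{\rho,\delta}$, and uses interpolation together with the $L^r$-hypothesis to close the induction. Keeping the constants uniform throughout this induction, and matching them with the quantitative form of Beals' theorem, is where the technical care is concentrated.
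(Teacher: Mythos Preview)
The paper does not supply its own proof of this theorem; it is quoted verbatim as a result of Ueberberg \cite[Theorem 4.3]{Ueberberg} (building on Beals \cite{Beals}) and used as background for the non-smooth analogues. So there is no in-paper argument to compare against; your sketch is precisely the classical Beals--Ueberberg strategy that the cited references carry out, namely the commutator characterization together with the derivation identity of Remark~\ref{bem:Formal identity}.

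One remark on the sketch itself. Your bootstrap for $P^{-1}\in\mathscr{L}(H^s_r)$ via $[\langle D_x\rangle^s,P]$ is fine when $\delta<\rho$ (the commutator drops order by $\rho-\delta>0$), but your proposed fix in the endpoint $\rho=\delta>0$ does not close as written: $[D_{x_j},P]\in\op S^{\delta}_{\rho,\delta}$ still has positive order $\delta$, so factoring $\langle D_x\rangle^k$ into first-order pieces gains nothing over the fractional commutator. In Ueberberg's actual proof this endpoint is handled differently (one works directly with the $L^2$-characterization and the weighted commutator conditions rather than trying to bootstrap $P^{-1}$ through the full Bessel scale), so if you want a self-contained write-up you should either restrict the bootstrap to $\delta<\rho$ and treat $\rho=\delta$ separately, or follow Ueberberg's reduction more closely.
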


In order to verify Theorem \ref{thm:SpectralinvarianceS000}, we use the main idea of the proof in the smooth case: We want to apply the characterization of pseudodifferential operators. Thus we just have to show the boundedness of certain iterated commutators of $p(x, D_x)^{-1}$. Since we already know that the iterated commutators of $p(x, D_x)$ have these mapping properties, we try to write the iterated commutators of $p(x, D_x)^{-1}$ as a sum and compositions of $p(x, D_x)^{-1}$ and the iterated commutators of $p(x, D_x)$. Unfortunately, non-smooth pseudodifferential operators are in general not bounded as operators from $\s$ to $\s$ like the smooth ones. Therefore we have to prove the formal identities for the iterated commutators rigorously.

\begin{bem}[Formal identities for the iterated commutators]\label{bem:Formal identity} \hspace{1cm} \\
  Let $m,s \in \R$, $1 < q < \infty$ and $M, \tilde{m} \in \N_0$ with $\tilde{m}+ M \geq 1$. We assume that $P \in \mathscr{L}(H^{s+m}_q, H^s_q)$ with  $P^{-1} \in \mathscr{L}(H^s_q,H^{s+m}_q)$ and  
  $$ \ad(-ix)^{\alpha_1} \ad(D_x)^{\beta_1} \ldots \ad(-ix)^{\alpha_l} \ad(D_x)^{\beta_l} P \in \mathscr{L}(H^{s+m}_q, H^s_q)$$
  for all $l \in \N$, $\alpha_1, \ldots, \alpha_l \in \Non$ and $\beta_1, \ldots, \beta_l \in \Non$ with $|\alpha_j + \beta_j| = 1$  for all $j \in \{ 1, \ldots, l\}$, $|\alpha_1|+ \ldots + |\alpha_l| \leq M$ and $|\beta_1|+ \ldots + |\beta_l| \leq \tilde{m}$.
  For $\alpha, \beta \in \Non$ with $|\alpha + \beta| = 1$ we have 
  $ \ad(-ix)^{\alpha} \ad(D_x)^{\beta} P^{-1} : \s \rightarrow \sd$. We consider $|\beta| = 0$ and $\alpha = e_j$ for an arbitrary $j \in \{ 1, \ldots, n\}$ first. On account of $\ad(-ix_j)P\in \mathscr{L}(H^{s+m}_q, H^s_q)$, we know that 
  \begin{align}\label{eq61}
    \ad(-ix_j) P u = -ix_j Pu + P(ix_j u) \in H^s_q(\Rn) \qquad \text{for all } u \in \s. 
  \end{align}
  If $u \in \s \subseteq H^{m+s}_q(\Rn)$, we obtain $P(ix_j u) \in H^s_q(\Rn)$. Together with (\ref{eq61}) this implies 
  \begin{align}\label{eq62}
     -ix_j Pu \in  H^s_q(\Rn) \qquad \text{for all } u \in \s.
  \end{align}
  Now we define $\mathscr{D}:= \{ Pu: u \in \s\} \subseteq H^{s}_q(\Rn)$. To show the density of $\mathscr{D}$ in $H^{s}_q(\Rn)$ we choose an arbitrary $v \in H^s_q(\Rn)$. On account of $P^{-1} \in  \mathscr{L}(H^s_q,H^{s+m}_q)$ we have $u:= P^{-1}v \in H^{s+m}_q(\Rn)$ and therefore $v=Pu$. Considering a sequence $(u_j)_{j \in \N_0} \subseteq \s$, which converges to $u$ in $H^{s+m}_q(\Rn)$, we define $v_j:= Pu_j$ for each $j \in \N_0$. 
  Due to $P \in \mathscr{L}(H^{s+m}_q,H^{s}_q)$ the sequence $(v_j)_{j \in \N}$ converges to $v$.
  This implies the density of $\mathscr{D}$ in $H^{s}_q(\Rn)$.
  Next we define $Q: \mathscr{D} \rightarrow H^{s+m}_q(\Rn)$ by $Qu:= -ix_j P^{-1} u  + P^{-1} (ix_j u)$ for all $u \in \mathscr{D}$. Due to (\ref{eq62})  
  $Q$ is well-defined and we obtain for all $u \in \s$:
  \begin{align}\label{eq63}
    Q(Pu)
    = -ix_j  u  + P^{-1} (ix_j Pu)
    =-P^{-1} [\ad(-ix_j)P]u.
  \end{align}
  With $\ad(-ix_j)P \in \mathscr{L}(H^{s+m}_q, H^s_q)$ and $P^{-1} \in \mathscr{L}(H^s_q,H^{s+m}_q)$ we get:
  \begin{align*}
    \|Q(Pu)\|_{H^{s+m}_q} 
    \leq C \|u\|_{H^{s+m}_q} 
    \leq C \|Pu\|_{H^{s}_q} \quad \text{for all } u \in H^s_q(\Rn).
  \end{align*}
  Due to the density of $\mathscr{D}$ in $H^{s}_q(\Rn)$ this implies $Q \in  \mathscr{L}(H^s_q,H^{s+m}_q)$. As a direct consequence we obtain 
  \begin{align*}
    \ad(-ix_j)P^{-1} \in \mathscr{L}(H^s_q,H^{s+m}_q)
  \end{align*}
  since $Qu=\ad(-ix_j)P^{-1}u$ for all $u \in \s$. Together with $\mathscr{ D} \subseteq H^s_q(\Rn)$ and (\ref{eq63}) we get
  \begin{align*}
    [\ad(-ix_j)P^{-1}]Pu = -P^{-1}[\ad(-ix_j)P]u \qquad \text{for all } u \in \s.
  \end{align*}
  On account of $[\ad(-ix_j)P^{-1}]P \in \mathscr{L}(H_q^{s+m})$ and $P^{-1}[\ad(-ix_j)P] \in \mathscr{L}(H_q^{s+m})$ the previous equality holds for all $u \in H^{s+m}_q(\Rn)$.
  The surjectivity of $P \in \mathscr{L}(H^{s+m}_q; H^s_q)$ yields for all $v \in H^{s}_q(\Rn)$:
  \begin{align}\label{p92}
    \ad(-ix)^{\alpha} \ad(D_x)^{\beta} P^{-1} v
    &=[\ad(-ix_j)P^{-1}]v = -P^{-1}[\ad(-ix_j)P]P^{-1}v \notag\\
    &=-P^{-1} [\ad(-ix)^{\alpha} \ad(D_x)^{\beta} P] P^{-1} v.
  \end{align}

  In the case $\beta = e_j$,  $j \in \{ 1, \ldots, n\}$ and $|\alpha| = 0$ we get (\ref{p92}) for all $u \in \s$ in the same way as before. Moreover, let $l \in \N$, $\alpha_1, \ldots, \alpha_l \in \Non$ and $\beta_1, \ldots, \beta_l \in \Non$ with $|\alpha_j + \beta_j| = 1$ for all $j \in \{ 1, \ldots, l\}$, $|\alpha_1|+ \ldots + |\alpha_l| \leq M$ and $|\beta_1|+ \ldots + |\beta_l| \leq \tilde{m}$. Denoting $\alpha:= \alpha_1 + \ldots + \alpha_l$ and $\beta := \beta_1 + \ldots + \beta_l$ we get by mathematical induction with respect to $l$:
  \begin{align*}
    \ad(-ix)^{\alpha_1} \ad(D_x)^{\beta_1} \ldots \ad(-ix)^{\alpha_l} \ad(D_x)^{\beta_l} P^{-1} 
    = \sum_{ \substack{ (\vara{\alpha^1}{l}) +  \ldots + (\vara{ \alpha^{ l } }{l}) = \alpha \\
			(\vara{\beta^1}{l}) +  \ldots + (\vara{ \beta^{ l } }{l}) = \beta } }
	  \hspace{-1.7cm} R_{\alpha_1^1, \ldots, \alpha_l^{l}, \beta_1^1, \ldots, \beta_l^{l } }
  \end{align*}
  where
  \begin{align*}
    R_{\alpha_1^1, \ldots, \alpha_l^{l}, \beta_1^1, \ldots, \beta_l^{l } } 
    := &C_{\alpha_1^1, \ldots, \alpha_l^{l}, \beta_1^1, \ldots, \beta_l^{l } } P^{-1} \\
	&\quad \circ \left[ \ad(-ix)^{ \alpha^1_l} \ad(D_x)^{ \beta^1_l } \ldots \ad(-ix)^{ \alpha^1_1} \ad(D_x)^{ \beta^1_1 } P \right] P^{-1} \\
	&\quad \circ \ldots \circ
	\left[ \ad(-ix)^{ \alpha^{ l }_{l} } \ad(D_x)^{ \beta^{ l }_{l} } \ldots \ad(-ix)^{ \alpha^{ l }_1} \ad(D_x)^{ \beta^{ l }_1 } P \right] P^{-1}. 
  \end{align*} 
\end{bem}

With this remark at hand, we now are able to show Theorem \ref{thm:SpectralinvarianceS000}:

\begin{proof}[Proof of Theorem \ref{thm:SpectralinvarianceS000}:] 
  Let $l \in \N_0$, $\alpha_1, \ldots, \alpha_l \in \Non$ and $\beta_1, \ldots, \beta_l \in \Non$ with $|\alpha_j + \beta_j| = 1$ for all $j \in \{ 1, \ldots, n\}$ and $|\beta_1 + \ldots + \beta_l| \leq \hat{m}$ be arbitrary. 
  Due to $p(x,D_x) \in  \mathcal{A}^0_{0,0}(\hat{m}, 2)$ and  $P^{-1} \in \mathscr{L} (L^2(\Rn))$ we can apply Remark \ref{bem:Formal identity} and get if we define $P:=p(x, D_x)$:
  \begin{align*}
    \ad(-ix)^{\alpha_1} \ad(D_x)^{\beta_1} \ldots \ad(-ix)^{\alpha_l} \ad(D_x)^{\beta_l} P^{-1} 
    = \sum_{ \substack{ (\vara{\alpha^1}{l}) +  \ldots + (\vara{ \alpha^{ l } }{l}) = \alpha \\
			(\vara{\beta^1}{l}) +  \ldots + (\vara{ \beta^{ l } }{l}) = \beta } }
	  \hspace{-1.7cm} R_{\alpha_1^1, \ldots, \alpha_l^{l}, \beta_1^1, \ldots, \beta_l^{l } }
  \end{align*}
  where $R_{\alpha_1^1, \ldots, \alpha_l^{l}, \beta_1^1, \ldots, \beta_l^{l } }$ are defined as in Remark \ref{bem:Formal identity}.
  Since $P \in  \mathcal{A}^0_{0,0}(\hat{m}, 2)$ and $P^{-1} \in \mathscr{L}(L^2(\Rn))$, we obtain
  $P^{-1} \in \mathcal{A}^0_{0,0}(\hat{m}, 2)$. Considering $0 < s \leq \hat{m} - n/2$, $s \notin \N$, Theorem \ref{thm:classA00HmqUloc} and Lemma \ref{lemma:einbettungSymbolklassen} yields the claim.
\end{proof}

In the same way we can show

\begin{lemma}
  Let $\tilde{m} \in \N_0$ with $\tilde{m}>n/2$ and $0 < \tau < 1$. For every non-smooth symbol $p \in C^{\tilde{m},\tau} \Sn{-n/2}{0}{0} $ with $p(x,D_x)^{-1} \in \mathscr{L}(L^2(\Rn))$ we get
  \begin{align*}
    p(x,D_x)^{-1} \in \op W^{\tilde{m},2}_{uloc} S^0_{0,0}(\RnRn) \subseteq \op C^s \Sn{0}{0}{0}
  \end{align*}
   for all $s \in (0, \tilde{m} - n/2]$ with $ s \notin \N$.
\end{lemma}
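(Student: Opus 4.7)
The proof plan is a direct adaptation of the argument for Theorem~\ref{thm:SpectralinvarianceS000}; the improvement comes from the fact that the symbol already has negative order $-n/2$, which removes the need for a reduced parameter analogous to $\hat m$. Write $P:=p(x,D_x)$.

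First, I would verify that $P\in\mathcal{A}^{0,\infty}_{0,0}(\tilde m,2)$. By Remark~\ref{bem:SymbolOfIteratedCommutatorNonSmooth}, for any $l\in\N$ and multi-indices with $|\alpha_j|+|\beta_j|=1$ and $|\beta|\leq\tilde m$, the iterated commutator
\[
  \ad(-ix)^{\alpha_1}\ad(D_x)^{\beta_1}\cdots\ad(-ix)^{\alpha_l}\ad(D_x)^{\beta_l}\,P
\]
is the pseudodifferential operator with symbol $\pa{\alpha}D_x^{\beta}p\in C^{\tilde m-|\beta|,\tau}S^{-n/2}_{0,0}(\RnRn)$. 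Since $\tilde m-|\beta|+\tau>0$, Theorem~\ref{thm:stetigInHoelderRaum00} applied with $m=0$ and $s=0$ gives $L^2$-boundedness of this commutator. The case $l=0$ (the operator $P$ itself) is covered the same way, so together with the hypothesis $P^{-1}\in\mathscr L(L^2)$ we obtain $P\in\mathcal A^{0,\infty}_{0,0}(\tilde m,2)$.

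Next, I would invoke Remark~\ref{bem:Formal identity} with $P=p(x,D_x)$, $m=0$, $s=0$, $q=2$, together with $M=\infty$ and the given $\tilde m$. Its hypotheses — $P\in\mathscr L(L^2)$, $P^{-1}\in\mathscr L(L^2)$, and $L^2$-boundedness of every admissible iterated commutator of $P$ — are exactly what was established in the previous step and in the assumption. The remark expresses each iterated commutator of $P^{-1}$ as a finite sum of compositions of $P^{-1}$ with iterated commutators of $P$, so every such commutator is bounded on $L^2$. Consequently $P^{-1}\in\mathcal A^{0,\infty}_{0,0}(\tilde m,2)$. An application of Theorem~\ref{thm:classA00HmqUloc} with $m=0$, $q=2$, $M=\infty$ (so that $\tilde M=\infty\geq 1$) and $\tilde m>n/2=n/q$ then yields $P^{-1}\in\op W^{\tilde m,2}_{uloc}S^0_{0,0}(\RnRn)$. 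The second inclusion in the statement is immediate from Lemma~\ref{lemma:einbettungSymbolklassen}.

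There is no genuine obstacle, every ingredient having been prepared earlier. The only conceptual point worth underlining is the contrast with Theorem~\ref{thm:SpectralinvarianceS000}: there, because $p$ had order $0$, Theorem~\ref{thm:1stetigInHoelderRaum00} forced the threshold $\tilde m+\tau-|\beta|>n/2$, which propagated as a loss of regularity into the final conclusion via the parameter $\hat m$. Here the symbol has order $-n/2$ from the outset, so $L^2$-boundedness of every commutator is available under the much weaker condition $\tilde m-|\beta|+\tau>0$, which holds for all $|\beta|\leq\tilde m$. This is exactly why the full Hölder regularity $\tilde m$ survives for the inverse.
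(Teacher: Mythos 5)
Your proposal is correct and follows essentially the same route the paper intends: the paper only remarks that the lemma is proved ``in the same way'' as Theorem \ref{thm:SpectralinvarianceS000}, and your adaptation — replacing Theorem \ref{thm:1stetigInHoelderRaum00} by Theorem \ref{thm:stetigInHoelderRaum00} so that the order $-n/2$ of the symbol absorbs the $n/2$-threshold and every commutator with $|\beta|\leq\tilde m$ is $L^2$-bounded, then applying Remark \ref{bem:Formal identity} and Theorem \ref{thm:classA00HmqUloc} — is exactly that argument, and your closing explanation correctly identifies why the parameter $\hat m$ is no longer needed.
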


\subsection{Properties of Difference Quotients}\label{section:DifferenceQuotients}

Our next aim is to prove the spectral invariance for pseudodifferential operators $P$ whose symbols are in the symbol-class $C^{\tau} \Sn{0}{1}{0}$, $\tau > 0$. The proof is again based on the formal identities for the iterated commutators of $P^{-1}$, cf.\;Remark \ref{bem:Formal identity}. In this case $\ad(-ix)^{\alpha} \ad(D_x)^{\beta} P$, $|\alpha| \neq 0$ are pseudodifferenial operators of negative order $-|\alpha|$. Hence the order of the Bessel potential space increases by applying $\ad(-ix)^{\alpha} \ad(D_x)^{\beta} P$, $|\alpha| \neq 0$. Therefore $P^{-1} \in \mathscr{L}(L^q(\Rn))$ is not sufficient. We even need $P^{-1} \in \mathscr{L}(H^{-s}_q(\Rn))$ for certain $s \in \N_0$. As we always try to restrict the assumptions to a minimal, we use the tools of difference quotients in order to get $P^{-1} \in \mathscr{L}(H^{-s}_q(\Rn))$ if $P^{-1} \in \mathscr{L}(L^q(\Rn))$ is assumed. \\

\begin{Def}
  Let $h \in \R \backslash \{ 0 \}$ and $j \in \{ 1, \ldots, n\}$. For $u \in H^s_p(\Rn)$ with $s \in \R$ and $1 < p<\infty$ we define the \textit{difference quotient} of $u$ by
  \begin{align*}
    \p^h_{x_j} u(x) := h^{-1}\{ u(x+h e_j) - u(x) \} \qquad \text{for all } x \in \Rn. 
  \end{align*}

\end{Def}

Difference quotients have the following useful properties:

\begin{lemma}\label{lemma:DifferencequotientPIsPDO}\label{lemma:CommutatorWithDifferenceQuotients}
  Let $m \in \R$, $\tilde{m} \in \N$, $0<\tau < 1$ and $M \in \N_0 \cup \{ \infty \}$. Considering $p \in C^{\tilde{m}, \tau} \Snn{m}{1}{0}{M}$, we get for all $j \in \{ 1, \ldots, n\}$:
  \begin{itemize}
    \item[i)] $\left\{ \differencequotient{h} p(x,\xi) : h\in \R \backslash \{ 0 \} \right\} \subseteq C^{\tilde{m}-1, \tau} \Snn{m}{1}{0}{M}$ is bounded, 
    \item[ii)] $[\p^h_{x_j}, p(x,D_x)] u(x) = \left[\left( \differencequotient{-h} p \right)(x, D_x) u \right] (x+he_j)$ for all $u \in \s, x \in \Rn$ and $h \in \R \backslash \{ 0 \}$.
    \item[iii)] Additionally let $M \in \N_0 \cup \{ \infty \}$ with $M>n/2$ for $q \geq 2$ and $M>n/q$ else and $s \in \R$ with $|s| < \tilde{m} - 1 + \tau$. Then we have for some $C>0$:
    \begin{align*}
    \| [ \p^h_{x_j}, p(x,D_x) ] u \|_{H^s_q} \leq C \|u\|_{H^{s+m}_q} \qquad \text{for all } u \in H^{s+m}_q(\Rn),h \in \R \backslash \{ 0 \} .
  \end{align*}
  \end{itemize}
\end{lemma}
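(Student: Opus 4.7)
For part (i), the key observation is the integral representation
\[
  \p^h_{x_j} f(x) = \int_0^1 (\p_{x_j} f)(x + t h e_j)\,dt \qquad \text{for } f \in C^1(\Rn),
\]
which immediately yields $\|\p^h_{x_j} f\|_{C^{k,\tau}} \leq \|f\|_{C^{k+1,\tau}}$ uniformly in $h \neq 0$. I would apply this pointwise in $\xi$ to $\pa{\alpha} p(\cdot,\xi)$ for each $\alpha \in \Non$ with $|\alpha| \leq M$, so the symbol estimates with weight $\<{\xi}^{m-|\alpha|}$ transfer from $p$ to $\differencequotient{h} p$, losing exactly one $x$-derivative but preserving the Hölder exponent $\tau$ and all $\xi$-decay. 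The continuity of mixed derivatives required by the symbol class follows at once from the corresponding property of $p$.

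For part (ii), I would carry out a direct Fourier-side calculation. On one hand,
\[
  \p^h_{x_j}(p(x,D_x)u)(x) = h^{-1}\!\!\int\! e^{i(x+he_j)\cdot\xi}p(x+he_j,\xi)\hat{u}(\xi)\dq\xi \;-\; h^{-1}\!\!\int\! e^{ix\cdot\xi}p(x,\xi)\hat{u}(\xi)\dq\xi.
\]
On the other, since $\widehat{\p^h_{x_j}u}(\xi) = h^{-1}(e^{ihe_j\cdot\xi}-1)\hat{u}(\xi)$, the quantity $p(x,D_x)(\p^h_{x_j}u)(x)$ equals the same expression with $p(x+he_j,\xi)$ replaced by $p(x,\xi)$. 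Subtracting produces $\int e^{i(x+he_j)\cdot\xi}(\differencequotient{h} p)(x,\xi)\hat{u}(\xi)\dq\xi$, and the algebraic identity $(\differencequotient{h} p)(x,\xi) = (\differencequotient{-h} p)(x+he_j,\xi)$ rewrites this as $[(\differencequotient{-h} p)(x,D_x)u](x+he_j)$, establishing (ii).

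For part (iii), I would combine (i), (ii) and the continuity of pseudodifferential operators on Bessel potential spaces uniformly in the symbol. By (i) the family $\{\differencequotient{-h} p : h \neq 0\}$ is a bounded subset of $C^{\tilde{m}-1,\tau}\Snn{m}{1}{0}{M}$, hence of $C^{\tilde{m}-1+\tau}_{\ast}\Snn{m}{1}{0}{M}$ (note $\tilde{m}-1+\tau \notin \N$). Theorem \ref{thm:stetigInHoelderRaum} applied with $\rho=1$, $\delta=0$ (so $k_q=0$) and Hölder--Zygmund exponent $\tilde{m}-1+\tau$ then yields a constant $C$, depending only on finitely many symbol semi-norms and hence independent of $h$, such that $\|(\differencequotient{-h} p)(x,D_x)v\|_{H^s_q} \leq C \|v\|_{H^{s+m}_q}$ for all $v \in H^{s+m}_q(\Rn)$ and all $s$ with $|s| < \tilde{m}-1+\tau$. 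Since translation is an isometry on $H^s_q(\Rn)$, inserting this uniform bound into (ii) closes the argument.

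The main obstacle will be verifying that the admissible range of $s$ in Theorem \ref{thm:stetigInHoelderRaum} really produces the symmetric window $|s|<\tilde{m}-1+\tau$ in our setting -- carefully matching the Hölder--Zygmund exponent, the parameters $\rho=1$, $\delta=0$, and the threshold on the number $M$ of $\xi$-derivatives (which is precisely why the hypothesis $M > n/2$ for $q \geq 2$ and $M > n/q$ otherwise appears in (iii)) -- and then ensuring that the constant really is uniform in $h$, which reduces to the uniform-in-$h$ semi-norm bound established in (i).
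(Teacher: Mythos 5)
Your proposal is correct and follows essentially the same route as the paper: part (i) via the fundamental theorem of calculus, part (ii) via the Fourier identity $\widehat{\p^h_{x_j}u}(\xi)=h^{-1}(e^{ihe_j\cdot\xi}-1)\hat u(\xi)$, and part (iii) by feeding the uniform-in-$h$ symbol bounds from (i) into Theorem \ref{thm:stetigInHoelderRaum} (with $\rho=1$, $\delta=0$, Hölder--Zygmund exponent $\tilde m-1+\tau$) and using translation invariance of $H^s_q$ together with density of $\s$.
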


\begin{proof}
  Due to the fundamental theorem of calculus we get claim $i)$. 
  In order to verify claim $ii)$ let $u \in \s$ be arbitrary. An application of 
  $\frac{e^{ihe_j\cdot \xi} - 1 }{h} \hat{u} (\xi) = \widehat{\differencequotient{h} u } (\xi)$ 
  yields for all $x \in \Rn$:
  \begin{align*}
    \differencequotient{h} [p(x, D_x)u(x)] 
    = \left[\left( \differencequotient{-h}  p \right)(x, D_x) u\right] (x+he_j) + \left[ p(x, D_x) \left( \differencequotient{h}  u \right)\right](x).
  \end{align*}
  Hence $ii)$ holds. Finally we can verify $iii)$ by means of Lemma \ref{lemma:DifferencequotientPIsPDO} i), Theorem \ref{thm:stetigInHoelderRaum} and the density of $\s$ in $ H^{s+m}_q(\Rn)$.
\end{proof}

\begin{thm}(Difference quotients and weak derivatives)\label{thm:DifferenceQuotientsWeakDerivatives}
  \begin{enumerate}
    \item[i)] We suppose $1 < p < \infty$ and $u \in H^{s+1}_p(\Rn)$ with $s \in \R$. Then there is a constant $C$, independent of $h \in \R \backslash \{0\}$ and $u \in H^{s+1}_p(\Rn)$, such that
      \begin{align*}
	\| \p^h_{x_j} u\|_{H^{s}_p} \leq C \| \p_{x_j} u \|_{H^{s}_p}
      \end{align*}
      for all $j \in \{ 1, \ldots, n \}$ and all $h \in \R \backslash \{0\}$.
    \item[ii)] Let $1 < p < \infty$ and $u \in H^{s}_p(\Rn)$ with $s \in \R$. Additionally we assume
      \begin{align*}
	\| \p^h_{x_j} u\|_{H^{s}_p} \leq C \qquad \text{for all } j \in \{ 1, \ldots, n \} \text{ and } h \in \R \backslash \{0\}. 
      \end{align*}
      Then $u \in H^{s+1}_p(\Rn)$ and $\| \p_{x_j} u \|_{H^{s}_p} \leq C$.
  \end{enumerate}
  Note that assertion $ii)$ is false for $p=1$ while $i)$ also holds for $p=1$.
\end{thm}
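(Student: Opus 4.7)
The plan is to prove (i) by expressing the difference quotient as the average of translates of the classical derivative, and (ii) by a weak compactness argument based on reflexivity of $H^s_p(\Rn)$ for $1<p<\infty$, together with an identification of the weak limit as a distributional derivative.

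For (i), first I would approximate $u \in H^{s+1}_p(\Rn)$ by Schwartz functions and apply the fundamental theorem of calculus to obtain
\[
\p^h_{x_j} u(x) = \frac{1}{h}\int_0^h (\tau_{-te_j}\p_{x_j} u)(x)\, dt,
\]
which extends to $u \in H^{s+1}_p(\Rn)$ as a Bochner integral in $H^s_p(\Rn)$, since $t \mapsto \tau_{-te_j}\p_{x_j} u$ is continuous into $H^s_p(\Rn)$. As translations commute with the Fourier multiplier $\<{D_x}^s$, they act as isometries on $H^s_p(\Rn)$, and Minkowski's integral inequality then yields $\|\p^h_{x_j} u\|_{H^s_p}\le \|\p_{x_j} u\|_{H^s_p}$ with constant $C=1$ independent of $h$. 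The argument also works for $p=1$, in line with the concluding remark.

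For (ii), reflexivity of $H^s_p(\Rn)$, valid for $1<p<\infty$, combined with the uniform bound $\|\p^h_{x_j} u\|_{H^s_p}\le C$, supplies a sequence $h_k \to 0$ and some $v \in H^s_p(\Rn)$ with $\p^{h_k}_{x_j} u \rightharpoonup v$ weakly in $H^s_p(\Rn)$ and $\|v\|_{H^s_p}\le C$. To identify $v$ with $\p_{x_j} u$, I would test against $\varphi \in \s$: the continuous embedding $\s \hookrightarrow H^{-s}_{p'}(\Rn)$, $1/p+1/p'=1$, permits use of the duality pairing $\skh{\cdot}{\cdot}{H^s_p;H^{-s}_{p'}}$. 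Since $\p^{-h_k}_{x_j}\varphi \to -\p_{x_j}\varphi$ in $\s$, hence in $H^{-s}_{p'}(\Rn)$, and the discrete integration-by-parts identity $\skh{\p^h_{x_j} u}{\varphi}{H^s_p;H^{-s}_{p'}} = \skh{u}{\p^{-h}_{x_j}\varphi}{H^s_p;H^{-s}_{p'}}$ extends from Schwartz $u$ by density, passing to the limit yields $\skh{v}{\varphi}{H^s_p;H^{-s}_{p'}} = -\skh{u}{\p_{x_j}\varphi}{H^s_p;H^{-s}_{p'}}$ for every $\varphi \in \s$, which is the distributional statement $\p_{x_j} u = v$; since $v \in H^s_p(\Rn)$, this forces $u \in H^{s+1}_p(\Rn)$ with $\|\p_{x_j} u\|_{H^s_p}\le C$.

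The main obstacle is the consistent handling of the duality pairing across the full range $s \in \R$: one has to verify that the $H^s_p/H^{-s}_{p'}$-pairing extends the standard $\sd/\s$ pairing, and that the discrete integration-by-parts identity survives for general $u \in H^s_p(\Rn)$. Both reduce to density arguments, but they must be invoked with care since $u$ need not be a function when $s<0$. The failure of (ii) at $p=1$ is directly traceable to the loss of reflexivity in this step, as $L^1$, and by an order shift $H^s_1$, is not reflexive; this matches the remark appended to the theorem.
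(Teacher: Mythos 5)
Your proof is correct and follows essentially the same route as the paper's: part (i) via the fundamental theorem of calculus plus translation invariance of the norm, part (ii) via reflexivity, weak compactness and identification of the weak limit through discrete integration by parts — this is precisely the argument of Evans, Theorem 5.8.3, which the paper invokes for $s=0$ and then transfers to general $s$ using $[\p^h_{x_j},\<{D_x}^s]=0$, whereas you run it directly in $H^s_p$; the two are interchangeable since translation invariance of $\|\cdot\|_{H^s_p}$ is exactly the commutation with $\<{D_x}^s$. One cosmetic remark: your discrete integration-by-parts identity should read $\skh{\p^h_{x_j}u}{\varphi}{H^s_p;H^{-s}_{p'}}=-\skh{u}{\p^{-h}_{x_j}\varphi}{H^s_p;H^{-s}_{p'}}$, and $\p^{-h_k}_{x_j}\varphi\to+\p_{x_j}\varphi$; the two sign slips cancel, so your final identity $\p_{x_j}u=v$ is correct.
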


\begin{proof}
  The proof of the case $s=0$ is essentially the same as that one of Theorem 5.8.3 in \cite{Evans}. 
  Assuming an arbitrary $s \in \R \backslash \{ 0 \}$ and $u \in H^{s+1}_p(\Rn)$
  we know that $\<{D_x}^s u \in W^1_p(\Rn)$.  
  Moreover Lemma \ref{lemma:CommutatorWithDifferenceQuotients} provides $[\p^h_{x_j}, \<{D_x}^s] = 0$ for all $h \in \R \backslash \{ 0 \}$ and $j \in \{ 1, \ldots, n \}$.
  Hence an application of the case $s=0$ provides for each $j \in \{ 1, \ldots, n \}$:
  \begin{align*}
    \| \differencequotient{h} u \|_{H^s_p} 
    = \| \differencequotient{h} \<{D_x}^s u \|_{L^p} 
    \leq C \| \p_{x_j} \<{D_x}^s u \|_{L^p} 
    = C \|\p_{x_j} u \|_{H^s_p}
  \end{align*}
  for all $h \in \R \backslash \{0\}$ and $u \in H^{s+1}_p(\Rn)$. 
  Similary to $i)$ we obtain the case $s \in \R$ of $ii)$ as a consequence of case $s=0$ and Lemma \ref{lemma:CommutatorWithDifferenceQuotients}. 
\end{proof}

The previous theorem allows us to verify the following proposition:

\begin{prop}\label{prop:EigenschaftVonPundSeinerInversen}
  Let $k \in \N_0$, $r \in \R$ and $1 < q < \infty$. Moreover, let $P$ be an operator, which fulfills for all $s \in \{ r, r+1, \ldots, r+k\}$ the properties 
  \begin{enumerate}
    \item[i)] $P \in \mathscr{L}(H^s_q, H^s_q)$,
    \item[ii)] $P \in \mathscr{L}(H^{r+k+1}_q, H^{r+k+1}_q)$,
    \item[iii)] $ \{ [ P, \p^h_{x_j} ] : h \in \R \backslash \{0\} \} \subseteq \mathscr{L}(H^s_q, H^s_q)$ is bounded for all $j \in \{ 1, \ldots, n \}$,
    \item[iv)] $P^{-1} \in \mathscr{L}(H^r_q, H^r_q)$.
  \end{enumerate}
  Then $P^{-1} \in \mathscr{L}(H^s_q, H^s_q)$ for each $s \in \{ r, r+1, \ldots, r+k+1\}$.
\end{prop}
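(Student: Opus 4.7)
The plan is to induct on $j \in \{0, 1, \ldots, k+1\}$ to establish $P^{-1} \in \mathscr{L}(H^{r+j}_q)$. The base case $j = 0$ is exactly hypothesis (iv). For the inductive step, I assume $P^{-1} \in \mathscr{L}(H^{r+j}_q)$ for some $j \leq k$ and aim to upgrade this to $P^{-1} \in \mathscr{L}(H^{r+j+1}_q)$. The strategy is to characterize $H^{r+j+1}_q$-membership of $P^{-1} u$ via uniform boundedness of its difference quotients, using Theorem \ref{thm:DifferenceQuotientsWeakDerivatives}.

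The heart of the argument is the commutator identity $\p^h_{x_l} P^{-1} u = P^{-1} \p^h_{x_l} u + P^{-1} [P, \p^h_{x_l}] P^{-1} u$, which should hold for $u \in H^{r+j+1}_q \subseteq H^{r+j}_q$, $h \neq 0$, and $l \in \{1, \ldots, n\}$. I would derive it by writing $P \p^h_{x_l} = \p^h_{x_l} P + [P, \p^h_{x_l}]$, applying both sides to $P^{-1} u$, and then applying $P^{-1}$ to the resulting equation inside $H^{r+j}_q$; the inductive hypothesis combined with (i) and (iii) makes every composition legitimate in that space. Estimating the right-hand side in the $H^{r+j}_q$-norm, the first summand is controlled by $C \|\p^h_{x_l} u\|_{H^{r+j}_q} \leq C \|u\|_{H^{r+j+1}_q}$ uniformly in $h$ by Theorem \ref{thm:DifferenceQuotientsWeakDerivatives}(i) together with the inductive hypothesis for $P^{-1}$, and the second summand is controlled by $C \|u\|_{H^{r+j}_q} \leq C \|u\|_{H^{r+j+1}_q}$ via the uniform bound from hypothesis (iii) sandwiched between two applications of $P^{-1}$ on $H^{r+j}_q$. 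Theorem \ref{thm:DifferenceQuotientsWeakDerivatives}(ii) should then upgrade $P^{-1} u$ to $H^{r+j+1}_q$ with the required operator-norm bound.

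I expect the main subtlety to be making the commutator identity genuinely rigorous rather than merely formal, since an expression like $P \p^h_{x_l} P^{-1} u$ is meaningful only once one specifies in which Bessel potential space each operator lives. The remedy is precisely the inductive hypothesis: it places $P^{-1} u$ inside $H^{r+j}_q$, on which $P$ acts by (i) and $\p^h_{x_l}$ is bounded as a combination of translations, so the entire calculation can be read within $H^{r+j}_q$. Hypothesis (ii) plays a complementary role at the last induction step $j+1 = k+1$: it guarantees $P \in \mathscr{L}(H^{r+k+1}_q)$, so that the bounded extension of $P^{-1}$ produced by the argument is a genuine two-sided inverse of $P$ on $H^{r+k+1}_q$ rather than just a bounded left inverse inherited from $H^r_q$.
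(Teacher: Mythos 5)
Your proposal is correct and follows essentially the same route as the paper: induction on the Sobolev index, the commutator identity $\p^h_{x_j}P^{-1}u = P^{-1}\p^h_{x_j}u + P^{-1}[P,\p^h_{x_j}]P^{-1}u$, a uniform-in-$h$ estimate from hypotheses (i), (iii) and Theorem \ref{thm:DifferenceQuotientsWeakDerivatives}~$i)$, and then Theorem \ref{thm:DifferenceQuotientsWeakDerivatives}~$ii)$ to gain one order of regularity. The only cosmetic difference is that the paper concludes the boundedness of $P^{-1}$ on $H^{s+1}_q$ via surjectivity of $P$ and the bounded inverse theorem, whereas you extract the operator-norm bound directly from the difference-quotient estimate; both are fine.
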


\begin{proof}
  We prove the claim by mathematical induction with respect to $s$. In the case $s=r$ there is nothing to show. For $s \in \{ r, r+1, \ldots, r+k\}$ we choose an arbitrary $j \in \{ 1, \ldots, n \}$ and $f \in H^{s+1}_q(\Rn) \subseteq H^s_q(\Rn)$. The induction hypothesis provides the existence of a $u \in H^s_q(\Rn)$ with $u=P^{-1}f$. Due to $P \in \mathscr{L}(H^s_q, H^s_q)$, we get $Pu \in H^s_q(\Rn)$ 
  and consequently $\p^h_{x_j} (Pu) \in H^s_q(\Rn)$. Similary we get $P(\p^h_{x_j} u) \in H^s_q(\Rn)$. An application of $P^{-1}$ to $P(\p^h_{x_j}u) = [P, \p^h_{x_j}] u + \p^h_{x_j} (Pu)$,
  the induction hypothesis,
  the assumptions
  and Theorem \ref{thm:DifferenceQuotientsWeakDerivatives} $i)$ yield
  \begin{align*}
    \| \p^h_{x_j} u \|_{H^s_q} 
    &= \| P^{-1} \{ [P, \p^h_{x_j}] u + \p^h_{x_j} (Pu) \} \|_{H^s_q}
    \leq C \| [P, \p^h_{x_j}] u \|_{H^s_q} + C \| \p^h_{x_j} f \|_{H^s_q}\\
    &\leq C \| u \|_{H^s_q} + C \| \p_{x_j} f \|_{H^s_q}
     \leq C \qquad \text{for all } h \in \R \backslash \{0\} , u \in H^s_q(\Rn).
  \end{align*}
  Therefore Theorem \ref{thm:DifferenceQuotientsWeakDerivatives} $ii)$ provides $u \in H^{s+1}_q(\Rn)$ which proves the surjectivity of the linear, bounded and injective operator $P:H^{s+1}_q(\Rn) \rightarrow H^{s+1}_q(\Rn)$. Then $P^{-1}$ is an element of $ \mathscr{L}(H^{s+1}_q, H^{s+1}_q)$ by means of the bounded inverse theorem. 
\end{proof}

By means of the previous proposition we obtain the central result of this subsection: 

\begin{thm}\label{thm:InverseOfPEvenBoundedInAroundR}
  Let $1 < q < \infty$, $0< \tau < 1$, $\tilde{m} \in \N$ and $N \in \N_0 \cup \{ \infty\}$ with $N>n/2$ for $q\geq 2$ and $N > n/q$ else. We define $k:= \max\{ l \in \N_0 : r+l < \tilde{m} + \tau \}$ for one $r \in \R$ with $|r| < \tilde{m} + \tau$. Considering $p \in C^{\tilde{m}, \tau} \Snn{0}{1}{0}{N}$, where $p(x,D_x)^{-1} \in \mathscr{L}(H^r_q, H^r_q)$ we obtain
  \begin{align}\label{p95}
    p(x,D_x)^{-1} \in \mathscr{L}(H^s_q; H^s_q) \qquad \text{for all } s \in [ -r-k, r+k ].
  \end{align}
\end{thm}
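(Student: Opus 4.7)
The plan is to iterate Proposition \ref{prop:EigenschaftVonPundSeinerInversen} in order to lift the assumed boundedness $p(x,D_x)^{-1} \in \mathscr{L}(H^r_q)$ upward in the Bessel potential scale, and to reach the negative end of the claimed interval by applying the same bootstrap to the formal adjoint of $P := p(x,D_x)$.

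First, I would verify the hypotheses of Proposition \ref{prop:EigenschaftVonPundSeinerInversen}. The embedding $C^{\tilde{m},\tau}(\Rn) \hookrightarrow C^{\tilde{m}+\tau}_*(\Rn)$ together with Theorem \ref{thm:stetigInHoelderRaum} (applied with $\rho=1$, $\delta=0$) and the hypothesis on $N$ gives $P \in \mathscr{L}(H^s_q)$ for all $|s| < \tilde{m}+\tau$. By the choice of $k$, this covers the integer levels $s \in \{r, r+1, \ldots, r+k\}$, which yields hypotheses (i) and (ii) of the Proposition. Hypothesis (iii) follows from Lemma \ref{lemma:DifferencequotientPIsPDO} (iii), which gives uniform boundedness of $\{[\partial^h_{x_j}, P] : h \neq 0\}$ on $H^s_q$ for $|s| < \tilde{m}-1+\tau$; the definition of $k$ places $r+k-1$ inside this range.

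Second, starting from $P^{-1} \in \mathscr{L}(H^r_q)$, a single application of Proposition \ref{prop:EigenschaftVonPundSeinerInversen} (applied iteratively one level at a time with $k'=0$) yields $P^{-1} \in \mathscr{L}(H^s_q)$ for every integer $s \in \{r, r+1, \ldots, r+k\}$. Complex interpolation of the Bessel potential spaces fills in the remaining real indices, giving $P^{-1} \in \mathscr{L}(H^s_q)$ for all real $s \in [r, r+k]$.

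Third, to cover the negative part $[-r-k, r]$ of the claimed interval, I would apply the same bootstrap to the formal adjoint $P^*$. The hypothesis $P^{-1} \in \mathscr{L}(H^r_q)$ dualizes to $(P^{-1})^* = (P^*)^{-1} \in \mathscr{L}(H^{-r}_{q'})$, with $1/q + 1/q' = 1$. Running the upward iteration for $P^*$ starting from $-r$ produces $(P^*)^{-1} \in \mathscr{L}(H^s_{q'})$ for integer $s$ up to $-r + k^*$, where $k^* := \max\{l \in \N_0 : -r+l < \tilde{m}+\tau\}$. Dualizing each such step back gives $P^{-1} \in \mathscr{L}(H^{-s}_q)$ for the corresponding indices, and together with the previous step and interpolation this exhausts the asserted range $[-r-k, r+k]$.

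The main obstacle will be justifying the bootstrap for $P^*$. For non-smooth symbols, the adjoint is constructed via the symbol reduction of a non-smooth double symbol to a single one (Theorem \ref{thm:SymbolReduktionNichtGlatt}), which drops the $\xi$-regularity parameter from $N$ to some smaller $\tilde{N}$. One has to verify that $\tilde{N}$ is still large enough to allow Theorem \ref{thm:stetigInHoelderRaum} and Lemma \ref{lemma:DifferencequotientPIsPDO} (iii) to be applied to $P^*$ at each Bessel potential level that the bootstrap touches. If the loss in $N$ turns out to be prohibitive, an alternative would be to prove a symmetric downward version of Proposition \ref{prop:EigenschaftVonPundSeinerInversen}, using the second part of Theorem \ref{thm:DifferenceQuotientsWeakDerivatives} on negative-index Bessel potential spaces.
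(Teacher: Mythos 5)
Your architecture is exactly the paper's: verify the hypotheses of Proposition \ref{prop:EigenschaftVonPundSeinerInversen} via Theorem \ref{thm:stetigInHoelderRaum} and Lemma \ref{lemma:CommutatorWithDifferenceQuotients} iii), bootstrap upward to get $P^{-1}\in\mathscr{L}(H^s_q)$ for $s\in\{r,\ldots,r+k\}$, pass to the adjoint for the lower range, and interpolate. The one place where you diverge is the step you yourself flag as the main obstacle, and there your two proposed resolutions are both the wrong tool. You do not need to realize $P^*$ as a pseudodifferential operator at all, so the loss of $\xi$-regularity in the symbol reduction (which, moreover, is only stated for $S^m_{0,0}$-type double symbols, not for $C^{\tilde m,\tau}S^0_{1,0}$) never enters; and a ``downward'' version of Proposition \ref{prop:EigenschaftVonPundSeinerInversen} would be awkward, since the difference-quotient mechanism inherently gains regularity rather than extending operators to larger spaces. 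The paper's resolution is pure duality at the level of the hypotheses of the Proposition: from $\bigl(\differencequotient{h}\bigr)^*=-\differencequotient{-h}$ one gets $[P^*,\differencequotient{h}]=[P,\differencequotient{-h}]^*$, so the uniform commutator bounds for $P^*$ on the dual scale $H^{-s}_{q'}$ follow by taking adjoints of the already-established bounds for $P$, and likewise $P^*\in\mathscr{L}(H^{-s}_{q'})$ and $(P^*)^{-1}\in\mathscr{L}(H^{-r}_{q'})$ are just the duals of the corresponding facts for $P$. With the hypotheses verified this way, the upward iteration for $P^*$ on $\{-r,-r+1,\ldots\}$ goes through verbatim and dualizes back to $P^{-1}\in\mathscr{L}(H^s_q)$ down to $s=-r-k$. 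So: keep your adjoint strategy, but discard the symbol-calculus detour and replace it with this commutator--adjoint identity; everything else in your proposal stands.
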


\begin{proof}
  On account of Theorem \ref{thm:stetigInHoelderRaum} and Lemma \ref{lemma:CommutatorWithDifferenceQuotients} we can apply Proposition \ref{prop:EigenschaftVonPundSeinerInversen} and get the claim for all $s \in \{ r, \ldots, r+k \}$.
  With $ (\differencequotient{h} )^* = -\differencequotient{-h}$ at hand we have
  $[P^*, \differencequotient{h}] = [P, \differencequotient{-h}]^*.$
  An application of Proposition \ref{prop:EigenschaftVonPundSeinerInversen} to $P^{\ast}$ provides the claim for all $s \in \{ -r-k, \ldots, r-1 \}$. Then the claim is verified for all $s \in [ -r-k, r+k ]$ by means of interpolation. 
\end{proof}

\subsection[Spectral Invariance of Operators with Symbols in $C^{\tilde{m}, \tau} S^{0}_{1,0} $]{Spectral Invariance of Pseudodifferential Operators in the Symbol-Class $C^{\tilde{m}, \tau} S^{0}_{1,0} $}\label{section:SpectralInvarianceHölderCaseS10}

We are now in the position to show the next spectral invariance result:

\begin{thm}\label{thm:Spektralinvarianz}
  Let $1< q_0 < \infty$, $0 < \tau < 1$ and $\tilde{m}, \hat{m} \in \N_0$ with $\tilde{m} \geq \hat{m} > n/ q_0$. Additionally let $M \in \N_0$ with $n<  M \leq \tilde{m} -\hat{m}$. We define $\tilde{M}:=M-(n+1)$. Furthermore, let $N \in \N \cup \{ \infty \}$ with $N-M > n/2$ if $q_0 \geq 2$ and $N-M > n/q_0$ else. Considering $p \in C^{\tilde{m}, \tau} \Snn{0}{1}{0}{N}$, where $p(x, D_x)^{-1} \in \mathscr{L}( H^{r}_{q_0}, H^{r}_{q_0} ) $ for one $|r| < \tilde{m} + \tau$ we get  	
  \begin{align*}
    p(x, D_x)^{-1} \in \op W^{\hat{m},q_0}_{uloc} \Snn{0}{1}{0}{\tilde{M}-1}.         
  \end{align*}
  In the case $\tilde{M}-1>n/\tilde{q}$ for one $1< \tilde{q} \leq 2$, we even have 
  \begin{align*}
    p(x, D_x)^{-1} \in \mathscr{L} (L^q, L^q) \qquad \text{for all } q \in [\tilde{q};\infty) \cup \{ q_0 \}.
  \end{align*}
\end{thm}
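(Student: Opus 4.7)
The plan is to verify that $P^{-1} \in \mathcal{A}^{0,M}_{1,0}(\hat{m}, q_0)$, where $P := p(x,D_x)$, and then invoke the improved characterization Theorem \ref{thm:classificationA10HmqUloc} to conclude $P^{-1} \in \op W^{\hat{m},q_0}_{uloc} S^0_{1,0}(\RnRn; \tilde{M}-1)$.

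First I upgrade the boundedness of $P^{-1}$ on a single Bessel potential space to a range of them: applying Theorem \ref{thm:InverseOfPEvenBoundedInAroundR} with $k := \max\{l \in \N_0 : r+l < \tilde{m}+\tau\}$, we obtain $P^{-1} \in \mathscr{L}(H^s_{q_0})$ for every $s \in [-r-k, r+k]$. Since $\hat{m} > n/q_0 \geq 0$ forces $\hat{m} \geq 1$, one has $r+k \geq \tilde{m}+\tau-1 > \tilde{m}-\hat{m} \geq M$, so the interval $[-r-k, r+k]$ contains $[-M, M]$; in particular $P^{-1} \in \mathscr{L}(L^{q_0})$. Next, for arbitrary multi-indices with $|\alpha|\leq M$ and $|\beta|\leq \hat{m}$, each iterated commutator of $P$ of subtype $(\alpha^{(i)},\beta^{(i)})$ is, by Remark \ref{bem:SymbolOfIteratedCommutatorNonSmooth}, a pseudodifferential operator with symbol in $C^{\tilde{m}-|\beta^{(i)}|,\tau} S^{-|\alpha^{(i)}|}_{1,0}(\RnRn; N-|\alpha^{(i)}|)$. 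The hypothesis $N-M > n/2$ (resp.\ $> n/q_0$) ensures enough $\xi$-regularity, while the remaining Hölder index $\tilde{m}-|\beta^{(i)}|+\tau \geq \tilde{m}-\hat{m}+\tau > M$ is large enough so that Theorem \ref{thm:stetigInHoelderRaum} (with $\rho=1$, $\delta=0$) gives continuity $H^{s+|\alpha^{(i)}|}_{q_0} \to H^s_{q_0}$ for every $s$ with $|s| < \tilde{m}-|\beta^{(i)}|+\tau$, in particular for all $s\in[-M,0]$.

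Then the formal identity from Remark \ref{bem:Formal identity} expresses $\adc{\alpha}{\beta} P^{-1}$ as a finite sum of compositions of the form
\[
P^{-1} \circ [\adc{\alpha^{(1)}}{\beta^{(1)}} P] \circ P^{-1} \circ \cdots \circ [\adc{\alpha^{(l)}}{\beta^{(l)}} P] \circ P^{-1}
\]
with $\sum |\alpha^{(i)}| = |\alpha|$ and $\sum |\beta^{(i)}| = |\beta|$. Tracing Bessel potential orders from right to left starting at $H^{-|\alpha|}_{q_0}$, every $P^{-1}$ preserves the order (always in $[-M,0]\subseteq[-r-k,r+k]$) while each commutator raises it by $|\alpha^{(i)}|$, so the whole composition maps $H^{-|\alpha|}_{q_0} \to L^{q_0}$. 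This yields $P^{-1} \in \mathcal{A}^{0,M}_{1,0}(\hat{m},q_0)$ and Theorem \ref{thm:classificationA10HmqUloc} delivers the first claim.

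For the additional $L^q$-boundedness under $\tilde{M}-1 > n/\tilde{q}$, I use Lemma \ref{lemma:einbettungSymbolklassen} to embed the symbol of $P^{-1}$ into $C^s S^0_{1,0}(\RnRn;\tilde{M}-1)$ for some $s>0$, and then Theorem \ref{thm:stetigInHoelderRaum} (with $\rho=1$, $\delta=0$, taking $s$ near $0$) gives boundedness on $L^q$ for every $q \in [\tilde{q}, \infty)$; the bound on $L^{q_0}$ has already been obtained above. The main obstacle is the careful bookkeeping of Bessel potential orders and Hölder exponents along the composition: one must simultaneously verify that every intermediate order at which $P^{-1}$ is invoked lies in $[-r-k,r+k]$, that each commutator retains enough remaining $\xi$-smoothness ($\geq N-M$) and enough remaining Hölder regularity ($\geq \tilde{m}-\hat{m}+\tau > M$), and that these line up with the hypotheses of Theorem \ref{thm:stetigInHoelderRaum}. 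The chain $M \leq \tilde{m}-\hat{m}$, $\hat{m} > n/q_0$, $|r|<\tilde{m}+\tau$, and $N-M > n/\max\{2,q_0\}$ is precisely what encodes these requirements.
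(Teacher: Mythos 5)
Your proposal is correct and follows essentially the same route as the paper's proof: Theorem \ref{thm:InverseOfPEvenBoundedInAroundR} to propagate invertibility to $H^{-s}_{q_0}$ for $s\in\{0,\ldots,M\}$, Remark \ref{bem:SymbolOfIteratedCommutatorNonSmooth} together with Theorem \ref{thm:stetigInHoelderRaum} for the mapping properties of the iterated commutators of $P$, the formal identity of Remark \ref{bem:Formal identity} to conclude $P^{-1}\in\mathcal{A}^{0,M}_{1,0}(\hat{m},q_0)$, and then Theorem \ref{thm:classificationA10HmqUloc} plus Theorem \ref{thm:stetigInHoelderRaum} for the final $L^q$-statement. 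Only a small sign slip: an operator of order $-|\alpha^{(i)}|$ maps $H^{s-|\alpha^{(i)}|}_{q_0}\to H^{s}_{q_0}$, not $H^{s+|\alpha^{(i)}|}_{q_0}\to H^{s}_{q_0}$; your subsequent order bookkeeping uses the correct direction, so the argument stands.
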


\begin{proof}
  An application of Theorem \ref{thm:InverseOfPEvenBoundedInAroundR} provides the boundedness of
  \begin{align}\label{p93}
    p(x, D_x)^{-1} \in \mathscr{L}(H^{-s}_{q_0}, H^{-s}_{q_0} ) \qquad \text{for all } s \in \{ 0, \ldots, M \}. 
  \end{align}
  Let $l \in \N_0$, $\varf{\alpha}{l} \in \Non$ and $\varf{\beta}{l} \in \Non$ with $|\alpha_j| + |\beta_j| = 1$ for all $j \in \{ 1, \ldots, l \}$, $|\alpha| \leq M$ and $|\beta| \leq \hat{m}$ where $\alpha:= \alpha_1 + \ldots + \alpha_l$ and $\beta := \beta_1 + \ldots + \beta_l$. Then Remark \ref{bem:SymbolOfIteratedCommutatorNonSmooth} and
  Theorem \ref{thm:stetigInHoelderRaum} yield for all  
  $s \in \{0, \ldots, M - |\alpha|\}$: 
  \begin{align}\label{p91}
    \ad(-ix)^{\alpha_l} \ad(D_x)^{\beta_l} \ldots \ad(-ix)^{\alpha_1} \ad(D_x)^{\beta_1} p(x, D_x)
      \in \mathscr{L}( H^{-s-|\alpha|}_{q_0}, H^{-s}_{q_0}).
  \end{align}
  Setting $P:= p(x, D_x)$ 
  we get due to Remark \ref{bem:Formal identity}
  \begin{align*}
    \ad(-ix)^{\alpha_1} \ad(D_x)^{\beta_1} \ldots \ad(-ix)^{\alpha_l} \ad(D_x)^{\beta_l} P^{-1} 
    = \sum_{ \substack{ (\vara{\alpha^1}{l}) +  \ldots + (\vara{ \alpha^{ l } }{l}) = \alpha \\
			(\vara{\beta^1}{l}) +  \ldots + (\vara{ \beta^{ l } }{l}) = \beta } }
	  \hspace{-1.7cm} R_{\alpha_1^1, \ldots, \alpha_l^{l}, \beta_1^1, \ldots, \beta_l^{l } }
  \end{align*}
  where $R_{\alpha_1^1, \ldots, \alpha_l^{l}, \beta_1^1, \ldots, \beta_l^{l } } $ is defined as in Remark  \ref{bem:SymbolOfIteratedCommutatorNonSmooth}.
  Together with (\ref{p93}) and (\ref{p91}) this provides 
  $P^{-1} \in \mathcal{A}^{0,M}_{1,0}(\hat{m},q_0)$. By means of Theorem \ref{thm:classificationA10HmqUloc} and Lemma \ref{lemma:einbettungSymbolklassen} we get for each $0 < \tilde{\tau} \leq \hat{m} - n/{q_0}$ with $\tilde{\tau} \notin \N$:
  \begin{align*}
    p(x, D_x)^{-1} &\in \op W^{\hat{m},q_0}_{uloc} \Snn{0}{1}{0}{\tilde{M}-1} \cap \mathscr{L}(L^{q_0}(\Rn)) \\
      &\subseteq C^{\tilde{\tau} } \Snn{0}{1}{0}{\tilde{M}-1}.
  \end{align*}
  Finally, considering $\tilde{M}-1>n/ \tilde{q}$ for one $1 < \tilde{q} \leq 2$ we obtain for every $q \in [\tilde{q}, \infty)$ due to Theorem \ref{thm:stetigInHoelderRaum} the boundedness of
  \begin{align*}
    P^{-1}: L^q(\Rn) \rightarrow L^q(\Rn).
  \end{align*}
  \vspace*{-1cm}

\end{proof}

The relation to the spectral invariance of Theorem \ref{thm:Spektralinvarianz} is emphasised in the next corollary which easily can be verified by means of Theorem \ref{thm:Spektralinvarianz}. For more details we refer to \cite[Corollary 6.12]{Diss}.

\begin{kor}
  We assume that all assumptions of Theorem \ref{thm:Spektralinvarianz} hold. Additionally we choose an arbitrary but fixed $\tilde{q} \in (1, 2]$ fulfilling the conditions of Theorem \ref{thm:Spektralinvarianz} and denote 
  $$P_{L^q}:= p(x, D_x): L^q(\Rn) \rightarrow L^q(\Rn) \qquad \text{for all } \tilde{q} \leq q < \infty.$$
  Then we obtain the spectral invariance of these operators:
  \begin{align*}
    \sigma(P_{L^{q} }) = \sigma(P_{L^{r} }) \qquad \text{for all } \tilde{q} \leq q, r < \infty.
  \end{align*}
\end{kor}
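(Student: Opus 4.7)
The strategy is to prove spectral invariance by applying Theorem \ref{thm:Spektralinvarianz} to the shifted operator $P - \lambda I$ for each $\lambda \in \C$. Fix $\lambda \in \C$ and suppose $\lambda \notin \sigma(P_{L^q})$ for some $q \in [\tilde{q}, \infty)$. Since $\lambda$ is a constant, it lies in $S^0_{1,0}(\RnRn) \subseteq C^{\tilde{m},\tau} S^0_{1,0}(\RnRn; N)$, and since this class is a vector space, $p - \lambda$ belongs to the same class as $p$. Hence the symbol of $P - \lambda I = (p - \lambda)(x, D_x)$ satisfies the same symbolic assumptions of Theorem \ref{thm:Spektralinvarianz} as $p$.

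By hypothesis, $(P - \lambda I)^{-1} \in \mathscr{L}(L^q) = \mathscr{L}(H^0_q, H^0_q)$, and $|0| < \tilde{m} + \tau$. Hence Theorem \ref{thm:Spektralinvarianz} applies to $P - \lambda I$ with the choices $q_0 := q$ and $r := 0$. Its second conclusion, which is available because $\tilde{q}$ was chosen to satisfy $\tilde{M} - 1 > n/\tilde{q}$, yields
\begin{align*}
  (P - \lambda I)^{-1} \in \mathscr{L}(L^{q'}) \qquad \text{for every } q' \in [\tilde{q}, \infty) \cup \{q\}.
\end{align*}
In particular $\lambda \notin \sigma(P_{L^r})$ for every $r \in [\tilde{q}, \infty)$, which proves $\sigma(P_{L^r}) \subseteq \sigma(P_{L^q})$. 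Interchanging the roles of $q$ and $r$ yields the reverse inclusion, and hence the desired equality.

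The only nontrivial point to verify is that the $q_0$-dependent hypothesis $\hat{m} > n/q_0$ of Theorem \ref{thm:Spektralinvarianz} remains valid when $q_0$ is replaced by the arbitrary $q \in [\tilde{q}, \infty)$ at which invertibility is first known. For $q \geq q_0$ this is automatic; otherwise one adjusts $\hat{m}$ downward to an admissible value with $\hat{m} > n/q$. This is always possible because enlarging $\tilde{m} - \hat{m}$ only relaxes the constraint $M \leq \tilde{m} - \hat{m}$, and the chosen $\tilde{q}$ depends only on $\tilde{M} = M - (n+1)$ via $\tilde{M} - 1 > n/\tilde{q}$, so the resulting $L^{q'}$-range $[\tilde{q}, \infty)$ is unchanged.
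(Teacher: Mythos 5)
Your overall strategy --- applying Theorem \ref{thm:Spektralinvarianz} to $p-\lambda$ with the base exponent $q_0$ replaced by the exponent $q$ at which invertibility of $P-\lambda$ is known, and then reading off $L^{q'}$-boundedness of $(P-\lambda)^{-1}$ for all $q'\in[\tilde{q},\infty)$ from the second conclusion --- is exactly the route the paper indicates (the paper itself only refers to the thesis for details). The first half of your argument is sound: $p-\lambda$ lies in the same symbol class, $r=0$ is admissible since $|0|<\tilde{m}+\tau$, and for every $q\geq q_0$ all $q_0$-dependent hypotheses of the theorem transfer verbatim, so the inclusion $\sigma(P_{L^{r}})\subseteq\sigma(P_{L^{q}})$ is established for all $q\geq q_0$ and $r\in[\tilde{q},\infty)$.

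The gap is in the case $q\in[\tilde{q},q_0)$, which you dispose of in your last paragraph. First, you address only the hypothesis $\hat{m}>n/q_0$; the hypothesis ``$N-M>n/2$ if $q_0\geq 2$ and $N-M>n/q_0$ else'' is also $q_0$-dependent and becomes strictly stronger when $q_0\geq 2$ is replaced by some $q<2$. Second, your adjustment of $\hat{m}$ goes the wrong way: for $q<q_0$ one has $n/q>n/q_0$, so the new $\hat{m}$ must be \emph{larger}, not smaller; ``adjusting $\hat{m}$ downward'' (equivalently, enlarging $\tilde{m}-\hat{m}$) only makes $\hat{m}>n/q$ harder to satisfy. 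Third, even the correct adjustment (increasing $\hat{m}$) tightens the constraint $M\leq\tilde{m}-\hat{m}$ and can force $M$, hence $\tilde{M}=M-(n+1)$, below the threshold $\tilde{M}-1>n/\tilde{q}$ on which the $L^{q'}$-conclusion rests. Concretely, take $n=6$, $q_0=3$, $\tilde{q}=3/2$ with the admissible parameters $\hat{m}=3$, $M=13$, $N=17$, $\tilde{m}=16$: for the new base exponent $q=\tilde{q}$ one needs $\hat{m}'\geq 5$, hence $M'\leq\tilde{m}-\hat{m}'=11$, hence $\tilde{M}'-1\leq 3$, which is not $>n/\tilde{q}=4$, so the second conclusion of Theorem \ref{thm:Spektralinvarianz} is unavailable. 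Thus ``this is always possible'' is false as written; to close the argument one must either impose explicit largeness assumptions on $\tilde{m}$ and $N$ guaranteeing admissible parameters for the worst case $q=\tilde{q}$, or supply a different argument for the inclusion $\sigma(P_{L^{r}})\subseteq\sigma(P_{L^{q}})$ when $q<q_0$.
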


Now one may wonder whether it is possible to prove that $p(x, D_x)^{-1}$ is even an element of $\op W^{\hat{m},q}_{uloc} \Sn{0}{1}{0}$ if all assumptions of the Theorem \ref{thm:Spektralinvarianz} are fulfilled and additionally $p(x,D_x) \in \op C^{\tilde{m},\tau} \Sn{0}{1}{0} $. Unfortunately in general this is not the case as we see in the next example:

\begin{bsp}
  Let $s >0$, $1< q_0 < \infty$ and $\tau > s + \lfloor n/q_0 \rfloor + n +4$. Additionally let $p(\xi) \in S^{0}_{1,0}(\RnRnx{x}{\xi})$ be a symbol which is not constantly equal to zero and where $p(D_x)^{-1} \in \mathscr{L}(L^{q_0}(\Rn), L^{q_0}(\Rn))$. Moreover let $a \in C^{\tau}(\Rn)$ such that there is no open set $U \subseteq \Rn$, $U \neq \varnothing$ with $a|_{U} \in C^{\infty}(U)$
  and there are two constants $c, C >0$ with $C> a(x) > c$ for all $x \in \Rn$. Then $T:=a(x)p(D_x) \in C^{\tau} \Sn{0}{1}{0}$ fulfills all assumptions of Theorem \ref{thm:Spektralinvarianz} for $M= n+3$ and $\hat{m}:= \lfloor \tau \rfloor - (n+3)$. 
  Consequently $T^{-1} \in \op W^{\hat{m},q_0}_{uloc}  \Snn{0}{1}{0}{\tilde{M}-1}$, where $\tilde{M}:= M-(n+1)$,  but $T^{-1} \notin \op C^{ \tilde{\tau} } \Sn{0}{1}{0}$ with $\tilde{\tau} \in (0,\hat{m}-n/q_0]$. In particular $T^{-1} \notin \op W^{\hat{m},q}_{uloc}  \Sn{0}{1}{0}$ due to Lemma \ref{lemma:einbettungSymbolklassen}.
\end{bsp}

\begin{proof}
  We define 
  $b(x):=\left( a(x) \right)^{-1}$ for all $x \in \Rn$. Then $b \in C^{\tau}(\Rn)$ and $T \in C^{\tau} \Sn{0}{1}{0}$. 
  Due to Theorem \ref{thm:SpektralinvarianzGlatterFall} $p(D_x)^{-1} \in \op \Sn{0}{1}{0}$. Hence $T^{-1}= p(D_x)^{-1} b(x)$. In particular the boundedness of $b$ and $p(D_x)^{-1} \in \mathscr{L}(L^{q_0}(\Rn), L^{q_0}(\Rn))$ imply $T^{-1} \in \mathscr{L}(L^{q_0}(\Rn), L^{q_0}(\Rn))$. Therefore all assumptions of Theorem \ref{thm:Spektralinvarianz} are fulfilled for $M= n+3$ and $\hat{m}:= \lfloor \tau \rfloor - (n+3)$. Let $\tilde{\tau} \in (0,\hat{m}-n/q_0]$. Assuming $T^{-1} \in C^{ \tilde{\tau} } \Sn{0}{1}{0}$ there is a kernel $\tilde{k}: \Rn \times \Rn \backslash \{ 0 \} \rightarrow \C$ such that $\tilde{k}(x,.) \in C^{\infty}( \Rn \backslash \{ 0 \} ) $ for each $x \in \Rn$ and 
  \begin{align}\label{p99}
    T^{-1} f (x) = \int \tilde{k}(x, x-y) f(y) dy
  \end{align}
  for all $f \in \s$ and $x \notin \supp f$ due to Theorem \ref{thm:KernelRepresentationNonsmoothCase}. An application of Remark \ref{bem:KernelRepresentationSmoothCase} provides the existence of a kernel $k \in C^{\infty} (\Rn \backslash \{ 0 \} )$ such that
  \begin{align}\label{p103}
    p(D_x)^{-1} u(x) = \int k( x-y) u(y) dy \qquad \text{for all } x \notin \supp u
  \end{align}
  for all $u \in \s$. Now let $(\delta_{\e} )_{\e>0} \subseteq C^{\infty}_c(\Rn)$ be a Dirac family, i.e.\,, for all $\e > 0$ we have $\delta_{\e} \geq 0$, $\int \delta_{\e}(x) dx = 1$ and $\lim_{\e \rightarrow 0} \int_{|x|\geq d} \delta_{\e} (x) dx = 0$ for every $d>0$. Then
  $\delta_{\e} \ast b \in C^{\infty}(\Rn)$ for each $\e >0$. 
  The boundedness of $b$, Theorem 10.7 in \cite{Kaballo} and $\p_x^{\alpha} \delta_{\e} \in C^{\infty}_c(\Rn) \subseteq L^1(\Rn) $ provides for every $\alpha \in \Non$
  \begin{align*}
    \left| \p_x^{\alpha} (\delta_{\e} \ast b)(x) \right| 
    \leq \int  \left| (\p_x^{\alpha} \delta_{\e})(y) \right| \left| b(x-y) \right| dy 
    \leq \|b\|_{L^{\infty} } \| \p_x^{\alpha} \delta_{\e} \|_{L^1} 
    \leq C_{\alpha, \e}
  \end{align*}
  for all $x \in \Rn$. In the case $|\alpha|=0$ the constant $C_{\alpha,\e}$ is even independent of $\e>0$.  
  In particular  $\delta_{\e} \ast b \in C^{\infty}_b(\Rn)$ 
  for every $\e >0$ and therefore $(\delta_{\e} \ast b)f \in \s$ for all $f \in \s$. Additionally we obtain for all $f \in \s$ with $x \notin \supp f$ the existence of a constant $C$, independent of $\e>0$, such that
  \begin{align*}
    |k( x-y) (\delta_{\e} \ast b)(y) f(y)| \leq C |k(x-y) f(y)| \in L^1(\Rn_y).
  \end{align*}
  and 
  \begin{align}\label{p104}
    (\delta_{\e} \ast b)(y) f(y) \xrightarrow[]{\e \rightarrow 0} b(y) f(y) \qquad \text{for all } y \in \Rn.
  \end{align} 
  Using (\ref{p99}), $p(D_x)^{-1} \in \mathscr{L}(L^2(\Rn))$ and (\ref{p104}) first we obtain together with (\ref{p103}) and an application of Lebesgue's theorem for all $f \in \s$:
  \begin{align}\label{p105}
    &\int \tilde{k}(x, x-y) f(y) dy =
    T^{-1} f(x) 
    = p(D_x)^{-1} \left[ \lim_{\e \rightarrow 0} (\delta_{\e} \ast b)(x) f(x) \right] \notag \\
    &\qquad \quad = \lim_{\e \rightarrow 0} \int k( x-y) (\delta_{\e} \ast b)(y) f(y) dy
    = \int k( x-y) b(y) f(y) dy 
  \end{align}
  for all $x \notin \supp f$.  
  Now we fix $x \in \Rn$ such that $\tilde{k}(x, .)$ is not constantly equal to zero. 
  An application of the fundamental lemma of calculus of variations
  yields 
  \begin{align*}
    k( x-y) b(y) = \tilde{k}(x, x-y) \qquad \text{for all } y \in \Rn \backslash \{ x\}
  \end{align*}
  since $k( x-y)$, $\tilde{k}(x, x-y)$ and $b(y)$ are continuous with respect to $y \in \Rn \backslash \{x\}$. By means of variable transformation we obtain
  \begin{align}\label{eq32}
    k( z) = a(x-z) \tilde{k}(x, z) \in C^{\infty}(\Rn_z \backslash \{ 0 \} ).
  \end{align}
  Now we choose $z \in \Rn \backslash \{0\}$ with $\tilde{k}(x,z) \neq 0$. Due to $\tilde{k}(x,.) \in C^{\infty}(\Rn \backslash \{0\})$, there some a $\delta>0$ such that $\tilde{k}(x,\tilde{z}) \neq 0$ for all $\tilde{z} \in B_{\delta}(z)$ and $0 \notin B_{\delta}(z)$. Together with $\tilde{k}(x,.) \in C^{\infty}(\Rn \backslash \{0\})$ and (\ref{eq32}) we obtain $a \in C^{\infty}(B_{\delta}(x-z))$. This is a contradiction to the choice of $a$. Therefore $T^{-1} \notin C^{ \tilde{\tau} } \Sn{0}{1}{0}$.
\end{proof}

\subsection[Spectral Invariance of Operators with Symbols in $W^{\tilde{m},q}_{uloc} S^{0}_{1,0} $]{Spectral Invariance of Pseudodifferential Operators in the Symbol-Class $W^{\tilde{m},q}_{uloc} S^{0}_{1,0} $}\label{section:SpectralInvarianceWmqUlocCase}

The present subsection serves to improve Theorem \ref{thm:Spektralinvarianz} for non-smooth pseudodifferential operators of the order zero with coefficients in $W^{\tilde{m},q}_{uloc}(\Rn)$.

Verifying the proof of Theorem \ref{thm:Spektralinvarianz} we see that we need similar results for pseudodifferential operators whose symbols are in $W^{\tilde{m},q}_{uloc} \Sn{m}{1}{0}$ instead of Theorem \ref{thm:InverseOfPEvenBoundedInAroundR} and Remark \ref{bem:SymbolOfIteratedCommutatorNonSmooth}:

\begin{lemma}\label{lemma:DifferencequotientPIsPDOwithCoefficientInLocalSobolevSpace}
  Let $1< q <\infty$, $m \in \R$ and $\tilde{m} \in \N$ with $\tilde{m} > 1+ n/q$. Considering $p \in W^{\tilde{m},q}_{uloc} S^{m}_{1,0}(\RnRn)$, we get the boundedness of
  \begin{align*}
    \left\{ \differencequotient{h} p(x,\xi) : h\in \R \backslash \{ 0 \} \right\} \subseteq W^{\tilde{m}-1, q}_{uloc} S^{m}_{1,0}(\RnRn)
  \end{align*}
  for all $j \in \{ 1, \ldots, n\}$.  
\end{lemma}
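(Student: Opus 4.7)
My plan is to imitate the Hölder-case argument in Lemma \ref{lemma:DifferencequotientPIsPDO}(i) via the fundamental theorem of calculus, replacing supremum bounds on $C^{\tilde{m}-1,\tau}$ by translation-invariance arguments for $W^{\tilde{m}-1,q}_{uloc}$. The hypothesis $\tilde{m}>1+n/q$ is used precisely to ensure that $\partial_{x_j}p(\cdot,\xi)$ exists pointwise as a continuous function, so that the FTC representation
\begin{align*}
  \partial^h_{x_j} p(x,\xi) = \int_0^1 \partial_{x_j} p(x+the_j,\xi)\, dt
\end{align*}
holds pointwise for every $x,\xi \in \Rn$ and $h \in \R\setminus\{0\}$. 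Indeed, by Lemma \ref{lemma:SobolevHoelderInequalityFor_LqUlocSpaces}(v), $W^{\tilde{m},q}_{uloc}(\Rn) \hookrightarrow C^{\tau}(\Rn)$ for some $0<\tau<1$ with $\tau+1 \leq \tilde{m}-n/q$, and hence differentiating $p(\cdot,\xi)\in W^{\tilde{m},q}_{uloc}$ once in $x_j$ still yields a continuous function.

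Next, for arbitrary $\alpha,\beta \in \Non$ with $|\beta|\leq \tilde{m}-1$, I would differentiate the identity above under the integral sign (justified by the continuity of $\partial_x^{\beta+e_j} \partial_\xi^\alpha p$ in $x$, established the same way via Sobolev embedding applied to $\partial_\xi^\alpha p(\cdot,\xi)$) to obtain
\begin{align*}
  \partial_x^{\beta} \partial_\xi^{\alpha} \partial^h_{x_j} p(x,\xi) = \int_0^1 \partial_x^{\beta+e_j} \partial_\xi^{\alpha} p(x+the_j,\xi)\, dt .
\end{align*}
Minkowski's inequality for integrals, together with the translation invariance of $\|\cdot\|_{L^q_{uloc}(\Rn)}$ (which is immediate from the definition, since the supremum over centres $x_0\in\Rn$ absorbs the shift $th e_j$), then gives
\begin{align*}
  \big\|\partial_x^{\beta} \partial_\xi^{\alpha} \partial^h_{x_j} p(\cdot,\xi)\big\|_{L^q_{uloc}} \leq \big\|\partial_x^{\beta+e_j} \partial_\xi^{\alpha} p(\cdot,\xi)\big\|_{L^q_{uloc}} \leq \big\|\partial_\xi^{\alpha} p(\cdot,\xi)\big\|_{W^{\tilde{m},q}_{uloc}} \leq C_\alpha \langle \xi\rangle^{m-|\alpha|},
\end{align*}
uniformly in $h \in \R\setminus\{0\}$. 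Summing over $|\beta| \leq \tilde{m}-1$ yields the required symbol seminorm bound
$\|\partial_\xi^\alpha \partial^h_{x_j} p(\cdot,\xi)\|_{W^{\tilde{m}-1,q}_{uloc}} \leq C_\alpha \langle\xi\rangle^{m-|\alpha|}$ with constants independent of $h$.

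Finally I would verify the remaining conditions in the definition of the symbol class $W^{\tilde{m}-1,q}_{uloc} S^m_{1,0}(\RnRn)$: smoothness of $\partial^h_{x_j} p(x,\cdot)$ in $\xi$ is inherited from $p$ (the difference quotient is taken in $x$ only), and joint continuity of $\partial_x^\beta \partial_\xi^\alpha \partial^h_{x_j} p$ for $|\beta| < \tilde{m}-1-n/q$ follows from joint continuity of $\partial_x^\beta \partial_\xi^\alpha p$ (which holds for $|\beta| < \tilde{m}-n/q$ by hypothesis on $p$) since $\partial^h_{x_j}$ is a bounded linear combination of translates. I do not anticipate any genuine obstacle here: the only delicate point is making sure the pointwise FTC representation is legitimate, and this is exactly what the extra $1$ in the assumption $\tilde{m} > 1+n/q$ provides. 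The bound's uniformity in $h$ is automatic because translation invariance absorbs the parameter $t$ without producing any $h$-dependent factor.
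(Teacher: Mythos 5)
Your strategy (fundamental theorem of calculus plus Minkowski's integral inequality plus translation invariance of $\|\cdot\|_{L^q_{uloc}}$) is genuinely different from the paper's. The paper instead localizes $p(\cdot,\xi)$ with translated cutoffs $\psi(\cdot-y)$ and applies Theorem \ref{thm:DifferenceQuotientsWeakDerivatives}~i) (the $H^s_p$-bound of a difference quotient by the corresponding weak derivative, uniform in $h$) to each localized piece, then resums over $|\beta|\leq\tilde{m}-1$. Both routes produce the same chain of estimates ending in $C\|\pa{\alpha}p(\cdot,\xi)\|_{W^{\tilde{m},q}_{uloc}}\leq C_\alpha\<{\xi}^{m-|\alpha|}$; yours is more self-contained, the paper's reuses a lemma it needs anyway in Subsection \ref{section:DifferenceQuotients}.

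However, the justification of your central step is flawed. You differentiate the pointwise identity $\p^h_{x_j}p(x,\xi)=\int_0^1\p_{x_j}p(x+the_j,\xi)\,dt$ under the integral sign for all $|\beta|\leq\tilde{m}-1$, "justified by the continuity of $\p_x^{\beta+e_j}\pa{\alpha}p$ in $x$, established via Sobolev embedding." That continuity is false in general: the embedding $W^{\tilde{m},q}_{uloc}\hookrightarrow C^{\tau}$ holds only for $\tau\leq\tilde{m}-n/q$, so the hypothesis $\tilde{m}>1+n/q$ delivers exactly \emph{one} classical $x$-derivative of $\pa{\alpha}p(\cdot,\xi)$, not $\tilde{m}$ of them; for $|\beta|+1>\tilde{m}-n/q$ the derivative $\p_x^{\beta+e_j}\pa{\alpha}p(\cdot,\xi)$ exists only as a weak derivative in $L^q_{uloc}$ and need not be continuous or even defined pointwise. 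The step is repairable: commute $\p_x^{\beta}$ (weak derivative) with $\p^h_{x_j}$ first, so that you only need the representation
\begin{align*}
  \p^h_{x_j}u(x)=\int_0^1\p_{x_j}u(x+the_j)\,dt\qquad\text{for a.e.\ }x\in\Rn
\end{align*}
for the $W^{1,q}_{uloc}$-function $u=\p_x^{\beta}\pa{\alpha}p(\cdot,\xi)$; this a.e.\ identity is proved by mollification, and Minkowski's inequality then applies on each ball $B_1(x_0)$ exactly as you describe. Alternatively, skip the FTC altogether and invoke Theorem \ref{thm:DifferenceQuotientsWeakDerivatives}~i) on localized pieces, as the paper does. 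The remaining verifications in your last paragraph (smoothness in $\xi$, joint continuity for $|\beta|<\tilde{m}-1-n/q$, $h$-uniformity) are fine.
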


\begin{proof}
  Let $\psi \in C_c^{\infty}(\Rn)$ be such that $\supp \psi \subseteq \overline{B_2(0)}$ and $\psi(x)=1$ for all $x \in B_1(0)$. Assuming an arbitrary $\alpha \in \Non$ we get due to Theorem \ref{thm:DifferenceQuotientsWeakDerivatives} and $p \in W^{\tilde{m},q}_{uloc} S^{m}_{1,0}(\RnRn)$:
  \begin{align*}
    \| \pa{\alpha} \differencequotient{h} p(x,\xi) \|_{W^{\tilde{m}-1, q}_{uloc} (\Rn_x)}
    &\leq \sum_{|\beta| \leq \tilde{m}-1} \sup_{y \in \Rn} \| \differencequotient{h} \p_x^{\beta} \pa{\alpha} [p(x,\xi) \psi(x-y)] \|_{L^q(\Rn_x)} \notag \\
    &\leq C \sum_{|\beta| \leq \tilde{m}-1} \sup_{y \in \Rn} \| \p_{x_j} \p_x^{\beta} \pa{\alpha} [p(x,\xi) \psi(x-y)] \|_{L^q(\Rn_x)} \notag \\
    &\leq C \sum_{|\beta| \leq \tilde{m} } \sup_{y \in \Rn} \| \p_x^{\beta} \pa{\alpha} p(.,\xi) \|_{L^q(B_2(y))} \notag \\
    &\leq C \| \pa{\alpha} p(x,\xi) \|_{W^{\tilde{m},q}_{uloc}(\Rn_x)}
    \leq C \<{\xi}^{m-|\alpha|}
  \end{align*}
  for all $\xi \in \Rn$ and $h\in \R \backslash \{ 0 \}$. This implies the claim.
\end{proof}

\begin{lemma}\label{lemma:CommutatorWithDifferenceQuotientsLocallySobolevCase}
  Let $1< \tilde{q},q <\infty$, $m \in \R$ and $\tilde{m} \in \N$ with $\tilde{m} > 1+ n/q$. Assuming $p \in W^{\tilde{m},q}_{uloc} S^{m}_{1,0}(\RnRn)$ we get for every $j \in \{ 1, \ldots, n\} $ and all $h \in \R \backslash \{ 0 \}$:
  \begin{align*}
    [\p^h_{x_j}, p(x,D_x)] u(x) = \left( \differencequotient{-h} p \right)(x, D_x) u(x+he_j) \qquad \text{for all } u \in \s, x \in \Rn.
  \end{align*}
  Moreover, for all $ - \tilde{m} +1 + n/q < s \leq \tilde{m} - 1 - n(1/q - 1/{\tilde{q}})^{+}$ there is a constant $C$, independent of $h \in \R \backslash \{ 0 \}$, such that 
  \begin{align*}
    \| [ \p^h_{x_j}, p(x,D_x) ] u \|_{H^s_{\tilde{q}}} \leq C \|u\|_{H^{s+m}_{\tilde{q}}} \qquad \text{for all } u \in H^{s+m}_{\tilde{q}}(\Rn), 
  \end{align*}
  where $j \in \{ 1, \ldots, n\} $.
\end{lemma}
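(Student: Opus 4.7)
The assertion has two parts: the pointwise commutator identity and the uniform $H^s_{\tilde q}$-estimate. For part (i) I would argue exactly as in the Hölder-space analogue, namely Lemma \ref{lemma:CommutatorWithDifferenceQuotients} (ii). Since $\tilde m > 1 + n/q > n/q$, the embedding Lemma \ref{lemma:einbettungSymbolklassen} gives $p \in C^\tau S^m_{1,0}(\RnRn)$ for some $\tau>0$, so $p(x,\xi)$ is pointwise continuous in $x$ and the oscillatory integral defining $p(x,D_x)u$ is well-defined pointwise for every $u\in\s$. A direct Fourier-transform computation using $\widehat{u(\cdot + h e_j)}(\xi) = e^{ihe_j\cdot\xi}\hat u(\xi)$ then yields
\begin{align*}
  p(x+he_j,D_x)u(x+he_j) - p(x,D_x)u(x+he_j)
  &= h \int e^{i(x+he_j)\cdot\xi} (\differencequotient{h} p)(x,\xi)\hat u(\xi)\dq\xi.
\end{align*}
Combining this with $(\differencequotient{-h} p)(x+he_j,\xi) = (\differencequotient{h} p)(x,\xi)$ and the definition of $p(\cdot,D_x)$ gives the claimed identity.

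For part (ii), the key idea is to rewrite the commutator in terms of a pseudodifferential operator and then invoke a uniform boundedness result for families of symbols in uniformly local Sobolev spaces. By part (i),
\begin{align*}
  \|[\p^h_{x_j}, p(x,D_x)] u\|_{H^s_{\tilde{q}}} = \bigl\|[(\differencequotient{-h} p)(x,D_x) u](\,\cdot\, + h e_j)\bigr\|_{H^s_{\tilde{q}}} = \|(\differencequotient{-h} p)(x,D_x) u\|_{H^s_{\tilde{q}}},
\end{align*}
where the last equality uses that translation is an isometry on $H^s_{\tilde{q}}(\Rn)$ (it commutes with $\<{D_x}^s$, hence also with $\<{D_x}^{-s}$).

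Next I would apply Lemma \ref{lemma:DifferencequotientPIsPDOwithCoefficientInLocalSobolevSpace}, which gives the boundedness of $\{\differencequotient{-h} p : h \in \R\setminus\{0\}\}$ as a subset of $W^{\tilde m -1,q}_{uloc} S^m_{1,0}(\RnRn)$. Since $\tilde m - 1 > n/q$, this set is an admissible bounded family for Theorem \ref{thm:boundednessOfSymbolsWithCoefficientsInUniformlyBoundedSobolevSpaces} with $\tilde m$ there replaced by $\tilde m - 1$. That theorem then produces a constant $C$, independent of $h \in \R\setminus\{0\}$, such that
\begin{align*}
  \|(\differencequotient{-h} p)(x,D_x) u\|_{H^s_{\tilde q}} \leq C \|u\|_{H^{s+m}_{\tilde q}} \qquad \text{for all } u \in H^{s+m}_{\tilde q}(\Rn),
\end{align*}
provided $-\tilde m + 1 + n/q < s \leq \tilde m - 1 - n(1/q - 1/\tilde q)^+$, which is precisely the range assumed in the statement. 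Substituting back completes part (ii).

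The proof is essentially bookkeeping since both nontrivial ingredients are already in place, so I do not expect a genuine obstacle. The one point requiring care is ensuring that the identity from part (i), initially stated for $u\in\s$, is used only to derive the estimate on a dense subset before extending by continuity; this is permissible because both $p(x,D_x)$ and $\p^h_{x_j}$ extend to bounded maps between the relevant Bessel potential spaces and $\s$ is dense in $H^{s+m}_{\tilde q}(\Rn)$ for $1<\tilde q<\infty$.
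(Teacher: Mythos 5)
Your proof is correct and takes essentially the same route as the paper: the paper simply states that the argument of Lemma \ref{lemma:CommutatorWithDifferenceQuotients} carries over after replacing Lemma \ref{lemma:DifferencequotientPIsPDO} by Lemma \ref{lemma:DifferencequotientPIsPDOwithCoefficientInLocalSobolevSpace} and Theorem \ref{thm:stetigInHoelderRaum} by Theorem \ref{thm:boundednessOfSymbolsWithCoefficientsInUniformlyBoundedSobolevSpaces}, which is exactly the substitution you carry out (with the correct shifted range for $s$ coming from $\tilde{m}-1$). The Fourier-transform derivation of the commutator identity and the final density argument also match the paper's proof.
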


\begin{proof}
  The proof of the lemma is essentially the same as that one of Lemma \ref{lemma:CommutatorWithDifferenceQuotients}. We just have to replace Lemma \ref{lemma:DifferencequotientPIsPDO} with Lemma \ref{lemma:DifferencequotientPIsPDOwithCoefficientInLocalSobolevSpace} and Theorem \ref{thm:stetigInHoelderRaum} with Theorem \ref{thm:boundednessOfSymbolsWithCoefficientsInUniformlyBoundedSobolevSpaces}.
\end{proof}

\begin{lemma}\label{lemma:InverseOfPEvenBoundedInAroundRLocallySobolevCase}
  Let $1 < q, \tilde{q} < \infty$ and $\tilde{m} \in \N$ with $\tilde{m} >1+ n/q$. Considering  $p \in W^{\tilde{m},q}_{uloc} \Sn{m}{1}{0}$, where the inverse operator $p(x,D_x)^{-1} \in \mathscr{L}(H^r_{\tilde{q}}, H^r_{\tilde{q}})$ for one $ -\tilde{m} + n/q < r \leq \tilde{m} - n(1/q - 1/{\tilde{q}} )^{+} $, we obtain
  \begin{align*}
    p(x,D_x)^{-1} \in \mathscr{L}(H^s_{\tilde{q}}, H^s_{\tilde{q}}) \qquad \text{for all } s \in [ r-l, r+k ].
  \end{align*}
  Here $k$ and $l$ are defined by $k:= \max\{ \tilde{k} \in \N_0 : r+ \tilde{k} \leq \tilde{m} - n(1/q - 1/{\tilde{q}} )^{+} \}$ and $l:= \max \{ \tilde{l} \in \N_0 : -\tilde{m} + n/q < r-\tilde{l} \}$.
\end{lemma}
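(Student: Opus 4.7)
The plan is to mimic, step by step, the proof of Theorem \ref{thm:InverseOfPEvenBoundedInAroundR}, replacing each Hölder-case tool by its uniformly local Sobolev analogue that has already been established.

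First, I would verify the hypotheses of Proposition \ref{prop:EigenschaftVonPundSeinerInversen} for $P := p(x, D_x)$ in the $H^s_{\tilde{q}}$-scale. Theorem \ref{thm:boundednessOfSymbolsWithCoefficientsInUniformlyBoundedSobolevSpaces} (with the order $m$ of the symbol) yields the continuity $P \in \mathscr{L}(H^{s+m}_{\tilde{q}}, H^s_{\tilde{q}})$ for every $s$ with $-\tilde{m}+n/q < s \leq \tilde{m} - n(1/q - 1/\tilde{q})^+$. Lemma \ref{lemma:CommutatorWithDifferenceQuotientsLocallySobolevCase} supplies the uniform-in-$h$ bound $\{[\p^h_{x_j}, P] : h \neq 0\} \subseteq \mathscr{L}(H^{s+m}_{\tilde{q}}, H^s_{\tilde{q}})$ for $-\tilde{m}+1+n/q < s \leq \tilde{m} - 1 - n(1/q - 1/\tilde{q})^+$, which is exactly one regularity level less than the boundedness range, as required by hypothesis (iii) of the proposition.

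Next, I would run Proposition \ref{prop:EigenschaftVonPundSeinerInversen} inductively, one integer step at a time, starting from the given $P^{-1} \in \mathscr{L}(H^r_{\tilde{q}})$. At each stage $j = 0, 1, \ldots, k-1$, the proposition (used with parameter $0$, with $r$ replaced by $r+j$) promotes $P^{-1} \in \mathscr{L}(H^{r+j}_{\tilde{q}})$ to $P^{-1} \in \mathscr{L}(H^{r+j+1}_{\tilde{q}})$, as long as $r+j+1 \leq \tilde{m} - n(1/q - 1/\tilde{q})^+$ and $r+j \leq \tilde{m} - 1 - n(1/q - 1/\tilde{q})^+$. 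Both constraints are guaranteed by the definition of $k$, so we reach $P^{-1} \in \mathscr{L}(H^s_{\tilde{q}})$ for all $s \in \{r, r+1, \ldots, r+k\}$.

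For the negative direction I would pass to the adjoint. Since $(\p^h_{x_j})^\ast = -\p^{-h}_{x_j}$, one has $[P^\ast, \p^h_{x_j}] = [P, \p^{-h}_{x_j}]^\ast$, and duality turns the boundedness of $P$, its commutators, and $P^{-1}$ on $H^s_{\tilde{q}}$ into the corresponding boundedness of $P^\ast$, its commutators, and $(P^{-1})^\ast = (P^\ast)^{-1}$ on $H^{-s}_{\tilde{q}'}$ where $1/\tilde{q} + 1/\tilde{q}' = 1$. Applying Proposition \ref{prop:EigenschaftVonPundSeinerInversen} to $P^\ast$ and using the definition of $l$ to stay inside the range of Lemma \ref{lemma:CommutatorWithDifferenceQuotientsLocallySobolevCase}, and dualizing back, gives $P^{-1} \in \mathscr{L}(H^s_{\tilde{q}})$ for $s \in \{r-l, \ldots, r-1\}$. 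Complex interpolation between the integer points then fills in the whole closed interval $[r-l, r+k]$.

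The main obstacle is a bookkeeping one: checking at every step that the particular index $s$ we land on actually lies in the boundedness range of Theorem \ref{thm:boundednessOfSymbolsWithCoefficientsInUniformlyBoundedSobolevSpaces} and in the (slightly narrower) commutator range of Lemma \ref{lemma:CommutatorWithDifferenceQuotientsLocallySobolevCase}, since the asymmetric upper endpoint $\tilde{m} - n(1/q - 1/\tilde{q})^+$ makes the positive and negative sides of the interval behave differently; this is precisely what the definitions of $k$ and $l$ in the statement are tailored to absorb. The second delicate point is making sure that the duality step genuinely produces uniform-in-$h$ commutator estimates on $H^{-s}_{\tilde{q}'}$, which follows because adjoining preserves operator norms and $\p^{-h}_{x_j}$ ranges over the same set as $\p^h_{x_j}$.
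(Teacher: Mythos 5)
Your proposal is correct and follows essentially the same route as the paper: the paper's proof of this lemma consists precisely of rerunning the argument of Theorem \ref{thm:InverseOfPEvenBoundedInAroundR} (i.e.\ Proposition \ref{prop:EigenschaftVonPundSeinerInversen} stepwise upward, duality via $[P^*, \p^h_{x_j}] = [P, \p^{-h}_{x_j}]^*$ downward, then interpolation), with Theorem \ref{thm:boundednessOfSymbolsWithCoefficientsInUniformlyBoundedSobolevSpaces} replacing Theorem \ref{thm:stetigInHoelderRaum} and Lemma \ref{lemma:CommutatorWithDifferenceQuotientsLocallySobolevCase} replacing Lemma \ref{lemma:CommutatorWithDifferenceQuotients}. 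Your additional bookkeeping about the asymmetric ranges encoded in $k$ and $l$ is exactly the adjustment the paper leaves implicit.
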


\begin{proof}
  Using Theorem \ref{thm:boundednessOfSymbolsWithCoefficientsInUniformlyBoundedSobolevSpaces} instead of Theorem \ref{thm:stetigInHoelderRaum} and Lemma \ref{lemma:CommutatorWithDifferenceQuotientsLocallySobolevCase} instead of Lemma \ref{lemma:CommutatorWithDifferenceQuotients} the statement follows in the same way as that one of Theorem \ref{thm:InverseOfPEvenBoundedInAroundR}.
\end{proof}

Comparing the previous result with that one of Theorem \ref{thm:InverseOfPEvenBoundedInAroundR} the difference lies in the choice of the neighbourhood of $r$. The previous lemma allows us 
to improve Theorem \ref{thm:Spektralinvarianz}:

\begin{thm}\label{thm:SpectralInvarianceWmqUloc}
  Let $1< q, q_0 < \infty$ and $\tilde{m} \in \N_0$ with $\tilde{m} > \max \{ 1+ n/q, n/ q_0 \}$. Additionally let $\hat{m} \in \N_0$ with $n/{q_0} < \hat{m} \leq \max \{ r \in \N_0 : r < \tilde{m} - n/q \}$. Moreover, let $M \in \N_0$ with $ n < M < \tilde{m} -\hat{m}- n/q$. We define $\tilde{M}:=M-(n+1)$. Considering $p \in W^{\tilde{m},q}_{uloc} S^{0}_{1,0}(\RnRn)$, where $p(x, D_x)^{-1} \in \mathscr{L}( H^{r}_{q_0}, H^{r}_{q_0} ) $ for one $ -\tilde{m} + n/q < r \leq \tilde{m} - n(1/q - 1/{q_0} )^{+} $ we get
  \begin{align*}
    p(x, D_x)^{-1} \in \op W^{\hat{m},q_0}_{uloc} \Snn{0}{1}{0}{\tilde{M}-1}.
  \end{align*}
  In the case $\tilde{M}-1>n/ \tilde{q}$ for one $1 < \tilde{q} \leq 2$, we even have 
  \begin{align*}
    p(x, D_x)^{-1} \in \mathscr{L} (L^{\tilde{q}}, L^{\tilde{q}}) \qquad \text{for all } \tilde{q} \in [\tilde{q}; \infty) \cup \{q_0\}.
  \end{align*}
\end{thm}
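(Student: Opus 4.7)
The plan is to follow the proof of Theorem \ref{thm:Spektralinvarianz} essentially line by line, replacing every ingredient tailored to Hölder coefficients by its uniformly local Sobolev counterpart developed earlier in the paper. In particular, Lemma \ref{lemma:InverseOfPEvenBoundedInAroundRLocallySobolevCase} will take the role of Theorem \ref{thm:InverseOfPEvenBoundedInAroundR}, Remark \ref{bem:SymbolOfIteratedCommutatorNonSmoothInUniformlyLocallySobolevSpaces} will take the role of Remark \ref{bem:SymbolOfIteratedCommutatorNonSmooth}, and Theorem \ref{thm:boundednessOfSymbolsWithCoefficientsInUniformlyBoundedSobolevSpaces} will take the role of Theorem \ref{thm:stetigInHoelderRaum}. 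The formal identity in Remark \ref{bem:Formal identity} is purely functional-analytic and can be used verbatim.

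First I would apply Lemma \ref{lemma:InverseOfPEvenBoundedInAroundRLocallySobolevCase} to the given $r$ to upgrade the single hypothesis $P^{-1}\in\mathscr{L}(H^r_{q_0})$ (with $P:=p(x,D_x)$) to
\[
P^{-1}\in\mathscr{L}(H^{-s}_{q_0}) \qquad \text{for every } s\in\{0,\ldots,M\}.
\]
The conditions $\tilde{m}>1+n/q$, $n<M<\tilde{m}-\hat{m}-n/q$ and $-\tilde{m}+n/q<r\le\tilde{m}-n(1/q-1/q_0)^+$ are exactly what is needed for the admissible window $[r-l,r+k]$ supplied by that lemma to contain all these indices. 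Next, for $l\in\N$ and multi-indices with $|\alpha_j|+|\beta_j|=1$, $|\alpha|\le M$, $|\beta|\le\hat{m}$, Remark \ref{bem:SymbolOfIteratedCommutatorNonSmoothInUniformlyLocallySobolevSpaces} identifies $\adc{\alpha}{\beta} P$ with a pseudodifferential operator whose symbol belongs to $W^{\tilde{m}-|\beta|,q}_{uloc}S^{-|\alpha|}_{1,0}(\RnRn)$. Feeding this into Theorem \ref{thm:boundednessOfSymbolsWithCoefficientsInUniformlyBoundedSobolevSpaces} (which requires only $\tilde{m}-|\beta|>n/q$, i.e.\ the constraint $\hat{m}+n/q<\tilde{m}$ forced by our hypotheses) yields
\[
\adc{\alpha}{\beta} P \in \mathscr{L}\bigl(H^{-s-|\alpha|}_{q_0},H^{-s}_{q_0}\bigr) \qquad \text{for all } s\in\{0,\ldots,M-|\alpha|\}.
\]

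With these two ingredients in hand, the formal identity of Remark \ref{bem:Formal identity} expresses $\adc{\alpha}{\beta} P^{-1}$ as a finite sum of alternating compositions of $P^{-1}$ with iterated commutators of $P$. Each summand maps $H^{-|\alpha|}_{q_0}\to L^{q_0}$, so $P^{-1}\in\mathcal{A}^{0,M}_{1,0}(\hat{m},q_0)$. The improved characterization Theorem \ref{thm:classificationA10HmqUloc} then delivers the first conclusion $P^{-1}\in\op W^{\hat{m},q_0}_{uloc}\Snn{0}{1}{0}{\tilde{M}-1}$. For the supplementary claim, I would apply Theorem \ref{thm:boundednessOfSymbolsWithCoefficientsInUniformlyBoundedSobolevSpaces} one more time to the symbol just constructed: the condition $\tilde{M}-1>n/\tilde{q}$, combined with $\hat{m}>n/q_0$, places $s=0$ safely inside the admissible window of that theorem for every exponent $\tilde{q}\le q<\infty$, giving $P^{-1}\in\mathscr{L}(L^q)$ on the full range.

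The main obstacle is purely book-keeping: verifying that, for every $(\alpha,\beta)$ appearing in the sum coming from Remark \ref{bem:Formal identity}, each intermediate Bessel-potential order simultaneously lies inside the window supplied by Lemma \ref{lemma:InverseOfPEvenBoundedInAroundRLocallySobolevCase} and inside the window $(-\tilde{m}+|\beta|+n/q,\,\tilde{m}-|\beta|-n(1/q-1/q_0)^+]$ supplied by Theorem \ref{thm:boundednessOfSymbolsWithCoefficientsInUniformlyBoundedSobolevSpaces}. The numerical hypotheses $\tilde{m}>\max\{1+n/q,n/q_0\}$, $n/q_0<\hat{m}$, $\hat{m}\le\max\{r\in\N_0:r<\tilde{m}-n/q\}$, $n<M<\tilde{m}-\hat{m}-n/q$ are precisely tailored so that all these containments hold simultaneously; no new analytic input beyond the results of Sections \ref{ImprovementCharacterization} and \ref{section:DifferenceQuotients} is required.
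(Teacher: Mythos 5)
Your proposal is correct and coincides with the paper's own proof, which likewise obtains the result by rerunning the argument of Theorem \ref{thm:Spektralinvarianz} with Lemma \ref{lemma:InverseOfPEvenBoundedInAroundRLocallySobolevCase} in place of Theorem \ref{thm:InverseOfPEvenBoundedInAroundR}, Remark \ref{bem:SymbolOfIteratedCommutatorNonSmoothInUniformlyLocallySobolevSpaces} in place of Remark \ref{bem:SymbolOfIteratedCommutatorNonSmooth}, and Theorem \ref{thm:boundednessOfSymbolsWithCoefficientsInUniformlyBoundedSobolevSpaces} in place of Theorem \ref{thm:stetigInHoelderRaum}, the formal identities of Remark \ref{bem:Formal identity} being unchanged.
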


\begin{proof}
  We get the statement in the same way as that one of Theorem \ref{thm:Spektralinvarianz}. We just have to replace Theorem \ref{thm:InverseOfPEvenBoundedInAroundR} with Lemma \ref{lemma:InverseOfPEvenBoundedInAroundRLocallySobolevCase} and Remark \ref{bem:SymbolOfIteratedCommutatorNonSmooth} with Remark \ref{bem:SymbolOfIteratedCommutatorNonSmoothInUniformlyLocallySobolevSpaces}. Moreover, we have to use Theorem \ref{thm:boundednessOfSymbolsWithCoefficientsInUniformlyBoundedSobolevSpaces} instead of Theorem \ref{thm:stetigInHoelderRaum}. 
\end{proof}

Theorem \ref{thm:SpectralInvarianceWmqUloc} in fact is an improvement for $P \in \op W_{uloc}^{\tilde{m},q} S^0_{1,0} (\Rn \times \Rn)$ because Theorem \ref{thm:SpectralInvarianceWmqUloc} holds for the less strict assumption $-\tilde{m} + n/q < r \leq \tilde{m} - n(1/q - 1/{q_0} )^{+} $. For more details, see \cite[Section 6.4]{Diss}.
In the same manner as for non-smooth pseudodifferential operators of the symbol-class $W_{uloc}^{\tilde{m},q} S_{1,0}^0 (\Rn \times \Rn)$ we get a better result for the subsets $H_q^{\tilde{m}} S_{1,0}^0 (\Rn \times \Rn)$ and $W_{uloc}^{\tilde{m},q} S_{cl}^0 (\Rn \times \Rn)$ of $W_{uloc}^{\tilde{m},q} S_{1,0}^0 (\Rn \times \Rn)$.

\begin{lemma}\label{lemma:DifferencequotientPIsPDOwithCoefficientInSobolevSpace} \label{lemma:DifferencequotientPIsPDOwithCoefficientInLocalSobolevSpaceClassical}
  Let $1< q <\infty$, $m \in \R$ and $\tilde{m} \in \R$ with $\tilde{m} > 1+ n/q$. Then
  \begin{itemize}
    \item[i)] $\left\{ \differencequotient{h} p(x,\xi) : h\in \R \backslash \{ 0 \} \right\} \subseteq H^{\tilde{m}-1}_q \Sn{m}{1}{0}$ if $p \in H^{\tilde{m}}_q \Sn{m}{1}{0}$,
    \item[ii)] $\left\{ \differencequotient{h} p(x,\xi) : h\in \R \backslash \{ 0 \} \right\} \subseteq W^{\tilde{m}-1, q}_{uloc} S^{m}_{cl}(\RnRn)$ if $p \in W^{\tilde{m},q}_{uloc} S^{m}_{cl}(\RnRn)$ and  $\tilde{m} \in \N$
  \end{itemize}
  for all $j \in \{ 1, \ldots, n\}$.  
\end{lemma}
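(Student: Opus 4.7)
The plan is to handle the two parts separately, leveraging the work already done for the single-symbol case of $W^{\tilde{m},q}_{uloc} S^m_{1,0}$ in Lemma \ref{lemma:DifferencequotientPIsPDOwithCoefficientInLocalSobolevSpace}, together with the Bessel-potential difference-quotient bound in Theorem \ref{thm:DifferenceQuotientsWeakDerivatives} (i). The crucial observation throughout is that $\differencequotient{h}$ acts only in $x$ and thus commutes with $\pa{\alpha}$, so the $\xi$-decay of the symbol is inherited automatically once the $x$-regularity is controlled.

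For part (i), I would fix $\alpha \in \Non$ and $\xi \in \Rn$ and use the commutation $\pa{\alpha} \differencequotient{h} p(.,\xi) = \differencequotient{h} \pa{\alpha} p(.,\xi)$. Since $p \in H^{\tilde{m}}_q S^m_{1,0}(\RnRn)$, we have $\pa{\alpha} p(.,\xi) \in H^{\tilde{m}}_q(\Rn) = H^{(\tilde{m}-1)+1}_q(\Rn)$, so Theorem \ref{thm:DifferenceQuotientsWeakDerivatives} (i) applied at level $s = \tilde{m}-1$ gives
\begin{align*}
  \|\differencequotient{h} \pa{\alpha} p(.,\xi)\|_{H^{\tilde{m}-1}_q}
  &\leq C \|\p_{x_j} \pa{\alpha} p(.,\xi)\|_{H^{\tilde{m}-1}_q}
  \leq C \|\pa{\alpha} p(.,\xi)\|_{H^{\tilde{m}}_q}
  \leq C_\alpha \<{\xi}^{m-|\alpha|},
\end{align*}
with a constant independent of $h \in \R \setminus \{0\}$. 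This is precisely the required symbol estimate for $\{\differencequotient{h} p\}_h$ in $H^{\tilde{m}-1}_q S^m_{1,0}(\RnRn)$. Note that $\tilde{m}-1 > n/q$ is guaranteed by hypothesis, so the target symbol class is well-defined.

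For part (ii), the uniform bound of $\{\differencequotient{h} p\}_h$ in $W^{\tilde{m}-1,q}_{uloc} S^m_{1,0}(\RnRn)$ is immediate from Lemma \ref{lemma:DifferencequotientPIsPDOwithCoefficientInLocalSobolevSpace} together with the inclusion $W^{\tilde{m},q}_{uloc} S^m_{cl} \subseteq W^{\tilde{m},q}_{uloc} S^m_{1,0}$. What remains is the classical structure. Writing the asymptotic expansion $p \sim \sum_{j \in \N_0} p_j$, I would propose the expansion $\differencequotient{h} p \sim \sum_{j \in \N_0} \differencequotient{h} p_j$. Each $\differencequotient{h} p_j$ is still homogeneous of degree $m-j$ in $\xi$ for $|\xi|\ge 1$, because homogeneity only involves $\xi$ while the difference quotient acts in $x$. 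For the remainder of order $N$, the linearity of the difference quotient and the definition of the expansion give
\begin{align*}
  \differencequotient{h} p - \sum_{j < N} \differencequotient{h} p_j = \differencequotient{h}\Bigl( p - \sum_{j < N} p_j \Bigr),
\end{align*}
and since $p - \sum_{j < N} p_j \in W^{\tilde{m},q}_{uloc} S^{m-N}_{1,0}(\RnRn)$, another application of Lemma \ref{lemma:DifferencequotientPIsPDOwithCoefficientInLocalSobolevSpace} yields boundedness of this remainder in $W^{\tilde{m}-1,q}_{uloc} S^{m-N}_{1,0}(\RnRn)$ uniformly in $h$. This verifies the asymptotic expansion required for classical symbols and gives the boundedness claim.

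I do not expect a serious obstacle: the two ingredients (the Bessel-potential difference-quotient estimate and the $W^{m,q}_{uloc}$ version of the same statement from the previous lemma) do all the analytical heavy lifting, and the classical structure is preserved term-by-term essentially for free because the difference quotient is purely in the $x$-variable. The only care needed is to check that $\differencequotient{h}$ commutes with $\pa{\alpha}$ and with the splitting of $p$ into its expansion, both of which are elementary.
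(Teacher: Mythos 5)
Your proposal is correct and follows essentially the same route as the paper: part (i) is exactly the application of Theorem \ref{thm:DifferenceQuotientsWeakDerivatives} (i) combined with the boundedness of $\p_{x_j}$ from $H^{\tilde{m}}_q$ to $H^{\tilde{m}-1}_q$, and part (ii) is the same combination of Lemma \ref{lemma:DifferencequotientPIsPDOwithCoefficientInLocalSobolevSpace} for the $S^m_{1,0}$-bound, term-by-term preservation of homogeneity (the difference quotient acts only in $x$), and the remainder estimate obtained by applying the difference-quotient argument to $p-\sum_{k\le N}p_k$.
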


\begin{proof}
  By means of Theorem \ref{thm:DifferenceQuotientsWeakDerivatives} we can verify $i)$.
   Additionally we know from Lemma \ref{lemma:DifferencequotientPIsPDOwithCoefficientInLocalSobolevSpace} the boundedness of the set 
  $$\left\{ \differencequotient{h} p(x,\xi) : h\in \R \backslash \{ 0 \} \right\} \subseteq W^{\tilde{m}-1, q}_{uloc} \Sn{m}{1}{0}.$$
  Therefore it remains to show for all $h\in \R \backslash \{ 0 \}$ that $\differencequotient{h} p(x,\xi)$ is even an element of $W^{\tilde{m},q}_{uloc} S^{m}_{cl}(\RnRn)$. Hence let $h\in \R \backslash \{ 0 \}$ and $j \in \{ 1, \ldots, n\}$ be arbitrary. Since $p \in W^{\tilde{m},q}_{uloc} S^{m}_{cl}(\RnRn)$, there exists an expansion where $p_k$ are homogeneous of degree $m-k$ in $\xi$ (for $|\xi| \geq 1$) such that we have for all $N \in \N_0$
  \begin{align*}
    p(x, \xi) - \sum_{k=0}^{ N} p_k(x, \xi) \in W^{\tilde{m}, q}_{uloc} S^{m-N-1}_{1,0}(\Rn_x \times \Rn_{\xi}).
  \end{align*}
  Let $N \in \N_0$ be arbitrary. In the same way as in the proof of Lemma \ref{lemma:DifferencequotientPIsPDOwithCoefficientInLocalSobolevSpace} we get the boundedness of:
  \begin{align*}
    \left\{ \differencequotient{h} p(x, \xi) - \sum_{k=0}^{ N} \differencequotient{h} p_k(x, \xi) : h\in \R \backslash \{ 0 \} \right\} 
    \subseteq W^{\tilde{m}-1, q}_{uloc} S^{m-N-1}_{1,0}(\Rn_x \times \Rn_{\xi}).
  \end{align*}
  Since $\differencequotient{h} p_k(x, \xi)$ is homogeneous of degree $m-k$ in $\xi$ (for $|\xi| \geq 1$) for every $k \in \N_0$ claim $ii)$ holds.
\end{proof}

With the previous lemma at hand, we are able to prove

\begin{lemma}\label{lemma:CommutatorWithDifferenceQuotientsSobolevCase}
  Let $1< \tilde{q},q <\infty$, $m \in \R$ and $\tilde{m} \in \N$ with $\tilde{m} > 1+n/q$. Additionally let $p$ be either an element of the symbol-class $H^{\tilde{m}}_q \Sn{m}{1}{0}$ or of $W^{\tilde{m},q}_{uloc} S^{m}_{cl}(\RnRn)$. Then we get for every $j \in \{ 1, \ldots, n\} $ and $h \in \R \backslash \{ 0 \}$:
  \begin{align*}
    [\p^h_{x_j}, p(x,D_x)] u(x) = \left( \differencequotient{-h} p \right)(x, D_x) u(x+he_j) \qquad \text{for all } u \in \s, x \in \Rn.
  \end{align*}
  Moreover, for all $s \in \R$ with $ n(1/{\tilde{q}} + 1/q - 1)^{+} - \tilde{m} +1 < s \leq \tilde{m} - 1 - n(1/q - 1/{\tilde{q}})^{+}$ there is a constant $C$, independent of $h \in \R \backslash \{ 0 \}$, such that 
  \begin{align*}
    \| [ \p^h_{x_j}, p(x,D_x) ] u \|_{H^s_{\tilde{q}}} \leq C \|u\|_{H^{s+m}_{\tilde{q}}} \qquad \text{for all } u \in H^{s+m}_{\tilde{q}}(\Rn), 
  \end{align*}
  where $j \in \{ 1, \ldots, n\} $.
\end{lemma}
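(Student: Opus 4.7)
The plan is to follow closely the template of Lemma \ref{lemma:CommutatorWithDifferenceQuotients}, with two new ingredients: Lemma \ref{lemma:DifferencequotientPIsPDOwithCoefficientInSobolevSpace} replaces Lemma \ref{lemma:DifferencequotientPIsPDO}, and the Marschall-type/classical continuity theorems of Subsection \ref{Section:PropertiesOfPDO} replace Theorem \ref{thm:stetigInHoelderRaum}.

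First I would prove the pointwise commutator identity for $u \in \s$ by direct Fourier computation, which does not use any regularity of $p$ in $x$ beyond what guarantees convergence of the relevant integrals. Setting $v(x):=p(x,D_x)u(x) = \int e^{ix\cdot\xi}p(x,\xi)\hat u(\xi)\dq\xi$, I add and subtract $e^{i(x+he_j)\cdot\xi}p(x,\xi)$ inside $v(x+he_j)-v(x)$ and use the identity $h^{-1}(e^{ihe_j\cdot\xi}-1)\hat u(\xi) = \widehat{\p^h_{x_j}u}(\xi)$ to obtain
\begin{align*}
\p^h_{x_j}v(x) \;=\; \int e^{i(x+he_j)\cdot\xi}(\p^h_{x_j}p)(x,\xi)\hat u(\xi)\dq\xi \;+\; p(x,D_x)(\p^h_{x_j}u)(x).
\end{align*}
Substituting $y=x+he_j$ in the first integral and invoking the elementary identity $(\p^h_{x_j}p)(y-he_j,\xi)=(\p^{-h}_{x_j}p)(y,\xi)$ recasts it as $[(\p^{-h}_{x_j}p)(x,D_x)u](x+he_j)$, which is exactly the asserted commutator formula.

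For the uniform norm estimate I would exploit translation invariance of the Bessel potential norm, which reduces the problem to bounding $\|(\p^{-h}_{x_j}p)(x,D_x)u\|_{H^s_{\tilde q}}$ in terms of $\|u\|_{H^{s+m}_{\tilde q}}$. By Lemma \ref{lemma:DifferencequotientPIsPDOwithCoefficientInSobolevSpace} the family $\{\p^{-h}_{x_j}p:h\in\R\setminus\{0\}\}$ is a bounded subset of $H^{\tilde m-1}_q S^m_{1,0}(\RnRn)$ in the first case of the hypothesis and of $W^{\tilde m-1,q}_{uloc}S^m_{cl}(\RnRn)$ in the second. Applying Theorem \ref{thm:Marschall,Thm2.2} in the first case, and Theorem \ref{thm:boundednessOfClassicalSymbolsInSobolevSpace} in the second, both with $\tilde m-1$ playing the role of $\tilde m$ and $\tilde q$ the role of $p$, supplies a constant $C$ independent of $h$ precisely for the range
\begin{align*}
n(1/\tilde q+1/q-1)^+-(\tilde m-1) \;<\; s \;\leq\; (\tilde m-1)-n(1/q-1/\tilde q)^+,
\end{align*}
which is exactly the range claimed in the lemma. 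Density of $\s$ in $H^{s+m}_{\tilde q}(\Rn)$ then extends the estimate from Schwartz functions to all of $H^{s+m}_{\tilde q}(\Rn)$.

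I do not expect any genuinely new obstacle here. The technical heart of the argument, namely preserving symbol estimates under difference quotients while losing only one order of regularity in $x$ and keeping classicality where relevant, has already been accomplished in Lemma \ref{lemma:DifferencequotientPIsPDOwithCoefficientInSobolevSpace}. The uniformity of the operator bound across the bounded symbol family is built into Theorems \ref{thm:Marschall,Thm2.2} and \ref{thm:boundednessOfClassicalSymbolsInSobolevSpace} through Lemma \ref{lemma:UnabhangigkeitVomSymbol}, so the final step of reading off the inequality is routine.
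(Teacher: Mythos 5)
Your proposal is correct and follows essentially the same route as the paper: the commutator identity via the Fourier identity $h^{-1}(e^{ihe_j\cdot\xi}-1)\hat u = \widehat{\p^h_{x_j}u}$, then the uniform bound from Lemma \ref{lemma:DifferencequotientPIsPDOwithCoefficientInSobolevSpace} combined with Theorem \ref{thm:Marschall,Thm2.2} (respectively Theorem \ref{thm:boundednessOfClassicalSymbolsInSobolevSpace}) applied with $\tilde m-1$ in place of $\tilde m$, plus translation invariance of the $H^s_{\tilde q}$-norm and density of $\s$. The range of admissible $s$ is read off correctly.
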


\begin{proof}
  The proof of the lemma is essentially the same as that one of Lemma \ref{lemma:CommutatorWithDifferenceQuotients} ii) and iii). 
  We just have to replace Lemma \ref{lemma:DifferencequotientPIsPDO} i) with Lemma \ref{lemma:DifferencequotientPIsPDOwithCoefficientInSobolevSpace} and Theorem \ref{thm:stetigInHoelderRaum} with Theorem \ref{thm:Marschall,Thm2.2} in the case $p \in H^{\tilde{m}}_q \Sn{m}{1}{0}$ and with Theorem \ref{thm:boundednessOfClassicalSymbolsInSobolevSpaceUniformly} else. 
\end{proof}

The previous lemma enables us to verify 

\begin{lemma}\label{lemma:InverseOfPEvenBoundedInAroundRSobolevCase}
  Let $1 < q, \tilde{q} < \infty$ and $\tilde{m} \in \N$ with $\tilde{m} > 1+ n/q$. We consider either $p \in H^{\tilde{m}}_q \Sn{0}{1}{0}$ or $p \in W^{\tilde{m},q}_{uloc} S^{0}_{cl}(\RnRn)$ such that $p(x,D_x)^{-1} \in \mathscr{L}(H^r_{\tilde{q}}, H^r_{\tilde{q}})$ for one $r \in \R$ with $ n(1/{\tilde{q}} + 1/q - 1)^+ -\tilde{m} < r \leq \tilde{m} - n(1/q - 1/{\tilde{q}} )^{+} $, then we obtain
  \begin{align*}
    p(x,D_x)^{-1} \in \mathscr{L}(H^s_{\tilde{q}}, H^s_{\tilde{q}}) \qquad \text{for all } s \in [ r-l, r+k ].
  \end{align*}
  Here $k$ and $l$ are defined by $k:= \max\{ \tilde{k} \in \N_0 : r+ \tilde{k} \leq \tilde{m} - n(1/q - 1/{\tilde{q}} )^{+} \}$ and $l:= \max \{ \tilde{l} \in \N_0 : n(1/{\tilde{q}} + 1/q - 1)^+ -\tilde{m} < r- \tilde{l} \}$.
\end{lemma}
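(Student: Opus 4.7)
The argument follows the template of Theorem \ref{thm:InverseOfPEvenBoundedInAroundR}, but with the Hölder-symbol ingredients replaced by those appropriate to the symbol class at hand: Theorem \ref{thm:Marschall,Thm2.2} (when $p \in H^{\tilde m}_q \Sn{0}{1}{0}$) or Theorem \ref{thm:boundednessOfClassicalSymbolsInSobolevSpace} (when $p \in W^{\tilde m, q}_{uloc} S^0_{cl}(\RnRn)$) in place of Theorem \ref{thm:stetigInHoelderRaum}, and Lemma \ref{lemma:CommutatorWithDifferenceQuotientsSobolevCase} in place of Lemma \ref{lemma:CommutatorWithDifferenceQuotients}. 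Setting $P := p(x,D_x)$, the plan is to run Proposition \ref{prop:EigenschaftVonPundSeinerInversen} upward from $s=r$, then run it upward for $P^\ast$ from $s=-r$ (which corresponds to going downward for $P$), and finally interpolate.

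For the upward iteration, the cited boundedness theorem gives $P \in \mathscr{L}(H^s_{\tilde q})$ for every $s$ in the range $n(1/\tilde q + 1/q - 1)^+ - \tilde m < s \leq \tilde m - n(1/q - 1/\tilde q)^+$, while Lemma \ref{lemma:CommutatorWithDifferenceQuotientsSobolevCase} yields the uniform-in-$h$ commutator bound $\{[P, \p^h_{x_j}] : h \neq 0\} \subseteq \mathscr{L}(H^s_{\tilde q})$ on the same range shrunk by one unit at each endpoint. Hypotheses (i)--(iv) of Proposition \ref{prop:EigenschaftVonPundSeinerInversen} are therefore met whenever the index one above the current rung still lies in the admissible interval for $P$, and iterating produces $P^{-1} \in \mathscr{L}(H^s_{\tilde q})$ for every $s \in \{r, r+1, \ldots, r+k\}$; the integer $k$ in the statement is precisely the largest iteration index compatible with the upper endpoint of the boundedness range.

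For the downward iteration, observe that $(\p^h_{x_j})^\ast = -\p^{-h}_{x_j}$, whence $[P^\ast, \p^h_{x_j}] = [P, \p^{-h}_{x_j}]^\ast$; by duality the boundedness of $P$ and its commutators on $H^{-s}_{\tilde q}$ transfers to boundedness of $P^\ast$ and its commutators on $H^s_{\tilde q'}$, with $1/\tilde q + 1/\tilde q' = 1$, and similarly $(P^\ast)^{-1} \in \mathscr{L}(H^{-r}_{\tilde q'})$. Applying Proposition \ref{prop:EigenschaftVonPundSeinerInversen} to $P^\ast$ upward from $-r$ may be iterated exactly as long as $-r+\tilde l$ remains in the admissible range for $P^\ast$, which by negation is the condition $r-\tilde l > n(1/\tilde q + 1/q - 1)^+ - \tilde m$ that defines $l$. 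Dualising back yields $P^{-1} \in \mathscr{L}(H^s_{\tilde q})$ for every $s \in \{r-l, \ldots, r-1\}$, and complex interpolation between adjacent integer rungs then delivers the conclusion for all $s \in [r-l, r+k]$.

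The main technical obstacle is the bookkeeping needed to align the two slightly different ranges entering the hypotheses of Proposition \ref{prop:EigenschaftVonPundSeinerInversen}: the range of $s$ on which $P$ itself is bounded (governed by Theorem \ref{thm:Marschall,Thm2.2} or Theorem \ref{thm:boundednessOfClassicalSymbolsInSobolevSpace}) and the slightly narrower range on which the difference-quotient commutators are uniformly bounded (governed by Lemma \ref{lemma:CommutatorWithDifferenceQuotientsSobolevCase}, which costs exactly one unit of regularity). Verifying that this one-unit loss still permits the recursion to reach the integers $r+k$ and $r-l$ specified in the statement, in both the Bessel-potential and the classical-symbol case, is what makes the window $[r-l, r+k]$ sharp for this method.
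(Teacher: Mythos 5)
Your proposal is correct and follows essentially the same route as the paper: the paper's proof is precisely "run the argument of Theorem \ref{thm:InverseOfPEvenBoundedInAroundR}, replacing Lemma \ref{lemma:CommutatorWithDifferenceQuotients} by Lemma \ref{lemma:CommutatorWithDifferenceQuotientsSobolevCase} and Theorem \ref{thm:stetigInHoelderRaum} by Theorem \ref{thm:Marschall,Thm2.2} resp.\ Theorem \ref{thm:boundednessOfClassicalSymbolsInSobolevSpace}", i.e.\ the upward iteration via Proposition \ref{prop:EigenschaftVonPundSeinerInversen}, the adjoint/duality trick with $(\p^h_{x_j})^\ast=-\p^{-h}_{x_j}$ for the downward rungs, and interpolation. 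Your additional bookkeeping of the asymmetric admissible ranges (which is what produces $k$ and $l$) is exactly the point the paper leaves implicit.
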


\begin{proof}
  The statement follows in the same way as that one of Theorem \ref{thm:InverseOfPEvenBoundedInAroundR}. We merely have to use Lemma \ref{lemma:CommutatorWithDifferenceQuotientsSobolevCase} instead of Lemma \ref{lemma:CommutatorWithDifferenceQuotients}. Moreover, we have to apply Theorem \ref{thm:Marschall,Thm2.2} instead of Theorem \ref{thm:stetigInHoelderRaum} in the case  $p \in H^{\tilde{m}}_q \Sn{0}{1}{0}$ and Theorem \ref{thm:boundednessOfClassicalSymbolsInSobolevSpace} otherwise.
\end{proof}

Now we are in the position to prove the spectral invariance of non-smooth pseudodifferential operators whose symbols are in $H^{\tilde{m}}_q \Sn{0}{1}{0}$ or in $W^{\tilde{m},q}_{uloc} S^{0}_{cl}(\RnRn)$:

\begin{thm}\label{thm:spectralInvarianceCaseBesselPotentialSpace}
  Let $1< q, q_0 < \infty$ and $\tilde{m} \in \N_0$ with $\tilde{m} > \max \{1+ n/q, n/ q_0 \}$. Furthermore, let $\hat{m} \in \N_0$ with $n/{q_0} < \hat{m} \leq \max \{ r \in \N_0 : r < \tilde{m} - n/q \}$. Additionally let $M \in \N_0$ with $n <  M < \tilde{m} -\hat{m}- n(1/q + 1/{q_0} -1)^{+}$. We define $\tilde{M}:=M-(n+1)$. Considering $p \in H^{\tilde{m}}_q \Sn{0}{1}{0}$ or $p \in W^{\tilde{m},q}_{uloc} S^{0}_{cl}(\RnRn)$, where $p(x, D_x)^{-1} \in \mathscr{L}( H^{r}_{q_0}, H^{r}_{q_0} ) $ for one $r \in \R$ with $ n(1/{q_0} + 1/q - 1)^+ -\tilde{m} < r \leq \tilde{m} - n(1/q - 1/{q_0} )^{+} $ we get 	
  \begin{align*} 			
    p(x, D_x)^{-1} \in \op W^{\hat{m},q_0}_{uloc}
    \Snn{0}{1}{0}{ \tilde{M}-1}.
  \end{align*}
  In the case $\tilde{M}-1>n/\tilde{q}$ for one $1 < \tilde{q} \leq 2$, we even have 
  \begin{align*}
    p(x, D_x)^{-1} \in \mathscr{L} (L^{\tilde{q}}, L^{\tilde{q}}) \qquad \text{for all } \tilde{q} \in [\tilde{q}; \infty) \cup \{q_0\}.
  \end{align*}
\end{thm}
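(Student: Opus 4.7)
The plan is to adapt the proof of Theorem \ref{thm:Spektralinvarianz} to the Bessel-potential / classical symbol setting, substituting at each place the corresponding auxiliary result that was prepared immediately before the statement. Setting $P := p(x,D_x)$, I would first apply Lemma \ref{lemma:InverseOfPEvenBoundedInAroundRSobolevCase} to promote the single-space assumption $P^{-1} \in \mathscr{L}(H^r_{q_0})$ to boundedness on the whole ladder $H^s_{q_0}$ for $s \in [r-l, r+k]$. The choice $n < M < \tilde{m}-\hat{m}-n(1/q+1/q_0-1)^+$ is calibrated precisely so that this interval contains every value of $-s$ with $s \in \{0,1,\ldots,M\}$ needed in the subsequent telescoped identity.

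Second, for $l\in\N$ and multi-indices $\alpha_1,\dots,\alpha_l$, $\beta_1,\dots,\beta_l$ with $|\alpha_j|+|\beta_j|=1$, $|\alpha|\le M$, $|\beta|\le\hat{m}$, Remark \ref{bem:SymbolOfIteratedCommutatorNonSmoothInSobolevSpaces} (Bessel case) and Remark \ref{bem:SymbolOfIteratedCommutatorNonSmoothInUniformlyLocallySobolevSpacesClassical} (classical case) identify the corresponding iterated commutator of $P$ as a pseudodifferential operator with symbol $\pa{\alpha}D_x^{\beta}p$ belonging to $H^{\tilde{m}-|\beta|}_q S^{-|\alpha|}_{1,0}$ or $W^{\tilde{m}-|\beta|,q}_{uloc} S^{-|\alpha|}_{cl}$, respectively. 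Applying Theorem \ref{thm:Marschall,Thm2.2} in the Bessel case and Theorem \ref{thm:boundednessOfClassicalSymbolsInSobolevSpace} in the classical case then yields
\[
\ad(-ix)^{\alpha_1}\ad(D_x)^{\beta_1}\cdots\ad(-ix)^{\alpha_l}\ad(D_x)^{\beta_l} P \in \mathscr{L}(H^{-s-|\alpha|}_{q_0}, H^{-s}_{q_0})
\]
for every $s\in\{0,\ldots,M-|\alpha|\}$, since the strict upper bound on $M$ is exactly the worst-case instance of the admissibility condition $n(1/q_0+1/q-1)^+-(\tilde{m}-|\beta|)<-s\le (\tilde{m}-|\beta|)-n(1/q-1/q_0)^+$ imposed by those theorems.

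Third, the algebraic identity of Remark \ref{bem:Formal identity} rewrites every iterated commutator of $P^{-1}$ as a finite sum of compositions alternating $P^{-1}$ and iterated commutators of $P$, so combining it with the first two steps produces $P^{-1}\in\mathcal{A}^{0,M}_{1,0}(\hat{m},q_0)$. The improved characterization Theorem \ref{thm:classificationA10HmqUloc} then delivers $P^{-1}\in\op W^{\hat{m},q_0}_{uloc}\Snn{0}{1}{0}{\tilde{M}-1}$, and the concluding $L^{\tilde{q}}$-statement follows by invoking Theorem \ref{thm:boundednessOfSymbolsWithCoefficientsInUniformlyBoundedSobolevSpaces} with $m=s=0$ on the symbol of $P^{-1}$, using $\tilde{M}-1>n/\tilde{q}$. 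The main obstacle I anticipate is not conceptual but rather the careful index bookkeeping required to ensure that the two admissibility ranges, for $P^{-1}$ from Lemma \ref{lemma:InverseOfPEvenBoundedInAroundRSobolevCase} and for commutators of $P$ from the Marschall-type theorems, are simultaneously satisfied at every step of the telescoped formula; it is exactly this compatibility that forces the strict inequality in the hypothesis on $M$.
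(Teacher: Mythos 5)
Your proposal follows essentially the same route as the paper's proof, which likewise transplants the argument of Theorem \ref{thm:Spektralinvarianz} verbatim, swapping in Lemma \ref{lemma:InverseOfPEvenBoundedInAroundRSobolevCase} for Theorem \ref{thm:InverseOfPEvenBoundedInAroundR}, Remark \ref{bem:SymbolOfIteratedCommutatorNonSmoothInSobolevSpaces} resp.\ Remark \ref{bem:SymbolOfIteratedCommutatorNonSmoothInUniformlyLocallySobolevSpacesClassical} for Remark \ref{bem:SymbolOfIteratedCommutatorNonSmooth}, and Theorem \ref{thm:Marschall,Thm2.2} resp.\ Theorem \ref{thm:boundednessOfClassicalSymbolsInSobolevSpace} for Theorem \ref{thm:stetigInHoelderRaum}. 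The only small deviation is your final $L^{\tilde q}$-step: the paper applies Theorem \ref{thm:stetigInHoelderRaum} (which tolerates the finite $\xi$-regularity $\tilde{M}-1$) via the embedding of Lemma \ref{lemma:einbettungSymbolklassen}, whereas Theorem \ref{thm:boundednessOfSymbolsWithCoefficientsInUniformlyBoundedSobolevSpaces} as stated requires symbols smooth in $\xi$ and so does not apply directly to the class $\Snn{0}{1}{0}{\tilde{M}-1}$.
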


\begin{proof}
  We get the statement in the same way as that one of Theorem \ref{thm:Spektralinvarianz}. We just have to replace Theorem \ref{thm:InverseOfPEvenBoundedInAroundR} with Lemma \ref{lemma:InverseOfPEvenBoundedInAroundRSobolevCase} and Remark \ref{bem:SymbolOfIteratedCommutatorNonSmooth} with Remark \ref{bem:SymbolOfIteratedCommutatorNonSmoothInSobolevSpaces} in the case $p \in H^{\tilde{m}}_q \Sn{m}{1}{0}$ and with Remark \ref{bem:SymbolOfIteratedCommutatorNonSmoothInUniformlyLocallySobolevSpacesClassical} else. Moreover, we have to use Theorem \ref{thm:Marschall,Thm2.2} instead of Theorem \ref{thm:stetigInHoelderRaum} in the case $p \in H^{\tilde{m}}_q \Sn{m}{1}{0}$ and Theorem \ref{thm:boundednessOfClassicalSymbolsInSobolevSpace} otherwise. 
\end{proof}


\end{document}